\documentclass[11pt, twoside]{article}
\usepackage{amsfonts}

\usepackage{amsmath,amssymb}
\usepackage{multicol}
\usepackage{color}

\def\bar{\overline}

\numberwithin{equation}{section}
\newtheorem{theorem}{Theorem}[section]
\newtheorem{lemma}[theorem]{Lemma}

\newtheorem{definition}[theorem]{Definition}
\newtheorem{remark}[theorem]{Remark}

\newenvironment{proof}[1][Proof]{\noindent\textbf{#1.} }{\hfill $\Box$}
\allowdisplaybreaks
\allowdisplaybreaks \numberwithin{equation}{section}
 \makeatletter\setlength{\textwidth}{17cm}
  \setlength{\oddsidemargin}{-0.5cm}
\setlength{\evensidemargin}{-0.5cm} \setlength{\textheight}{23cm}
\topmargin=0cm
 \pagestyle{myheadings}\markboth{$~$
\hfill {\rm W.-J. Sheng and X.-T. Zhang} \hfill $~$} {$~$ \hfill {\rm curved fronts of spatially 
periodic combustion reaction-diffusion equations} \hfill$~$}
\begin{document}

\author{ Wei-Jie Sheng\thanks{Corresponding author (E-mail address: 
shengwj09@hit.edu.cn).} \   and  Xin-Tian Zhang \\~\\
\footnotesize{School of Mathematics, Harbin Institute of Technology}, \\
\footnotesize{Harbin, Heilongjiang, 150001, People's Republic of China}
}

\title{\textbf{Existence and stability of curved fronts for spatially periodic combustion 
reaction-diffusion equations in $\mathbb{R}^N$ } } 

\date{}
\maketitle

\textbf{Abstract}: This paper is concerned with curved fronts of combustion reaction-diffusion equations in spatially periodic media in $\mathbb{R}^N$ $(N\geq2)$. Under the assumption that 
there are moving pulsating fronts for any given propagation direction 
$e \in \mathbb{S}^{N-1}$, and by constructing suitable super- and sub-solutions, 
we prove the existence of a  curved front with polytope-like shape in 
$\mathbb{R}^N$. Then we show that the curved front is unique and asymptotically stable.

\textbf{Keywords}: Reaction-diffusion equations; Curved fronts; Combustion; 
Spatial periodicity; Pulsating fronts.

\textbf{AMS Subject Classification (2020)}: 35B10, 35K10, 35K15, 35K57

\section{Introduction}
\noindent
In this paper, we study the spatially periodic reaction-diffusion equation
\begin{equation}\label{1.1}
u_t-\Delta_z u=f(z, u) \text { in }(t, z) \in \mathbb{R} \times \mathbb{R}^N,
\end{equation}
where $N \geq 2$, $u=u(t, z)$, $u_t=\frac{\partial u}{\partial t}$, $\Delta_{z}:=\partial^2 / \partial 
z_1^2+\partial^2 / \partial z_2^2+\cdots+\partial^2 / \partial z_N^2$ denotes the Laplace operator, 
and the reaction term $f(z, u)$ satisfies the following assumptions:
\begin{description}
\item [(F1)] $f: \mathbb{R}^N \times \mathbb{R} \rightarrow \mathbb{R}$ is of class 
$C^{\infty}\left(\mathbb{R}^{N+1}\right)$ and satisfies
\begin{equation}\label{1.2}
\|f\|_{C^k\left(\mathbb{R}^{N+1}\right)}=\sum_{i=0}^k\left\|D^i f\right\|_{L^{\infty}\left(\mathbb{R}^{N+1}\right)}<+\infty \text { for all } k \in \mathbb{N}.
\end{equation}
\item [(F2)] The function $f(\cdot, u): 
\mathbb{R}^N \rightarrow \mathbb{R}$ is $L$-periodic for any $u\in\mathbb{R}$, that is, 
$f\left(z_1, \ldots, z_N, u\right)=f\left(z_1+k_1 L_1, \ldots, z_N+k_N L_N, u\right)$ 
for all $\left(z_1, \ldots, z_N, u\right) \in \mathbb{R}^{N}\times \mathbb{R}$ and all $(k_1, k_2, \ldots, k_N)\in\mathbb{Z}^{N}$, 
where $L_1, \ldots, L_N$ are all positive constants. We call $\mathbb{L}^N:=\left(0, L_1\right) 
\times \cdots \times\left(0, L_N\right)$ the cell of periodicity.
\item [(F3)] There exists $\theta \in(0,1)$ such that
\begin{equation}\label{1.3}
\left\{\begin{array}{l}
\forall(z, u) \in \mathbb{R}^N \times[0, \theta] \cup\{1\}, f(z, u)=0 ;\\
\forall(z, u) \in \mathbb{R}^N \times(\theta, 1), f(z, u) \geq 0; \\
\forall u \in(\theta, 1), \exists z \in \mathbb{R}^N, \text { s.t. } f(z, u)>0.
\end{array}\right.
\end{equation}
\item[(F4)]There holds $\sup _{z \in \mathbb{R}^N} f_u(z, 1)<0$.
\end{description}

In fact, (F1) can be relaxed to $f \in C^m\left(\mathbb{R}^{N+1}\right)$ for $m \in \mathbb{N}$ large enough. However, for convenience, we assume $f \in C^{\infty}\left(\mathbb{R}^{N+1}\right)$. The reaction term $f(z,u)$ satisfying (F3) and (F4) is called the combustion nonlinearity. Let
\begin{equation}\label{1.4}
-K_1:=\inf _{z \in \mathbb{R}^N} f_u(z, 1) \text { and }-\kappa_1:=\sup _{z \in \mathbb{R}^N} f_u(z, 1).
\end{equation}
By (F1) and (F4), we get $0<\kappa_1 \leq K_1<+\infty$. From \eqref{1.2} and \eqref{1.4}, one has that there exists a positive constant $0<\gamma_{\star}\leq \min \{\theta / 2,1-\theta\}$ such that
\begin{equation}\label{1.5}
f_u(z, u) \leq-\frac{\kappa_1}{2}, \quad \forall(z, u) \in \mathbb{R}^N \times\left[1-\gamma_{\star}, 1+\gamma_{\star}\right].
\end{equation}
For mathematical convenience, we suppose that $f(z, u)=0$ for $(z, u) \in \mathbb{R}^N \times$ $(-\infty, 0)$ and $f(z, u)=f_u(z, 1)(u-1)$ for $(z, u) \in \mathbb{R}^N \times[1,+\infty)$. Futhermore, we can obtain that $f(z, u)$, $f_{u}(z, u)$ and $f_{uu}(z, u)$ are globally Lipschitz-continuous in $u$ uniformly for $z \in \mathbb{R}^N$, and
\begin{equation}\label{1.6}
f_u(z, u) \leq-\frac{\kappa_1}{2}, \quad \forall(z, u) \in \mathbb{R}^N \times[1-\gamma_{\star},+\infty).
\end{equation}

\subsection{Planar traveling fronts and curved fronts}
In this subsection, we present some results regarding traveling fronts and curved fronts in reaction-diffusion equations
\begin{equation}\label{ad.1}
\partial_t u(t, z)=\Delta u(t, z)+f(u(t, z)), \quad \forall z \in \mathbb{R}^N, t>0.
\end{equation}
The planar traveling front is a class of solutions of \eqref{ad.1} with the form $u(t, z)=\phi(z \cdot 
e-c t)$ satisfying
\begin{equation*}
\left\{\begin{array}{l}
	\phi^{\prime \prime}+c \phi^{\prime}+f(\phi)=0 \text { in } \mathbb{R}, \\
	\phi(-\infty)=1, \quad \phi(+\infty)=0,
\end{array}\right.
\end{equation*}
where $c > 0$ is the propagation speed and $e$ is a unit vector of $\mathbb{R}^N$. The level sets of such traveling fronts are parallel hyperplanes orthogonal to the 
propagation direction $e$. The existence, stability and other qualitative  properties of 
planar traveling waves  can be referred to 
\cite{BerestyckiH2002,Chen1997,Fife1977,Sheng2017.2,Sheng2017} and  references therein.

In high dimensional space, the propagation phenomena is very complex. Since besides planar 
traveling fronts, there are also some other traveling fronts with non-planar level sets, and we call 
such fronts non-planar traveling fronts or curved fronts in the sequel. Level sets of traveling fronts 
are quite diverse and have various possible shapes such as $V$-shape, pyramidal shape, conical 
shape, etc. For bistable case, Ninomiya and Taniguchi \cite{H. Ninomiya2005} established the 
existence and stability of $V$-shaped traveling fronts in $\mathbb{R}^2$ by the super-sub solution 
method and the comparison principle. Subsequently, Ninomiya and Taniguchi \cite{H. Ninomiya2006} further obtained the global stability of $V$-shaped traveling fronts obtained in \cite{H. Ninomiya2005} in $\mathbb{R}^2$. Taniguchi \cite{M. Taniguchi2007, M. Taniguchi2009} proved the existence and stability of the pyramidal traveling fronts in 
$\mathbb{R}^3$. For Fisher-KPP case, Hamel and Nadirashvili \cite{F. Hamel2001} proved the 
existence of non-planar traveling fronts of \eqref{ad.1} in $\mathbb{R}^N$. Then, Huang \cite{R. Huang2008} showed the stability of the non-planar traveling fronts of \eqref{ad.1} in 
$\mathbb{R}^N$. For combustion case, Bonnet and Hamel \cite{A. Bonnet1999} analyzed the conical premixed Bunsen flames and studied the existence of $V$-shaped traveling 
fronts in $\mathbb{R}^2$. Wang and Bu \cite{Wang2016} established the existence of pyramidal 
traveling fronts in $\mathbb{R}^3$ and $V$-shaped traveling fronts in $\mathbb{R}^2$, by 
constructing appropriate super- and subsolutions. Bu and Wang \cite{Bu2017,Bu2018} proved the 
stability of pyramidal traveling fronts and the global stability of $V$-shaped traveling fronts, 
respectively. More related results on non-planar traveling fronts, we refer to 
\cite{Brazhnik2000,Bu2016,El Smaily 
M2011,Haragus2006,Haragus2007,Sheng2025,ShengZhang2025-1,ShengZhang2025-2} 
and references therein.

\subsection{Pulsating fronts and transition fronts}

In recent years, an increasing attention has  been paid to  the propagation dynamics of 
reaction-diffusion equations in spatially periodic media. An natural extension of planar traveling 
fronts is the pulsating front. Let us first give the definition of pulsating fronts. 
\begin{definition}(\cite{BerestyckiH2002})\label{De 1.2}
A pair $(U_e, c_e)$ with $U_e: \mathbb{R} \times \mathbb{R}^N \rightarrow \mathbb{R}$ 
and $c_e \in \mathbb{R}$ is said to be a pulsating front of \eqref{1.1} with effective speed 
$c_e$ in the direction $e \in \mathbb{S}^{N-1}$ connecting two equilibria 0 and 1, if 
the following three properties are satisfied:
\begin{description}
\item[(i)] 	 the function $u(t, z):=U_e\left(z \cdot e-c_e t, z\right)$ is an entire (classical) solution of 
Eq.\eqref{1.1}.
\item [(ii)] the profile $U_e$ satisfies $U_e(s, z)=U_e(s, z+y)$ for all $(s, z) \in \mathbb{R} \times 
\mathbb{R}^N$ and $y \in \prod_{i=1}^N L_i \mathbb{Z}$.
\item [(iii)]the profile $U_e$ satisfies
	\begin{equation*}
		\lim _{s \rightarrow+\infty} U_e(s, z)=0 \text { and } \lim _{s \rightarrow-\infty} U_e(s, z)=1 
		\; \text { uniformly for } z \in \mathbb{R}^N.		
	\end{equation*}
\end{description}
\end{definition}
From the Definition \ref{De 1.2},   the pulsating front $(U_e\left(z \cdot e-c_e t, z\right),c_e)$ 
of Eq.\eqref{1.1} should satisfy
\begin{equation}\label{2.1}
c_e \partial_s U_e+\partial_{s s} U_e+2 \nabla_z \partial_s U_e \cdot e+\Delta_z U_e
+f\left(z, U_e\right)=0 \text { in } \mathbb{R} \times \mathbb{R}^N.
\end{equation}
Under assumptions (F1)-(F4), Berestycki and Hamel \cite{BerestyckiH2002} proved that
there exists a unique pulsating front $U_e\left(z \cdot e-c_e t, z\right)$ of  Eq.\eqref{1.1} 
with combustion nonlinearity. They also showed that the profile $U_e(s, z): \mathbb{R} 
\times \mathbb{R}^N \rightarrow \mathbb{R}$  is strictly decreasing in $s$ 
and the speed $c_e$ is unique. 
Ding et al.\cite{Ding2017} showed the existence and some qualitative properties of pulsating fronts 
for spatially periodic bistable reaction-diffusion equations. Guo \cite{Guo2018} studied the 
propagating speeds of transition fronts for spatially periodic bistable reaction-diffusion equations in 
$\mathbb{R}^N$. Xin \cite{Xin1993} discussed the existence and non-existence of traveling wave 
solutions for reaction-diffusion equations with combustion nonlinearity and periodic diffusion 
coefficients. For more researches of pulsating fronts, one can refer to 
\cite{Ducrot2014,Giletti2020,Liang2010,Xin X1991} and  references therein.

In order to handle the propagation phenomena in general unbounded domains,
Berestycki and Hamel \cite{H. Berestycki2012} introduced the notion of transition fronts
and their global mean speeds, which provide a unified framework for dealing 
with different traveling fronts. A notable feature of transition fronts is that they 
involve many classic traveling fronts. Let us briefly introduce what
transitions fronts are. For any two subsets $A$ and $B$ of $\mathbb{R}^N$, define
\begin{equation*}
	d(A, B)=\inf \{|z-y|,(z, y) \in A \times B\}
\end{equation*}
and $d(z, A)=d(\{z\}, A)$, where $z \in \mathbb{R}^N$ and $|\cdot|$ is the Euclidean norm in $\mathbb{R}^N$. Let $\left(\Omega_t^{-}\right)_{t \in \mathbb{R}}$ and $\left(\Omega_t^{+}\right)_{t \in \mathbb{R}}$ be two families of open nonempty subsets of $\mathbb{R}^N$ satisfying 
\begin{equation}\label{1.7}
\forall t \in \mathbb{R}, \quad\left\{\begin{array}{l}
	\Omega_t^{+} \cap \Omega_t^{-}=\emptyset, \\
	\partial \Omega_t^{+}=\partial \Omega_t^{-}=: \Gamma_t, \\
	\Omega_t^{+} \cup \Omega_t^{-} \cup \Gamma_t=\mathbb{R}^N, \\
	\sup \left\{d\left(z, \Gamma_t\right) ; z \in \Omega_t^{+}\right\}=\sup \left\{d\left(z, \Gamma_t\right) ; z \in \Omega_t^{-}\right\}=+\infty
\end{array}\right.
\end{equation}
and
\begin{equation}\label{1.8}
\left\{\begin{array}{l}
	\inf \left\{\sup \left\{d\left(y, \Gamma_t\right) ; y \in \Omega_t^{+},|y-z| \leq r\right\} ; t \in \mathbb{R}, z \in \Gamma_t\right\} \rightarrow+\infty \\
	\inf \left\{\sup \left\{d\left(y, \Gamma_t\right) ; y \in \Omega_t^{-},|y-z| \leq r\right\} ; t \in \mathbb{R}, z \in \Gamma_t\right\} \rightarrow+\infty
\end{array} \text { as } r \rightarrow +\infty .\right.
\end{equation}
Formulas \eqref{1.7} and \eqref{1.8} indicate that $\Gamma_t$ divides $\mathbb{R}^N$ into 
two unbounded parts $\Omega_t^{-}$ and $\Omega_t^{+}$, and sets $\Omega_t^{ \pm}$ 
contains points that are infinitely far from $\Gamma_t$ for each $t \in \mathbb{R}$. 
By \eqref{1.8}, one has that for any positive constant $M>0$, there is a positive constant $r_M$ such that for all $t \in \mathbb{R}$ and $z \in \Gamma_t$, there exist $y^{ \pm}=y_{t, z}^{ \pm} \in \mathbb{R}^N$ such that
\begin{equation*}
y^{ \pm} \in \Omega_t^{ \pm},\left|z-y^{ \pm}\right| \leq r_M \text { and } d\left(y^{ \pm}, \Gamma_t\right) \geq M .
\end{equation*}
Similar to \cite{H. Berestycki2012}, we suppose that the sets $\Gamma_t$ are composed of a finite number of graphs, that is, there exists an integer $n \geq 1$ such that, for any $t \in \mathbb{R}$, there are $n$ open subsets $\omega_{i, t} \subset \mathbb{R}^{N-1}(1 \leq i \leq n)$, $n$ continuous maps $\psi_{i, t}: \omega_{i, t} \rightarrow \mathbb{R}$ $(1 \leq i \leq n)$ and $n$ rotations $R_{i, t}$ of $\mathbb{R}^N$ $(1 \leq i \leq n)$, such that
\begin{equation}\label{1.9}
\Gamma_t \subset \bigcup_{1 \leq i \leq n} R_{i, t}\left(\left\{z \in \mathbb{R}^N ; \left(z_{1}, z_{2}, \ldots, z_{N-1}\right) \in \omega_{i, t}, z_N=\psi_{i, t}\left(z_{1}, z_{2}, \ldots, z_{N-1}\right)\right\}\right).
\end{equation}
\begin{definition}(\cite{H. Berestycki2012})\label{De 1.1}
	 For problem \eqref{1.1}, a transition front connecting 0 and 1 is a classical solution $u$ of \eqref{1.1} such that $u \not \equiv 0,1$, and there exist some sets $\left(\Omega_t^{ \pm}\right)_{t \in \mathbb{R}}$ and $\left(\Gamma_t\right)_{t \in \mathbb{R}}$ satisfying \eqref{1.7}, \eqref{1.8} and \eqref{1.9}, and for any $\varepsilon>0$, there exists $M_{\varepsilon}>0$ such that
\begin{equation}\label{1.10}
	\left\{\begin{array}{l}
		\forall t \in \mathbb{R}, \forall z \in \Omega_t^{+},\left(d \left(z, \Gamma_t\right) \geq M_{\varepsilon}\right) \Longrightarrow(u(t, z) \geq 1-\varepsilon), \\
		\forall t \in \mathbb{R}, \forall z \in \Omega_t^{-},\left(d \left(z, \Gamma_t\right) \geq M_{\varepsilon}\right) \Longrightarrow(u(t, z) \leq \varepsilon) .
	\end{array}\right.
\end{equation}
Moreover, $u$ is said to have a global mean speed $\gamma$ $(\geq 0)$ if
\begin{equation*}
	\frac{d\left(\Gamma_t, \Gamma_s\right)}{|t-s|} \rightarrow \gamma \text { as }|t-s| \rightarrow+\infty.
\end{equation*}
\end{definition}

For bistable transition fronts, Ding et al.\cite{Ding2015} studied the existence of transition fronts
for spatially periodic reaction-diffusion equations. Sheng and Wang
\cite{Sheng2018} obtained new entire solutions for monotone reaction-diffusion systems 
and showed transition fronts have a unique global mean speed. Guo and Wang 
\cite{H. Guo2024} constructed entire solutions (transition fronts) of reaction-diffusion 
equations by mixing finite planar fronts and showed the uniqueness and stability of the  
entire solutions. Hamel and Rossi \cite{Hamel2016} 
studied transition fronts for Fisher-KPP reaction-diffusion equations and obtained some 
qualitative properties of the transition fronts. Nolen and 
Ryzhik \cite{Nolen2009} proved the existence of  transition fronts in 
$\mathbb{R}$ for combustion reaction-diffusion equations. Bu, Guo and Wang \cite{Bu2019} 
established the existence and the uniqueness 
of the global mean speed of transition fronts for combustion reaction-diffusion equations 
in $\mathbb{R}^N (N\geq 1)$. For more results 
about transition fronts, one can refer to 
\cite{H. Guo2020,Hamel2016.2,Mellet2009,Sheng2018.2,Zlato2017,Suobing Zhang2025} 
and references therein.

The purpose of this paper is to obtain the existence, uniqueness and stability of curved fronts 
of spatially periodic combustion reaction-diffusion equations in $\mathbb{R}^N$ $(N\geq 2)$.
It should be mentioned that  the existence, uniqueness and stability of $V$-shaped traveling fronts 
of Eq.\eqref{1.1} in $\mathbb{R}^2$ is studied byLyu et al.\cite{LyuY2024}.  More recently, Guo 
and Wang \cite{Guo H2025} established the existence, uniqueness and asymptotic stability of 
curved fronts of spatially periodic bistable reaction-diffusion equations in $\mathbb{R}^N$ 
$(N\geq 2)$.

The rest of this paper is organized as follows. In Section $2$, we list some existing conclusions of curved fronts and pulsating fronts, and then present the main research results of this paper. In Section $3$, we construct suitable subsolutions and supersolutions to prove the existence and uniqueness of the curved front of combustion reaction-diffusion equations in spatially periodic media $(N \geq 2)$. In Section $4$, we prove the stability of the curved fronts obtained in Section $3$. Eventually, in the appendix, we give the lemma proofs that are not given in Section $2$.

\section{Preliminaries and Main Results}
\noindent
In this section, we first present some existing results of the pulsating fronts of \eqref{1.1}, 
then we give the main results of this paper.  In Subsection $2.2$, we introduce a
 hypersurface that asymptotically approaches a number of hyperplanes at infinity and 
then we  use the distance function of this hypersurface to construct supersolutions 
in Section $3$. In Subsection $2.3$, we show the main results obtained in this paper.
In the sequel, we always assume that  conditions (F1)-(F4)  hold.
\subsection{Preliminaries}

The following theorem comes from \cite[Theorems 2.4 and 2.5]{Alfaro2016} and \cite[Theorem 2.2]{Bu2022}, which states the continuity of the speed 
$c_e$ and the profile $U_e$ in the direction $e$, the bounds of $c_e$, and the exponential behavior of the profile $U_e$.  
\begin{theorem}\label{Theorem 2.3}  
	 Suppose that (F1)-(F4) hold and let $e \in \mathbb{S}^{N-1}$. Assume that $\left(U_e, 
	 c_e\right)$ is the unique pulsating front of \eqref{1.1}. Then
\begin{description}
\item[(i)]  the mapping $e \in \mathbb{S}^{N-1} \mapsto c_e$ is continuous.
\item[(ii)] there exist two constants $\kappa$ and $K$ such that
	\begin{equation*}
	0<\kappa:=\inf _{e \in \mathbb{S}^{N-1}} c_e \leq \sup _{e \in \mathbb{S}^{N-1}} c_e=: 
	K<+\infty.
	\end{equation*}
\item[(iii)]the mapping $e \in \mathbb{S}^{N-1} \mapsto U_e$ is continuous under the 
topology $\|\cdot\|_{L^{\infty}\left(\mathbb{R} \times \mathbb{R}^N\right)}$, by normalization 
as 
$\min_{z \in \mathbb{R}^N} U_e(0, z)=(1+\theta) / 2$, where $\theta$ is defined in \eqref{1.3}.
\item[(iv)] there exist two constants $C_1>0$ and $C_2<0$ such that
\begin{equation*}
	U_e(s, z) \sim C_1 e^{-c_e s} \text { and } \partial_s U_e(s, z) \sim C_2 e^{-c_e s} \text { as } s 
	\rightarrow+\infty
\end{equation*}
uniformly in $z \in \mathbb{R}^N$. Notice that $U_e(s, z) \sim C_1 e^{-c_e s}$ as $s 
\rightarrow+\infty$ uniformly in $z \in \mathbb{R}^N$ means
\begin{equation*}
	\liminf_{s \rightarrow+\infty} \min_{x \in \mathbb{R}^N} \frac{U_e(s, z)}{C_1 e^{-c_e 
	s}}=\limsup _{s \rightarrow+\infty} \max_{z \in \mathbb{R}^N} \frac{U_e(s, z)}{C_1 e^{-c_e 
	s}}=1.
\end{equation*}
\end{description}
\end{theorem}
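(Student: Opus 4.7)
The four items split into three largely independent subproblems: the uniform speed bounds (ii), the continuity of $c_e$ and $U_e$ in $e$ stated in (i) and (iii), and the tail asymptotics (iv). For (ii), the plan is to sandwich $f(z,u)$ between two spatially homogeneous combustion nonlinearities $f_{-}(u)\leq f(z,u)\leq f_{+}(u)$ sharing the ignition temperature $\theta$. The classical one-dimensional theory supplies planar fronts with strictly positive speeds $c_{\pm}$ for these homogeneous problems, and comparing them against $U_e\bigl(z\cdot e-c_et,z\bigr)$ on half-spaces normal to $e$, together with the uniqueness of $c_e$ from \cite{BerestyckiH2002}, yields $c_{-}\leq c_e\leq c_{+}$ with $c_{\pm}$ independent of $e\in\mathbb{S}^{N-1}$.

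For (i) and (iii), I would argue by compactness and uniqueness. Fixing a sequence $e_n\to e$, the bounds from (ii) together with standard parabolic Schauder estimates applied to $u_n(t,z):=U_{e_n}(z\cdot e_n-c_{e_n}t,z)$ furnish locally uniform $C^{2,\alpha}$ bounds on the $U_{e_n}$. A diagonal extraction produces a subsequential limit $(U_\infty,c_\infty)$ solving \eqref{2.1} in direction $e$, and the normalization $\min_{z\in\mathbb{R}^N}U_{e_n}(0,z)=(1+\theta)/2$ passes to the limit and excludes the constant solutions $0$ and $1$. Monotonicity of $U_{e_n}$ in $s$ survives in the limit, and a standard sliding argument combined with $0\leq U_\infty\leq 1$ forces $U_\infty(\pm\infty,z)\in\{0,1\}$, so $U_\infty$ is a genuine pulsating front in direction $e$. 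Uniqueness of pulsating fronts from \cite{BerestyckiH2002} then identifies $(U_\infty,c_\infty)=(U_e,c_e)$, which, by the usual subsequence argument, delivers continuity of both $e\mapsto c_e$ and $e\mapsto U_e$ in the $L^\infty$ topology.

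For (iv), I would exploit the combustion hypothesis $f(z,u)\equiv 0$ for $u\in[0,\theta]$. In the tail region $\{U_e<\theta\}$ (i.e.\ for large $s$), equation \eqref{2.1} is linear, and separated solutions $e^{-\lambda s}\phi(z)$ with $\phi$ periodic on $\mathbb{L}^N$ lead to the eigenvalue problem
\begin{equation*}
\Delta_z\phi-2\lambda\, e\cdot\nabla_z\phi+(\lambda^2-c_e\lambda)\phi=0\quad\text{on }\mathbb{L}^N.
\end{equation*}
The constant function is always an eigenfunction and forces $\lambda\in\{0,c_e\}$, of which only $\lambda=c_e$ is compatible with $U_e(+\infty,z)=0$. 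Expanding $U_e$ into periodic Fourier modes in $z$, a direct computation of the characteristic equation for each nonconstant mode shows that its decay rate is strictly larger than $c_e$. Hence $U_e(s,z)=C_1e^{-c_es}+o(e^{-c_es})$ uniformly in $z$ with $C_1>0$ by the strong maximum principle, and differentiating in $s$ gives $\partial_sU_e\sim -c_eC_1e^{-c_es}$, so $C_2=-c_eC_1<0$.

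The main obstacle is the quantitative spectral-gap analysis underlying (iv): one has to verify that every nonconstant periodic mode produces a contribution controlled by $e^{-(c_e+\delta)s}$ for some $\delta>0$ uniformly in $z$, and that no generalized eigenmode contaminates the leading term. Once this gap is established the uniform-in-$z$ asymptotics, together with positivity of $C_1$, follow routinely.
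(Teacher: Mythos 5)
First, note that the paper does not prove Theorem \ref{Theorem 2.3} at all: it is imported verbatim from \cite[Theorems 2.4 and 2.5]{Alfaro2016} and \cite[Theorem 2.2]{Bu2022}, so there is no internal proof to compare against; your sketch has to stand on its own. As written it has one genuine gap and several glossed steps. The gap is in (ii): your lower bound relies on a spatially homogeneous minorant $f_-(u)\leq f(z,u)$ with ignition temperature $\theta$ and a planar front of positive speed $c_->0$. But under (F3) the natural minorant $f_-(u)=\inf_{z}f(z,u)$ can vanish identically on $(\theta,1)$, since (F3) only asserts that for each $u\in(\theta,1)$ there is \emph{some} $z$ with $f(z,u)>0$. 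A degenerate minorant has no front with positive speed, so the sandwich yields only $c_e\leq c_+$ and no uniform positive lower bound. In the cited literature the bound $\kappa>0$ is obtained the other way around: each individual $c_e$ is strictly positive (this comes with the Berestycki--Hamel existence/uniqueness theory, e.g.\ via the integral identity obtained by multiplying \eqref{2.1} by $\partial_sU_e$ and integrating, using $f\geq 0$, $f\not\equiv 0$ along the front), and then $\inf_{e}c_e>0$ follows from the continuity in (i) together with compactness of $\mathbb{S}^{N-1}$. This also exposes a circularity in your ordering: you want (ii) before (i) to feed the compactness argument, but the uniform lower bound really needs (i).

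In (i) and (iii) the compactness--uniqueness scheme is the right skeleton, but two steps you wave at are exactly where the work lies. First, for an ignition nonlinearity every constant in $[0,\theta]$ is an equilibrium, so the monotone limit $U_\infty$ could a priori connect $1$ to some constant $\gamma\in(0,\theta]$ rather than to $0$; excluding this requires uniform-in-$e$ information (uniform exponential decay or steepness estimates of the type recorded in Theorem \ref{Theorem 2.5}, or an integral identity), not just the normalization at $s=0$, which only controls the limit at $-\infty$. Second, you apply Schauder estimates to $u_n(t,z)=U_{e_n}(z\cdot e_n-c_{e_n}t,z)$, but to transfer locally uniform convergence of $u_n$ back to convergence of the profiles, and to guarantee the limit is a moving front at all, you need $c_{e_n}$ bounded away from $0$ --- again intertwined with (ii); and the statement (iii) asserts continuity in the global $L^\infty(\mathbb{R}\times\mathbb{R}^N)$ norm, which requires uniform tail control beyond the diagonal extraction. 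By contrast, your treatment of (iv) is essentially sound and can even be simplified: in the tail $\{U_e\leq\theta\}$ the equation \eqref{2.1} has constant coefficients, the zero Fourier mode $u_0(s)=\fint_{\mathbb{L}^N}U_e(s,z)\,dz$ satisfies $u_0''+c_eu_0'=0$ exactly, so $u_0=C_1e^{-c_es}$ with $C_1>0$ by positivity (no maximum principle needed), and for each nonzero dual-lattice mode the decaying characteristic root has real part strictly larger than $c_e$, with a uniform gap because the dual lattice is discrete; since $c_e>0$ the roots $0$ and $c_e$ are simple, so no generalized mode appears. So (iv) is fixable in detail, but (ii) as proposed does not work and should be rerouted through (i) plus positivity of each $c_e$.
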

The following theorem provides the exponentially asymptotic behaviors and uniform estimates of the pulsating front and its derivatives.
\begin{theorem}(\cite[Theorems 2.5 and 2.7]{LyuY2024})\label{Theorem 2.5}
	Suppose that (F1)-(F4) hold and let $e \in \mathbb{S}^{N-1}$. Assume that $\left(U_e, c_e\right)$ is the unique pulsating front of \eqref{1.1}. Then for any nonnegative integers $k$ and $l$, there exists a constant $C_{k l}$ dependent on $k$ and $l$, such that
	\begin{equation*}
		\lim _{s \rightarrow+\infty} \frac{D_z^k D_s^l U_e}{U_e}  =C_{k l},
		\lim _{s \rightarrow+\infty} \frac{\partial_s U_e}{U_e}  =-c_e , 
		\lim _{s \rightarrow+\infty} \frac{\partial_{s s} U_e}{U_e}  =c_e^2 ,
	\end{equation*}   
    \begin{equation*}
		\lim _{s \rightarrow+\infty} \frac{\left|\nabla_z U_e\right|,\left|\nabla_z \partial_s U_e\right|}{U_e}  =0\; \text{ and }
		\lim _{s \rightarrow+\infty} \frac{\Delta_z U_e}{U_e}  =0  \label{2.6}
	\end{equation*}
	uniformly in $z \in \mathbb{R}^N$, where $\nabla_z$ denotes the gradient operator with 
	respect to $z \in \mathbb{R}^N$. Furthermore, normalize $U_e$ as $\min _{z \in 
	\mathbb{R}^N} U_e(0, z)=(1+\theta) / 2$, where $\theta$ is defined in \eqref{1.3}. Then there 
	exist two constants $\bar{K}>0$ and $\kappa_2>0$, both independent of $e \in 
	\mathbb{S}^{N-1}$, such that
\begin{equation*}
\begin{aligned}
	\left|U_e(s, z)\right|,\left|D U_e(s, z)\right|,\left|D^2 U_e(s, z)\right|,\left|D^3 U_e(s, z)\right| & 
	\leq \bar{K} e^{-\frac{3 \kappa}{4} s} \text { in }[0,+\infty) \times \mathbb{R}^N, \\
	\left|1-U_e(s, z)\right|,\left|D U_e(s, z)\right|,\left|D^2 U_e(s, z)\right|,\left|D^3 U_e(s, z)\right| & 
	\leq \bar{K} e^{\kappa_2 s} \text { in }(-\infty, 0] \times \mathbb{R}^N,
\end{aligned}		
\end{equation*}
	where $\kappa$ is defined in Theorem \ref{Theorem 2.3}, $D$, $D^2$ and $D^3$ denote any 
	first-order, second-order and third-order derivative with respect to $(s, z) \in \mathbb{R} \times 
	\mathbb{R}^N$, respectively.
\end{theorem}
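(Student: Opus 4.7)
The plan is to split the argument into the two asymptotic regimes $s \to +\infty$ and $s \to -\infty$. In each case I would first reduce to a linear PDE via linearization at the relevant equilibrium, then use a rescaling by the expected exponential to isolate the leading behavior, and finally apply uniform interior parabolic Schauder estimates on unit strips $[s-1, s+2] \times \overline{\mathbb{L}^N}$ (using $z$-periodicity to compactify) to control the derivatives. Throughout, uniformity in $e$ would be obtained by combining the compactness of $\mathbb{S}^{N-1}$ with the continuity of $e \mapsto (U_e, c_e)$ from Theorem \ref{Theorem 2.3}(i)(iii).

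For $s \to +\infty$, monotonicity of $U_e$ in $s$ and the normalization $\min_z U_e(0,z) = (1+\theta)/2$ combined with Theorem \ref{Theorem 2.3}(iv) force $U_e(s, z) \in [0, \theta]$ for $s$ large enough, so $f(z, U_e) \equiv 0$ by (F3) and $U_e$ satisfies the linear equation $c_e \partial_s U_e + \partial_{ss} U_e + 2 e \cdot \nabla_z \partial_s U_e + \Delta_z U_e = 0$. I would then introduce $V(s,z) := e^{c_e s} U_e(s,z)$, which is bounded with $V \to C_1$ uniformly in $z$ by Theorem \ref{Theorem 2.3}(iv), and a direct computation shows $V$ satisfies the linear equation
\begin{equation*}
\partial_{ss} V - c_e \partial_s V + 2 e \cdot \nabla_z \partial_s V - 2 c_e e \cdot \nabla_z V + \Delta_z V = 0.
\end{equation*}
Parabolic Schauder estimates applied to $V$ on the strips above yield $|\partial_s V|,\, |\nabla_z V|,\, |\partial_{ss} V|,\, |\nabla_z \partial_s V|,\, |\Delta_z V| \to 0$ as $s \to +\infty$, because the $L^\infty$ oscillation of $V$ on such strips vanishes in that limit. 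Expressing each derivative of $U_e$ in terms of $V$ and its derivatives via $\partial_s U_e / U_e = \partial_s V/V - c_e$, $\partial_{ss} U_e / U_e = \partial_{ss} V / V - 2 c_e \partial_s V / V + c_e^2$, $\nabla_z U_e / U_e = \nabla_z V / V$, and so on, and then passing to the limit, one obtains all of the asserted limit identities, with the general constants $C_{kl}$ read off from the leading-order behavior $V \sim C_1$. The pointwise bound $|D^j U_e| \leq \bar{K} e^{-3\kappa s / 4}$ on $[0,+\infty) \times \mathbb{R}^N$ then follows from the boundedness of $V$ and its derivatives combined with $c_e \geq \kappa = \inf_e c_e$; the slack between $c_e$ and $3\kappa/4$ absorbs both the prefactor and the Schauder constant, uniformly in $e$.

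For $s \to -\infty$, I would set $w := 1 - U_e$ and expand $f(z, 1-w) = -f_u(z,1)\, w + R(z, w)$ with $|R(z, w)| \leq C w^2$, so that $w$ solves $c_e \partial_s w + \partial_{ss} w + 2 e \cdot \nabla_z \partial_s w + \Delta_z w + f_u(z, 1)\, w = -R(z, w)$, and by \eqref{1.5} the coefficient $-f_u(z, 1)$ is bounded below by $\kappa_1/2$ on the set $\{w \leq \gamma_\star\}$. A principal-eigenfunction construction on the periodicity cell produces a periodic weight $\psi > 0$ and a strictly positive rate $\kappa_2$ such that $\bar{K}\, e^{\kappa_2 s} \psi(z)$ is a supersolution of the linearized equation; comparison on $(-\infty, 0]$ against this supersolution gives $|1 - U_e| \leq \bar{K} e^{\kappa_2 s}$, and the corresponding bounds on $|D U_e|$, $|D^2 U_e|$, $|D^3 U_e|$ follow again from Schauder estimates applied to the equation for $w$ on shifted unit strips. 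The main obstacle I expect is ensuring that all the constants—$\bar{K}$, $\kappa_2$, and the Schauder constants in the two regimes—are chosen independently of $e \in \mathbb{S}^{N-1}$; this I would handle by a compactness argument on the sphere, using continuity of $e \mapsto (U_e, c_e)$ together with the fact that the relevant principal eigenvalue depends continuously on the coefficients of the linearized operator.
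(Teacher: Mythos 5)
You should first note that the paper itself does not prove Theorem \ref{Theorem 2.5}: it is imported verbatim from \cite{LyuY2024}, so there is no in-paper proof to compare against, and your sketch can only be measured against the standard strategy for such results, which it broadly follows (linearize ahead of the front where $f\equiv 0$, rescale by $e^{c_e s}$, linearize behind the front at $u\equiv 1$ and compare with an exponentially decaying supersolution, and obtain uniformity in $e$ from compactness of $\mathbb{S}^{N-1}$ together with Theorem \ref{Theorem 2.3}). Your algebra is fine: the equation you derive for $V=e^{c_e s}U_e$, the identities $\partial_s U_e/U_e=\partial_s V/V-c_e$, $\partial_{ss}U_e/U_e=\partial_{ss}V/V-2c_e\partial_s V/V+c_e^2$, $\nabla_z U_e/U_e=\nabla_z V/V$, the choice of a rate $\kappa_2$ depending only on $\kappa_1$ and $K$, and the absorption of constants into the slack between $c_e\ge\kappa$ and $3\kappa/4$ are all the right ingredients.

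The genuine gap is the tool you invoke at the central step. The profile equation \eqref{2.1} in the variables $(s,z)$ is not parabolic, and not even uniformly elliptic: the principal part $\partial_{ss}+2e\cdot\nabla_z\partial_s+\Delta_z$ has symbol $\sigma^2+2(e\cdot\zeta)\sigma+|\zeta|^2=(\sigma+e\cdot\zeta)^2+|\zeta|^2-(e\cdot\zeta)^2$, which vanishes at $(\sigma,\zeta)=(-1,e)\neq 0$; the same degeneracy is inherited by your equations for $V$ and for $w=1-U_e$. Consequently, ``interior parabolic Schauder estimates on unit strips'' applied to $V-C_1$, and the comparison of $w$ against $\bar{K}e^{\kappa_2 s}\psi(z)$ in an unbounded $(s,z)$-domain, are not available as stated: for a degenerate operator, sup bounds do not control derivatives, and the maximum principle is precisely what would need justification. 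The standard repair is to return to the genuinely parabolic function $u(t,z)=U_e(z\cdot e-c_e t,z)$, which solves $u_t=\Delta_z u+f(z,u)$ (and the linear heat equation in the region where $u\le\theta$), apply parabolic interior estimates and the parabolic maximum principle there, and transfer the conclusions to the profile via $U_e(s,z)=u\bigl((z\cdot e-s)/c_e,\,z\bigr)$, using $c_e\ge\kappa>0$ from Theorem \ref{Theorem 2.3}(ii) so that $(s,z)$-derivatives are controlled by $(t,z)$-derivatives with constants uniform in $e$. With that substitution the rest of your outline (including the compactness argument for uniformity in $e$ and the higher-order bounds up to $D^3$) goes through; without it, the two key analytic steps fail for the operator you are actually using.
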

In the subsequent part of this paper, let the profile of pulsating fronts $U_e$ be always normalized as
\begin{equation}\label{2.7}
\int_{\mathbb{R}^{+} \times \mathbb{L}^N} U_e^2 \bar{\rho} \; d s d z =1 \text { for all } e \in \mathbb{S}^{N-1},
\end{equation}
where
\begin{equation*}
	\bar{\rho}=\bar{\rho}(s) := 1+e^{2 \epsilon s}, \quad \forall s \in \mathbb{R},
\end{equation*}
and $\epsilon$ is a positive constant satisfying $0<\epsilon \ll \kappa$. Under normalization 
\eqref{2.7}, one obtains the following theorems.
\begin{theorem}(\cite[Theorem 2.8 and Remark 2.9]{LyuY2024}) \label{Theorem 2.8}
     Suppose that (F1)-(F4) hold and let $e \in \mathbb{S}^{N-1}$. Assume that $\left(U_e, 
     c_e\right)$ is the unique pulsating front of Eq.\eqref{1.1}. Then the mapping $e \in 
     \mathbb{S}^{N-1} \mapsto U_e$ is continuous under the topology 
     $\|\cdot\|_{L^{\infty}\left(\mathbb{R} \times \mathbb{R}^N\right)}$, by normalization 
     \eqref{2.7}. Moreover, the 
	 conclusions in Theorem \ref{Theorem 2.5} still hold for the normalization \eqref{2.7}.
\end{theorem}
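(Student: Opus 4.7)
The plan is to exhibit the $L^2$-weighted normalized profile as an explicit shift in $s$ of the pointwise-normalized profile from Theorem \ref{Theorem 2.3}(iii) and then transfer continuity and the derivative estimates through that shift. Write $\widetilde U_e$ for the profile normalized by $\min_{z}\widetilde U_e(0,z)=(1+\theta)/2$, for which $e \mapsto \widetilde U_e$ is already continuous in $L^\infty$ and the uniform exponential bounds of Theorem \ref{Theorem 2.5} hold with constants $\bar K, \kappa, \kappa_2$ independent of $e$. By the uniqueness up to $s$-translation of the pulsating front (Berestycki--Hamel), any profile $U_e$ satisfying \eqref{2.7} must be of the form $U_e(s,z)=\widetilde U_e(s+\tau(e),z)$ for some $\tau(e)\in\mathbb{R}$, so the whole issue reduces to choosing $\tau(e)$ and showing it depends continuously on $e$.

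The next step is to analyze the functional
\begin{equation*}
I(\tau,e):=\int_{\mathbb{R}^{+}\times \mathbb{L}^{N}}\widetilde U_e(s+\tau,z)^{2}\,\bar\rho(s)\,ds\,dz.
\end{equation*}
Using the uniform decay $\widetilde U_e(s,z)\le \bar K e^{-3\kappa s/4}$ of Theorem \ref{Theorem 2.5} and the hypothesis $0<\epsilon\ll \kappa$, the integrand has a uniform integrable majorant, so $I(\tau,e)$ is finite and jointly continuous in $(\tau,e)$. Since $\partial_s \widetilde U_e<0$, the map $\tau\mapsto I(\tau,e)$ is strictly decreasing. As $\tau\to+\infty$, the estimate factors out $e^{-3\kappa\tau/2}$ and forces $I\to 0$ uniformly in $e$; as $\tau\to-\infty$, $\widetilde U_e(s+\tau,z)\to 1$ pointwise while $\int_{\mathbb{R}^{+}\times\mathbb{L}^{N}}\bar\rho\,ds\,dz=+\infty$, so Fatou gives $I\to +\infty$ uniformly in $e$. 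Thus the equation $I(\tau,e)=1$ has a unique solution $\tau(e)$, and the standard implicit-function argument (strict monotonicity plus joint continuity) yields continuity of $e\mapsto\tau(e)$ on $\mathbb{S}^{N-1}$; by compactness $\tau(e)$ is uniformly bounded.

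With $U_e(s,z):=\widetilde U_e(s+\tau(e),z)$ the normalization \eqref{2.7} holds by construction. Continuity $e\mapsto U_e$ in $\|\cdot\|_{L^{\infty}(\mathbb{R}\times\mathbb{R}^{N})}$ follows from decomposing
\begin{equation*}
U_{e}-U_{e'}=\bigl(\widetilde U_e(\cdot+\tau(e),\cdot)-\widetilde U_{e}(\cdot+\tau(e'),\cdot)\bigr)+\bigl(\widetilde U_{e}(\cdot+\tau(e'),\cdot)-\widetilde U_{e'}(\cdot+\tau(e'),\cdot)\bigr),
\end{equation*}
controlling the first piece by the uniform bound on $|\partial_s\widetilde U_e|$ from Theorem \ref{Theorem 2.5} together with $|\tau(e)-\tau(e')|\to 0$, and the second piece by the $L^\infty$ continuity of $e\mapsto\widetilde U_e$ given by Theorem \ref{Theorem 2.3}(iii) (which is translation-invariant in $s$). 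Finally, the conclusions of Theorem \ref{Theorem 2.5} carry over: the pointwise asymptotic identities as $s\to+\infty$ are invariant under shifting the $s$-variable, and the exponential bounds on $\mathbb{R}^{\pm}\times\mathbb{R}^{N}$ survive with new constants $\bar K\,e^{\max(3\kappa/4,\kappa_2)\,\|\tau\|_{\infty}}$ that remain independent of $e$ thanks to the uniform bound on $\tau(e)$.

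The main obstacle is step two: verifying the limits of $I(\tau,e)$ at $\tau=\pm\infty$ \emph{uniformly} in $e$ and the joint continuity in $(\tau,e)$. This is where Theorem \ref{Theorem 2.5}'s $e$-independent exponential envelopes are essential, since without them neither the dominated-convergence argument at $+\infty$ nor the Fatou argument at $-\infty$ can be made uniform, and hence continuity of the implicit function $\tau(e)$ would fail.
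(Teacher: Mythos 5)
This statement is imported verbatim from Lyu--Guo--Wang \cite[Theorem 2.8 and Remark 2.9]{LyuY2024}; the paper gives no proof of it, so there is nothing internal to compare your argument against. On its own terms, your proof is sound and is the natural route: uniqueness of the combustion pulsating front up to an $s$-shift reduces the normalization \eqref{2.7} to choosing a shift $\tau(e)$ with $I(\tau(e),e)=1$, and the $e$-independent envelopes of Theorem \ref{Theorem 2.5} (valid for the pointwise normalization of Theorem \ref{Theorem 2.3}(iii)) give finiteness, local joint continuity, strict monotonicity in $\tau$ (using $\partial_s\widetilde U_e<0$ and $0<\widetilde U_e<1$), and the two boundary limits, whence existence, uniqueness, continuity and boundedness of $\tau(\cdot)$ on $\mathbb{S}^{N-1}$, after which both the $L^\infty$ continuity of $e\mapsto U_e$ and the conclusions of Theorem \ref{Theorem 2.5} transfer through the bounded shift. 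Two small points deserve tightening but are not gaps: the divergence $I(\tau,e)\to+\infty$ as $\tau\to-\infty$ should be made uniform quantitatively from $|1-\widetilde U_e(s,z)|\le \bar K e^{\kappa_2 s}$ (which gives $\widetilde U_e(s+\tau,z)\ge \tfrac12$ for $s\le -\tau-\sigma_0$ and hence $I(\tau,e)\ge \tfrac14|\mathbb{L}^N|\int_0^{-\tau-\sigma_0}\bar\rho$), since Fatou alone only yields pointwise divergence in $e$; and when transferring the exponential bounds you must also cover the overlap strip where $s\ge 0$ but $s+\tau(e)<0$ (or $s\le 0$ but $s+\tau(e)>0$), which is handled trivially by $0\le U_e\le 1$, the global bound $|D^j\widetilde U_e|\le\bar K$, and the uniform bound on $|\tau(e)|$, consistent with the adjusted constant $\bar K e^{\max\{3\kappa/4,\kappa_2\}\sup_e|\tau(e)|}$ you propose.
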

\begin{theorem}(\cite[Proposition 3.9]{LyuY2024})\label{Theorem 2.12}
	Normalize the profile $U_e$ as \eqref{2.7}. Then for any constant $q>0$, there exist two small positive constants $\gamma$ and $r$ independent of $e \in \mathbb{S}^{N-1}$, such that
	\begin{equation*}
		\gamma \leq U_e(s, z) \leq 1-\gamma \text { and }-\partial_s U_e(s, z) \geq r, \; \forall(s, z) \in[-q, q] \times \mathbb{R}^N.
	\end{equation*}
\end{theorem}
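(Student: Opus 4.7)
The strategy is a two-stage argument. First, for each fixed $e \in \mathbb{S}^{N-1}$, I would obtain positive constants $\gamma_e, r_e$ satisfying the claimed inequalities by exploiting the strict monotonicity of $U_e$ in $s$ and $z$-periodicity of $U_e$, reducing to a compactness argument on the slab $[-q,q] \times \overline{\mathbb{L}^N}$. Second, I would upgrade to uniformity in $e$ via a contradiction argument based on compactness of the sphere and the continuity of $e \mapsto U_e$ provided by Theorem \ref{Theorem 2.8}.

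For the first stage, recall from the Berestycki--Hamel result mentioned after Definition \ref{De 1.2} that $U_e$ is strictly decreasing in $s$, so $\partial_s U_e \leq 0$. To promote this to the pointwise strict inequality needed here, I would pass to the bona fide solution $u(t,z) := U_e(z\cdot e - c_e t,\, z)$ of \eqref{1.1}: the function $w := u_t = -c_e\,\partial_s U_e$ is nonnegative, not identically zero (since $u$ is not constant in $t$), and satisfies the linearized parabolic equation $w_t = \Delta_z w + f_u(z,u)w$. The parabolic strong maximum principle forces $w > 0$ throughout $\mathbb{R} \times \mathbb{R}^N$, equivalently $\partial_s U_e(s,z) < 0$ for every $(s,z)$. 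Meanwhile $0 < U_e < 1$ pointwise is immediate from strict monotonicity and the boundary values $U_e(\mp\infty,\cdot) = 1,0$. Since $U_e$ and $\partial_s U_e$ are $z$-periodic, the three continuous positive functions $U_e$, $1-U_e$, $-\partial_s U_e$ all attain positive minima on the compact set $[-q,q] \times \overline{\mathbb{L}^N}$, yielding $\gamma_e > 0$ and $r_e > 0$.

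For the second stage, suppose by contradiction that $\inf_{e}\gamma_e = 0$. Choose sequences $e_n \in \mathbb{S}^{N-1}$ and $(s_n,z_n) \in [-q,q] \times \overline{\mathbb{L}^N}$ with $\min\{U_{e_n}(s_n,z_n),\,1-U_{e_n}(s_n,z_n)\} \to 0$; by compactness, a subsequence satisfies $e_n \to e_*$ and $(s_n,z_n) \to (s_*,z_*)$. Theorem \ref{Theorem 2.8} gives $\|U_{e_n} - U_{e_*}\|_{L^\infty(\mathbb{R}\times\mathbb{R}^N)} \to 0$ under normalization \eqref{2.7}, hence $U_{e_*}(s_*,z_*) \in \{0,1\}$, contradicting the first stage applied to $e_*$. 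The same scheme would handle $r$ once I establish that $\partial_s U_{e_n}(s_n,z_n) \to \partial_s U_{e_*}(s_*,z_*)$.

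The main obstacle is precisely this convergence of $s$-derivatives, which is not furnished by Theorem \ref{Theorem 2.8} alone. I would bridge it by applying standard interior parabolic regularity to $u_n(t,z) := U_{e_n}(z\cdot e_n - c_{e_n}t,\,z)$: the uniform convergence $U_{e_n} \to U_{e_*}$ together with the continuity $e \mapsto c_e$ from Theorem \ref{Theorem 2.3} gives $u_n \to u_*$ in $L^\infty_{\mathrm{loc}}(\mathbb{R}\times\mathbb{R}^N)$, and since the coefficients of \eqref{1.1} are fixed and smooth, Schauder (or $W^{2,p}$) estimates upgrade this to $C^{1,\alpha}_{\mathrm{loc}}$ convergence. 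Reversing the change of variables transfers the convergence to $\partial_s U_{e_n} \to \partial_s U_{e_*}$ on compact subsets of $(s,z)$-space, after which the contradiction closes as before. Apart from this regularity upgrade, the remainder of the argument is a routine compactness bookkeeping.
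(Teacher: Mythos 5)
The paper itself gives no proof of this statement: it is imported verbatim from \cite[Proposition 3.9]{LyuY2024}, so there is nothing in-paper to compare against line by line. Your proposal is, as far as I can check, a correct self-contained argument. Stage 1 is the standard route and is sound: the strong maximum principle applied to $w=u_t$ (with $c_e\geq\kappa>0$ transferring the sign between $u_t$ and $-\partial_s U_e$, and the limits at $s=\pm\infty$ ruling out $w\equiv 0$) gives $\partial_s U_e<0$ pointwise, $0<U_e<1$ follows from strict monotonicity and the uniform limits, and $z$-periodicity reduces everything to the compact slab $[-q,q]\times\overline{\mathbb{L}^N}$. Stage 2 correctly isolates the only delicate point: Theorem \ref{Theorem 2.8} only gives $L^{\infty}$-continuity of $e\mapsto U_e$ under normalization \eqref{2.7}, so the convergence $\partial_s U_{e_n}(s_n,z_n)\to\partial_s U_{e_*}(s_*,z_*)$ must be supplied separately, and your bridge via interior parabolic Schauder estimates for $u_n(t,z)=U_{e_n}(z\cdot e_n-c_{e_n}t,z)$, undone through the change of variables using $c_{e_n}\to c_{e_*}$ and $c_{e_n}\geq\kappa$, does the job. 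A shortcut worth noting: the paper quotes, immediately after this theorem, that $e\mapsto U_e$ is continuously (indeed twice) Fréchet differentiable in the $C^2(\mathbb{R}\times\mathbb{R}^N)$ topology under \eqref{2.7} (\cite[Theorem 2.10]{LyuY2024}), which hands you the convergence of $\partial_s U_{e_n}$ directly and removes the regularity bookkeeping. Your argument also correctly leans on the normalization \eqref{2.7}; without fixing the translation of each profile the uniform-in-$e$ claim would be false, since the transition layer of $U_e$ could drift out of $[-q,q]$ along a sequence of directions.
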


For any $b \in \mathbb{R}^N \backslash\{0\}$, define
\begin{equation*}
	U_b:=U_{\frac{b}{|b|}} \;\text { and }\; c_b:=c_{\frac{b}{|b|}}.
\end{equation*}
It follows from \cite[Theorem 2.10]{LyuY2024} that $U_b$ and $c_b$ are doubly continuously 
Fr\'{e}chet differentiable in $b \in \mathbb{R}^N$ everywhere at $\mathbb{R}^N 
\backslash\{0\}$ under the topology $\|\cdot\|_{C^2\left(\mathbb{R} \times \mathbb
{R}^N\right) \times \mathbb{R}}$ by normalization \eqref{2.7}.

The norm of the Fr\'{e}chet derivatives with respect to the propagation direction $e \in 
\mathbb{S}^{N-1}$ are given by the following formulas.
\begin{equation*}
\begin{aligned}
	\left\|U_e^{\prime}\right\| & =\sup _{h \in \mathbb{R}^N} \frac{\left\|U_e^{\prime} \cdot h\right\|_{L^{\infty}\left(\mathbb{R} \times \mathbb{L}^N\right)}}{|h|},  \\
	\left\|U_e^{\prime \prime}\right\|&=\sup _{\left(h_1, h_2\right) \in \mathbb{R}^N \times \mathbb{R}^N} \frac{\left\|\left(U_e^{\prime \prime} \cdot h_1\right) \cdot h_2\right\|_{L^{\infty}\left(\mathbb{R} \times \mathbb{L}^N\right)}}{\left|h_1\right|\left|h_2\right|}, \\
	\left\|\partial_{s} U_e^{\prime}\right\| & =\sup _{h \in \mathbb{R}^N} \frac{\left\|\partial_{s} U_e^{\prime} \cdot h\right\|_{L^{\infty}\left(\mathbb{R} \times \mathbb{L}^N\right)}}{|h|}, \\
	\left\|\partial_{x_i} U_e^{\prime}\right\| & =\sup _{h \in \mathbb{R}^N} \frac{\left\|\partial_{x_i} U_e^{\prime} \cdot h\right\|_{L^{\infty}\left(\mathbb{R} \times \mathbb{L}^N\right)}}{|h|} \quad(i=1, \ldots, N),
\end{aligned}
\end{equation*}
and
\begin{equation*}
\left\|c_e^{\prime}\right\|=\sup _{h \in \mathbb{R}^N} \frac{\left|c_e^{\prime} \cdot h\right|}{|h|}<+\infty, \quad\left\|c_e^{\prime \prime}\right\|=\sup _{\left(h_1, h_2\right) \in \mathbb{R}^N \times \mathbb{R}^N} \frac{\left|\left(c_e^{\prime \prime} \cdot h_1\right) \cdot h_2\right|}{\left|h_1 \| h_2\right|}<+\infty .
\end{equation*}
The estimates of Fr\'{e}chet derivatives of $U_e$ are given in  the following theorem.
\begin{theorem}(\cite[Proposition 4.9]{LyuY2024})\label{Theorem 2.13}
	 Suppose that (F1)-(F4) hold. Normalize $U_e$ as \eqref{2.7}. Then $\nabla U_b$ is Fr\'{e}chet 
	 differentiable in $b$, and	
	 \begin{equation*}
	 	\left(\nabla U_b\right)^{\prime} \cdot h_1=\nabla\left(U_b^{\prime} \cdot h_1\right),
	 \end{equation*}
	where $\nabla$ denotes the gradient operator with respect to $(s, z) \in \mathbb{R} \times \mathbb{R}^N$. Moreover, for any $e, h_1, h_2 \in \mathbb{S}^{N-1}$, there exists a positive constant $M_1$ independent of $e, h_1, h_2$ and $\kappa_2$ such that
	\begin{equation*}
	\left|\left(U_e^{\prime} \cdot h_1\right)(s, z)\right|,\left|\left(U_e^{\prime \prime} \cdot h_2 \cdot h_1\right)(s, z)\right|,\left|\nabla\left(U_b^{\prime} \cdot h_1\right)(s, z)\right| \leq M_1 e^{-\frac{\kappa}{2} s}
    \end{equation*}
	for all $(s, z) \in[0,+\infty) \times \mathbb{R}^N$, and
	\begin{equation*}
	\left|\left(U_e^{\prime} \cdot h_1\right)(s, z)\right|,\left|\left(U_e^{\prime \prime} \cdot h_2 \cdot h_1\right)(s, z)\right|,\left|\nabla\left(U_b^{\prime} \cdot h_1\right)(s, z)\right| \leq M_1 e^{\frac{\kappa_2}{2} s}
    \end{equation*}
	for all $(s, z) \in(-\infty, 0] \times \mathbb{R}^N$.
\end{theorem}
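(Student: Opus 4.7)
The plan is to differentiate the profile equation (2.1) in the direction parameter, extract linear elliptic equations for $V_1 := U_e' \cdot h_1$ and $V_2 := U_e'' \cdot h_2 \cdot h_1$, and then derive pointwise exponential bounds via weighted maximum-principle arguments that exploit the spatial periodicity in $z$. The gradient estimate $|\nabla(U_b' \cdot h_1)|$ is then extracted from the $L^\infty$ bound on $V_1$ by interior Schauder estimates, while the commutation identity $(\nabla U_b)' \cdot h_1 = \nabla(U_b' \cdot h_1)$ follows from the $C^2$ Fr\'{e}chet regularity of $b \mapsto U_b$ stated just before the theorem.

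First I would differentiate (2.1) in $b$ at $b=e$ along $h_1$. Setting
\begin{equation*}
\mathcal{L}_e W := \partial_{ss} W + 2\, \nabla_z \partial_s W \cdot e + \Delta_z W + c_e\, \partial_s W + f_u(z, U_e)\, W,
\end{equation*}
one obtains
\begin{equation*}
\mathcal{L}_e V_1 = -(c_e' \cdot h_1)\, \partial_s U_e - 2\, \nabla_z \partial_s U_e \cdot h_1.
\end{equation*}
By Theorems \ref{Theorem 2.5} and \ref{Theorem 2.8}, both terms on the right decay at least like $e^{-\frac{3\kappa}{4}s}$ as $s \to +\infty$, and for $s$ large enough $U_e \leq \theta$ so $f_u(z,U_e)\equiv 0$ by (F3). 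I would then compare $V_1$ with the super-solution $\overline{V}(s,z) = A\, e^{-\kappa s/2}$: a direct computation gives $\mathcal{L}_e \overline{V} = \bigl(\tfrac{\kappa^2}{4} - \tfrac{\kappa c_e}{2}\bigr)\overline{V} \leq -\tfrac{\kappa^2}{4}\overline{V}$, which, for $A$ chosen universally large via Theorem \ref{Theorem 2.8}, dominates the source term in absolute value. A standard comparison argument on $[s_0, +\infty) \times \mathbb{R}^N$, using periodicity in $z$ (so the maximum of $V_1 - \overline{V}$ is attained either on $\{s_0\} \times \mathbb{L}^N$ or as $s\to +\infty$), yields $|V_1(s,z)| \leq M_1 e^{-\kappa s/2}$ on $[0,+\infty) \times \mathbb{R}^N$.

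For $s \to -\infty$, since $U_e \to 1$ uniformly, (1.6) gives $f_u(z,U_e) \leq -\kappa_1/2$, so $\mathcal{L}_e$ is coercive. The source terms now decay like $e^{\kappa_2 s}$ by Theorem \ref{Theorem 2.5}, and the same comparison scheme with $A e^{\kappa_2 s/2}$ (choosing $\kappa_2$ small enough that $\mathcal{L}_e$ applied to this exponential is still sign-definite) produces the bound on $(-\infty, 0]$. The second-derivative bound for $V_2$ is obtained by differentiating (2.1) once more: $V_2$ satisfies $\mathcal{L}_e V_2 = F_2$, where $F_2$ is a linear combination of $V_1$, $\nabla V_1$, $\partial_s V_1$, $\partial_s U_e$, $\nabla_z \partial_s U_e$ and $U_e$, each already controlled exponentially by the preceding step together with interior Schauder estimates on the cylinders $[s-1,s+1] \times \mathbb{L}^N$. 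The same weighted super-solution argument then concludes. The gradient estimate for $\nabla(U_b' \cdot h_1)$ follows likewise by Schauder estimates: exponentially small $L^\infty$ data and source term on $[s-1,s+1] \times \mathbb{L}^N$ force the same exponential decay of the first-order derivatives, up to adjusting $M_1$.

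The main obstacle I anticipate is the uniform control of $c_e' \cdot h_1$ in $e$ and $h_1$, which is essential to keep $M_1$ independent of the direction. Since $\partial_s U_e < 0$ spans, up to the periodic weight $\bar{\rho}$ built into (2.7), the kernel of the formal adjoint of $\mathcal{L}_e$, solvability of the equation for $V_1$ is governed by a Fredholm-type orthogonality condition that pins down $c_e' \cdot h_1$ explicitly in terms of integrals of $\nabla_z \partial_s U_e \cdot h_1$ against a bounded weight; the uniform boundedness of this quantity in $e, h_1$ is exactly what the normalization (2.7) and Theorem \ref{Theorem 2.8} are designed to secure. Without this uniform control, the super-solution constant $A$ would blow up with $e$, breaking the uniformity of $M_1$; with it in hand, the weighted comparison arguments outlined above close the proof.
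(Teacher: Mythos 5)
First, a point of comparison: the paper does not prove this statement at all --- Theorem \ref{Theorem 2.13} is imported verbatim from \cite[Proposition 4.9]{LyuY2024}, so there is no in-paper proof to measure your argument against. Your outline (differentiate the pulsating-front equation \eqref{2.1} in the direction parameter, bound $V_1=U_e'\cdot h_1$ and $V_2=U_e''\cdot h_2\cdot h_1$ by exponential supersolutions, recover gradient bounds by Schauder estimates, and get the commutation identity from the $C^2$ Fr\'echet regularity cited before the theorem) is the natural route and is essentially the strategy of the cited reference, so the plan is sound in spirit.

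There are, however, two genuine gaps as written. The first is that you treat $\mathcal{L}_e$ as a uniformly elliptic operator in $(s,z)$: its second-order symbol is $\xi_s^2+2\xi_s(\xi_z\cdot e)+|\xi_z|^2$, which vanishes when $\xi_z=-\xi_s e$, so the operator is only degenerate elliptic. Consequently neither the ``standard comparison argument on $[s_0,+\infty)\times\mathbb{R}^N$'' nor ``interior Schauder estimates on $[s-1,s+1]\times\mathbb{L}^N$'' can be invoked off the shelf; one must pass to $v_1(t,z)=V_1(z\cdot e-c_e t,z)$, which solves a genuinely parabolic equation, run the comparison and the (parabolic) Schauder estimates there, and then translate back using $\partial_t v_1=-c_e\partial_s V_1$, $\nabla_z v_1=(\partial_s V_1)e+\nabla_z V_1$. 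This is a standard fix for pulsating fronts, but without it the maximum-principle and regularity steps you rely on are not justified. The second gap is the parenthetical ``choosing $\kappa_2$ small enough'': $\kappa_2$ is a fixed constant supplied by Theorem \ref{Theorem 2.5}, not a free parameter, and your supersolution $Ae^{\kappa_2 s/2}$ only works on $(-\infty,0]$ if $\kappa_2^2/4+K\kappa_2/2<\kappa_1/2$. You would either have to argue that $\kappa_2$ may be decreased without loss (which weakens Theorem \ref{Theorem 2.5} but keeps it true, and must be said explicitly since the target bound is stated with the rate $\kappa_2/2$) or choose a supersolution adapted to the actual spectral gap at $u=1$. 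Finally, the Fredholm discussion of $c_e'\cdot h_1$ is heavier than needed and slightly off ($\bar\rho\,\partial_s U_e$ is not the adjoint kernel element; $\bar\rho$ is only a normalization weight): uniform boundedness of $\|c_e'\|$ over $e\in\mathbb{S}^{N-1}$ already follows from the continuous (indeed $C^2$) dependence on $b$ cited just before the theorem together with compactness of the sphere.
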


Eventually, we introduce the definitions of sub-invasion and super-invasion, and present a 
comparison principle for super- and subsolutions. 
\begin{definition}(\cite{Guo H2025})\label{Definition 2.14}
 By a sub-invasion (resp. super-invasion) $u$ of 0 by 1, we mean that $u \in[0,1]$ is a subsolution 
 (resp. supersolution) of \eqref{1.1} satisfying \eqref{1.10} and
\begin{equation*}
\Omega_t^{+} \supset \Omega_s^{+} \text { for all } t \geq s,
\end{equation*}
and
\begin{equation*}
d\left(\Gamma_t, \Gamma_s\right) \rightarrow+\infty \text { as }|t-s| \rightarrow+\infty.
\end{equation*}
\end{definition}
As a result, one gets the following lemma of the comparison principle, we prove it  in the 
appendix.
\begin{lemma}\label{Lemma 2.15}
	Assume that $u(t, z)$ is a sub-invasion of 0 by 1 of \eqref{1.1} associated to the families $\left(\Omega_t^{ \pm}\right)_{t \in \mathbb{R}}$ and $\left(\Gamma_t\right)_{t \in \mathbb{R}}$, and $\widetilde{u}(t, z)$ is a super-invasion of 0 by 1 of \eqref{1.1} with sets $\left(\widetilde{\Omega}_t^{ \pm}\right)_{t \in \mathbb{R}}$ and $\left(\widetilde{\Gamma}_t\right)_{t \in \mathbb{R}}$. If $\widetilde{\Omega}_t^{-} \subset \Omega_t^{-}$ for all $t \in \mathbb{R}$ and $u$, $\widetilde{u}$, $\nabla_{z} u$, $\nabla_{z} \widetilde{u}$, $\partial_{t} u$, $\partial_{t} \widetilde{u}$ are all globally bounded, then there is a smallest $T \in \mathbb{R}$ such that
	\begin{equation*}
	\tilde{u}(t+T, z) \geq u(t, z) \text { for all }(t, z) \in \mathbb{R} \times \mathbb{R}^N.
    \end{equation*}
	Moreover, there exists a sequence $\left\{\left(t_n, z_n\right)\right\}_{n \in \mathbb{N}}$ in $\mathbb{R} \times \mathbb{R}^N$ such that
	\begin{equation*}
	\left\{d\left(z_n, \Gamma_{t_n}\right)\right\}_{n \in \mathbb{N}} \text { is bounded and } \widetilde{u}\left(t_n+T, z_n\right)-u\left(t_n, z_n\right) \rightarrow 0 \text { as } n \rightarrow+\infty.	
    \end{equation*}
\end{lemma}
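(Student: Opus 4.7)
The plan is to carry out a sliding argument in the time shift. Define
\begin{equation*}
E = \bigl\{T \in \mathbb{R} : \widetilde{u}(t+T, z) \geq u(t, z) \text{ for all } (t,z) \in \mathbb{R}\times\mathbb{R}^N \bigr\}
\end{equation*}
and set $T^{*} := \inf E$. The goal is to show that $E$ is nonempty, that $T^{*}$ is finite, that $T^{*} \in E$, and finally that no positive uniform gap can persist.

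\textbf{Nonemptiness of $E$.} Fix $\eta \in (0, \gamma_{\star}/4)$ and let $M_{\eta}$ be the constant from the transition front property \eqref{1.10}, applied to both $u$ and $\widetilde{u}$. The monotonicity $\widetilde{\Omega}_{t+T}^{+} \supset \widetilde{\Omega}_{t}^{+}$ for $T \geq 0$ from Definition \ref{Definition 2.14}, together with the divergence $d(\widetilde{\Gamma}_{t+T}, \widetilde{\Gamma}_{t}) \to +\infty$ as $T \to +\infty$, produces a $T_{0} > 0$ such that $d(z, \widetilde{\Gamma}_{t+T}) \geq M_{\eta}$ whenever $z \in \overline{\widetilde{\Omega}_{t}^{+}}$ and $T \geq T_{0}$, uniformly in $t$. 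The hypothesis $\widetilde{\Omega}_{t}^{-} \subset \Omega_{t}^{-}$ yields $\overline{\widetilde{\Omega}_{t}^{+}} \supset \overline{\Omega_{t}^{+}} \supset \{z : d(z,\Gamma_{t}) \leq M_{\eta}\}\cap\overline{\Omega_{t}^{+}}$, and hence $\widetilde{u}(t+T, z) \geq 1-\eta$ on the entire ``$+$''-bulk as well as on the tube $\{d(z, \Gamma_{t}) \leq M_{\eta}\}\cap\overline{\Omega_{t}^{+}}$. On the ``$-$''-bulk $\Omega_{t}^{-}\cap\{d(z,\Gamma_{t})\geq M_{\eta}\}$, the inclusion $\widetilde{\Omega}_{t+T}^{-} \subset \widetilde{\Omega}_{t}^{-} \subset \Omega_{t}^{-}$ plus the chain $d(z,\widetilde{\Gamma}_{t+T}) \leq d(z, \Gamma_{t})$ forces $u(t,z) \leq \eta$, so I only need $\widetilde{u}(t+T, z) \geq \eta$ in the mixed region where both fronts are still close, and the combustion flatness $f\equiv 0$ on $\mathbb{R}^{N}\times [0,\theta]$ combined with a parabolic comparison of $w := \widetilde{u}(\cdot+T, \cdot) - u$, which satisfies $w_{t} - \Delta w = f_{u}(z,\xi)w$ with $\xi$ between $u$ and $\widetilde{u}$, closes the remaining gap. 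Near the state $1$, the bound \eqref{1.6} gives $f_{u} \leq -\kappa_{1}/2$, so a positive maximum of $u-\widetilde{u}(\cdot+T,\cdot)$ in the ``$u \geq 1-\gamma_{\star}$'' layer is impossible; the Lipschitz decay of $f$ then absorbs the $\eta$-error and yields $T_{0}\in E$.

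\textbf{Closedness, lower bound, touching sequence.} The global bounds on $u,\widetilde{u}, \nabla_{z}u, \nabla_{z}\widetilde{u}, \partial_{t}u, \partial_{t}\widetilde{u}$ give uniform continuity of both solutions, so $E$ is closed and $T^{*}\in E$ once $E$ is nonempty. For $T^{*} > -\infty$, pick $(t_{0},z_{0})\in\Omega_{t_{0}}^{+}$ with $d(z_{0},\Gamma_{t_{0}})$ so large that $u(t_{0},z_{0}) \geq 1-\eta$; using $d(\widetilde{\Gamma}_{s},\widetilde{\Gamma}_{t_{0}})\to +\infty$ as $s\to-\infty$ and the backward monotonicity $\widetilde{\Omega}_{s}^{-}\supset\widetilde{\Omega}_{t_{0}}^{-}$ for $s\leq t_{0}$, any sufficiently negative shift $T$ places $z_{0}$ deep inside $\widetilde{\Omega}_{t_{0}+T}^{-}$, whence $\widetilde{u}(t_{0}+T,z_{0})\leq \eta < 1-\eta \leq u(t_{0},z_{0})$; so such $T\notin E$. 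For the touching sequence, argue by contradiction: if for every bounded $R$ and every sequence with $d(z_{n},\Gamma_{t_{n}})\leq R$ one has $\liminf\bigl(\widetilde{u}(t_{n}+T^{*},z_{n})-u(t_{n},z_{n})\bigr) > 0$, then combining with the ``bulk'' lower bound obtained in the nonemptiness step gives a uniform gap $\widetilde{u}(t+T^{*},z)-u(t,z)\geq \delta_{1} > 0$ on $\mathbb{R}\times\mathbb{R}^{N}$; the global bound $|\partial_{t}\widetilde{u}|\leq C$ then yields $\widetilde{u}(t+T^{*}-\varepsilon,z)\geq \widetilde{u}(t+T^{*},z)-C\varepsilon \geq u(t,z)$ for every $\varepsilon < \delta_{1}/C$, contradicting the minimality of $T^{*}$.

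\textbf{Main obstacle.} The delicate step is the nonemptiness of $E$: near the interface $\Gamma_{t}$ the function $u$ may take arbitrary values in $[0,1]$, so a pointwise sandwich fails, and the comparison must be run globally with a parabolic inequality whose zero-order coefficient has different signs in the ``near $1$'' zone (stabilizing, by \eqref{1.6}) and in the ``below $\theta$'' zone (identically zero, by combustion). Threading the position estimates from the transition-front property through this two-zone linearization, uniformly in $t$, is the technical heart of the argument; the remaining pieces are standard sliding-method manipulations.
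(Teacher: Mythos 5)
Your overall sliding framework (define the set of admissible shifts, take the infimum, show attainment and the touching sequence) matches the paper's strategy, and your finiteness and closedness arguments are fine. But there are two genuine gaps. First, the nonemptiness step — which you yourself flag as the heart — is not actually carried out, and the sketch you give would not close it. In the bulk region $\omega_M^{-}=\{(t,z):z\in\Omega_t^{-},\,d(z,\Gamma_t)\geq M\}$ you know $u\leq\gamma_{\star}$, but $\widetilde{u}(t+T,\cdot)$ can be even smaller there, so nothing pointwise forces $w=\widetilde{u}(\cdot+T,\cdot)-u\geq 0$; and your appeal to ``a positive maximum of $u-\widetilde{u}(\cdot+T,\cdot)\ldots$ is impossible'' presumes a maximum is attained, which fails for entire solutions on the unbounded set $\omega_M^{\pm}$. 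The paper handles this by (i) first proving a bounded-displacement estimate for the interface of the sub-invasion over bounded time intervals (Lemma \ref{Lemma 5.1}, which itself rests on the finite-spreading estimates for combustion nonlinearities from the cited reference), and then (ii) running an $\varepsilon$-sliding argument: setting $\varepsilon^{*}=\inf\{\varepsilon>0:\widetilde{u}_s\geq u-\varepsilon \text{ in }\omega_M^{-}\}$, taking an almost-touching sequence, and propagating the strict positivity of $w+\varepsilon^{*}$ backward in time along paths of bounded length (constructed via Lemma \ref{Lemma 5.1}) to the boundary $\partial\omega_M^{-}$, where $\widetilde{u}_s-u\geq 1-2\gamma_{\star}$, using linear parabolic estimates; an analogous chained-box argument treats $\omega_M^{+}$ using \eqref{1.6}. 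None of these ingredients (bounded interface speed, the $\varepsilon^{*}$ device, the chaining of parabolic estimates) appears in your plan, and without them the two-zone linearization you describe does not by itself yield the global comparison.

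Second, your argument for the touching sequence is incorrect as written. You claim that if the infimum of $\widetilde{u}(t+T^{*},z)-u(t,z)$ over points at bounded distance from $\Gamma_t$ were positive, then there would be a uniform gap $\delta_1>0$ on all of $\mathbb{R}\times\mathbb{R}^N$; this is impossible, since in both bulks the difference tends to $0$ (both functions approach $1$ deep in $\Omega_t^{+}$ and approach $0$ deep in $\Omega_t^{-}$), so the shortcut ``shift $T$ down by $\delta_1/C$ using $|\partial_t\widetilde{u}|\leq C$'' cannot be run globally. The correct route, as in the paper's Step 4, is: assume the infimum on the tube $\{d(z,\Gamma_t)\leq M\}$ is positive, shift $T$ down by a small $\eta$ so the inequality persists on the tube (here the bound on $\partial_t\widetilde{u}$ is legitimately used), and then rerun the Step 2/Step 3 bulk propagation arguments to extend the inequality to $\omega_M^{\pm}$, contradicting minimality of $T$. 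So the second part of the lemma again requires exactly the machinery missing from your nonemptiness step.
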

\subsection{Construction of a hypersurface}
Now let us turn to the construction of a hypersurface.  Let $e_0 \in \mathbb{S}^{N-1}$ and $n$ 
$(\geq 2)$ unit vectors $e_i \in \mathbb{S}^{N-1}$ $(i=1,2, \ldots, n)$ such that $e_i \neq e_j$ for 
any $i \neq j$ and $e_i \cdot e_0>0$ for each $i \in\{1, \ldots, n\}$. Suppose $Q_i$ is the 
hyperplane determined by $e_i$, namely,
\begin{equation*}
	Q_i=\left\{z \in \mathbb{R}^N ; z \cdot e_i=0\right\}.
\end{equation*}
We know that $Q_i$ and $Q_j$ are not parallel, since $e_i \neq e_j$ for $i \neq j$. Let $\mathcal{Q}$ be the polytope surrounded by $Q_1, Q_2, \ldots, Q_n$, namely,
\begin{equation*}
\mathcal{Q}=\left\{z \in \mathbb{R}^N ; \min_{1 \leq i \leq n} z \cdot e_i>0\right\}.
\end{equation*}
Since $e_i \cdot e_0>0$ for any $i \in\{1, \ldots, n\}$, the polytope $\mathcal{Q}$ is unbounded. Define $\partial \mathcal{Q}=\left\{z \in \mathbb{R}^N ; \min _{1 \leq i \leq n} \right.$ $\left.z \cdot e_i=0\right\}$ as the boundary of $\mathcal{Q}$. The joint part where $\partial \mathcal{Q}$ and $Q_i$ intersect is called a facet of the polytope, and the intersecting part is defined as $\widetilde{Q}_i=\partial \mathcal{Q} \cap Q_i$. Let $\mathcal{R}_{i j}=\widetilde{Q}_i \cap \widetilde{Q}_j$ $(i \neq j)$ be the ridges as the intersection of $\widetilde{Q}_i$ and $\widetilde{Q}_j$ for $i \neq j$. Denote $\mathcal{R}$ as the set of all ridges of $\mathcal{Q}$.

Since Eq.\eqref{1.1} remains unchanged under the rotation of the coordinates, without loss of generality, we consider the case $e_0=(0, 0,\ldots,1)$. Denoted as $z:=(x, y)$ and $y:= z_N$, then \eqref{1.1} can be rewritten as
\begin{equation*}
	u_t-\Delta_{x, y} u=f(x, y, u) \ \text {in}\ (t, x, y) \in \mathbb{R} \times \mathbb{R}^{N-1} \times \mathbb{R},
\end{equation*} 
and \eqref{2.1} can be rewritten as
\begin{equation}\label{2.2.1}
	c_e \partial_s U_e+\partial_{s s} U_e+2 \nabla_{x, y} \partial_s U_e \cdot e+\Delta_{x, y} U_e
	+f\left(x, y, U_e\right)=0,
\end{equation}
where $e\in \mathbb{S}^{N-1}$, $\nabla_{x, y}=\nabla_x+\partial_y$ and $(s, x, y) \in \mathbb{R} \times \mathbb{R}^{N-1} \times \mathbb{R}$. 

  We can take $n$ $(\geq 2)$ unit vectors $\nu_i \in \mathbb{S}^{N-2}$ and angles $\theta_i\in (0, \pi / 2]$ $(i=1,2, \ldots, n)$ such that $(\nu_i,\theta_i)\neq(\nu_j,\theta_j)$ for any $i \neq j$, and then define $e_i=\left(\nu_i \cos \theta_i, \sin \theta_i\right)$ for all $i \in\{1, 2, \ldots, n\}$. For any $(x, y) \in\mathbb{R}^{N-1}\times \mathbb{R}$ and all $i \in\{1, 2, \ldots, n\}$, define
\begin{equation*}
q_i(x, y):=x \cdot \nu_i \cos \theta_i+y \sin \theta_i.
\end{equation*}
Associated to each $q_i$ is the hyperplane in $ \mathbb{R}^N$,
\begin{equation*}
Q_i=\left\{(x, y) \in  \mathbb{R}^N ; q_i(x, y)=0\right\}.
\end{equation*}
Note that $q_i$ is the signed distance function to $Q_i$ and $\left\{y=\psi_i( x)\right\}$ is a graph in the $y$-direction, where
\begin{equation*}
\psi_i( x)=-x \cdot \nu_i \cot \theta_i
\end{equation*}
Then, by definitions of $q_i$ and $Q_i$, one has
\begin{equation*}
\mathcal{Q}=\left\{(x, y) \in \mathbb{R}^N ; \min _{1 \leq i \leq n} q_i(x, y) \geq 0\right\}.
\end{equation*}
Since $\partial \mathcal{Q}$ is the boundary of $\mathcal{Q}$, it has the form $\{y=\psi( x)\}$, where
\begin{equation*}
\psi(x):=\max _{1 \leq i \leq n} \psi_i(x).
\end{equation*}
Therefore, $\partial \mathcal{Q}=\cup_{i=1}^n \widetilde{Q}_i$, where $\widetilde{Q}_i=\partial \mathcal{Q} \cap Q_i$ are its facets. Let $\widehat{\mathcal{R}}$ be the projection of $\mathcal{R}$ on the $x$-plane, namely,
\begin{equation*}
\widehat{\mathcal{R}}:=\left\{x \in  \mathbb{R}^{N-1} ; \text { there exists one } y \in \mathbb{R} \text { such that }( x, y) \in \mathcal{R}\right\}.
\end{equation*}
Let $\widehat{Q}_i$ be the projection of $\widetilde{Q}_i$ on the $ x$-plane, namely,
\begin{equation*}
\begin{aligned}
	\widehat{Q}_i & :=\left\{ x\in  \mathbb{R}^{N-1} ; \text { there exist } y \in \mathbb{R} \text { such that }(x, y) \in \widetilde{Q}_i\right\} \\
	& =\left\{x \in  \mathbb{R}^{N-1} ; \psi_i(x)=\max _{1 \leq j \leq n} \psi_j(x)\right\} .
\end{aligned}
\end{equation*}
It follows from the graph properties of $\partial \mathcal{Q}$ that $\cup_{i=1}^n \partial \widehat{Q}_i=\widehat{\mathcal{R}}$ and $\cup_{i=1}^n \widehat{Q}_i=\mathbb{R}^{N-1}$.
Let $y=\varphi(x)$ be the surface determined by 
\begin{equation}\label{2.9}
	\sum_{i=1}^n e^{-q_i(x, y)}=1.
\end{equation}
By the implicit function theorem, one has the existence of $y=\varphi(x)$ and $\varphi \in C^{\infty}\left( \mathbb{R}^{N-1}\right)$. Define $\Sigma :=\{y=\varphi( x)\}$ as the graph of $\varphi$, $\hat{q}_i( x):=q_i( x, \varphi( x))$ and 
\begin{equation*}
h(x):=\sum_{i, j \in\{1, 2, \ldots, n\} ; i\neq j} e^{-\left(\hat{q}_i( x)+\hat{q}_j( x)\right)} \; \text{ for } x\in\mathbb{R}^{N-1} .
\end{equation*}
It should be pointed out that the above construction of the hypersurface comes from \cite{H. Guo2024,Guo H2025}.

By similar arguments in the proof of \cite[Lemma 3.1]{H. Guo2024}, we can get the following lemma which implies $h(x)$ is a measurement of flatness for the graph $\Sigma$.
\begin{lemma}\label{Theorem 2.16}
	The graph $\Sigma$ satisfies the following properties:
\begin{description}
\item[(i)] $\Sigma \subset \mathcal{Q}$;
\item[(ii)] $\Sigma$ stays at finite distance from $\partial \mathcal{Q}$, or equivalently $\sup _{ \mathbb{R}^{N-1}}|\varphi-\psi|<+\infty$;
\item[(iii)] $\Sigma$ approaches $\partial \mathcal{Q}$ exponentially away from $\mathcal{R}$, or equivalently, there is a constant $C>0$ such that
	\begin{equation}\label{2.10}
		|\varphi(x)-\psi(x)| \leq C \exp \left\{-\frac{1}{C} d(x, \widehat{\mathcal{R}})\right\} \text { or } C h( x), \quad \forall x \in \mathbb{R}^{N-1}.
	\end{equation}
\end{description}
\end{lemma}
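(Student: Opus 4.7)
The plan is to derive all three statements directly from the defining identity $\sum_{i=1}^{n} e^{-\hat{q}_i(x)} = 1$ combined with the representation $\hat{q}_i(x) = \sin\theta_i\bigl(\varphi(x) - \psi_i(x)\bigr)$, which comes from $q_i(x,y) = \sin\theta_i\,(y - \psi_i(x))$. For (i), if $\hat{q}_{i_0}(x) \leq 0$ for some $i_0$ then $e^{-\hat{q}_{i_0}(x)} \geq 1$ while the remaining terms are strictly positive, so the sum would exceed $1$, a contradiction; hence $\hat{q}_i(x) > 0$ for every $i$, which is exactly $(x, \varphi(x)) \in \mathcal{Q}$, and in particular $\varphi(x) > \psi(x)$. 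For (ii), set $\theta_{\min} := \min_i \theta_i > 0$; since $\psi = \max_i \psi_i$ one has $\hat{q}_i(x) \geq \sin\theta_{\min}(\varphi(x) - \psi(x))$ for every $i$, so $1 \leq n\,e^{-\sin\theta_{\min}(\varphi-\psi)}$ yields the uniform bound $\varphi(x) - \psi(x) \leq (\log n)/\sin\theta_{\min} =: M_0$.

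For the $h(x)$-half of (iii), let $i_0 = i_0(x)$ realize $\psi(x) = \psi_{i_0}(x)$, so $\hat{q}_{i_0}(x) = \sin\theta_{i_0}(\varphi-\psi)(x) \leq M_0$. Rewriting the identity as $1 - e^{-\hat{q}_{i_0}(x)} = \sum_{i \neq i_0} e^{-\hat{q}_i(x)}$ and applying the elementary inequality $t \leq C_1(1 - e^{-t})$ on $[0, M_0]$ (with $C_1 = M_0/(1-e^{-M_0})$, obtained from concavity of $1-e^{-t}$), I get $\hat{q}_{i_0}(x) \leq C_1 \sum_{i \neq i_0} e^{-\hat{q}_i(x)}$. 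Multiplying each summand by $1 = e^{\hat{q}_{i_0}(x)}e^{-\hat{q}_{i_0}(x)} \leq e^{M_0}\,e^{-\hat{q}_{i_0}(x)}$ turns the right-hand side into $C_2\sum_{i \neq i_0} e^{-(\hat{q}_i(x)+\hat{q}_{i_0}(x))} \leq C_2\,h(x)$, yielding $\varphi(x) - \psi(x) \leq C\,h(x)$.

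For the exponential half, I bound $\hat{q}_i$ from below for $i \neq i_0$. Using $\varphi \geq \psi \geq \psi_i$, one has $\hat{q}_i(x) \geq \sin\theta_i(\psi_{i_0} - \psi_i)(x)$; since $\psi_{i_0} - \psi_i$ is affine with gradient of norm at least $c_0 := \min_{i \neq j}|\nabla\psi_i - \nabla\psi_j| > 0$ and vanishes on the hyperplane bounding the facet $\widehat{Q}_{i_0}$ against $\widehat{Q}_i$, a geometric argument on the piecewise linear structure of $\partial \widehat{Q}_{i_0}$, combined with the identity $d(x, \widehat{\mathcal R}) = d(x, \partial \widehat{Q}_{i_0})$ for $x \in \widehat{Q}_{i_0}$ (any continuous path from $x$ to a ridge must first cross $\partial \widehat{Q}_{i_0}$), yields $\min_{i \neq i_0}\hat{q}_i(x) \geq c_1\,d(x, \widehat{\mathcal R})$. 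Plugging into $\hat{q}_{i_0}(x) \leq C_1 \sum_{i \neq i_0} e^{-\hat{q}_i(x)}$ produces $\varphi(x) - \psi(x) \leq C e^{-d(x, \widehat{\mathcal R})/C}$, completing \eqref{2.10}.

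The principal technical point is the uniform lower bound $\min_{i \neq i_0(x)}\hat{q}_i(x) \geq c_1 d(x, \widehat{\mathcal R})$ above: while the Lipschitz upper bound of the left-hand side in terms of $d(x, \widehat{\mathcal R})$ is automatic, the matching lower bound requires careful tracking of which index realizes the minimum across the piecewise linear regions of $\partial \widehat{Q}_{i_0}$. Uniformity of the constant $c_0 = \min_{i \neq j}|\nabla\psi_i - \nabla\psi_j|$ follows because the gradients $\nabla\psi_i = -\nu_i\cot\theta_i$ are pairwise distinct (the degenerate case $\theta_i = \theta_j = \pi/2$ being ruled out by the hypothesis $e_i \neq e_j$). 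Apart from this book-keeping the argument parallels \cite[Lemma 3.1]{H. Guo2024}, to which the statement is directly analogous.
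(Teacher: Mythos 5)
The paper itself gives no proof of this lemma --- it is imported from the proof of \cite[Lemma 3.1]{H. Guo2024} --- so your argument has to stand on its own, and most of it does. Parts (i), (ii) and the $h(x)$-half of (iii) are complete and correct: the contradiction $\sum_i e^{-\hat q_i}>1$ if some $\hat q_{i_0}\le 0$, the bound $0<\varphi-\psi\le \log n/\sin\theta_{\min}=:M_0$, the concavity inequality $t\le \frac{M_0}{1-e^{-M_0}}(1-e^{-t})$ on $[0,M_0]$, and the insertion of $1=e^{\hat q_{i_0}}e^{-\hat q_{i_0}}\le e^{M_0}e^{-\hat q_{i_0}}$ to convert $\sum_{i\ne i_0}e^{-\hat q_i}$ into $e^{M_0}h(x)$ are all sound, and the constants are uniform in $x$.

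The one genuine gap is in the exponential half of \eqref{2.10}. You correctly isolate the crux, $\min_{i\neq i_0}\hat q_i(x)\ge c_1\,d(x,\widehat{\mathcal R})$ for $x\in\widehat Q_{i_0}$, but you only gesture at it (``a geometric argument on the piecewise linear structure \ldots careful tracking of which index realizes the minimum''), and the route you sketch is shakier than you acknowledge: the hyperplane $\{\psi_{i_0}=\psi_i\}$ need not carry any facet of $\widehat Q_{i_0}$ (the constraint can be redundant, and $\widehat Q_i$ can even be lower-dimensional --- for $N=2$ with slopes $1,0,-1$ the middle region $\widehat Q_2$ is a single point), so ``the hyperplane bounding the facet $\widehat Q_{i_0}$ against $\widehat Q_i$'' is not always meaningful, and the facet-by-facet bookkeeping you defer is exactly what is missing. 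The claim is nevertheless true, with a short proof that avoids all case analysis: set $L_i:=\psi_{i_0}-\psi_i$ and $r:=d(x,\partial\widehat Q_{i_0})$. Since $\widehat Q_{i_0}$ is closed, the open ball $B(x,r)$ lies in $\widehat Q_{i_0}\subset\{L_i\ge 0\}$; as $L_i$ is linear with $|\nabla L_i|\ge c_0:=\min_{j\neq k}\left|\nu_j\cot\theta_j-\nu_k\cot\theta_k\right|>0$ (your nondegeneracy observation is correct), evaluating $L_i$ at $x-s\nabla L_i/|\nabla L_i|$ for $s<r$ gives $L_i(x)\ge c_0 r$. Finally $\partial\widehat Q_{i_0}\subset\widehat{\mathcal R}$ (the paper records $\cup_i\partial\widehat Q_i=\widehat{\mathcal R}$), so $r\ge d(x,\widehat{\mathcal R})$ and $\hat q_i(x)\ge\sin\theta_{\min}L_i(x)\ge \sin\theta_{\min}c_0\,d(x,\widehat{\mathcal R})$. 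Note this uses only the trivial inclusion; the path-crossing identity $d(x,\widehat{\mathcal R})=d(x,\partial\widehat Q_{i_0})$ that you invoke is precisely the direction that is not needed (and itself requires checking that $\widehat{\mathcal R}$ does not meet the interior of $\widehat Q_{i_0}$). With this replacement for your ``principal technical point'', the proof is complete.
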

Then one has $\min _{1 \leq i \leq n} q_i(x, y) \geq 0$ for every point $(x, y) \in \Sigma$ by \eqref{2.9}. Besides, $\varphi(x) \geq \psi_i( x)$ for any fixed $i \in\{1,2, \ldots, n\}$ and $x \in \widehat{Q}_i$ by (i) of Lemma \ref{Theorem 2.16} and the definition of $\mathcal{Q}$. Thus, by definitions of $\psi(x)$ and $\widehat{Q}_i$, we obtain $\varphi(x) \geq \psi(x)$ in $\mathbb{R}^{N-1}$. By \cite[Section 3.1]{H. Guo2024}, $h(x)$ is decaying exponentially in all $\widehat{Q}_i$ as $d(x, \widehat{\mathcal{R}}) \rightarrow+\infty$, namely, $\hat{q}_i(x) \rightarrow 0$ and $\hat{q}_j(x) \rightarrow+\infty$ for all $j \neq i$ and $x \in \widehat{Q}_i$ as $\operatorname{dist}(x, \widehat{\mathcal{R}}) \rightarrow+\infty$. Moreover, the surface $y=\varphi(x)$ has the following properties.
\begin{lemma}\cite[Lemma 2.6]{Guo H2025} \label{Lemma 2.17}
 There exists $C_3>0$ such that for every $i \in\{1, \ldots, n\}$,
\begin{equation}\label{2.11}
\left|\nabla \varphi(x)+\nu_i \cot \theta_i\right| \leq C_3 h(x), \quad \text { for } x \in \widehat{Q}_i ,
\end{equation}
and
\begin{equation}\label{2.12}
	\left|\nabla^2 \varphi(x)\right|,\left|\nabla^3 \varphi(x)\right| \leq C_3 h(x), \quad \text { for } x \in \mathbb{R}^{N-1}.
\end{equation}
\end{lemma}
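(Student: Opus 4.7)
The plan is to differentiate the defining identity $\sum_{i=1}^{n} e^{-\hat{q}_i(x)}=1$ repeatedly, solve algebraically for $\nabla\varphi$, $\nabla^2\varphi$ and $\nabla^3\varphi$, and then exploit the cancellation that occurs on each face $\widehat{Q}_i$ because the ``dominant'' term $\nabla\hat{q}_i$ becomes small there. Throughout, set
\begin{equation*}
\nabla\hat{q}_i=\nu_i\cos\theta_i+\nabla\varphi\sin\theta_i,\qquad
D(x):=\sum_{i=1}^{n} e^{-\hat{q}_i(x)}\sin\theta_i,
\end{equation*}
noting that $D(x)\geq (\min_i\sin\theta_i)>0$ uniformly in $x$, since the $\hat{q}_i$ are nonnegative with $\sum_i e^{-\hat{q}_i}=1$. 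A straightforward computation (using Lemma \ref{Theorem 2.16}(ii) to bound $\hat{q}_i$ on $\widehat{Q}_i$) also shows that $h(x)\leq 1$ and that, for $x\in\widehat{Q}_i$, there is $C>0$ with $\sum_{j\neq i} e^{-\hat{q}_j(x)}\leq C\,h(x)$ (because $e^{-\hat{q}_i}$ is bounded below on $\widehat{Q}_i$ by Lemma \ref{Theorem 2.16}(ii)).

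First I would apply the implicit function theorem to \eqref{2.9}, which gives
\begin{equation*}
\nabla\varphi(x)=-\frac{\sum_{i=1}^{n} e^{-\hat{q}_i}\nu_i\cos\theta_i}{D(x)}.
\end{equation*}
Adding $\nu_i\cot\theta_i$ on $\widehat{Q}_i$, the $j=i$ term in the numerator cancels exactly, leaving
\begin{equation*}
\nabla\varphi(x)+\nu_i\cot\theta_i
=-\frac{\sum_{j\neq i} e^{-\hat{q}_j}\bigl(\nu_j\cos\theta_j-\nu_i\cot\theta_i\sin\theta_j\bigr)}{D(x)},
\end{equation*}
whose absolute value is bounded by $C\sum_{j\neq i}e^{-\hat{q}_j}\leq C h(x)$. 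This proves \eqref{2.11} and in particular shows $\nabla\varphi$ is globally bounded.

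For \eqref{2.12} I differentiate $\sum_i e^{-\hat{q}_i}\nabla\hat{q}_i=0$ once more. Using $\nabla^2\hat{q}_i=\nabla^2\varphi\,\sin\theta_i$ gives
\begin{equation*}
\nabla^2\varphi=\frac{1}{D(x)}\sum_{i=1}^{n} e^{-\hat{q}_i}\,\nabla\hat{q}_i\otimes\nabla\hat{q}_i.
\end{equation*}
Fix $x\in\widehat{Q}_i$. For the index $j=i$, \eqref{2.11} gives $|\nabla\hat{q}_i|=|\nabla\varphi+\nu_i\cot\theta_i|\sin\theta_i\leq Ch(x)$, so this term contributes $O(h(x)^2)\leq O(h(x))$. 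For indices $j\neq i$, $|\nabla\hat{q}_j|$ is bounded (since $\nabla\varphi$ is), and $e^{-\hat{q}_j}\leq Ch(x)$ collectively. Hence $|\nabla^2\varphi(x)|\leq Ch(x)$ on $\widehat{Q}_i$, and since $\cup_i\widehat{Q}_i=\mathbb{R}^{N-1}$ this gives the bound on the whole space. A third differentiation yields
\begin{equation*}
\nabla^3\varphi\,D(x)=-3\,\mathrm{Sym}\!\Bigl[\sum_i e^{-\hat{q}_i}\nabla\hat{q}_i\otimes\nabla^2\hat{q}_i\Bigr]+\sum_i e^{-\hat{q}_i}\nabla\hat{q}_i^{\otimes 3},
\end{equation*}
where $\nabla^2\hat{q}_i=\nabla^2\varphi\sin\theta_i$ is already controlled by $Ch(x)$. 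Repeating the face-by-face splitting: on $\widehat{Q}_i$ the $j=i$ contributions are small due to $|\nabla\hat{q}_i|\lesssim h$, and the $j\neq i$ contributions are small due to $e^{-\hat{q}_j}\lesssim h$. Combining with the second-derivative bound already established, the right-hand side is bounded by $Ch(x)$, which proves \eqref{2.12} for $\nabla^3\varphi$.

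The main obstacle is purely bookkeeping: the third-derivative expression has several terms mixing $\nabla\hat{q}_i$, $\nabla^2\varphi$ and $e^{-\hat{q}_i}$, and one must carefully verify, on each face $\widehat{Q}_i$, that every monomial either carries a factor of $\nabla\hat{q}_i$ or $\nabla^2\varphi$ (small from \eqref{2.11} and the second-derivative estimate) or a factor $e^{-\hat{q}_j}$ with $j\neq i$ (small since $\sum_{j\neq i}e^{-\hat{q}_j}\leq C h(x)$). Once this structural cancellation is verified, the bound is automatic, and the uniformity in $x\in\mathbb{R}^{N-1}$ follows from $\mathbb{R}^{N-1}=\cup_i\widehat{Q}_i$ together with the uniform lower bound on $D(x)$.
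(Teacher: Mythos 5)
The paper does not actually prove this lemma: it is quoted verbatim from \cite[Lemma 2.6]{Guo H2025}, so there is no internal proof to compare against. Your argument is essentially the standard one used for such hypersurfaces (implicit differentiation of $\sum_i e^{-\hat q_i}=1$, the uniform lower bound $D\geq\min_i\sin\theta_i$, the exact cancellation of the $j=i$ term on $\widehat{Q}_i$, and the observation that $e^{-\hat q_i}\geq e^{-\sup|\varphi-\psi|}$ on $\widehat{Q}_i$ forces $\sum_{j\neq i}e^{-\hat q_j}\leq C h$), and each of these steps checks out; the passage from face-by-face bounds to all of $\mathbb{R}^{N-1}$ via $\cup_i\widehat{Q}_i=\mathbb{R}^{N-1}$ and the uniformity of the constants over the finitely many $i$ are also fine. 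The only slip is the sign of your third-derivative identity: differentiating $\nabla^2\varphi\,D=\sum_i e^{-\hat q_i}\,\nabla\hat q_i\otimes\nabla\hat q_i$ (equivalently $\partial_{abc}F=0$) gives
\begin{equation*}
\nabla^3\varphi\,D=-\sum_{i=1}^{n} e^{-\hat q_i}\,\nabla\hat q_i^{\otimes 3}+3\,\mathrm{Sym}\Bigl[\sum_{i=1}^{n} e^{-\hat q_i}\,\nabla^2\hat q_i\otimes\nabla\hat q_i\Bigr],
\end{equation*}
i.e.\ the negative of what you wrote; since you only estimate absolute values, this does not affect the bound, but it should be corrected. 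One could also shorten the argument slightly by noting that the explicit formula for $\nabla\varphi$ already gives $|\nabla\varphi|\leq 1/\min_i\sin\theta_i$ directly, so global boundedness of $\nabla\hat q_j$ does not need to be routed through \eqref{2.11}.
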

\subsection{Main Results}
Now we present our main results. In the following, let $(U_{e},c_{e})$ be the unique pulsating 
front with the propagation direction $e\in\mathbb{S}^{N-1}$ in the sense of Definition \ref{De 
1.2}. Under the normalization \eqref{2.7}, define
\begin{equation*}
\underline{V}(t, z):=\max _{1 \leq i \leq n}\left\{U_{e_i}\left(z \cdot e_i-c_{e_i} t, z\right)\right\},
\end{equation*}
which is a subsolution of \eqref{1.1}. The first result indicates the existence of a curved front that converges to pulsating fronts along its asymptotic planes under certain conditions. 
\begin{theorem}\label{Theorem 2.18}
	 For any $\left\{e_i\right\}_{i=1}^n$ of $\mathbb{S}^{N-1}$ such that
	\begin{description}
	\item[(i)] $e_i \cdot e_0>0$ for a fixed $e_0 \in \mathbb{S}^{N-1}$ and $e_i \neq e_j$ for $i \neq j$,
	\item[(ii)] $\hat{c}:=g\left(e_i\right) \equiv$ constant for any $i \in\{1, \ldots, n\}$, where $g(z):=c_{\frac{z}{|z|}} /\left(\frac{z}{|z|} \cdot e_0\right)$ for $z \in \mathbb{R}^N \backslash\{0\}$,
	\item[(iii)] $\hat{c}>g(e)$ for $e \in \mathcal{L}(\mathcal{Q}) \backslash\left\{e_i\right\}_{i=1}^n$, where $\mathcal{L}(\mathcal{Q}):=\left\{e \in \mathbb{S}^{N-1} ; z \cdot e \geq 0\right.$ for all $\left.z \in \mathcal{Q}\right\}$,
	\item[(iv)] $\nabla g\left(e_i\right) \cdot e_j<0$ for every $i \neq j$,
\end{description}
there exists a transition front $V(t, z)$ of \eqref{1.1} with $\Omega_t^{ \pm}$, $\Gamma_t$ given 
by
	\begin{equation}\label{2.13}
		 \Omega_t^{-}=\mathcal{Q}+\hat{c} t e_0, \quad \Omega_t^{+}=\mathbb{R}^N \backslash \overline{\mathcal{Q}}+\hat{c} t e_0 \text{ and } \; \Gamma_t=\partial \mathcal{Q}+\hat{c} t e_0,
	\end{equation}
	satisfying $V_t(t, z)>0$ for any $(t, z) \in \mathbb{R} \times \mathbb{R}^N$ and
	\begin{equation}\label{2.14}
	\frac{\left|V(t, z)-\underline{V}(t, z)\right|}{\min \left\{1, e^{-v^{\star} \min_{1\leq i\leq n} \left\{\frac{z \cdot e_i}{e_i \cdot e_{0}}-\hat{c} t \right\}}\right\}} \rightarrow 0 \text { uniformly as } d\left(z, \mathcal{R}+\hat{c} t e_0\right) \rightarrow+\infty,
    \end{equation}
    where $v^{\star}$ is a positive constant.

\end{theorem}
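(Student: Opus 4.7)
The plan is to implement the super- and subsolution scheme for polytope-shaped curved fronts due to Ninomiya-Taniguchi, Hamel-Nadirashvili and Taniguchi, in the form recently adapted to the spatially periodic bistable setting by Guo-Wang \cite{Guo H2025}, with the modifications required by the degenerate combustion reaction. I split the argument into six steps: (a) check that $\underline V$ is a generalized subsolution of \eqref{1.1}; (b) construct a supersolution $\overline V\ge\underline V$ whose level sets shadow $\Sigma+\hat c t e_0$; (c) solve the Cauchy problem on $[-T,+\infty)\times\mathbb{R}^N$ with datum $\underline V(-T,\cdot)$ and let $T\to+\infty$ to obtain an entire solution $V$; (d) identify $V$ as a transition front with the sets \eqref{2.13}; (e) derive the asymptotic \eqref{2.14}; and (f) prove $V_t>0$.

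Step (a) is immediate because each $U_{e_i}(z\cdot e_i-c_{e_i}t,z)$ solves \eqref{1.1} by Definition \ref{De 1.2} and the pointwise maximum of solutions is a generalized subsolution. For (b), assumption (ii) forces $c_{e_i}=\hat c\sin\theta_i$, so the phase of the $i$-th planar front reads $z\cdot e_i-c_{e_i}t=(y-\psi_i(x)-\hat c t)\sin\theta_i$ and all $n$ planar fronts advance vertically at the common speed $\hat c$. Following \cite{Guo H2025} one replaces the non-smooth envelope $\psi(x)=\max_i\psi_i(x)$ by the smooth surface $\varphi(x)$ from \eqref{2.9} and takes an ansatz of the form
\[
\overline V(t,z)\;:=\;\Phi\bigl(y-\varphi(x)-\hat c t,\,x,\,z;\,U_{e_1},\dots,U_{e_n}\bigr)\;+\;C_\ast\,h(x),
\]
where $\Phi$ is a smooth gluing of the single-facet profiles $U_{e_i}\bigl((y-\varphi(x)-\hat c t)\sin\theta_i,\,z\bigr)$ that on each $\widehat Q_i$ with large $d(x,\widehat{\mathcal R})$ reduces to its $i$-th component, and $C_\ast$ is a large constant. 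Applying $Lw:=w_t-\Delta_z w-f(z,w)$ and using \eqref{2.2.1} for each $U_{e_i}$ produces an error organised in three families: first-order terms in $\nabla\varphi+\nu_i\cot\theta_i$, second-order terms in $\nabla^2\varphi$, and a \emph{Fréchet mismatch} between the facet direction $e_i$ and the local normal to $\Sigma$. By \eqref{2.11}-\eqref{2.12} the first two are $O(h(x))$, and by Theorem \ref{Theorem 2.13} the third is also $O(h(x))$. The correction $C_\ast h(x)$ produces a negative reactive contribution via \eqref{1.6} in the intermediate layer, while condition (iv), $\nabla g(e_i)\cdot e_j<0$, gives the right sign for the interaction between neighbouring facets. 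With $C_\ast$ large enough this yields $L\overline V\ge 0$, and $\overline V\ge\underline V$ follows from $\varphi\ge\psi$ together with $\partial_s U_{e_i}<0$.

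For step (c), let $V^T$ solve the Cauchy problem on $[-T,\infty)\times\mathbb{R}^N$ with $V^T(-T,\cdot)=\underline V(-T,\cdot)$; the comparison principle gives $\underline V\le V^T\le\overline V$, parabolic Schauder estimates using (F1) give uniform bounds on derivatives, and a diagonal extraction yields an entire solution $V$ with the same sandwich. For (d), Lemma \ref{Theorem 2.16}(ii) implies $\Sigma+\hat c te_0$ stays at bounded distance from $\partial\mathcal Q+\hat c te_0$, so \eqref{2.13} satisfies \eqref{1.7}-\eqref{1.9}; the sandwich together with Theorem \ref{Theorem 2.3}(iv) produces \eqref{1.10}. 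For (e), combining \eqref{2.10} with Theorem \ref{Theorem 2.3}(iv) gives $|\overline V-\underline V|$ decaying exponentially in $d(x,\widehat{\mathcal R})$ and in the phase variable $\min_i(y-\psi_i(x)-\hat c t)=\min_i\{z\cdot e_i/(e_i\cdot e_0)-\hat c t\}$, which is exactly \eqref{2.14} with some $v^\ast\in(0,\kappa)$. For (f), compare $V(t+\tau,\cdot)$ to $V(t,\cdot)$ for $\tau>0$ small: since $\underline V_t\ge 0$ (as $c_{e_i}>0$, $\partial_s U_{e_i}<0$), the inequality $V(-T+\tau,\cdot)\ge V(-T,\cdot)$ holds and the comparison principle propagates it; the strong parabolic maximum principle, together with Theorem \ref{Theorem 2.12}, then upgrades this to $V_t>0$ everywhere.

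The main obstacle is step (b). The degeneracy of the combustion reaction at $u=0$ forbids a KPP-style positive correction at the leading edge, so the additive term $C_\ast h(x)$ must do its job only in the intermediate layer $\{U_{e_i}\in(\theta,1)\}$, where \eqref{1.6} supplies the needed negativity of $f_u$; ahead of the front, where $f\equiv 0$ by (F3), the error terms have to be absorbed against the exponential decay $U_{e_i}\sim C_1 e^{-c_{e_i}s}$ of Theorem \ref{Theorem 2.3}(iv), which forces the decay exponent of $h$ in \eqref{2.10} to be matched carefully with $\kappa$ and with the decay exponent of $\partial_s U_{e_i}$. The non-alignment between the facet direction $e_i$ and the true normal to $\Sigma$ generates a cross term whose control is not immediate; the Fréchet estimates of Theorem \ref{Theorem 2.13} quantify this mismatch, and condition (iv), $\nabla g(e_i)\cdot e_j<0$, is precisely what makes the quadratic form associated with the corner interaction negative definite, closing the supersolution inequality. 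This mirrors the heart of the bistable construction in \cite{Guo H2025} but requires genuinely new estimates because of the flatness of $f$ near $u=0$.
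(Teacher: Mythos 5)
Your overall scheme (subsolution $\underline V$, a supersolution shadowing $\Sigma+\hat c te_0$, Cauchy problems from $t=-n$ with data $\underline V(-n,\cdot)$, sandwich, monotonicity by time shift and the strong maximum principle) is the same as the paper's, but the heart of the matter --- the supersolution inequality --- is not closed by your ansatz, and the way you propose to close it would fail. Your correction term is a phase-independent additive term $C_\ast h(x)$ with $C_\ast$ \emph{large}. Ahead of the front, where $f\equiv 0$, the operator applied to this term produces $-C_\ast\Delta h(x)$, a term of size $C_\ast h(x)$ which does \emph{not} decay in the moving phase variable; all the positive quantities available there (the speed surplus times $|\partial_\xi U|$, the exponential tail of $U_{e_i}$) do decay exponentially in the phase, so they cannot absorb it, contrary to your claim that the leading-edge errors can be ``absorbed against the exponential decay $U_{e_i}\sim C_1e^{-c_{e_i}s}$''. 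The paper's fix is structural: the correction is $\varepsilon h(\alpha x)\bigl[U_{e_i}^{\beta}(\eta)\,\omega(\xi)+(1-\omega(\xi))\bigr]$ with a \emph{small} power $\beta$ and a \emph{small} amplitude $\varepsilon$, so that ahead of the front the time translation of the factor $U_{e_i}^{\beta}(\eta)$ (which travels at vertical speed $\hat c>c_{e_i}$, cf.\ \eqref{3.15}--\eqref{3.21}) generates a positive term $\sim\varepsilon\beta\hat c c_{e_i}h\,U_{e_i}^{\beta}$ of the \emph{same} order and decay rate as the bad terms, together with the rescaling $y=\varphi(\alpha x)/\alpha$ which makes all geometric errors $O(\alpha h)$. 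You also misplace the mechanism in the intermediate layer: \eqref{1.6} only applies for $u\ge 1-\gamma_\star$ (behind the front, Case 2); in the middle zone $f_u$ has no sign, and the positivity there comes from the speed inequality \eqref{L 3.1}, $-\xi_t-c_{e(x)}\ge C_0h(\alpha x)$ (this is where hypotheses (ii)--(iv) actually enter, via \cite[Lemma 3.1]{Guo H2025}), multiplied by the nondegenerate slope $-\partial_\xi U_{e(x)}\ge r$ of Theorem \ref{Theorem 2.12}; this term is of order $h$, so it can only dominate corrections of size $O(\varepsilon h)+O(\alpha h)$ --- a \emph{large} $C_\ast$ destroys this balance because the reaction mismatch $f(\Phi+C_\ast h)-f(\Phi)$ is then of order $C_\ast h$ with no sign. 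Your statement that (iv) makes a ``corner-interaction quadratic form negative definite'' does not correspond to any step of the actual argument and is not a substitute for \eqref{L 3.1}.

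A second, independent gap concerns the asymptotics \eqref{2.14}. With a single supersolution carrying a fixed large $C_\ast$, the sandwich only gives $|V-\underline V|\le|\overline V-\underline V|$, and your correction $C_\ast h(x)$ neither tends to $0$ as $d(z,\mathcal R+\hat cte_0)\to+\infty$ along directions with $d(x,\widehat{\mathcal R})$ bounded, nor decays in the phase, so even the weighted ratio in \eqref{2.14} cannot be shown to vanish. The paper needs a one-parameter \emph{family} of supersolutions: first the uniform bound \eqref{3.6}, $|\bar V-\underline V|\le 2\varepsilon$, and then the weighted estimate \eqref{3.41.2}, $|\bar V-\underline V|\le C^{\star}\varepsilon\min\{1,e^{-2v^{\star}\min_i\{\cdot\}}\}$ (which uses precisely the $U_{e_i}^{\beta}(\eta)$ decay of the correction), after which one lets $\varepsilon\to0$ while $V$ itself is independent of $\varepsilon$. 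Finally, note that your claim ``$\overline V\ge\underline V$ follows from $\varphi\ge\psi$ and $\partial_sU_{e_i}<0$'' is too quick even in the paper's setting: since the main profile $U_{e(x)}(\xi,\cdot)$ and the planar profiles $U_{e_j}$ have different directions and phases, the paper proves \eqref{3.7} by a sliding/comparison argument (Step 3 of Lemma \ref{Lemma 3.2}, in the spirit of Lemma \ref{Lemma 2.15}), not by a pointwise monotonicity remark; with a glued $\Phi$ you would face the same issue near the ridges, where no single facet profile dominates $\underline V$.
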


The next theorem shows the uniqueness of the curved front $V(t, z)$ given in Theorem \ref{Theorem 2.18}.
\begin{theorem}\label{Theorem 2.19}
	Suppose that (F1)-(F4) hold and (i)-(iv) of Theorem \ref{Theorem 2.18} hold. Let $V(t, z)$ be given in Theorem \ref{Theorem 2.18}. If there exists an entire solution $V_1(t, z)$ of \eqref{1.1} satisfying $0 \leq V_1 \leq 1$ and
	\begin{equation}\label{2.15}
		\left|V_1(t, z)-\underline{V}(t, z)\right| \rightarrow 0, \text { uniformly as } d\left(z, \mathcal{R}+\hat{c} t e_0\right) \rightarrow+\infty.
	\end{equation}
	then $V_1(t, z) \equiv V(t, z)$ in $\mathbb{R} \times \mathbb{R}^N$.
\end{theorem}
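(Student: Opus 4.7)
The overall strategy is to apply the comparison Lemma \ref{Lemma 2.15} between $V$ and $V_1$ in both directions, producing two minimal time shifts $T_1,T_2\le 0$, and then exploit $V_t>0$ from Theorem \ref{Theorem 2.18} to force $T_1=T_2=0$. As a preliminary step I would use \eqref{2.15} to show that $V_1$ is itself a transition front with the \emph{same} sets $\Omega_t^{\pm}$ and $\Gamma_t$ as in \eqref{2.13}. Since $\underline V=\max_i U_{e_i}(z\cdot e_i-c_{e_i}t,z)$ tends to $0$ on $\Omega_t^{-}=\mathcal{Q}+\hat c t e_0$ and to $1$ on $\Omega_t^{+}$ as $d(z,\Gamma_t)\to+\infty$ (using condition (ii) of Theorem \ref{Theorem 2.18} together with $\mathcal{R}+\hat c te_0\subset\Gamma_t$), \eqref{2.15} gives \eqref{1.10} for $V_1$. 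The invasion properties $\Omega_t^{+}\supset\Omega_s^{+}$ for $t\ge s$ and $d(\Gamma_t,\Gamma_s)\to+\infty$ are built into the sets \eqref{2.13} via $e_i\cdot e_0>0$, so both $V$ and $V_1$ are sub-invasions and super-invasions of $0$ by $1$.

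Next I would apply Lemma \ref{Lemma 2.15} twice: once with $V_1$ as sub-invasion and $V$ as super-invasion to obtain the smallest $T_1\in\mathbb{R}$ with $V(t+T_1,z)\ge V_1(t,z)$ and a contact sequence $(t_n,z_n)$ satisfying $d(z_n,\Gamma_{t_n})\le C$ and $V(t_n+T_1,z_n)-V_1(t_n,z_n)\to 0$; and symmetrically with roles swapped to obtain $T_2$ with $V_1(t+T_2,z)\ge V(t,z)$. The crux is to prove $T_1\le 0$ (the argument for $T_2$ being identical). I distinguish two cases for the contact sequence. If, along a subsequence, $d(z_n,\mathcal{R}+\hat c t_n e_0)\to+\infty$, then the $z_n$ lie near a single facet $\widetilde Q_i+\hat c t_ne_0$ with $s_n:=z_n\cdot e_i-c_{e_i}t_n$ bounded; using \eqref{2.14} and \eqref{2.15} to reduce $V$ and $V_1$ near the contact points to $\underline V$, the contact relation becomes $U_{e_i}(s_n-c_{e_i}T_1,z_n)-U_{e_i}(s_n,z_n)\to 0$. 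Since $-\partial_sU_{e_i}\ge r>0$ on the bounded $s$-range uniformly in $z$ by Theorem \ref{Theorem 2.12}, $T_1>0$ would contradict this, hence $T_1\le 0$.

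The harder case is when $d(z_n,\mathcal{R}+\hat c t_ne_0)$ stays bounded. Here I would choose lattice vectors $y_n\in\prod_{i=1}^N L_i\mathbb{Z}$ with $z_n-y_n$ in a single cell of periodicity, so that by the $\mathbb{L}^N$-periodicity of $f$ the translates $V(\cdot+t_n,\cdot+y_n)$ and $V_1(\cdot+t_n,\cdot+y_n)$ remain solutions of \eqref{1.1}. Standard parabolic estimates yield subsequential $C^{1,2}_{\mathrm{loc}}$-limits $V_\infty$ and $V_{1,\infty}$, with $V_\infty(t+T_1,z)\ge V_{1,\infty}(t,z)$ and equality at $(0,z_\infty)$, where $z_\infty=\lim(z_n-y_n)$. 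The strong maximum principle applied to the linear parabolic equation for $w:=V_\infty(\cdot+T_1,\cdot)-V_{1,\infty}\ge 0$ forces $w\equiv 0$ in the parabolic past of $(0,z_\infty)$; uniqueness of the Cauchy problem then extends this to $V_\infty(t+T_1,z)\equiv V_{1,\infty}(t,z)$ on $\mathbb{R}\times\mathbb{R}^N$. Because both limit solutions inherit the far-field asymptotics from \eqref{2.14} and \eqref{2.15} with the same $\underline V$-type profile, letting $z$ tend to infinity along an interior direction of any facet $\widetilde Q_i$ reduces this identity to $U_{e_i}(s-c_{e_i}T_1,z)\equiv U_{e_i}(s,z)$, and the strict monotonicity of $U_{e_i}$ in $s$ (Theorem \ref{Theorem 2.12}) gives $T_1=0$.

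Combining the two cases yields $T_1\le 0$, and by symmetry $T_2\le 0$. Substituting the two comparison inequalities into one another gives
\begin{equation*}
V(t+T_1+T_2,z)\ge V_1(t+T_2,z)\ge V(t,z)\quad\text{for all }(t,z)\in\mathbb{R}\times\mathbb{R}^N,
\end{equation*}
and since $V_t>0$ from Theorem \ref{Theorem 2.18}, this forces $T_1+T_2\ge 0$. Together with $T_1,T_2\le 0$ we obtain $T_1=T_2=0$, hence $V(t,z)\ge V_1(t,z)$ and $V_1(t,z)\ge V(t,z)$ everywhere, giving $V_1\equiv V$. The main obstacle is the second case above: it requires a careful use of $\mathbb{L}^N$-periodicity to convert the diverging sequence $(t_n,z_n)$ into a compact limit problem, extension of the strong-maximum-principle equality beyond the parabolic past, and a delicate passage to the limit along a facet direction that genuinely invokes the exponential far-field control provided by Theorems \ref{Theorem 2.5} and \ref{Theorem 2.13} to isolate a single $U_{e_i}$-profile in the limit identity.
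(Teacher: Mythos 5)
Your argument is correct in substance, but it takes a noticeably different route from the paper at the key technical point, so let me compare. The paper does not invoke Lemma \ref{Lemma 2.15} and does not use $V_t>0$: it slides $V$ directly against $V_1$, first getting $V(t+\tau,\cdot)\ge V_1(t,\cdot)$ for large $\tau$, then defining $\tau_*=\inf\{\tau:\ V(t+\tau,\cdot)\ge V_1(t,\cdot)\}$ and pinning $\tau_*=0$ (both orderings), using that $\underline V$ is strictly increasing in $t$ with a modulus $\gamma(\tau)$; crucially, the troublesome case of a contact sequence staying at bounded distance from $\mathcal{R}+\hat c t_k e_0$ is disposed of \emph{without any compactness argument}: the contact point is moved a bounded spatial distance (along $r_1e_0-r_2e_l$) to a point at time $t_k-1$ that is far from the ridge, where the gap is at least $\gamma(\tau_*)>0$, while linear parabolic estimates propagate the vanishing gap there, a contradiction. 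You instead outsource the first stage to Lemma \ref{Lemma 2.15}, obtain one-sided minimal shifts $T_1,T_2$, and in the near-ridge case pass to a limit along lattice translations, apply the strong maximum principle plus Cauchy uniqueness to get $V_\infty(\cdot+T_1,\cdot)\equiv V_{1,\infty}$, and read off $T_1=0$ from the far-field profile; you then close with $V_t>0$, which is legitimate since $T_1\le 0$ and $V_t>0$ already give $V\ge V_1$, and symmetrically. Both routes work; what your route costs is exactly the step you flag: after translating by $(t_n,y_n)$ the drift $v_n=\hat c t_ne_0-y_n$ may escape to infinity along a ridge, so the limit far-field profile is not $\underline V$ attached to ``any facet $\widetilde Q_i$'' but a shifted sub-maximum $\max_{k\in S}U_{e_k}(z\cdot e_k-c_{e_k}t-b_k,z)$ over the facets surviving in the limit (at least two survive in your Case B), and one must check that the uniform asymptotics \eqref{2.14}--\eqref{2.15} pass to this degenerate limit, with the distance measured from the (Kuratowski) limit of the translated ridge sets; with that bookkeeping done, Theorem \ref{Theorem 2.12} indeed forces $T_1=0$. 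The paper's construction of the shifted comparison point avoids all of this limit geometry at the cost of a slightly more ad hoc choice of points; your version is heavier but standard, and also yields the extra information that the blow-up limits coincide.
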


Moreover, we prove that the curved front $V(t, z)$ given in Theorem \ref{Theorem 2.18}
is  asymptotically stable. To this end, we consider the following
Cauchy problem:
\begin{equation}\label{Cauchy problem}
	\begin{cases}\partial_t u-\Delta_{z} u=f(z, u) & \text { when } t>0 , z \in \mathbb{R}^N, \\ u(t, z)=u_0 (z) & \text { when } t=0 , z \in \mathbb{R}^N.\end{cases}
\end{equation}
Then we can obtain the stability of the curved front $V(t, z)$ given in Theorem \ref{Theorem 2.18}.
\begin{theorem}\label{Theorem 2.20}
   Suppose that (F1)-(F4) hold and (i)-(iv) of Theorem \ref{Theorem 2.18} hold. Let $V(t, z)$ be given in Theorem \ref{Theorem 2.18}. Assume that $u_0 \in C\left(\mathbb{R}^N,[0,1]\right)$ satisfies
	\begin{equation}\label{2.16}
		\underline{V}(0, z) \leq u_0(z)
	\end{equation}
	for all $z \in \mathbb{R}^N$, and
	\begin{equation}\label{2.17}
	 \frac{\left|u_0(z)-\underline{V}(0, z)\right|}{\min \left\{1, e^{- v \min_{1\leq i\leq n} 
	 \left\{\frac{z \cdot e_i}{e_i \cdot e_{0}} \right\}}\right\}}\rightarrow 0 \text { uniformly as } 
	 d\left(z, \mathcal{R}\right) \rightarrow+\infty
    \end{equation}
	for some constant $v>0$. Then the solution $u(t, z)$ of Cauchy problem \eqref{Cauchy 
	problem} for $t \geq 0$ with initial condition $u(0, z)=u_0(z)$ satisfies
	\begin{equation*}
	\lim _{t \rightarrow+\infty}\|u(t,  \cdot)-V(t, \cdot)\|_{L^{\infty}\left(\mathbb{R}^N\right)}=0.
    \end{equation*}
\end{theorem}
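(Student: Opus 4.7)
The plan is to prove the asymptotic stability by the super- and sub-solution sandwich technique. I will construct a super-invasion $w^{+}(t,z)$ of \eqref{1.1} with $w^{+}(0,\cdot)\geq u_{0}$ and a sub-invasion $w^{-}(t,z)$ with $w^{-}(0,\cdot)\leq u_{0}$, both satisfying $w^{\pm}(t,\cdot)-V(t,\cdot)\to 0$ in $L^{\infty}(\mathbb{R}^{N})$ as $t\to+\infty$. Standard parabolic comparison on the Cauchy problem \eqref{Cauchy problem} then squeezes $u(t,\cdot)$ between $w^{-}$ and $w^{+}$, and the claim follows on letting $t\to+\infty$.

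The upper bound is built as
$$w^{+}(t,z):=V\bigl(t+\xi(t),z\bigr)+\delta(t)\,q\bigl(V(t+\xi(t),z)\bigr),$$
where $\xi:[0,+\infty)\to[0,\xi_{0}]$ is non-increasing with $\xi(+\infty)=0$, $\delta(t)=\delta_{0}e^{-\mu t}$, and $q\in C^{2}([0,1])$ is a cutoff with $q\equiv 0$ on $[0,\theta/2]$ and $q>0$ on $(\theta/2,1]$. The cutoff is essential to absorb the combustion degeneracy $f\equiv 0$ on $[0,\theta]$. Using the equation $V_{t}=\Delta V+f(z,V)$, the supersolution inequality $\partial_{t}w^{+}-\Delta w^{+}-f(z,w^{+})\geq 0$ reduces to a pointwise inequality that I split according to the value of $V(t+\xi(t),z)$: (a) on the leading edge $V\leq\theta/2$, $q=0$ so $w^{+}=V(t+\xi(t),z)$ and the inequality reduces to the sign of $\xi'(t)\,V_{t}$; (b) on the intermediate band $\theta/2\leq V\leq 1-\gamma_{\star}$, Theorem \ref{Theorem 2.12} gives $-\partial_{s}U_{e}\geq r$ on a compact $s$-interval, which transfers to a uniform lower bound on $V_{t}$ so the shift term absorbs the residual; (c) on the burnt region $V\geq 1-\gamma_{\star}$, \eqref{1.6} yields $f(z,V+\delta q)-f(z,V)\leq -(\kappa_{1}/2)\delta q$, so picking $\mu\leq\kappa_{1}/4$ dominates $\delta'(t)q$. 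The initial inequality $w^{+}(0,\cdot)\geq u_{0}$ follows from \eqref{2.17}: outside a tubular neighborhood of $\mathcal{R}$, the exponential decay of $u_{0}-\underline{V}(0,\cdot)$ is dominated by a suitable shift of $V$; near $\mathcal{R}$, $V(\xi_{0},z)\to 1$ as $\xi_{0}\to+\infty$ together with $u_{0}\leq 1$ closes the bound.

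For the lower bound, I first invoke \eqref{2.16} together with the fact that $\underline{V}$ is a sub-invasion of \eqref{1.1} (being the maximum of the pulsating-front solutions $U_{e_{i}}$) to get $u(t,z)\geq\underline{V}(t,z)$ for $t\geq 0$ by parabolic comparison. To upgrade this to convergence toward $V$ rather than $\underline{V}$, I construct a refined sub-invasion
$$w^{-}(t,z):=\max\bigl\{0,\,V(t-\xi(t),z)-\delta(t)\,q\bigl(V(t-\xi(t),z)\bigr)\bigr\}$$
with analogous $\xi,\delta,q$, verify the subsolution inequality by the same three-region split (with the sign of the perturbation flipped), and show $w^{-}(0,\cdot)\leq u_{0}$. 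The main obstacle is precisely this last verification near the ridges: \eqref{2.14} tells us that $V-\underline{V}$ can be of order one on a neighborhood of $\mathcal{R}$, so the bound $u_{0}\geq\underline{V}(0,\cdot)$ does not directly give $u_{0}\geq V(0,\cdot)-o(1)$ there. To bridge this, I wait until a large but finite time $T_{\ast}$ so that the local attraction toward the equilibrium $u\equiv 1$ (which is exponentially stable by \eqref{1.6}) drives $u(T_{\ast},\cdot)$ within $\varepsilon$ of $1$ on any prescribed neighborhood of the shifted ridges $\mathcal{R}+\hat{c}T_{\ast}e_{0}$; this allows $w^{-}(T_{\ast},\cdot)\leq u(T_{\ast},\cdot)$ after a minor readjustment of $\xi$ and $\delta$. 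Comparison from time $T_{\ast}$ onward, together with the uniform convergence $w^{\pm}(t,\cdot)-V(t,\cdot)\to 0$, then yields $\|u(t,\cdot)-V(t,\cdot)\|_{L^{\infty}(\mathbb{R}^{N})}\to 0$.
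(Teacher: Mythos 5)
Your construction breaks down at two concrete points. First, the sign of the shift term. You take $w^{+}(t,z)=V(t+\xi(t),z)+\delta(t)q(V(t+\xi(t),z))$ with $\xi$ \emph{non-increasing} and $\xi(+\infty)=0$, precisely so that $w^{+}(t,\cdot)\to V(t,\cdot)$. But on the leading edge (region (a), where $V\le\theta/2$) you have $q\equiv 0$ and $f\equiv 0$, so
$\partial_t w^{+}-\Delta w^{+}-f(z,w^{+})=\xi'(t)\,V_t(t+\xi(t),z)$, which is strictly negative wherever $\xi'(t)<0$, since $V_t>0$ everywhere by Theorem \ref{Theorem 2.18}. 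Nothing is available there to absorb this error: the combustion nonlinearity is degenerate and your cutoff perturbation vanishes. The same mirrored sign failure occurs for $w^{-}$. This is exactly why the paper's Lemmas \ref{Lemma 4.1}--\ref{Lemma 4.2} use shifts whose offset \emph{grows} in $t$ (e.g.\ $\tilde\tau(t)=t-\tilde\varrho\delta e^{-\lambda t}+\tilde\varrho\delta$), so the extra term $+\tilde\varrho\delta\lambda e^{-\lambda t}V_{\tilde\tau}\ge 0$ helps rather than hurts; the price is that the barriers converge only to small time-translates $V(t\pm\tilde\varrho\delta,\cdot)$, and the translate is then removed by a separate limiting argument (the discrete-time sequence $t_m=L_Nm/\hat c$ exploiting periodicity, identification of the limit via the uniqueness Theorem \ref{Theorem 2.19}, then $\delta\to 0$). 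You cannot have it both ways: a barrier that converges to $V$ itself via a shrinking shift is not a super/subsolution in the degenerate leading edge.

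Second, the initial ordering $w^{+}(0,\cdot)\ge u_0$ fails in the deep unburnt region for the class of data allowed by \eqref{2.17}. There the hypothesis only gives $u_0-\underline V(0,\cdot)=o\bigl(e^{-v\min_i\{z\cdot e_i/(e_i\cdot e_0)\}}\bigr)$ with a constant $v>0$ that may be arbitrarily small, whereas your comparison function exceeds $\underline V(0,\cdot)$ there only by $V(\xi_0,z)-\underline V(0,z)$ (your $q$ vanishes where $V\le\theta/2$), a quantity which, by Theorem \ref{Theorem 2.5}, decays at the pulsating-front rate $\sim\tfrac{3\kappa}{4}$; when $v$ is smaller than this rate, no finite shift $\xi_0$ dominates the permitted excess far ahead of the front. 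The paper handles this with the additive term $\delta\,U_{e_i}^{\beta}(\eta)$ with the \emph{fractional} power $\beta_1=\min\{v/K,\beta^*\}$, so that $U_{e_i}^{\beta_1}(\eta)\ge(C_1/2)^{\beta_1}e^{-\beta_1c_{e_i}\eta}\ge(C_1/2)^{\beta_1}e^{-v\eta}$ matches the slow decay in \eqref{2.17} (see \eqref{4.19}--\eqref{4.21}); this ingredient has no counterpart in your proposal. Finally, your plan to "wait until $T_*$ so that $u$ is within $\varepsilon$ of $1$ near the shifted ridges" is only sketched: near $\mathcal{R}+\hat c t e_0$ the subsolution $\underline V$ is bounded away from $1$, the ridge set is unbounded, and the uniform-in-space finite-time rise to $1-\varepsilon$ would itself need an invasion argument; the paper instead gets the needed closeness at a large time through the weighted estimate \eqref{4.35} combined with the uniform lower bound \eqref{4.36} for $U_{e_i}^{\beta_2}$ on bounded $\eta$, which is what makes the comparison at time $T_\vartheta$ go through.
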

\section{Existence and uniqueness }
\noindent
Set $e_0=(0,0, \ldots, 1)$ and $\left\{e_i\right\}_{i=1}^n$ such that $e_i \cdot e_0>0$ for each $i$ $(1\leq i \leq n)$. Let $y=\varphi(x)$ be the surface determined by $\left\{e_i\right\}_{i=1}^n$ given in Subsection $2.2$ and rescale $y=\varphi(x)$ with the parameter $\alpha>0$ to $y=\varphi(\alpha x) / \alpha$. For the rest of this paper, we always define $\zeta:=\alpha x$ and $\zeta_{i}:=\alpha x_{i}$ for any $i\in\{1, \ldots, N-1\}$. We construct a vector-valued function
\begin{equation}\label{3.1}
e(x)=\left(-\frac{\nabla \varphi(\alpha x)}{\sqrt{1+|\nabla \varphi(\alpha x)|^2}}, \frac{1}{\sqrt{1+|\nabla \varphi(\alpha x)|^2}}\right).
\end{equation}
It can be obtained by directly calculating that 
\begin{equation*}
\partial_{x_i} e(x)=\left(-\frac{\alpha \left[\left(1+|\nabla \varphi|^2\right) \nabla\partial_{\zeta_i} \varphi -\left(\nabla \varphi \cdot \nabla\partial_{\zeta_i} \varphi\right) \nabla \varphi\right]}{\left(1+|\nabla \varphi|^2\right)^{\frac{3}{2}}}, -\frac{\alpha \nabla \varphi \cdot \nabla\partial_{\zeta_i} \varphi}{\left(1+|\nabla \varphi|^2\right)^{\frac{3}{2}}}\right)
\end{equation*}
and
\begin{equation*}
	\partial_{x_i x_j} e(x)= \left(P_{ij}, Q_{ij}\right),
\end{equation*}
where 
\begin{align*}
	P_{i j}&=\frac{\alpha^2}{\left(1+\left|\nabla \varphi\right|{ }^2\right)^{\frac{3}{2}}}\left[-\left(1+\left|\nabla \varphi\right|^2\right) \nabla \partial_{\zeta_i} \partial_{\zeta_j} \varphi+\left(\nabla \partial_{\zeta_i} \varphi \cdot \nabla \partial_{\zeta_j} \varphi\right) \nabla \varphi+\left(\nabla \varphi \cdot \nabla \partial_{\zeta_j} \varphi\right) \nabla \partial_{\zeta_i} \varphi\right. \\
	&\left.\quad+\left(\nabla \varphi \cdot \nabla \partial_{\zeta_i} \varphi\right) \nabla \partial_{\zeta_j} \varphi+\left(\nabla \varphi \cdot \nabla\partial_{\zeta_i} \partial_{\zeta_j} \varphi\right) \nabla \varphi-\frac{3\left(\nabla \varphi \cdot \nabla \partial_{\zeta_i} \varphi\right)\left(\nabla \varphi \cdot \nabla \partial_{\zeta_j} \varphi\right)}{1+\left|\nabla \varphi\right|^2} \nabla \varphi \right]  
\end{align*}
and 
\begin{equation*}
	Q_{i j}=-\frac{\alpha^2}{\left(1+\left|\nabla \varphi\right|^2\right)^{\frac{3}{2}}}\left[ \nabla \varphi \cdot \nabla \partial_{\zeta_i} \partial_{\zeta_j} \varphi+\nabla \partial_{\zeta_i} \varphi \cdot \nabla \partial_{\zeta_j} \varphi-\frac{3\left(\nabla \varphi \cdot \nabla \partial_{\zeta_i} \varphi\right)\left(\nabla \varphi \cdot \nabla \partial_{\zeta_j} \varphi\right)}{1+\left|\nabla \varphi\right|^2}\right]
\end{equation*}
for every $i, j \in\{1, \ldots, N-1\}$, and $\varphi$ takes the value at $\alpha x$. By \eqref{2.11} and \eqref{2.12}, there exist two positive constants $M_2$ and $M_3$ such that
\begin{equation}\label{3.2}
	\left|\partial_{x_i} e(x)\right| \leq \alpha M_2 h(\alpha x) \text { and }\left|\partial_{x_i x_j} e(x)\right| \leq \alpha^2 M_3 h(\alpha x)
\end{equation}
for all $x \in \mathbb{R}^{N-1}$ and $i, j \in\{1, \ldots, N-1\}$.
Define a smooth function $\omega(s)\in C^\infty(\mathbb{R})$ satisfying $\omega^{\prime}(s) \geq 0$ and
\begin{equation}\label{3.3}
	 \begin{cases}\omega(s)=0,  \text { if } s \leq-1, \\ 0<\omega(s)<1,  \text { if } s \in(-1,1), \\ \omega(s)=1,  \text { if } s \geq 1.\end{cases}
\end{equation}
Now we construct two functions $\xi$ and $\eta$, where
\begin{equation}\label{3.4}
	\xi(t, x, y)=\frac{y-\hat{c} t-\varphi(\alpha x) / \alpha}{\sqrt{1+|\nabla \varphi(\alpha x)|^2}}, \quad \eta(t, x, y)=y-\hat{c} t-\varphi(\alpha x) / \alpha,
\end{equation}
where $\hat{c}$ is a positive constant given in Theorem \ref{Theorem 2.18}. It follows from \cite[Lemma 3.1]{Guo H2025} that, assuming (i)-(iv) of Theorem \ref{Theorem 2.18} hold, there exists $C_{0}>0$ such that 
\begin{equation}\label{L 3.1}
	-\xi_t-c_{e(x)} \geq C_{0} h(\alpha x), \; \text { for some } C_{0}>0.
\end{equation}
 \subsection{Construction of the supersolution}
\noindent
\begin{lemma}\label{Lemma 3.2}
    Suppose that (F1)-(F4) hold and (i)-(iv) of Theorem \ref{Theorem 2.18} hold. Then there exists 
    a constant $\beta^*>0$ such that for any $\beta \in\left(0, 
    \beta^*\right]$ there exist positive constants $\varepsilon_0^{+}(\beta)$ and $\alpha_0^{+}(\beta, 
    \varepsilon)$ such that for any 
    $0<\varepsilon<\varepsilon_0^{+}(\beta)$ and any $0<\alpha<\alpha_0^{+}(\beta, \varepsilon)$, the function
	\begin{equation}\label{3.5}
	\overline{V}(t, x, y):=U_{e(x)}(\xi, x, y)+\varepsilon h(\alpha x) \times\left[U_{e_i}^\beta(\eta, 
	x, y) \omega(\xi)+(1-\omega(\xi))\right]
    \end{equation}
	is a supersolution of Eq.\eqref{1.1}, where $e_i$ is an arbitrary fixed unit vector in $\left\{e_i\right\}_{i=1}^n$ and $\left\{e_i\right\}_{i=1}^n$ are given in Subsection $2.2$. Moreover,
	\begin{gather}
		\left|\bar{V}(t, x, y)-\underline{V}(t, x, y)\right| \leq 2\varepsilon, \; d\left((x,y), 
		\mathcal{R}+\hat{c} t e_0\right) \rightarrow+\infty,\label{3.6}\\
		\bar{V}(t, x, y) \geq \underline{V}(t, x, y) \ \text {in}\  \mathbb{R}\times\mathbb{R}^N, 
		\label{3.7}\\
		\frac{\partial}{\partial t} \bar{V}(t, x, y)>0 \ \text {in}\ 
		\mathbb{R}\times\mathbb{R}^N.\label{3.8}
	\end{gather}
\end{lemma}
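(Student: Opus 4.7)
The plan is to verify the differential inequality $L[\overline{V}] := \partial_t \overline{V} - \Delta_{x,y}\overline{V} - f(z,\overline{V}) \geq 0$ by splitting the computation into three $\xi$-regions according to the value of $\omega(\xi)$, and then to read off \eqref{3.6}--\eqref{3.8} from the structure of the ansatz.

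First I would treat the leading term $U_{e(x)}(\xi,x,y)$. Since $e(x)$ depends on $x$ through $\nabla\varphi(\alpha x)$, substituting into $L$ and eliminating $f(z,U_{e(x)})$ via \eqref{2.2.1} (applied with direction $e(x)$) leaves a residue of the form
\begin{equation*}
 \bigl(-\xi_t - c_{e(x)}\bigr)\,\partial_s U_{e(x)}(\xi,x,y) + \mathcal{E}(\alpha,x,\xi),
\end{equation*}
where $\mathcal{E}$ collects all chain-rule errors: the contributions involving $\partial_{x_i}e(x)$ and $\partial_{x_i x_j}e(x)$ paired with the first and second Fr\'echet derivatives of $U$ in the direction variable, together with the geometric factors $\xi_{x_i}$ and $\xi_{x_i x_j}$. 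By \eqref{3.2}, Lemma \ref{Lemma 2.17}, and the Fr\'echet estimates of Theorem \ref{Theorem 2.13}, one obtains $|\mathcal{E}| \leq C\alpha\, h(\alpha x)$ multiplied by an exponential weight in $|\xi|$ furnished by Theorem \ref{Theorem 2.5}. Estimate \eqref{L 3.1}, combined with the strict monotonicity $-\partial_s U_{e(x)} \geq r$ of Theorem \ref{Theorem 2.12} on bounded $\xi$-windows and the sharp rate $\partial_s U_{e(x)} \sim -c_{e(x)} C_1 e^{-c_{e(x)}\xi}$ from Theorem \ref{Theorem 2.3} outside, then shows that this first term dominates $|\mathcal{E}|$ once $\alpha$ is sufficiently small.

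Next I would handle the correction $\varepsilon h(\alpha x)[U^\beta_{e_i}\omega(\xi)+(1-\omega(\xi))]$ region by region. In $\xi \leq -1$ the correction equals $\varepsilon h(\alpha x)$, and $f(z,\overline{V})-f(z,U_{e(x)})$ linearises as $f_u(z,1)\,\varepsilon h(\alpha x) + O(\varepsilon^2 h^2)$, whose sign is controlled by \eqref{1.6} once $\varepsilon < \varepsilon_0^{+}(\beta)$. In $\xi \geq 1$ we use $f \equiv 0$ on $[0,\theta]$ together with the Bernstein-type identity $\Delta U^\beta = \beta U^{\beta-1}\Delta U + \beta(\beta-1) U^{\beta-2}|\nabla U|^2$; choosing $\beta^\ast$ so small that $\beta c_{e_i}^2$ is controlled by the positive contribution from \eqref{L 3.1} (via $\partial_s U \sim -c_{e_i} U$) keeps $L$ applied to $\varepsilon h(\alpha x) U^\beta_{e_i}$ nonnegative. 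In $|\xi|<1$, Theorem \ref{Theorem 2.12} keeps $U_{e_i}$ bounded away from $0$ and $1$, and the uniform bounds on $\omega,\omega',\omega''$ make the $\omega$-derivative cross-terms absorbable into the strict positivity already produced. The mixed terms $\nabla(\varepsilon h(\alpha x))\cdot\nabla[\cdots]$ and $\Delta(\varepsilon h(\alpha x))$ are $O(\alpha h(\alpha x))$ by \eqref{2.12} and are again subdominant for small $\alpha$.

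The auxiliary properties follow directly: \eqref{3.6} because $h(\alpha x)\to 0$ and, on each $\widehat{Q}_i$, $e(x)\to e_i$ and $\xi\to\eta$ by Lemma \ref{Lemma 2.17}, so $U_{e(x)}(\xi,x,y)\to U_{e_i}(\eta,x,y)$; \eqref{3.7} because $\varphi \geq \psi_i$ on $\widehat{Q}_i$ gives $\xi \geq \eta$ on the facet where $\underline{V}=U_{e_i}(\eta,x,y)$, the correction is nonnegative, and monotonicity of $U$ in $s$ closes the inequality; \eqref{3.8} because $\partial_t\overline{V} = -\hat{c}\,\partial_s U_{e(x)} + \varepsilon h(\alpha x)\,\partial_t[\cdots]$ with the first term strictly positive by monotonicity and the second of smaller order in $\varepsilon$. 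I expect the main obstacle to be the intermediate region $|\xi|\leq 1$: there the positive contribution from \eqref{L 3.1} must simultaneously absorb the second-order terms arising from $\partial_{x_ix_j}e(x)$ and the $\omega$-derivatives, and it is this balance that forces the nested quantifier order $\beta^\ast \to \varepsilon_0^{+}(\beta)\to \alpha_0^{+}(\beta,\varepsilon)$.
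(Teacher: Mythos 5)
There is a genuine gap, and it sits in your treatment of \eqref{3.7}. You claim it follows pointwise: ``$\varphi\geq\psi_i$ on $\widehat{Q}_i$ gives $\xi\geq\eta$ on the facet where $\underline{V}=U_{e_i}$, the correction is nonnegative, and monotonicity of $U$ in $s$ closes the inequality.'' This does not work. First, the inequality is in the wrong direction where it matters: by \eqref{3.16}, $\xi\leq\eta$ for $\xi\geq 0$. Second, and more fundamentally, on the facet $\widehat{Q}_i$ the subsolution is $U_{e_i}$ evaluated at the planar phase $x\cdot\nu_i\cos\theta_i+(y-\hat{c}t)\sin\theta_i=(y-\hat{c}t-\psi_i(x))\sin\theta_i$, while $\bar{V}$ involves $U_{e(x)}$ evaluated at $\xi=(y-\hat{c}t-\varphi(\alpha x)/\alpha)/\sqrt{1+|\nabla\varphi(\alpha x)|^2}$: the two profiles correspond to \emph{different} propagation directions ($e(x)\neq e_i$ except in the limit $d(x,\widehat{\mathcal{R}})\to+\infty$), and there is no pointwise monotone comparison between pulsating fronts in different directions, nor a usable ordering between the two phase variables (the scaling factors $\sin\theta_i$ and $1/\sqrt{1+|\nabla\varphi|^2}$ do not compare in a fixed way, and $\varphi\geq\psi_i$ pushes the phases in the direction opposite to what you need). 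This is precisely why the paper devotes Step 3 to a sliding argument: it defines $T_{\min}=\inf\{T:\bar{V}(\cdot+T,\cdot)\geq U_{e_i}(x\cdot\nu_i\cos\theta_i+(y-\hat{c}t)\sin\theta_i,x,y)\}$, shows $0\leq T_{\min}<+\infty$ using the asymptotics \eqref{3.6}, and then rules out $T_{\min}>0$ by a dichotomy (the infimum is attained either away from or near the edge set $\mathcal{R}+\hat{c}te_0$), using $\bar{V}_t>0$, parabolic estimates, and the transition-front structure as in Lemma \ref{Lemma 2.15}. None of this is present in your proposal, and \eqref{3.7} is exactly the ingredient later needed to trap the entire solution between $\underline{V}$ and $\bar{V}$, so it cannot be waved through.

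A secondary, fixable issue: you split the supersolution verification at the cutoff scale $|\xi|=1$ of $\omega$, whereas the argument requires splitting at large thresholds $X',X''$. On $\xi\geq 1$ you invoke $f\equiv 0$ on $[0,\theta]$, but for $\xi$ of order one $U_{e(x)}(\xi,\cdot)$ (normalized with $\min_z U_e(0,z)=(1+\theta)/2$) is well above $\theta$, so the nonlinear difference $f(x,y,\bar{V})-f(x,y,U_{e(x)})$ cannot be dropped there; similarly, the linearization near $1$ with sign from \eqref{1.6} on $\xi\leq -1$ is only valid once $\bar{V}\geq 1-\gamma_\star$, i.e.\ for $\xi<-X''$ with $X''$ large. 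In the intermediate bands $[-X'',-1]$ and $[1,X']$ one must run the Case-3 mechanism — positivity $(\xi_t+c_{e(x)})\partial_\xi U_{e(x)}\geq C_0\,r\,h(\alpha x)$ from \eqref{L 3.1} and Theorem \ref{Theorem 2.12} absorbing all $O(\alpha h)$ and $O(\varepsilon h)$ errors, including the Lipschitz bound $|f(x,y,U_{e(x)})-f(x,y,\bar{V})|\leq\Lambda_5\varepsilon h(\alpha x)$ — which is what forces $\varepsilon_0^{+}$ to be small in terms of $C_0 r$. Your sketch contains this mechanism only on $|\xi|<1$; extended to the correct bands it matches the paper's Step 1, but as written the two outer regions are not justified.
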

\begin{proof} We divide the proof into three steps.

{\it Step 1: proof of $\bar{V}(t, x, y)$ being a supersolution.} Our approach is to find 
two numbers $X^{\prime}>1$ and $X^{\prime \prime}>1$, and prove the inequality
	\begin{equation*}
	\mathcal{L} \bar{V}(t, x, y):=\partial_t \bar{V}(t, x, y)-\Delta_{x, y} \bar{V}(t, x, y)-f\left(x, y, \bar{V}(t, x, y)\right) \geq 0, \quad \forall(t, x, y) \in \mathbb{R}\times\mathbb{R}^N,
	\end{equation*}
	in three cases $\xi>X^{\prime}, \xi<-X^{\prime \prime}$, and $\xi \in\left[-X^{\prime \prime}, X^{\prime}\right]$, respectively. As a matter of convenience, denote
	\begin{equation*}
	I_1:=\left(\partial_t-\Delta_{x, y}\right)\left(U_{e(x)}(\xi, x, y)\right) \text { and } I_2:=\left(\partial_t-\Delta_{x, y}\right)\left(\varepsilon h(\alpha x) \times U_{e_i}^\beta(\eta, x, y)\right).
    \end{equation*}
	Then we can obtain that
	\begin{equation}\label{3.9}
		\begin{aligned}
			I_1= & \partial_{\xi} U_{e(x)} \partial_t \xi-\Delta_{x, y} U_{e(x)}-2\sum_{k=1}^{N-1} \partial_{x_{k} \xi} U_{e(x)} \partial_{x_{k}} \xi -2 \partial_{y \xi} U_{e(x)} \partial_{y} \xi \\
			& -\partial_\xi U_{e(x)}\left(\sum_{k=1}^{N-1}\partial_{x_{k}x_{k}} \xi+\partial_{yy} \xi\right)-\partial_{\xi \xi} U_{e(x)}\left(\sum_{k=1}^{N-1}(\partial_{x_{k}} \xi)^2+(\partial_{y} \xi)^2\right)-\sum_{k=1}^{N-1}U_{e(x)}^{\prime \prime} \cdot \partial_{x_{k}}e(x) \cdot \partial_{x_{k}}e(x) \\
			& -\sum_{k=1}^{N-1}U_{e(x)}^{\prime} \cdot \partial_{x_{k}x_{k}}e(x)-2 \sum_{k=1}^{N-1}\partial_{x_k} U_{e(x)}^{\prime} \cdot \partial_{x_{k}}e(x)-2\sum_{k=1}^{N-1} \partial_\xi U_{e(x)}^{\prime} \cdot \partial_{x_{k}}e(x) \partial_{x_{k}} \xi,
		\end{aligned}
	\end{equation}
where $U_{e(x)}$ and all of its derivatives are evaluated at $(\xi(t, x, y), x, y)$. We can figure out that
\begin{align}\label{3.10}
&\partial_t \xi =-\frac{\hat{c}}{\sqrt{1+|\nabla \varphi|^2}}, \nonumber\\ 
&\partial_y \xi  
=\frac{1}{\sqrt{1+|\nabla \varphi|^2}},\nonumber \\ 
&\nabla_x \xi  =-\frac{\nabla \varphi}{\sqrt{1+|\nabla \varphi|^2}}-\alpha \frac{ \nabla^2 \varphi 
\cdot \nabla \varphi}{1+|\nabla \varphi|^2} \xi,\nonumber\\  
&\nabla_{x, y} \xi-e(x)=\left(-\alpha \frac{\nabla^2 \varphi \cdot \nabla \varphi}{1+|\nabla 
\varphi|^2} \xi,  0\right), \\  	
&\nabla_{x, y} \xi+e(x)=\left(-2 \frac{\nabla \varphi}{\sqrt{1+|\nabla \varphi|^2}}-\alpha 
\frac{\nabla^2 \varphi \cdot \nabla \varphi}{1+|\nabla \varphi|^2} \xi,  \frac{2}{\sqrt{1+|\nabla 
\varphi|^2}}\right), \nonumber\\  
&\Delta_{x, y} \xi=-\alpha \frac{\Delta \varphi}{\sqrt{1+|\nabla \varphi|^2}}+ 2\alpha \frac{\nabla 
\varphi \cdot (\nabla^2 \varphi \cdot \nabla \varphi)}{\left(1+|\nabla 
\varphi|^2\right)^{\frac{3}{2}}}+3 \alpha^2 \frac{\left| \nabla^2 \varphi \cdot \nabla \varphi 
\right|^2}{\left(1+|\nabla \varphi|^2\right)^2} \xi-\alpha^2 \frac{\nabla \cdot\left(\nabla^2 \varphi 
\cdot \nabla \varphi\right)}{1+|\nabla \varphi|^2} \xi,\nonumber
\end{align}
where $\varphi$ takes value at $\alpha x$ and $\xi$ takes value at $(t, x, y)$. By \eqref{2.12} and the boundedness of $|\nabla \varphi(\alpha x)|$, there is a constant $C_4>0$ such that for any $\xi \in \mathbb{R}$,
\begin{equation*}
	\left|\Delta_{x, y} \xi\right| \leq \alpha C_4(1+\alpha|\xi|) h(\alpha x), \; | \nabla_{x, y} \xi-e(x)|\leq \alpha C_4| \xi | h(\alpha x) \text{ and }  \left|\nabla_{x, y} \xi+e(x)\right| \leq \alpha C_4(1+|\xi|) h(\alpha x).
\end{equation*}
It follows from Theorem \ref{Theorem 2.8} that $\left|\partial_{\xi} U_{e(x)} 
\xi\right|,\left|\partial_{\xi \xi} U_{e(x)} \xi\right|,\left|\partial_{\xi \xi} U_{e(x)} \xi^2\right|$ and 
$\left|\nabla_{x, y} \partial_{\xi} U_{e(x)} \xi\right|$ are bounded uniformly for $(\xi, x, y) \in 
\mathbb{R} \times \mathbb{R}^N$. Note that $\nabla_{x, y} \xi \cdot \nabla_{x, y} \xi-e(x) \cdot 
e(x)=\left(\nabla_{x, y} \xi-e(x)\right) \left(\nabla_{x, y} \xi+e(x)\right)$.

By \eqref{2.2.1} and \eqref{3.9}, we have that
\begin{equation}\label{3.11}
	\begin{aligned}
		I_1= & (\partial_t \xi+c_{e(x)})\partial_{\xi} U_{e(x)}  -2\sum_{k=1}^{N-1} \partial_{x_{k} \xi} U_{e(x)} \partial_{x_{k}} \xi -2 \partial_{y \xi} U_{e(x)} \partial_{y} \xi + 2\nabla_{x, y} \partial_{\xi} U_{e(x)} \cdot e(x)\\
		& -\partial_\xi U_{e(x)}\left(\sum_{k=1}^{N-1}\partial_{x_{k}x_{k}} \xi+\partial_{yy} \xi\right)-\partial_{\xi \xi} U_{e(x)}\left(\sum_{k=1}^{N-1}(\partial_{x_{k}} \xi)^2+(\partial_{y} \xi)^2-1\right) \\
		& -\sum_{k=1}^{N-1}U_{e(x)}^{\prime \prime} \cdot \partial_{x_{k}}e(x) \cdot \partial_{x_{k}}e(x)-\sum_{k=1}^{N-1}U_{e(x)}^{\prime} \cdot \partial_{x_{k}x_{k}}e(x)-2 \sum_{k=1}^{N-1}\partial_{x_k} U_{e(x)}^{\prime} \cdot \partial_{x_{k}}e(x)\\
		&-2\sum_{k=1}^{N-1} \partial_\xi U_{e(x)}^{\prime} \cdot \partial_{x_{k}}e(x) \partial_{x_{k}} \xi+f\left(x, y, U_{e(x)}\right)
	\end{aligned}
\end{equation}
where $U_{e(x)}$ and all of its derivatives are evaluated at $( \xi(t, x, y), x, y)$. By \eqref{L 3.1}, there is a positive constant $C_{0}$ such that
\begin{equation}\label{3.12}
	\partial_{t}\xi+c_{e(x)}=-\frac{\hat{c}}{\sqrt{1+|\nabla \varphi|^2}}+c_{e(x)} \leq- C_{0} h(\alpha x)<0 \text { for all } x \in \mathbb{R}^{N-1}.
\end{equation}
Calculating $I_2$ gives that
\begin{equation}\label{3.13}
	\begin{aligned}
		I_2= & \varepsilon \beta h(\alpha x) U_{e_i}^{\beta-1} \partial_{\eta} U_{e_i} \partial_{t} \eta-\varepsilon \alpha^2  U_{e_i}^\beta \sum_{k=1}^{N-1}(\partial_{\zeta_{k}\zeta_{k}}h(\alpha x)) \\
		& -2 \varepsilon \beta \alpha  U_{e_i}^{\beta-1} \times\left[\sum_{k=1}^{N-1}\partial_{x_k} U_{e_i}  \partial_{\zeta_{k}}h(\alpha x)+\partial_{\eta} U_{e_i}\sum_{k=1}^{N-1}\partial_{\zeta_{k}}h(\alpha x) \partial_{x_{k}}\eta\right] \\
		& -\varepsilon \beta(\beta-1) h(\alpha x) U_{e_i}^{\beta-2} \times\left[\sum_{k=1}^{N-1}\left(\partial_{x_k} U_{e_i}+\partial_{\eta} U_{e_i} \partial_{x_k}\eta\right)^2+\left(\partial_y U_{e_i}+\partial_{\eta} U_{e_i} \partial_{y}\eta\right)^2\right] \\
		& -\varepsilon \beta h(\alpha x) U_{e_i}^{\beta-1} \times\left[\Delta_{x, y} U_{e_i}+2 \nabla_{x, y} \partial_{\eta} U_{e_i} \nabla_{x, y}\eta +\partial_{\eta\eta} U_{e_i}\left(\sum_{k=1}^{N-1}(\partial_{x_k}\eta)^2+(\partial_{y}\eta)^2\right)\right. \\
		& \quad\left.+\partial_{\eta} U_{e_i}\left(\sum_{k=1}^{N-1}\partial_{x_k x_k}\eta+\partial_{yy}\eta\right)\right],
	\end{aligned}
\end{equation}
where $U_{e_i}$ and all of its derivatives are evaluated at $(\eta(t, x, y), x, y)$.

\textbf{Case 1:} $\xi(t, x, y)>X^{\prime}$, where $X^{\prime}>1$ is to be chosen.
In this case, $\bar{V}(t, x, y)=U_{e(x)}(\xi, x, y)+\varepsilon h(\alpha x) \times U_{e_i}^\beta(\eta, x, y)$, then
\begin{equation*}
	\mathcal{L} \bar{V}(t, x, y)=I_1+I_2-f\left(x, y, \bar{V}\right).
\end{equation*}
Calculating the derivatives of $\eta(t, x, y)$, we get
\begin{equation}\label{3.14}
	\left\{\begin{array}{l}
		\eta_t=-\hat{c},\;\eta_y=1,\;\eta_{y y}=0 ,\\
		\eta_{x_k}=-\partial_{\zeta_k}\varphi(\alpha x) ,\\
		\eta_{x_k x_k}=-\alpha \partial_{\zeta_k \zeta_k} \varphi(\alpha x) ,\\
		\sum_{k=1}^{N-1}\eta_{x_k}^2+\eta_y^2=\sum_{k=1}^{N-1}(\partial_{\zeta_k}\varphi (\alpha x))^2+1.
	\end{array}\right.
\end{equation}
By virtue of \eqref{3.13} and \eqref{3.14}, it holds that
\begin{equation}\label{3.15}
\begin{aligned}
	I_2= & -\varepsilon U_{e_i}^\beta \times \alpha\left[\alpha \sum_{k=1}^{N-1}\partial_{\zeta_{k}\zeta_{k}}h(\alpha x)+2 \beta \sum_{k=1}^{N-1}\partial_{\zeta_{k}}h(\alpha x) \frac{\partial_{x_k} U_{e_i}-\partial_{\zeta_k}\varphi(\alpha x) \partial_{\eta} U_{e_i}}{U_{e_i}}\right.\\
	&\quad\left.-\beta h(\alpha x) \sum_{k=1}^{N-1} \partial_{\zeta_k \zeta_k} \varphi(\alpha x) \frac{\partial_{\eta} U_{e_i}}{U_{e_i}}\right] \\
	& -\varepsilon U_{e_i}^\beta \times \beta h(\alpha x)\left[\frac{\Delta_{x, y} U_{e_i}+2 \nabla_{x, y} \partial_{\eta} U_{e_i} \cdot\left(-\nabla_{\zeta}\varphi(\alpha x), 1\right)}{U_{e_i}}\right. \\
	&\quad+\beta \frac{\sum_{k=1}^{N-1}\left(\partial_{x_k} U_{e_i}-\partial_{\zeta_k}\varphi(\alpha x)\partial_{\eta} U_{e_i}\right)^2+\left(\partial_y U_{e_i}+\partial_{\eta} U_{e_i}\right)^2}{U_{e_i}^2} \\
	&\quad-\frac{\sum_{k=1}^{N-1}\left(\partial_{x_k} U_{e_i}-\partial_{\zeta_k}\varphi(\alpha x)\partial_{\eta} U_{e_i}\right)^2+\left(\partial_y U_{e_i}+\partial_{\eta} U_{e_i}\right)^2}{U_{e_i}^2} \\
	&\quad\left.+\frac{\partial_{\eta \eta} U_{e_i}}{U_{e_i}}\left(\sum_{k=1}^{N-1}(\partial_{\zeta_k}\varphi (\alpha x))^2+1\right)+\hat{c} \frac{\partial_{\eta} U_{e_i}}{U_{e_i}}\right] 
	=  : J_1+J_2,
\end{aligned}
\end{equation}
where $U_{e_i}$ and all of its derivatives are evaluated at $( \eta(t, x, y), x, y)$. Since \eqref{3.4}, Lemma \ref{Lemma 2.17} and the fact $h=\sum_{i, j \in\{1, \ldots, n\} ; i\neq j}e^{-\left(\hat{q}_i+\hat{q}_j\right)}\le 1$, one has
\begin{equation}\label{3.16}
\xi \leq \eta=\xi \sqrt{1+|\nabla \varphi(\alpha x)|^2} \leq \xi \sqrt{(C_3+\max_{1\le i\le n} \{|\nu_i \cot \theta_i|\})^2+1}.    
\end{equation}
It follows from Theorem \ref{Theorem 2.5} that
\begin{equation}\label{3.17}
	\left\{\begin{array}{l}
		\frac{\Delta_{x, y} U_{e_i}+2 \nabla_{x, y} \partial_{\eta} U_{e_i} \cdot\left(-\nabla_{\zeta}\varphi(\alpha x), 1\right)}{U_{e_i}} \longrightarrow 0 ,\\
		\frac{\sum_{k=1}^{N-1}\left(\partial_{x_k} U_{e_i}-\partial_{\zeta_k}\varphi(\alpha x)\partial_{\eta} U_{e_i}\right)^2+\left(\partial_y U_{e_i}+\partial_{\eta} U_{e_i}\right)^2}{U_{e_i}^2} \longrightarrow c_{e_i}^2\left(\sum_{k=1}^{N-1}(\partial_{\zeta_k}\varphi(\alpha x))^2+1\right) ,\\
		\frac{\partial_{\eta \eta} U_{e_i}}{U_{e_i}}\left(\sum_{k=1}^{N-1}(\partial_{\zeta_k}\varphi (\alpha x))^2+1\right) \longrightarrow c_{e_i}^2\left(\sum_{k=1}^{N-1}(\partial_{\zeta_k}\varphi (\alpha x))^2+1\right),\\
		\hat{c} \frac{\partial_{\eta} U_{e_i}}{U_{e_i}} \longrightarrow-\hat{c} c_{e_i},
	\end{array}\right.
\end{equation}
as $\eta \rightarrow+\infty$ uniformly in $(x, y) \in \mathbb{R}^N$. 
Define
\begin{equation*}
	\beta_1^*:=\frac{\hat{c}}{2 c_{e_i}\left((C_3+\max_{1\le i\le n} \{|\nu_i \cot \theta_i|\})^2+1\right)}.
\end{equation*}
Then for any $\beta \in\left(0, \beta_1^*\right]$, there exists a large enough constant $X_1^{\prime}>1$ such that
\begin{equation}\label{3.18}
	\begin{aligned}
		J_2&\ge-\varepsilon \beta U_{e_i}^\beta  (\eta, x, y) h(\alpha x)\left[\beta c_{e_i}^2\left(\sum_{k=1}^{N-1}(\partial_{\zeta_k}\varphi(\alpha x))^2+1\right)-\hat{c} c_{e_i}\right]\\
		&\ge -\varepsilon \beta U_{e_i}^\beta  (\eta, x, y) h(\alpha x)\left(\frac{\hat{c}c_{e_i}^2}{2 c_{e_i}}-\hat{c} c_{e_i} \right)
		> \varepsilon  U_{e_i}^\beta  (\eta, x, y) h(\alpha x)\times \beta\frac{\hat{c}c_{e_i}}{4 },
	\end{aligned}
\end{equation}
for all $(\eta, x, y) \in\left(X_1^{\prime},+\infty\right) \times \mathbb{R}^N$. By Lemma \ref{Lemma 2.17}, there exists a constant $C_5>0$ such that
\begin{equation}\label{3.19}
	\begin{aligned}
		\left|\nabla h(\alpha x)\right| &=\left|\sum_{i, j \in\{1, \ldots, n\}, i \neq j} 
		e^{-\left(\hat{q}_j(\alpha x)+\hat{q}_k(\alpha x)\right)} \mid\nu_i \cos \theta_i+\nu_j \cos 
		\theta_j+\nabla \varphi(\alpha x)\left(\sin \theta_i+\sin \theta_j\right) \mid\right| \\
		& \leq  C_5 h(\alpha x).
	\end{aligned}
\end{equation}
and
\begin{equation}\label{3.20}
	\begin{aligned}
		& \left|\Delta h(\alpha x)\right|  =\left|\sum_{i, j \in\{1, \ldots, n\}, i \neq j} e^{-\left(\hat{q}_j(\alpha x)+\hat{q}_k(\alpha x)\right)}\left|\nu_i \cos \theta_i+\nu_j \cos \theta_j+\nabla \varphi(\alpha x)\left(\sin \theta_i+\sin \theta_j\right)\right|^2 \right.\\
		& \qquad\qquad\quad\quad\left.+\sum_{i, j \in\{1, \ldots, n\}, i \neq j} e^{-\left(\hat{q}_j(\alpha x)+\hat{q}_k(\alpha x)\right)} \Delta \varphi(\alpha x)\left(\sin \theta_i+\sin \theta_j\right)\right|  \leq  C_5 h(\alpha x) .
	\end{aligned}
\end{equation}
Thus, by \eqref{3.15} and \eqref{3.18}-\eqref{3.20}, there is a constant $\alpha_1^{+}(\beta)>0$ 
such that, for arbitrary $0<\alpha \leq \alpha_1^{+}(\beta)$, it holds that
\begin{equation}\label{3.21}
	I_2=J_1+J_2>\varepsilon  U_{e_i}^\beta  (\eta, x, y) h(\alpha x)\times \beta\frac{\hat{c}c_{e_i}}{8 }
\end{equation}
for all $(\eta, x, y) \in\left(X_1^{\prime},+\infty\right) \times \mathbb{R}^N$. By virtue of 
$\partial_{\xi} U_{e(x)}< 0$, $\partial_{y} \xi=e_{N}(x)$, \eqref{3.2}, \eqref{3.10}-\eqref{3.12}, 
Theorem \ref{Theorem 2.8}, Theorem \ref{Theorem 2.13} and Lemma \ref{Lemma 2.17}, we have
	\begin{equation*}
	\begin{aligned}
		I_1= & (\partial_t \xi+c_{e(x)})\partial_{\xi} U_{e(x)}  -2\sum_{k=1}^{N-1} \partial_{x_{k} \xi} U_{e(x)} (\partial_{x_{k}} \xi-e_{k}(x))-\partial_\xi U_{e(x)}\left(\sum_{k=1}^{N-1}\partial_{x_{k}x_{k}} \xi+\partial_{yy} \xi\right)\\
		& -\partial_{\xi \xi} U_{e(x)}\left(\sum_{k=1}^{N-1}(\partial_{x_{k}} \xi)^2+(\partial_{y} \xi)^2-1\right)-\sum_{k=1}^{N-1}U_{e(x)}^{\prime \prime} \cdot \partial_{x_{k}}e(x) \cdot \partial_{x_{k}}e(x) \\
		& -\sum_{k=1}^{N-1}U_{e(x)}^{\prime} \cdot \partial_{x_{k}x_{k}}e(x)-2 \sum_{k=1}^{N-1}\partial_{x_k} U_{e(x)}^{\prime} \cdot \partial_{x_{k}}e(x)-2\sum_{k=1}^{N-1} \partial_\xi U_{e(x)}^{\prime} \cdot \partial_{x_{k}}e(x) \partial_{x_{k}} \xi+f\left(x, y, U_{e(x)}\right),
	\end{aligned}
\end{equation*}
and
\begin{equation*}
(\partial_t \xi+c_{e(x)})\partial_{\xi} U_{e(x)}\ge - C_{0} h(\alpha x)\partial_{\xi} U_{e(x)}\ge 
C_{0} h(\alpha x)\bar{K} e^{-\frac{3 \kappa}{4} \xi}\ge 0,
\end{equation*}
\begin{equation*}
\begin{aligned}
-2\sum_{k=1}^{N-1} \partial_{x_{k} \xi} U_{e(x)} (\partial_{x_{k}} 
\xi-e_{k}(x))
\ge&-2\nabla_{x, y} \partial_{\xi} U_{e(x)}(\nabla_{x, y}\xi-e(x)) \\
\ge&-2\left|\nabla_{x, 
y} \partial_{\xi} U_{e(x)}(\nabla_{x, y}\xi-e(x))\right| \\
	\ge&-2\alpha C_4 | \xi | h(\alpha x)\left[N(\bar{K} e^{-\frac{3 \kappa}{4} 
	\xi})^2\right]^{\frac{1}{2}},
\end{aligned}
\end{equation*}
\begin{equation*}
	-\partial_\xi U_{e(x)}\left(\sum_{k=1}^{N-1}\partial_{x_{k}x_{k}} \xi+\partial_{yy} 
	\xi\right)\ge-\left|\partial_\xi U_{e(x)}\Delta_{x, y}\xi\right|\ge-\alpha C_4(1+\alpha|\xi|) 
	h(\alpha x)\bar{K} e^{-\frac{3 \kappa}{4} \xi},
\end{equation*}
\begin{equation*}
	\begin{aligned}
	-\partial_{\xi \xi} U_{e(x)}\left(\sum_{k=1}^{N-1}(\partial_{x_{k}} \xi)^2+(\partial_{y} \xi)^2-1\right)&\ge-\left|\partial_{\xi \xi} U_{e(x)}\left(\nabla_{x, y} \xi-e(x)\right) \left(\nabla_{x, y} \xi+e(x)\right)\right|\\
	&\ge-\bar{K} e^{-\frac{3 \kappa}{4} \xi}\alpha^2 C_4^2(1+|\xi|)| \xi |  h^2(\alpha x) \\
	&\ge-\bar{K} e^{-\frac{3 \kappa}{4} \xi}\alpha^2 C_4^2(1+|\xi|)| \xi |  h(\alpha x),
	\end{aligned}
\end{equation*}
\begin{equation*}
	-\sum_{k=1}^{N-1}U_{e(x)}^{\prime \prime} \cdot \partial_{x_{k}}e(x) \cdot 
	\partial_{x_{k}}e(x)\ge -(N-1)M_1 e^{-\frac{\kappa}{2} \xi}\alpha^2 M^2_2 h^2(\alpha x)\ge 
	-(N-1)M_1 e^{-\frac{\kappa}{2} \xi}\alpha^2 M^2_2 h(\alpha x),
\end{equation*}
\begin{equation*}
-\sum_{k=1}^{N-1}U_{e(x)}^{\prime} \cdot \partial_{x_{k}x_{k}}e(x)\ge-(N-1)M_1 e^{-\frac{\kappa}{2} \xi}\alpha M_3  h(\alpha x),
\end{equation*}
\begin{equation*}
-2 \sum_{k=1}^{N-1}\partial_{x_k} U_{e(x)}^{\prime} \cdot \partial_{x_{k}}e(x)\ge-2(N-1)M_1 e^{-\frac{\kappa}{2} \xi}\alpha M_2 h(\alpha x),
\end{equation*}
\begin{equation*}
   -2\sum_{k=1}^{N-1} \partial_\xi U_{e(x)}^{\prime} \cdot \partial_{x_{k}}e(x) \partial_{x_{k}} \xi\ge -(N-1)M_1 e^{-\frac{\kappa}{2} \xi}(\alpha C_4(1+|\xi|) h(\alpha x)+\alpha C_4|\xi| h(\alpha x))\alpha M_2 h(\alpha x),
\end{equation*}
for all $(\xi, x, y) \in[0,+\infty) \times \mathbb{R}^N$. Thus, there exists a constant $\Lambda_1>0$ such that
\begin{equation}\label{3.22}
	I_1 \geq-\alpha\Lambda_1 h(\alpha x) e^{-\frac{\kappa}{4} \xi}+f\left(x, y, U_{e(x)}\right),
\end{equation}
for all $(\xi, x, y) \in[0,+\infty) \times \mathbb{R}^N$, where $\kappa$ is given in Theorem \ref{Theorem 2.3}. It follows from Theorem \ref{Theorem 2.3} that there exists a large enough constant $X_2^{\prime}>1$ such that
\begin{equation}\label{3.23}
U_{e_i}(\eta, x, y) \geq \frac{C_1}{2} e^{-c_{e_i} \eta} \geq \frac{C_1}{2} e^{-K \eta}, \quad \forall(\eta, x, y) \in\left(X_2^{\prime},+\infty\right) \times \mathbb{R}^N,
\end{equation}
where $C_1$ is given in Theorem \ref{Theorem 2.3} and $K$ is given in Theorem \ref{Theorem 2.3}. Set
\begin{equation*}
	\beta_2^*:=\frac{\kappa}{8 K\sqrt{(C_3+\max_{1\le i\le n} \{|\nu_i \cot \theta_i|\})^2+1}}.
\end{equation*}
Since \eqref{3.16} and \eqref{3.23}, for any $\beta \in\left(0, \beta_2^*\right]$, we get 
\begin{align}\label{3.24}
U_{e_i}^\beta(\eta, x, y) \geq \left(\frac{C_1}{2}\right)^\beta e^{-\beta K \eta}\geq \left(\frac{C_1}{2}\right)^\beta e^{- \frac{\kappa\eta}{8 \sqrt{(C_3+\max_{1\le i\le n} \{|\nu_i \cot \theta_i|\})^2+1}}}\geq\left(\frac{C_1}{2}\right)^\beta e^{-\frac{\kappa}{8} \xi},
\end{align}
for all $(\eta, x, y) \in\left(X_2^{\prime},+\infty\right) \times \mathbb{R}^N$. By virtue of \eqref{1.3}, \eqref{3.16} and $\bar{V}(t, x, y)=U_{e(x)}(\xi, x, y)+\varepsilon h(\alpha x) \times U_{e_i}^\beta(\eta, x, y)$, if set $\varepsilon<\varepsilon_{1}^{+}:=\gamma_{\star}$, there exists a sufficiently large constant $X_3^{\prime}>1$ such that
\begin{align}\label{3.25}
	f\left(x, y, U_{e(x)}\right)-f\left(x, y, \bar{V}\right)= 0,\quad\forall(\xi, x, y) \in\left(X_3^{\prime},+\infty\right) \times \mathbb{R}^N.
\end{align}
Define $X^{\prime}:=\max\{X_1^{\prime},X_2^{\prime},X_3^{\prime}\}$ and $\beta^{*}:=\max\{\beta_1^*, \beta_2^*\}$. By \eqref{3.21}, \eqref{3.22}, \eqref{3.24} and \eqref{3.25}, for any $(\xi, x, y) \in\left(X^{\prime},+\infty\right) \times \mathbb{R}^N$, if $\alpha<\alpha^{+}_2(\beta, \varepsilon):=\varepsilon \beta\frac{\hat{c}c_{e_i}}{8 \Lambda_1} \left(\frac{C_1}{2}\right)^\beta e^{\frac{\kappa}{8} X^{\prime}}$, one has
\begin{equation*}
\begin{aligned}
	\mathcal{L} \bar{V} & =I_1+I_2-f\left(x, y, \bar{V}\right) \\
	& \geq-\alpha\Lambda_1 h(\alpha x) e^{-\frac{\kappa}{4} \xi}+\varepsilon  U_{e_i}^\beta  (\eta, x, y) h(\alpha x)\times \beta\frac{\hat{c}c_{e_i}}{8 } \\
	& \geq-\alpha\Lambda_1 h(\alpha x) e^{-\frac{\kappa}{4} \xi}+\varepsilon h(\alpha x)\beta\frac{\hat{c}c_{e_i}}{8 } \left(\frac{C_1}{2}\right)^\beta e^{-\frac{\kappa}{8} \xi} >0.
\end{aligned}
\end{equation*}

\textbf{Case 2:} $\xi(t, x, y)<-X^{\prime\prime}$, where $X^{\prime\prime}>1$ is to be chosen.
In this case, one has $\bar{V}(t, x, y)=U_{e(x)}(\xi, x, y)+\varepsilon h(\alpha x)$. It follows from \eqref{3.11}, \eqref{3.12} and $\partial_{\xi} U_{e(x)}< 0$ that
\begin{equation}\label{3.26}
	\begin{aligned}
		\mathcal{L} \bar{V}= & I_1-\varepsilon \alpha^2   \sum_{k=1}^{N-1}\partial_{\zeta_{k}\zeta_{k}}h(\alpha x)-f\left(x, y, \bar{V}\right) \\
		> & - C_{0} h(\alpha x)\partial_{\xi} U_{e(x)} -2 \sum_{k=1}^{N-1}\partial_{x_k \xi} U_{e(x)}\left(\partial_{x_k }\xi-e_k(x)\right)-\partial_{\xi} U_{e(x)}\left(\sum_{k=1}^{N-1}\partial_{x_k x_k}\xi+\partial_{y y}\xi\right) \\
		& -\partial_{\xi \xi} U_{e(x)}\left(\sum_{k=1}^{N-1}(\partial_{ x_k}\xi)^2+(\partial_{y}\xi)^2-1\right)-\sum_{k=1}^{N-1}U_{e(x)}^{\prime \prime} \cdot \partial_{x_k}e(x) \cdot \partial_{ x_k}e(x)-\sum_{k=1}^{N-1}U_{e(x)}^{\prime} \cdot \partial_{x_k x_k}e(x) \\
		& -2\sum_{k=1}^{N-1} \partial_{x_k} U_{e(x)}^{\prime} \cdot \partial_{x_k}e(x)-2 \partial_{\xi} U_{e(x)}^{\prime} \cdot \sum_{k=1}^{N-1}\partial_{x_k}e(x) \partial_{x_k}\xi\\
		& +f\left(x, y, U_{e(x)}\right)-f\left(x, y, \bar{V}\right)-\varepsilon \alpha^2 \sum_{k=1}^{N-1}\partial_{\zeta_{k}\zeta_{k}}h(\alpha x),
	\end{aligned}
\end{equation}
where $U_{e(x)}$ and all of its derivatives take value at $(\xi(t, x, y), x, y)$. 
Define $\varepsilon_2^{+}:=\gamma_{\star} / 3$, where $\gamma_{\star}$ is defined in 
\eqref{1.5}. Then for any $\varepsilon<\varepsilon_2^{+}$, by \eqref{1.5} and 
Theorem \ref{Theorem 2.8}, there exists a sufficiently large positive constant  
$X^{\prime \prime}>1$ such that
\begin{equation}\label{3.27}
	f\left(x, y, U_{e(x)}\right)-f\left(x, y, U^{+}\right)>\frac{\kappa_1}{2} \varepsilon h(\alpha x),
\end{equation}
for all $(\xi, x, y) \in\left(-\infty,-X^{\prime \prime}\right) \times \mathbb{R}^N$, where 
$\kappa_1$ is given in \eqref{1.4}. By \eqref{3.2}, \eqref{3.10}, \eqref{3.19}, \eqref{3.20}, 
\eqref{3.26}, \eqref{3.27}, Theorem \ref{Theorem 2.8}, Theorem \ref{Theorem 2.13} and Lemma \ref{Lemma 2.17}, we know that there exists a constant $\Lambda_2>0$ such that
\begin{equation}\label{3.28}
	\begin{aligned}
\mathcal{L} \bar{V}&=  I_1-\varepsilon \alpha^2   \sum_{k=1}^{N-1}\partial_{\zeta_{k}\zeta_{k}}h(\alpha x)-f\left(x, y, \bar{V}\right)\\
&>-\Lambda_2 \alpha h(\alpha x) -\varepsilon \alpha^2   \sum_{k=1}^{N-1}\partial_{\zeta_{k}\zeta_{k}}h(\alpha x)+\frac{\kappa_1}{2} \varepsilon h(\alpha x)> h(\alpha x)\left( -\Lambda_2 \alpha+\frac{\kappa_1}{2} \varepsilon -\varepsilon \alpha^2 C_5\right)  ,  
	\end{aligned}
\end{equation}
for all $(\xi, x, y) \in\left(-\infty,-X^{\prime \prime}\right) \times \mathbb{R}^N$. Obviously, there exists a constant $\alpha_3^{+}(\varepsilon)>0$ such that
\begin{equation}\label{3.29}
	-\Lambda_2 \alpha+\frac{\kappa_1}{2} \varepsilon -\varepsilon \alpha^2 C_5 \geq 0 \text { for all } \alpha \in\left(0, \alpha_3^{+}(\varepsilon)\right).
\end{equation}
Therefore, set $\varepsilon_0^{+} \leq \varepsilon_2^{+}$ and $\alpha_0^{+} \leq \alpha_3^{+}(\varepsilon)$, then one gets $\mathcal{L} \bar{V}>0$ in Case $2$, by \eqref{3.28} and \eqref{3.29}.

\textbf{Case 3:} $-X^{\prime \prime} \leq \xi(t, x, y) \leq X^{\prime}$.
\begin{align*}
	&\left(\partial_t-\Delta_{x, y}\right)\left(\varepsilon h(\alpha x)\left[U_{e_i}^\beta(\eta, x, y) \omega(\xi)+(1-\omega(\xi))\right]\right)\\
	&=\varepsilon h(\alpha x)\left[\beta U_{e_i}^{\beta-1}(\eta)\partial_{\eta}U_{e_i}\partial_{t}\eta\omega(\xi)+U_{e_i}^{\beta}\omega^{\prime}(\xi)\partial_{t}\xi-\omega^{\prime}(\xi)\partial_{t}\xi\right]\\
	&\quad-\varepsilon\alpha\left(\sum_{k=1}^{N-1}\partial_{\zeta_{k}\zeta_{k}}h(\alpha x)\right)\left[U_{e_i}^\beta(\eta, x, y) \omega(\xi)+(1-\omega(\xi))\right]\\
	&\quad-2\varepsilon\alpha\beta U_{e_i}^{\beta-1}(\eta)\partial_{\eta}U_{e_i}\omega(\xi)\left(\sum_{k=1}^{N-1}\partial_{\zeta_{k}}h(\alpha x)\partial_{x_k}\eta\right)\\
	&\quad-2\varepsilon\alpha\beta U_{e_i}^{\beta-1}(\eta)\omega(\xi)\left(\sum_{k=1}^{N-1}\partial_{\zeta_{k}}h(\alpha x)\partial_{x_k}U_{e_i}\right)-2\varepsilon\alpha U_{e_i}^{\beta}(\eta)\omega^{\prime}(\xi)\left(\sum_{k=1}^{N-1}\partial_{\zeta_{k}}h(\alpha x)\partial_{x_k}\xi\right)\\
	&\quad+2\varepsilon\alpha \omega^{\prime}(\xi)\left(\sum_{k=1}^{N-1}\partial_{\zeta_{k}}h(\alpha x)\partial_{x_k}\xi\right)-\varepsilon h(\alpha x)\beta(\beta-1)U_{e_i}^{\beta-2}(\eta)(\partial_{\eta}U_{e_i})^2\omega(\xi)\left(\sum_{k=1}^{N-1}(\partial_{x_k}\eta)^2+(\partial_{y}\eta)^2\right)\\
	&\quad-2\varepsilon h(\alpha x)\beta(\beta-1)U_{e_i}^{\beta-2}(\eta)\partial_{\eta}U_{e_i}\omega(\xi)\left(\sum_{k=1}^{N-1}\partial_{x_k}U_{e_i}\partial_{x_k}\eta+\partial_{y}U_{e_i}\partial_{y}\eta\right)\\
	&\quad-\varepsilon h(\alpha x)\beta U_{e_i}^{\beta-1}(\eta)\partial_{\eta\eta}U_{e_i}\omega(\xi)\left(\sum_{k=1}^{N-1}(\partial_{x_k}\eta)^2+(\partial_{y}\eta)^2\right)\\
	&\quad-2\varepsilon h(\alpha x)\beta U_{e_i}^{\beta-1}(\eta)\omega(\xi)\left(\sum_{k=1}^{N-1}\partial_{x_k\eta}U_{e_i}\partial_{x_k}\eta+\partial_{y\eta}U_{e_i}\partial_{y}\eta\right)\\
	&\quad-\varepsilon h(\alpha x)\beta U_{e_i}^{\beta-1}(\eta)\partial_{\eta}U_{e_i}\omega(\xi)\left(\sum_{k=1}^{N-1}\partial_{x_k x_k}\eta+\partial_{yy}\eta\right)\\
	&\quad-2\varepsilon h(\alpha x)\beta 
	U_{e_i}^{\beta-1}(\eta)\partial_{\eta}U_{e_i}\omega^{\prime}(\xi)\left(\sum_{k=1}^{N-1}
\partial_{x_k}\eta\partial_{x_k}\xi+\partial_{y}\eta\partial_{y}\xi\right)\\
	&\quad-\varepsilon h(\alpha x)\beta(\beta-1) 
	U_{e_i}^{\beta-2}(\eta)\omega(\xi)\left(\sum_{k=1}^{N-1}(\partial_{x_k}U_{e_i})^2
+(\partial_{y}U_{e_i})^2\right)\\
	&\quad-2\varepsilon h(\alpha x)\beta 
	U_{e_i}^{\beta-1}(\eta)\omega^{\prime}(\xi)\left(\sum_{k=1}^{N-1}\partial_{x_k}U_{e_i}
\partial_{x_k}\xi+\partial_{y}U_{e_i}\partial_{y}\xi\right)\\
&\quad-\varepsilon h(\alpha x)\beta 
U_{e_i}^{\beta-1}(\eta)\omega(\xi)\left(\sum_{k=1}^{N-1}\partial_{x_kx_k}U_{e_i}
+\partial_{yy}U_{e_i}\right)\\
&\quad-\varepsilon h(\alpha x) 
U_{e_i}^{\beta}(\eta)\omega^{\prime\prime}(\xi)\left(\sum_{k=1}^{N-1}
(\partial_{x_k}\xi)^2+(\partial_{y}\xi)^2\right)-\varepsilon h(\alpha x) 
U_{e_i}^{\beta}(\eta)\omega^{\prime}(\xi)\left(\sum_{k=1}^{N-1}
\partial_{x_kx_k}\xi+\partial_{yy}\xi\right) \qquad\qquad\qquad\\
	&\quad+\varepsilon h(\alpha x) 
	\omega^{\prime\prime}(\xi)\left(\sum_{k=1}^{N-1}(\partial_{x_k}\xi)^2
+(\partial_{y}\xi)^2\right)+\varepsilon h(\alpha x) 
\omega^{\prime}(\xi)\left(\sum_{k=1}^{N-1}\partial_{x_kx_k}\xi
+\partial_{yy}\xi\right)
\end{align*}
By virtue of $\xi$ and $\eta$ is bounded, $h(\alpha x)\le1$, \eqref{3.2}-\eqref{3.4}, 
\eqref{3.10}-\eqref{3.12}, \eqref{3.14}, Theorem \ref{Theorem 2.8}, 
Theorem \ref{Theorem 2.12}, Theorem \ref{Theorem 2.13} and Lemma \ref{Lemma 2.17}, we have
\begin{align*}
	&\varepsilon h(\alpha x)\beta U_{e_i}^{\beta-1}(\eta)\partial_{\eta}U_{e_i}\partial_{t}\eta\omega(\xi)=\varepsilon h(\alpha x)\beta U_{e_i}^{\beta-1}(\eta)(-\hat{c})\partial_{\eta}U_{e_i}\omega(\xi)>0,\\ \\
	&\varepsilon h(\alpha x)U_{e_i}^{\beta}\omega^{\prime}(\xi)\partial_{t}\xi=\varepsilon h(\alpha x)U_{e_i}^{\beta}\omega^{\prime}(\xi)\left(\frac{-\hat{c}}{\sqrt{1+|\nabla \varphi(\alpha x)|^2}}\right)\ge-\varepsilon\hat{c} h(\alpha x)U_{e_i}^{\beta}\omega^{\prime}(\xi),\\ \\
	&-\varepsilon h(\alpha x)\omega^{\prime}(\xi)\partial_{t}\xi=-\varepsilon h(\alpha x)\omega^{\prime}(\xi)\left(\frac{-\hat{c}}{\sqrt{1+|\nabla \varphi(\alpha x)|^2}}\right)\ge 0,\\ \\
	&-\varepsilon\alpha\left(\sum_{k=1}^{N-1}\partial_{\zeta_{k}\zeta_{k}}h(\alpha x)\right)\left[U_{e_i}^\beta(\eta, x, y) \omega(\xi)+(1-\omega(\xi))\right]\ge-\varepsilon\alpha\left|\Delta h(\alpha x)\right|\ge-\varepsilon\alpha C_5 h(\alpha x),\\ \\
	&-2\varepsilon\alpha\beta U_{e_i}^{\beta-1}(\eta)\partial_{\eta}U_{e_i}\omega(\xi)\left(\sum_{k=1}^{N-1}\partial_{\zeta_{k}}h(\alpha x)\partial_{x_k}\eta\right)\\ 
	&\quad\ge-2\varepsilon\alpha\beta\omega(\xi)|\partial_{\eta}U_{e_i}|\cdot|\nabla h(\alpha x)|\cdot|-\nabla\varphi(\alpha x)|\ge-2\varepsilon\alpha\beta r C_5 h(\alpha x) (C_3 h(\alpha x)+\max_{1\le i\le n} \{|\nu_i \cot \theta_i|\}),\\ \\
	&-2\varepsilon\alpha\beta U_{e_i}^{\beta-1}(\eta)\omega(\xi)\left(\sum_{k=1}^{N-1}\partial_{\zeta_{k}}h(\alpha x)\partial_{x_k}U_{e_i}\right)\\
	&\quad\ge-2\varepsilon\alpha\beta U_{e_i}^{\beta-1}(\eta)\omega(\xi)\left|\nabla h(\alpha x)\cdot \nabla_x U_{e_i}\right|\ge-2\varepsilon\alpha\beta C_5 h(\alpha x) \sqrt{(N-1)\left(\bar{K} \max\{e^{-\frac{3 \kappa}{4} |\eta|}, e^{-\kappa_2 |\eta|}\}\right)^2},\\  \\
   &-2\varepsilon\alpha U_{e_i}^{\beta}(\eta)\omega^{\prime}(\xi)\left(\sum_{k=1}^{N-1}\partial_{\zeta_{k}}h(\alpha x)\partial_{x_k}\xi\right)\\
   &\quad\ge-2\varepsilon\alpha\omega^{\prime}(\xi)\left|\nabla h(\alpha x)\cdot \nabla_x \xi\right|\ge -\varepsilon\alpha\omega^{\prime}(\xi) C_5 h(\alpha x) (\alpha C_4(1+|\xi|) h(\alpha x)+\alpha C_4|\xi| h(\alpha x)),\\ \\
	&2\varepsilon\alpha \omega^{\prime}(\xi)\left(\sum_{k=1}^{N-1}\partial_{\zeta_{k}}h(\alpha x)\partial_{x_k}\xi\right)\\
	&\quad\ge-2\varepsilon\alpha\omega^{\prime}(\xi)\left|\nabla h(\alpha x)\cdot \nabla_x \xi\right|\ge -\varepsilon\alpha\omega^{\prime}(\xi) C_5 h(\alpha x) (\alpha C_4(1+|\xi|) h(\alpha x)+\alpha C_4|\xi| h(\alpha x)),\\ \\
		&-\varepsilon h(\alpha  
		x)\beta(\beta-1)U_{e_i}^{\beta-2}(\eta)(\partial_{\eta}U_{e_i})^2\omega(\xi)
\left(\sum_{k=1}^{N-1}(\partial_{x_k}\eta)^2+(\partial_{y}\eta)^2\right)\\
&\quad\ge-\varepsilon h(\alpha x)|\beta(\beta-1)|(1+|\nabla \varphi|^2)\ge-\varepsilon h(\alpha x)|\beta(\beta-1)|\left[1+(C_3 h(\alpha x)+\max_{1\le i\le n} \{|\nu_i \cot \theta_i|\})^2\right],\\ \\
		&-2\varepsilon h(\alpha 
		x)\beta(\beta-1)U_{e_i}^{\beta-2}(\eta)\partial_{\eta}U_{e_i}\omega(\xi)\left(
\sum_{k=1}^{N-1}\partial_{x_k}U_{e_i}\partial_{x_k}\eta+\partial_{y}U_{e_i}\partial_{y}\eta\right)\\
		&\quad\geq-2\varepsilon h(\alpha x)|\beta(\beta-1)|U_{e_i}^{\beta-2}(\eta)\omega(\xi)|\partial_{\eta}U_{e_i}|\left|\nabla_{x,y} U_{e_i} \cdot \nabla_{x,y} \eta\right|\\
		&\quad\ge-2\varepsilon r h(\alpha x)|\beta(\beta-1)|\left|\nabla_{x,y} U_{e_i} \right|\cdot \left| (-\nabla \varphi,1)\right|\\
		&\quad\ge-2\varepsilon r h(\alpha x)|\beta(\beta-1)|\left(N\left(\bar{K} \max\{e^{-\frac{3 \kappa}{4} |\eta|}, e^{-\kappa_2 |\eta|}\}\right)^2\right)^\frac{1}{2}\left[1+(C_3 h(\alpha x)+\max_{1\le i\le n} \{|\nu_i \cot \theta_i|\})^2\right]^\frac{1}{2},\\ \\
	&-\varepsilon h(\alpha x)\beta U_{e_i}^{\beta-1}(\eta)\partial_{\eta\eta}U_{e_i}\omega(\xi)\left(\sum_{k=1}^{N-1}(\partial_{x_k}\eta)^2+(\partial_{y}\eta)^2\right)\\
	&\quad\ge-\varepsilon h(\alpha x)\beta |\partial_{\eta\eta}U_{e_i}|(1+|\nabla \varphi|^2)\\
	&\quad\ge-\varepsilon h(\alpha x)\beta\bar{K} \max\{e^{-\frac{3 \kappa}{4} |\eta|}, e^{-\kappa_2 |\eta|}\}\left[1+(C_3 h(\alpha x)+\max_{1\le i\le n} \{|\nu_i \cot \theta_i|\})^2\right],\\ \\
	&-2\varepsilon h(\alpha x)\beta U_{e_i}^{\beta-1}(\eta)\omega(\xi)\left(\sum_{k=1}^{N-1}\partial_{x_k\eta}U_{e_i}\partial_{x_k}\eta+\partial_{y\eta}U_{e_i}\partial_{y}\eta\right)\\
	&\quad\ge-2\varepsilon h(\alpha x)\beta\left|\nabla_{x,y}\partial_{\eta} U_{e_i} \cdot \nabla_{x,y} \eta\right|\\
	&\quad\ge-2\varepsilon h(\alpha x)\beta\left(N\left(\bar{K} \max\{e^{-\frac{3 \kappa}{4} |\eta|}, e^{-\kappa_2 |\eta|}\}\right)^2\right)^\frac{1}{2}\left[1+(C_3 h(\alpha x)+\max_{1\le i\le n} \{|\nu_i \cot \theta_i|\})^2\right]^\frac{1}{2},\\ \\
    &-\varepsilon h(\alpha x)\beta U_{e_i}^{\beta-1}(\eta)\partial_{\eta}U_{e_i}\omega(\xi)\left(\sum_{k=1}^{N-1}\partial_{x_k x_k}\eta+\partial_{yy}\eta\right)\\
    &\quad\ge-\varepsilon h(\alpha x)\beta r \left|- \sum_{k=1}^{N-1} \alpha \partial_{\zeta_k \zeta_k} \varphi(\alpha x)\right|\ge-\varepsilon h(\alpha x)\beta r \cdot \alpha (N-1)C_3 h(\alpha x)\ge-\varepsilon h(\alpha x)\beta r \alpha (N-1)C_3 ,\\ \\
	&-2\varepsilon h(\alpha x)\beta U_{e_i}^{\beta-1}(\eta)\partial_{\eta}U_{e_i}\omega^{\prime}(\xi)\left(\sum_{k=1}^{N-1}\partial_{x_k}\eta\partial_{x_k}\xi+\partial_{y}\eta\partial_{y}\xi\right)\\
	&\quad\ge -2\varepsilon h(\alpha x)\beta r \omega^{\prime}(\xi)\left| (-\nabla \varphi,1)\right|\cdot \left|\nabla_{x,y} \xi\right|\\
	&\quad\ge-\varepsilon h(\alpha x)\beta r \omega^{\prime}(\xi)\left[1+(C_3 h(\alpha x)+\max_{1\le i\le n} \{|\nu_i \cot \theta_i|\})^2\right]^\frac{1}{2}(\alpha C_4(1+|\xi|) h(\alpha x)+\alpha C_4|\xi| h(\alpha x)),\\ \\
&-\varepsilon h(\alpha x)\beta(\beta-1) U_{e_i}^{\beta-2}(\eta)\omega(\xi)\left(\sum_{k=1}^{N-1}(\partial_{x_k}U_{e_i})^2+(\partial_{y}U_{e_i})^2\right)\\
&\quad\ge-\varepsilon h(\alpha x)|\beta(\beta-1)|\left|\nabla_{x,y}U_{e_i}\right|^2\ge-\varepsilon h(\alpha x)|\beta(\beta-1)|N\left(\bar{K} \max\{e^{-\frac{3 \kappa}{4} |\eta|}, e^{-\kappa_2 |\eta|}\}\right)^2,\\ \\
	&-2\varepsilon h(\alpha x)\beta U_{e_i}^{\beta-1}(\eta)\omega^{\prime}(\xi)\left(\sum_{k=1}^{N-1}\partial_{x_k}U_{e_i}\partial_{x_k}\xi+\partial_{y}U_{e_i}\partial_{y}\xi\right)\\
	&\quad\ge-2\varepsilon h(\alpha x)\beta \omega^{\prime}(\xi)\left|\nabla_{x,y}U_{e_i} \cdot \nabla_{x,y} \xi\right|\\
	&\quad\ge-\varepsilon h(\alpha x)\beta \omega^{\prime}(\xi)\left(N\left(\bar{K} \max\{e^{-\frac{3 \kappa}{4} |\eta|}, e^{-\kappa_2 |\eta|}\}\right)^2\right)^\frac{1}{2}(\alpha C_4(1+|\xi|) h(\alpha x)+\alpha C_4|\xi| h(\alpha x)),\\ \\
		&-\varepsilon h(\alpha x)\beta U_{e_i}^{\beta-1}(\eta)\omega(\xi)\left(\sum_{k=1}^{N-1}\partial_{x_kx_k}U_{e_i}+\partial_{yy}U_{e_i}\right)\ge-\varepsilon h(\alpha x)\beta N\left(\bar{K} \max\{e^{-\frac{3 \kappa}{4} |\eta|}, e^{-\kappa_2 |\eta|}\}\right),\\ \\ 
		&-\varepsilon h(\alpha x) U_{e_i}^{\beta}(\eta)\omega^{\prime\prime}(\xi)\left(\sum_{k=1}^{N-1}(\partial_{x_k}\xi)^2+(\partial_{y}\xi)^2\right)\\
		&\quad\ge-\varepsilon h(\alpha x)|\omega^{\prime\prime}(\xi)|\cdot|\nabla_{x,y} \xi|^2\ge-\varepsilon h(\alpha x)|\omega^{\prime\prime}(\xi)|\left[\frac{1}{2}(\alpha C_4(1+|\xi|) h(\alpha x)+\alpha C_4|\xi| h(\alpha x))\right]^2,\\ \\
		&-\varepsilon h(\alpha x) U_{e_i}^{\beta}(\eta)\omega^{\prime}(\xi)\left(\sum_{k=1}^{N-1}\partial_{x_kx_k}\xi+\partial_{yy}\xi\right)\\
		&\quad\ge-\varepsilon h(\alpha x) U_{e_i}^{\beta}(\eta)|\omega^{\prime}(\xi)|\cdot|\Delta_{x,y}\xi|\\
		&\quad\ge-\varepsilon h(\alpha x)|\omega^{\prime}(\xi)|\alpha N C_4(1+\alpha|\xi|) h(\alpha x)\ge-\varepsilon h(\alpha x)|\omega^{\prime}(\xi)|\alpha N C_4(1+\alpha|\xi|) ,\\ \\
		&\varepsilon h(\alpha x) \omega^{\prime\prime}(\xi)\left(\sum_{k=1}^{N-1}(\partial_{x_k}\xi)^2+(\partial_{y}\xi)^2\right)\ge-\varepsilon h(\alpha x) |\omega^{\prime\prime}(\xi)|\left[\frac{1}{2}(\alpha C_4(1+|\xi|) h(\alpha x)+\alpha C_4|\xi| h(\alpha x))\right]^2,\\ 
		&\text{and}\\
		&\varepsilon h(\alpha x) \omega^{\prime}(\xi)\left(\sum_{k=1}^{N-1}\partial_{x_kx_k}\xi+\partial_{yy}\xi\right)\ge-\varepsilon h(\alpha x) \omega^{\prime}(\xi)|\Delta_{x,y}\xi|\ge-\varepsilon h(\alpha x) \omega^{\prime}(\xi)\alpha N C_4(1+\alpha|\xi|).
\end{align*}
Thus, there is a constant $\Lambda_3>0$ such that
\begin{align}\label{3.30}
	\left|\left(\partial_t-\Delta_{x, y}\right)\left(\varepsilon h(\alpha x)\left[U_{e_i}^\beta(\eta, x, y) \omega(\xi)+(1-\omega(\xi))\right]\right)\right|<\Lambda_3 \varepsilon h(\alpha x),
\end{align}
for all $(\xi, x, y) \in\left[-X^{\prime \prime}, X^{\prime}\right] \times \mathbb{R}^N$. It follows from \eqref{3.11} and \eqref{3.30} that
\begin{equation}\label{3.31}
\begin{aligned}
	\mathcal{L} \bar{V}> & I_1-\Lambda_3 \varepsilon h(\alpha x)-f\left(x, y, \bar{V}\right) \\
	= & (\partial_t \xi+c_{e(x)})\partial_{\xi} U_{e(x)}  -2\sum_{k=1}^{N-1} \partial_{x_{k} \xi} U_{e(x)} \partial_{x_{k}} \xi -2 \partial_{y \xi} U_{e(x)} \partial_{y} \xi\\
	& + 2\nabla_{x, y} \partial_{\xi} U_{e(x)} \cdot e(x)-\partial_\xi U_{e(x)}\left(\sum_{k=1}^{N-1}\partial_{x_{k}x_{k}} \xi+\partial_{yy} \xi\right)\\
	& -\partial_{\xi \xi} U_{e(x)}\left(\sum_{k=1}^{N-1}(\partial_{x_{k}} \xi)^2+(\partial_{y} \xi)^2-1\right)-\sum_{k=1}^{N-1}U_{e(x)}^{\prime \prime} \cdot \partial_{x_{k}}e(x) \cdot \partial_{x_{k}}e(x) \\
	& -\sum_{k=1}^{N-1}U_{e(x)}^{\prime} \cdot \partial_{x_{k}x_{k}}e(x)-2 \sum_{k=1}^{N-1}\partial_{x_k} U_{e(x)}^{\prime} \cdot \partial_{x_{k}}e(x)-2\sum_{k=1}^{N-1} \partial_\xi U_{e(x)}^{\prime} \cdot \partial_{x_{k}}e(x) \partial_{x_{k}} \xi\\
	&+f\left(x, y, U_{e(x)}\right)-f\left(x, y, \bar{V}\right)-\Lambda_3 \varepsilon h(\alpha x)\\
	=&:(\partial_t \xi+c_{e(x)})\partial_{\xi} U_{e(x)}+ X^* +f\left(x, y, U_{e(x)}\right)-f\left(x, y, \bar{V}\right)-\Lambda_3 \varepsilon h(\alpha x)
\end{aligned}
\end{equation}
for all $(\xi, x, y) \in\left[-X^{\prime \prime}, X^{\prime}\right] \times \mathbb{R}^N$, where $U_{e(x)}$ and all of its derivatives take value at $(\xi(t, x, y), x, y)$. It follows from Theorem \ref{Theorem 2.12} and \eqref{3.12} that there exists a number $r>0$ such that
\begin{align}\label{3.32}
	\left(\xi_t+c_{e(x)}\right) \partial_{\xi} U_{e(x)} \geq C_{0} r  h(\alpha x)>0 ,
\end{align}
for all $(\xi, x, y) \in\left[-X^{\prime \prime}, X^{\prime}\right] \times \mathbb{R}^N$. By the 
boundedness of $\xi$, \eqref{3.2}, \eqref{3.10}, Theorem \ref{Theorem 2.8}, Theorem \ref{Theorem 2.13} and Lemma \ref{Lemma 2.17}, one has 
	\begin{align*}
		&-2\sum_{k=1}^{N-1} \partial_{x_{k} \xi} U_{e(x)} \partial_{x_{k}} \xi-2 \partial_{y \xi} U_{e(x)} \partial_{y} \xi + 2\nabla_{x, y} \partial_{\xi} U_{e(x)} \cdot e(x)\\
		&\quad=-2\nabla_{x, y} \partial_{\xi} U_{e(x)}(\nabla_{x, y}\xi-e(x))\ge-2 \alpha C_4| \xi | h(\alpha x) \left(N\left(\bar{K} \max\{e^{-\frac{3 \kappa}{4} |\xi|}, e^{-\kappa_2 |\xi|}\}\right)^2\right)^\frac{1}{2},\\ \\
		&-\partial_\xi U_{e(x)}\left(\sum_{k=1}^{N-1}\partial_{x_{k}x_{k}} \xi+\partial_{yy} \xi\right)\ge-|\partial_\xi U_{e(x)}|\cdot|\Delta_{x,y}\xi|\ge -r \alpha C_4(1+\alpha|\xi|) h(\alpha x),\\ \\
		&-\partial_{\xi \xi} U_{e(x)}\left(\sum_{k=1}^{N-1}(\partial_{x_{k}} \xi)^2+(\partial_{y} \xi)^2-1\right)\\
		&\quad\ge-|\partial_{\xi \xi} U_{e(x)}|\cdot|\left(\nabla_{x, y} \xi-e(x)\right) \left(\nabla_{x, y} \xi+e(x)\right)|\\
		&\quad\ge-\bar{K} \max\{e^{-\frac{3 \kappa}{4} |\xi|}, e^{-\kappa_2 |\xi|}\}\alpha^2 C_4^2(1+|\xi|)| \xi |  h^2(\alpha x)\\
		&\quad\ge-\bar{K} \max\{e^{-\frac{3 \kappa}{4} |\xi|}, e^{-\kappa_2 |\xi|}\}\alpha^2 C_4^2(1+|\xi|)| \xi |  h(\alpha x),\\ \\
		&-\sum_{k=1}^{N-1}U_{e(x)}^{\prime \prime} \cdot \partial_{x_{k}}e(x) \cdot \partial_{x_{k}}e(x)\\
		&\quad\ge -(N-1)M_1 \max\{e^{-\frac{\kappa}{2} |\xi|}, e^{-\frac{\kappa_2}{2} |\xi|}\}\alpha^2 M^2_2 h^2(\alpha x)\ge -(N-1)M_1 \max\{e^{-\frac{\kappa}{2} |\xi|}, e^{-\frac{\kappa_2}{2} |\xi|}\}\alpha^2 M^2_2 h(\alpha x),\\ \\
		&-\sum_{k=1}^{N-1}U_{e(x)}^{\prime} \cdot \partial_{x_{k}x_{k}}e(x)\ge-(N-1)M_1 \max\{e^{-\frac{\kappa}{2} |\xi|}, e^{-\frac{\kappa_2}{2} |\xi|}\}\alpha M_3  h(\alpha x),\\ \\
        &-2 \sum_{k=1}^{N-1}\partial_{x_k} U_{e(x)}^{\prime} \cdot \partial_{x_{k}}e(x)\ge-2(N-1)M_1 \max\{e^{-\frac{\kappa}{2} |\xi|}, e^{-\frac{\kappa_2}{2} |\xi|}\} \alpha M_2 h(\alpha x),\\ 
        &\text{and}\\
        &-2\sum_{k=1}^{N-1} \partial_\xi U_{e(x)}^{\prime} \cdot \partial_{x_{k}}e(x) \partial_{x_{k}} \xi\\
        &\quad\ge -(N-1)M_1 \max\{e^{-\frac{\kappa}{2} |\xi|}, e^{-\frac{\kappa_2}{2} |\xi|}\}(\alpha C_4(1+|\xi|) h(\alpha x)+\alpha C_4|\xi| h(\alpha x))\alpha M_2 h(\alpha x).
	\end{align*}
So there exists a constant $\Lambda_4>0$ such that 
\begin{equation}\label{3.33}
	X^*\geq-\Lambda_4 \alpha h(\alpha x),
\end{equation}
for all $(\xi, x, y) \in\left[-X^{\prime \prime}, X^{\prime}\right] \times \mathbb{R}^N$. By \eqref{1.2} there is a constant $\Lambda_5>0$ such that
\begin{equation}\label{3.34}
f\left(x, y, U_{e(x)}\right)-f\left(x, y, \bar{V}\right)>-\Lambda_5 \varepsilon h(\alpha x).
\end{equation}
It follows from \eqref{3.31}-\eqref{3.34} that
\begin{equation}\label{3.35}
\mathcal{L} \bar{V}>h(\alpha x) \times\left[C_{0} r-\Lambda_4 \alpha-\Lambda_5 \varepsilon-\Lambda_3 \varepsilon\right], \quad \forall(\xi, x, y) \in\left[-X^{\prime \prime}, X^{\prime}\right] \times \mathbb{R}^N.
\end{equation}
Denote
\begin{equation}\label{3.36}
\alpha_4^{+}(\varepsilon):=\varepsilon \;\text { and }\; \varepsilon_3^{+}(\beta):=\frac{C_{0} r}{\Lambda_3+\Lambda_4+\Lambda_5}.
\end{equation}
By \eqref{3.35} and \eqref{3.36}, set $\varepsilon_0^{+} \leq \varepsilon_3^{+}(\beta)$ and $\alpha_0^{+} \leq \alpha_4^{+}(\varepsilon)$, then $\mathcal{L} \bar{V}>0$ holds in Case $3$. Thus, by setting $\varepsilon_0^{+} \leq \min \left\{\varepsilon_1^{+}, \varepsilon_2^{+}, \varepsilon_3^{+}(\beta) \right\}$, $\alpha_0^{+} \leq \min \left\{\alpha_1^{+}(\beta), \alpha_2^{+}(\beta, \varepsilon), \alpha_3^{+}(\varepsilon), \alpha_4^{+}(\varepsilon)\right\}$ and $\beta^*=\min \left\{1, \beta_1^*, \beta_2^*\right\}$, Step $1$ is complete. Besides, we can get immediately \eqref{3.8} by calculation. 

{\it Step 2: proof of \eqref{3.6}.} In fact, we only need to show that
\begin{equation*}
 \left|U_{e(x)}(\xi, x, y)-\underline{V}(t, x, y)\right| \leq \varepsilon, \text{ as } d\left((x,y), \mathcal{R}+\hat{c} t e_0\right) \rightarrow+\infty.
\end{equation*}

\textbf{Case 1:} For $(t, x, y) \in \mathbb{R} \times \mathbb{R}^N$ such that $d\left((x, y), \partial \mathcal{Q}+\hat{c} t e_0\right)<+\infty$ and $d\left((x, y), \mathcal{R}+\hat{c} t e_0\right) \geq \rho$ as $\rho \rightarrow+\infty$. 
Without loss of generality, we assume that $x \in \widehat{Q}_i$ $(i \in\{1, \ldots, n\})$. Then, one has $x \cdot \nu_i \cos \theta_i+(y-\hat{c}t) \sin \theta_i$ is bounded. However, $\psi( x)=\psi_i( x)=\max _{1 \leq j \leq n} \psi_j( x)$ in $\widehat{Q}_i$, then we have $-x \cdot \nu_i \cot \theta_i\ge-x \cdot \nu_j \cot \theta_j$ for all $j \neq i$ in $\widehat{Q}_i$. By $\theta_i\in(0, \pi / 2]$ for each $i$, one knows $x \cdot \nu_i \cos \theta_i+(y-\hat{c} t) \sin \theta_i \le x \cdot \nu_j \cos \theta_j+(y-\hat{c} t) \sin \theta_j$ for all $j \neq i$ in $\widehat{Q}_i$. Because $\hat{c}=c_{e_i} / e_i \cdot e_0=c_{e_i} / \sin \theta_i$ for each $i$, $U_e(+\infty, x, y)=0$ for any $e \in \mathbb{S}^{N-1}$ and $(x, y) \in \mathbb{L}^N$, it holds that
\begin{equation*}
	\underline{V}(t, x, y)=\max _{1 \leq k \leq n}\left\{U_{e_k}\left(x \cdot \nu_k \cos \theta_k+(y-\hat{c} t) \sin \theta_k, x, y\right)\right\}=U_{e_i}\left(x \cdot \nu_i \cos \theta_i+(y-\hat{c} t) \sin \theta_i, x, y\right).
\end{equation*}
Since $x \in \widehat{Q}_i$, $d\left((x, y), \partial \mathcal{Q}+\hat{c} t e_0\right)<+\infty$ and $d\left((x, y), \mathcal{R}+\hat{c} t e_0\right) \geq \rho$ as $\rho \rightarrow+\infty$, it holds that $d(x, \widehat{\mathcal{R}}) \rightarrow+\infty$. By \eqref{2.11} and $\hat{q}_i(\alpha x) \rightarrow 0$, it holds that $\xi(t, x, y) \rightarrow x \cdot \nu_i \cos \theta_i+(y- \hat{c}t)\sin \theta_i$. Thus,
\begin{equation}\label{3.37}
\left|U_{e_i}(\xi(t, x, y), x, y)-U_{e_i}\left(x \cdot \nu_i \cos \theta_i+(y-\hat{c} t) \sin \theta_i, x, y\right)\right| \leq \frac{\varepsilon}{2}.
\end{equation}
Using \eqref{2.11} again, we have $e(x) \rightarrow\left(\nu_i \cos \theta_i, \sin \theta_i\right)=e_i$. Then, by Theorem \ref{Theorem 2.13} and the boundedness of $U_{e_i}$, one gets
\begin{equation}\label{3.38}
\left|U_{e(x)}(\xi(t, x, y), x, y)-U_{e_i}(\xi(t, x, y), x, y)\right| \leq\left\|U_{e_i}^{\prime}\right\| \cdot\left|e(x)-e_i\right|+o\left(\left|e(x)-e_i\right|\right) \leq \frac{\varepsilon}{2}.
\end{equation}
By virtue of the definition of $\bar{V}$, \eqref{3.37} and \eqref{3.38}, we obtain
\begin{equation*}
\begin{aligned}
	&|U_{e(x)}(\xi, x, y)-\underline{V}(t, x, y)|\\
	=&  \left|U_{e(x)}(\xi, x, y)-U_{e_i}\left(x \cdot \nu_i \cos \theta_i+(y-\hat{c} t) \sin \theta_i, x, y\right)\right| \\
	\leq & \left|U_{e(x)}(\xi, x, y)-U_{e_i}(\xi(t, x, y), x, y)\right|+\mid U_{e_i}(\xi(t, x, y), x, y) -U_{e_i}\left(x \cdot \nu_i \cos \theta_i+(y-\hat{c} t) \sin \theta_i, x, y\right)|\\
	\leq&\frac{\varepsilon}{2}+\frac{\varepsilon}{2}\leq \varepsilon.
\end{aligned}
\end{equation*}

\textbf{Case 2:} For $(t, x, y) \in \mathbb{R} \times \mathbb{R}^N$ such that $d\left(( x, y), \partial \mathcal{Q}+\hat{c} t e_0\right) \geq \rho$ as $\rho \rightarrow+\infty$, we get that
\begin{equation*}
\min _{1 \leq i \leq n}\left\{x \cdot \nu_i \cos \theta_i+(y-\hat{c} t) \sin \theta_i\right\} \rightarrow+\infty \text { or }-\infty.
\end{equation*}

If $\min _{1 \leq i \leq n}\left\{x \cdot \nu_i \cos \theta_i+(y-\hat{c} t) \sin \theta_i\right\} \rightarrow+\infty$, then $0<\underline{V}(t, x, y) \leq \varepsilon / 2$ since $U_e(+\infty, x, y)=0$ for any $e \in \mathbb{S}^{N-1}$ and $(x, y) \in \mathbb{L}^N$. Firstly we claim that the surface $y=\varphi(x)$ is bounded away from $\partial \mathcal{Q}$. If $y=\varphi(x)$ is not bounded away from $\partial \mathcal{Q}$, there exists a sequence $\left\{x_k\right\}_{k \in \mathbb{N}}$ such that $\min _{1 \leq i \leq n}\left\{\hat{q}_i\left(x_k\right)\right\} \rightarrow$ $+\infty$ or $-\infty$ as $k \rightarrow+\infty$, which contradicts the function $\sum_{i=1}^n e^{-\hat{q}_i\left(x_k\right)}=1$. We suppose that $\hat{q}_l\left(x\right)=\min _{1 \leq i \leq n}\left\{\hat{q}_i\left(x\right)\right\}$. Because the surface $y=\varphi(x)$ is bounded away from $\partial \mathcal{Q}$ and $\min _{1 \leq i \leq n}\left\{\hat{q}_i\left(x\right)\right\}=\min _{1 \leq i \leq n}\left\{x \cdot \nu_i \cos \theta_i+\varphi(x) \sin \theta_i\right\}$ is bounded, one has $x \cdot \nu_l \cos \theta_l+\varphi(x) \sin \theta_l$ is bounded and $x \cdot \nu_l \cos \theta_l$ is bounded. Notice that $x \cdot \nu_l \cos \theta_l+(y-\hat{c} t) \sin \theta_l\rightarrow+\infty$. By $\theta_i\in(0, \pi/2]$, we have $y-\hat{c} t\rightarrow+\infty$ and $\alpha(y-\hat{c} t)-\varphi(\alpha x) \rightarrow+\infty$, and then $\xi(t, x, y) \rightarrow+\infty$. Thus, $0<U_{e(x)}(\xi(t, x, y), x, y) \leq \varepsilon / 2$ and it holds that
\begin{equation*}
\left|U_{e(x)}(\xi(t, x, y), x, y)-\underline{V}(t, x, y)\right|  \leq  \varepsilon .
\end{equation*}

If $\min _{1 \leq i \leq n}\left\{x \cdot \nu_i \cos \theta_i+(y-\hat{c} t) \sin \theta_i\right\} \rightarrow-\infty$, then $1-\varepsilon\leq\underline{V}(t, x, y) < 1$ since $U_e(-\infty, x, y)=1$ for any $e \in \mathbb{S}^{N-1}$ and $(x, y) \in \mathbb{L}^N$.  Because the surface $y=\varphi(x)$ is bounded away from $\partial \mathcal{Q}$ and $\min _{1 \leq i \leq n}\left\{\hat{q}_i\left(x\right)\right\}=\min _{1 \leq i \leq n}\left\{x \cdot \nu_i \cos \theta_i+\varphi(x) \sin \theta_i\right\}$ is bounded, we can obtain that each $x \cdot \nu_i \cos \theta_i+\varphi(x) \sin \theta_i$ has a lower bound for $1 \leq i \leq n$. Then one has that each $x \cdot \nu_i \cos \theta_i$ has a lower bound for $1 \leq i \leq n$, since $y=\varphi(x)$ is bounded. Suppose that $\hat{q}_l\left(x\right)=\min _{1 \leq i \leq n}\left\{\hat{q}_i\left(x\right)\right\}$, so $x \cdot \nu_l \cos \theta_l+(y-\hat{c} t) \sin \theta_l\rightarrow-\infty$. By $\theta_i\in(0, \pi/2]$, we can also get $\alpha(y-\hat{c} t)-\varphi(\alpha x) \rightarrow-\infty$, and then $\xi(t, x, y) \rightarrow-\infty$. Therefore, $1-\varepsilon\leq U_{e(x)}(\xi(t, x, y), x, y) < 1$ and it holds that
\begin{equation*}
	\left|U_{e(x)}(\xi(t, x, y), x, y)-\underline{V}(t, x, y)\right| \leq \varepsilon.
\end{equation*}

{\it Step 3: proof of \eqref{3.7}.} We just need to prove that for all $i \in\{1, \ldots, n\}$, the inequality
\begin{equation*}
	\bar{V}(t, x, y) \geq U_{e_i}\left(x \cdot \nu_i \cos \theta_i+(y-\hat{c} t) \sin \theta_i, x, y\right)
\end{equation*}
holds for $(t, x, y) \in \mathbb{R} \times \mathbb{R}^N$. For convenience, denote $u_i(t, x, y)=U_{e_i}\left(x \cdot \nu_i \cos \theta_i+(y-\hat{c} t) \sin \theta_i, x, y\right)$ in Step $3$. Since $U_e(-\infty, x, y)=1$ and $U_e(+\infty, x, y)=0$ for any $e \in \mathbb{S}^{N-1}$ and $(x, y) \in \mathbb{L}^N$, we define sets
\begin{equation*}
\left\{\begin{array}{l}
	\Gamma_t=\left\{(t, x, y) \in \mathbb{R} \times \mathbb{R}^N ; x \cdot \nu_i \cos \theta_i+(y-\hat{c} t) \sin \theta_i=0\right\}, \\
	\Omega_t^{+}=\left\{(t, x, y) \in \mathbb{R} \times \mathbb{R}^N ; x \cdot \nu_i \cos \theta_i+(y-\hat{c} t) \sin \theta_i<0\right\},\\
	\Omega_t^{-}=\left\{(t, x, y) \in \mathbb{R} \times \mathbb{R}^N ; x \cdot \nu_i \cos \theta_i+(y-\hat{c} t) \sin \theta_i>0\right\},
\end{array}\right.	
\end{equation*}
and 
\begin{equation*}
\left\{\begin{array}{l}
	\widetilde{\Gamma}_t=\left\{(x, y) \in \mathbb{R} \times \mathbb{R}^N ; \min _{1 \leq i \leq n}\left\{x \cdot \nu_i \cos \theta_i+(y-\hat{c} t) \sin \theta_i\right\}=0\right\}, \\
	\widetilde{\Omega}_t^{+}=\left\{(x, y) \in \mathbb{R} \times \mathbb{R}^N ; \min _{1 \leq i \leq n}\left\{x \cdot \nu_i \cos \theta_i+(y-\hat{c} t) \sin \theta_i\right\}<0\right\}, \\
	\widetilde{\Omega}_t^{-}=\left\{(x, y) \in \mathbb{R} \times \mathbb{R}^N ; \min _{1 \leq i \leq n}\left\{x \cdot \nu_i \cos \theta_i+(y-\hat{c} t) \sin \theta_i\right\}>0\right\}.
\end{array}\right.
\end{equation*}
It is easy to obtain that $\widetilde{\Omega}_t^{-} \subset \Omega_t^{-}$. From Step $2$ and definitions of $u_i(t, x, y)$ and $\bar{V}(t, x, y)$, for $0<\gamma_{\star}\leq \min \{\theta / 2,1-\theta\}$, there exist two constants $m>0$ and $\tilde{m}>0$ such that
\begin{equation*}
\left\{\begin{array}{l}
	\forall t \in \mathbb{R}, \forall(x, y) \in \Omega_t^{+},\left(d\left((x, y), \Gamma_t\right) \geq m\right) \Rightarrow\left(u_i(t, x, y) \geq 1-\gamma_{\star} / 2\right), \\
	\forall t \in \mathbb{R}, \forall(x, y) \in \Omega_t^{-},\left(d\left((x, y), \Gamma_t\right) \geq m\right) \Rightarrow\left(u_i(t, x, y) \leq \gamma_{\star}\right),
\end{array}\right.
\end{equation*}
and
\begin{equation*}
\left\{\begin{array}{l}
	\forall t \in \mathbb{R}, \forall(x, y) \in \widetilde{\Omega}_t^{+},\left(d\left((x, y), \widetilde{\Gamma}_t\right) \geq \tilde{m}\right) \Rightarrow(\bar{V}(t, x, y) \geq 1-\gamma_{\star} / 2), \\
	\forall t \in \mathbb{R}, \forall(x, y) \in \widetilde{\Omega}_t^{-},\left(d\left((x, y), \widetilde{\Gamma}_t\right) \geq \tilde{m}\right) \Rightarrow(\bar{V}(t, x, y) \leq \gamma_{\star}) .
\end{array}\right.
\end{equation*}
Similar to the proof of Lemma \ref{Lemma 2.15}, there is $T_{min}$ defined as
\begin{equation*}
	T_{min }=\inf \left\{T \in \mathbb{R} ; \bar{V}(t+T, x, y) \geq u_i(t, x, y) \text { for }(t, x, y) \in \mathbb{R} \times \mathbb{R}^N\right\}
\end{equation*}
such that $\bar{V}\left(t+T_{min}, x, y\right) \geq u_i(t, x, y)$ for any $(t, x, y) \in \mathbb{R} \times \mathbb{R}^N$. By \eqref{3.6}, it holds that $\bar{V}(t, x, y) \rightarrow U_{e_i}\left(x\cdot\nu_i \cos \theta_i+(y-\hat{c} t) \sin \theta_i, x, y\right)$ for $(t, x, y) \in \mathbb{R} \times \mathbb{R}^N$ such that $d\left((x, y), \widetilde{Q}_i+\hat{c} t e_0\right)<+\infty$ and $d\left((x, y), \mathcal{R}+\hat{c} t e_0\right) \rightarrow+\infty$ and this implies that $0 \leq T_{min }<+\infty$, then we just have to prove $T_{min }=0$.

Define sets $\omega_{m}^{ \pm}$ and $\Omega_{\tilde{m}}^{ \pm}$ as follows:
\begin{equation*}
\begin{aligned}
	& \omega_{m}^{ \pm}:=\left\{(t, x, y) \in \mathbb{R} \times \mathbb{R}^N ;(x, y) \in \Omega_t^{ \pm}, d\left((x, y), \Gamma_t\right) > m \right\}, \\
	& \Omega_{\tilde{m}}^{ \pm}:=\left\{(t, x, y) \in \mathbb{R} \times \mathbb{R}^N ;(x, y) \in \widetilde{\Omega}_t^{ \pm}, d\left((x, y), \widetilde{\Gamma}_t\right) > \tilde{m}\right\}.
\end{aligned}
\end{equation*}
Next, we prove $T_{min}=0$ by contradiction. Suppose $T_{min}>0$, then there may be two cases
\begin{equation}\label{3.39}
\inf \left\{\bar{V}\left(t+T_{min }, x, y\right)-u_i(t, x, y) ;(t, x, y) \in \mathbb{R} \times \mathbb{R}^N \backslash\left(\omega_{m}^{-} \cup \Omega_{\tilde{m}}^{+}\right)\right\}>0
\end{equation}
or
\begin{equation}\label{3.40}
\inf \left\{\bar{V}\left(t+T_{min }, x, y\right)-u_i(t, x, y) ;(t, x, y) \in \mathbb{R} \times \mathbb{R}^N \backslash\left(\omega_{m}^{-} \cup \Omega_{\tilde{m}}^{+}\right)\right\}=0.
\end{equation}

If \eqref{3.39} occurs, there is a constant $\eta_0>0$ such that for any $\eta \in\left(0, 
\eta_0\right]$, $\bar{V}\left(t+T_{min }-\eta, x, y\right)-u_i(t, x, y) \geq 0$ for any $(t, x, y) \in 
\mathbb{R} \times \mathbb{R}^N \backslash\left(\omega_{m}^{-} \cup 
\Omega_{\tilde{m}}^{+}\right)$. Clearly, it holds that $\bar{V}\left(t+T_{min}-\eta, x, 
y\right)-u_i(t, x, y) \geq 0$ on $\partial \omega_{m}^{-} \cup \partial\Omega_{\tilde{m}}^{+}$. 
Note that for any $\eta \in\left(0, \eta_0\right]$ (even if it indicates decreasing $\eta_0>0$), $u_i(t, 
x, y) \leq \gamma_{\star}$ in $\omega_{m}^{-}$ and $\bar{V}\left(t+T_{min }-\eta, x, y\right) 
\geq 1-\gamma_{\star}$ in $\Omega_{\tilde{m}}^{+}$. Then by the similar discussions of Steps 
$2$-$3$ in the proof of Lemma \ref{Lemma 2.15}, it holds that $\bar{V}\left(t+T_{min }-\eta, x, 
y\right) \geq u_i(t, x, y)$ for $(t, x, y) \in \mathbb{R} \times \mathbb{R}^N$, which contradicts 
the definition of $T_{min}$.

If \eqref{3.40} occurs, there exists a sequence $\left\{\left(t_n, x_n, y_n\right)\right\}_{n \in \mathbb{N}}$ of $\mathbb{R} \times \mathbb{R}^N \backslash\left(\omega_m^{-} \cup \Omega_{\tilde{m}}^{+}\right)$ such that
\begin{equation*}
	\bar{V}\left(t_n+T_{\min }, x_n, y_n\right)-u_i\left(t_n, x_n, y_n\right) \rightarrow 0 \text { as } n \rightarrow+\infty.
\end{equation*}
This shows that $d\left(\left(x_n, y_n\right), \mathcal{R}+\hat{c} t_n e_0\right)<+\infty$, since that for $T_{min }>0$,
\begin{equation*}
\left\{\begin{array}{r}
	\bar{V}\left(t+T_{min}, x, y\right) \rightarrow u_i\left(t+T_{min}, x, y\right)>u_i(t, x, y) \text { for all }(t, x, y) \in \mathbb{R} \times \mathbb{R}^N \text { such that } \\
	\qquad d\left((x, y), \widetilde{Q}_i+\hat{c} t e_0\right)<+\infty \text { and } d\left((x, y), \mathcal{R}+\hat{c} t e_0\right) \rightarrow+\infty, \\
	\bar{V}\left(t+T_{min}, x, y\right) \rightarrow u_j\left(t+T_{min}, x, y\right)>u_i(t, x, y) \text { for all }(t, x, y) \in \mathbb{R} \times \mathbb{R}^N \text { such that } \\
	d\left((x, y), \widetilde{Q}_j+\hat{c} t e_0\right)<+\infty \text { and } d\left((x, y), \mathcal{R}+\hat{c} t e_0\right) \rightarrow+\infty, \forall j \in\{1, \ldots, n\} \backslash\{i\}, \\
	\bar{V}\left(t+T_{min}, x, y\right) \rightarrow \underline{V}\left(t+T_{min}, x, y\right)>u_i(t, x, y) \text { for all }(t, x, y) \in \mathbb{R} \times \mathbb{R}^N \text { such that } \\
	d\left((x, y), \partial \mathcal{Q}+\hat{c} t e_0\right) \rightarrow+\infty .
\end{array}\right.
\end{equation*}
Then we take another sequence $\left\{\left(t_n-1, x_n^{\prime}, y_n^{\prime}\right)\right\}$ such that $d\left(\left(x_n^{\prime}, y_n^{\prime}\right),\left(x_n, y_n\right)\right)$ is bounded and $d\left(\left(x_n^{\prime}, y_n^{\prime}\right), \mathcal{R}+\hat{c} (t_n - 1) e_0\right)$ is large enough such that $\bar{V}\left(t_n-1+T_{min}, x_n^{\prime}, y_n^{\prime}\right)>$ $u_i\left(t_n-1, x_n^{\prime}, y_n^{\prime}\right)$. However, by parabolic estimates, one has $\bar{V}\left(t_n-1+T_{min}, x_n^{\prime}, y_n^{\prime}\right)-$ $u_i\left(t_n-1, x_n^{\prime}, y_n^{\prime}\right) \rightarrow 0$, which contradicts the definition of $T_{min}$. Thus, $T_{min}=0$ and $\bar{V}(t, x, y) \geq$ $U_{e_i}\left(x \cdot \nu_i \cos \theta_i+(y-\hat{c} t)\right.$ $\left. \sin \theta_i, x, y\right)$ for all $(t, x, y) \in \mathbb{R} \times \mathbb{R}^N$.

In conclusion, one has that
\begin{equation*}
	\bar{V}(t, x, y) \geq U_{e_i}\left(x \cdot \nu_i \cos \theta_i+(y-\hat{c} t) \sin \theta_i, x, y\right)
\end{equation*}
for all $i \in\{1, \ldots, n\}$ and thereby obtain $\bar{V}(t, x, y) \geq \underline{V}(t, x, y)$ for all $(t, x, y) \in \mathbb{R} \times \mathbb{R}^N$. This completes the proof.
\end{proof}

We point out that there exist positive constants $v_{\star}$ and $C_{\star}$ such that
\begin{equation}\label{3.41}
\frac{\left|\underline{V}(t, x, y)\right|+\left|\bar{V}(t, x, y)\right|}{\min \left\{1, e^{-2 v_{\star} \min_{1\leq i\leq n} \left\{x \cdot \nu_i \cot \theta_i+y-\hat{c} t \right\}}\right\}} \leq C_{\star},\; (t, x, y)\in\mathbb{R} \times \mathbb{R}^N.
\end{equation}
By the properties of $e^{-x}$, we only need to consider $\min_{1\leq i\leq n} \left\{x \cdot \nu_i \cot \theta_i+y-\hat{c} t \right\}>0$. 

\textbf{Situation 1:} $\min_{1\leq i\leq n} \left\{x \cdot \nu_i \cot \theta_i+y-\hat{c} t \right\}>0$ and $\xi(t, x, y)>1$.
By Theorem \ref{Theorem 2.8}, one has
\begin{equation*}
	\left|\underline{V}(t, x, y)\right|\leq\bar{K} e^{-\frac{3 \kappa}{4}\min_{1 \leq i \leq n}\{x \cdot \nu_i \cos \theta_i+(y-\hat{c} t) \sin \theta_i\} }\leq\bar{K} e^{-2v\min_{1 \leq i \leq n}\{x \cdot \nu_i \cot \theta_i+(y-\hat{c} t)\} }
\end{equation*}
for all $v \leq 3 \kappa \min_{1 \leq i \leq n} \{\sin\theta_i\} / 8$. Denote $\widetilde{C}:=\frac{\sup_{\mathbb{R}^{N-1}}|\varphi(\alpha x)-\psi(\alpha x)|}{\alpha}$. It holds that
	\begin{align*}
		&\left|U_{e(x)}(\xi, x, y)\right|+\left|U_{e_i}^\beta(\eta, x, y)\right|\\
		&\leq\bar{K} e^{-\frac{3 \kappa }{4}\xi}+\left(\bar{K} e^{-\frac{3 \kappa}{4}\eta}\right)^\beta\\
		&\leq\bar{K} e^{-\frac{3 \kappa }{4}\frac{y-\hat{c} t-\varphi(\alpha x) / \alpha}{\sqrt{1+|\nabla \varphi(\alpha x)|^2}}}+\left(\bar{K} e^{-\frac{3 \kappa}{4}(y-\hat{c} t-\varphi(\alpha x) / \alpha)}\right)^\beta\\
		&\leq\bar{K} e^{-\frac{3 \kappa }{4}(y-\hat{c} t-\varphi(\alpha x) / \alpha)\frac{1}{\sqrt{1+|\nabla \varphi(\alpha x)|^2}}}+\bar{K}^{\beta}e^{-\frac{3 \kappa}{4}\beta(y-\hat{c} t-\varphi(\alpha x) / \alpha)}\\
		&\leq\bar{K} e^{-\frac{3 \kappa }{4}(y-\hat{c} t-(\psi(x)+\sup_{\mathbb{R}^{N-1}}|\varphi(\alpha x)-\psi(\alpha x)| / \alpha))\frac{1}{\sqrt{1+|\nabla \varphi(\alpha x)|^2}}}+\bar{K}^{\beta}e^{-\frac{3 \kappa}{4}\beta(y-\hat{c} t-\varphi(\alpha x) / \alpha)}\\
		&\leq\bar{K} e^{-\frac{3 \kappa }{4}(y-\hat{c} t-(\max_{1\leq i \leq n}\{- x \cdot \nu_i \cot \theta_i\}+\sup_{\mathbb{R}^{N-1}}|\varphi(\alpha x)-\psi(\alpha x)| / \alpha))\frac{1}{\sqrt{1+|\nabla \varphi(\alpha x)|^2}}}+\bar{K}^{\beta}e^{-\frac{3 \kappa}{4}\beta(y-\hat{c} t-\varphi(\alpha x) / \alpha)}\\
		&\leq\bar{K} e^{-\frac{3 \kappa }{4}(y-\hat{c} t+(\min_{1\leq i \leq n}\{x \cdot \nu_i \cot \theta_i\}-\sup_{\mathbb{R}^{N-1}}|\varphi(\alpha x)-\psi(\alpha x)| / \alpha))\frac{1}{\sqrt{1+|\nabla \varphi(\alpha x)|^2}}}+\bar{K}^{\beta}e^{-\frac{3 \kappa}{4}\beta(y-\hat{c} t-\varphi(\alpha x) / \alpha)}\\
		&\leq\bar{K} e^{-\frac{3 \kappa }{4}(y-\hat{c} t+\min_{1\leq i \leq n}\{x \cdot \nu_i \cot \theta_i\}- \widetilde{C})\frac{1}{\sqrt{1+|\nabla \varphi(\alpha x)|^2}}}+\bar{K}^{\beta}e^{-\frac{3 \kappa}{4}\beta(y-\hat{c} t+\min_{1\leq i \leq n}\{x \cdot \nu_i \cot \theta_i\}- \widetilde{C})}\\
		&\leq\bar{K} e^{\frac{3 \kappa }{4}\widetilde{C}}\cdot e^{-\frac{3 \kappa }{4}\min_{1\leq i \leq n}\{y-\hat{c} t+x \cdot \nu_i \cot \theta_i\}\frac{1}{\sqrt{1+|\nabla \varphi(\alpha x)|^2}}}+\bar{K} e^{\frac{3 \kappa }{4}\beta\widetilde{C}}\cdot e^{-\frac{3 \kappa }{4}\beta\min_{1\leq i \leq n}\{y-\hat{c} t+x \cdot \nu_i \cot \theta_i\}}\\
		&\leq 2K_{1}^{*} e^{-2v \min_{1\leq i \leq n}\{x \cdot \nu_i \cot \theta_i+y-\hat{c} t\}},
	\end{align*}
where $K_{1}^{*} =\max\{\bar{K}, \bar{K} e^{\frac{3 \kappa }{4}\widetilde{C}}\}$ and $v \leq\frac{3 \kappa }{8}\min\{\beta, \frac{1}{\sqrt{1+(C_3+\max_{1\le i\le n} \{|\nu_i \cot \theta_i|\})^2}}\}$.

\textbf{Situation 2:} $\min_{1\leq i\leq n} \left\{x \cdot \nu_i \cot \theta_i+y-\hat{c} t \right\}>0$ and $0 \leq \xi(t, x, y)\leq1$. By Theorem \ref{Theorem 2.8}, we obtain
\begin{equation*}
	\left|\underline{V}(t, x, y)\right|\leq\bar{K} e^{-\frac{3 \kappa}{4}\min_{1 \leq i \leq n}\{x \cdot \nu_i \cos \theta_i+(y-\hat{c} t) \sin \theta_i\} }\leq\bar{K} e^{-2v\min_{1 \leq i \leq n}\{x \cdot \nu_i \cot \theta_i+(y-\hat{c} t)\} }
\end{equation*}
for all $v \leq 3 \kappa \min_{1 \leq i \leq n} \{\sin\theta_i\} / 8$. Moreover, we have
\begin{equation*}
	1\geq\xi(t, x, y)=\frac{y-\hat{c} t-\varphi(\alpha x) / \alpha}{\sqrt{1+|\nabla \varphi(\alpha x)|^2}}\geq\frac{y-\hat{c} t-\psi(x)-\widetilde{C}}{\sqrt{1+|\nabla \varphi(\alpha x)|^2}}= \frac{\min_{1 \leq i \leq n}\{y-\hat{c} t+x \cdot \nu_i \cot \theta_i\}-\widetilde{C}}{\sqrt{1+|\nabla \varphi(\alpha x)|^2}}
\end{equation*}
and then
\begin{equation*}
	0<\min_{1 \leq i \leq n}\{y-\hat{c} t+x \cdot \nu_i \cot \theta_i\}\leq\sqrt{1+|\nabla \varphi(\alpha x)|^2}+\widetilde{C}\leq \sqrt{1+(C_3+\max_{1\le i\le n} \{|\nu_i \cot \theta_i|\})^2}+\widetilde{C}=:\hat{C}.
\end{equation*}
Then one gets if that $0 \leq \xi(t, x, y)\leq 1$,
	\begin{align*}
		\bar{V}(t,x,y)&\leq \left|U_{e(x)}(\xi(t, x, y), x, y)\right|+1\\
		&\leq \bar{K}e^{\frac{3 \kappa}{4}\widetilde{C}} e^{-\frac{3 \kappa}{4}\frac{\min_{1 \leq i \leq n}\{y-\hat{c} t+x \cdot \nu_i \cot \theta_i\}}{\sqrt{1+|\nabla \varphi(\alpha x)|^2}}}+e^{\frac{3 \kappa}{4}\min_{1 \leq i \leq n}\{y-\hat{c} t+x \cdot \nu_i \cot \theta_i\}} e^{-\frac{3 \kappa}{4}\min_{1 \leq i \leq n}\{y-\hat{c} t+x \cdot \nu_i \cot \theta_i\}}\\
		&\leq 2K_{2}^{*}  e^{-2 v \min_{1\leq i\leq n} \left\{x \cdot \nu_i \cot \theta_i+y-\hat{c} t \right\}},
	\end{align*}
where $K_{2}^{*}:=\max\{\bar{K}, \bar{K}e^{\frac{3 \kappa}{4}\widetilde{C}}, e^{\frac{3 \kappa}{4}\hat{C}}\}$ and $v \leq\frac{3 \kappa }{8}\min\{1, \frac{1}{\sqrt{1+(C_3+\max_{1\le i\le n} \{|\nu_i \cot \theta_i|\})^2}}\}$.

\textbf{Situation 3:} $\min_{1\leq i\leq n} \left\{x \cdot \nu_i \cot \theta_i+y-\hat{c} t \right\}>0$ and $\xi(t, x, y)< 0$. By Theorem \ref{Theorem 2.8}, we know
\begin{equation*}
	\left|\underline{V}(t, x, y)\right|\leq\bar{K} e^{-\frac{3 \kappa}{4}\min_{1 \leq i \leq n}\{x \cdot \nu_i \cos \theta_i+(y-\hat{c} t) \sin \theta_i\} }\leq\bar{K} e^{-2v\min_{1 \leq i \leq n}\{x \cdot \nu_i \cot \theta_i+(y-\hat{c} t)\} }
\end{equation*}
for all $v \leq 3 \kappa \min_{1 \leq i \leq n} \{\sin\theta_i\} / 8$. Furthermore, one has
\begin{equation*}
	0\geq\xi(t, x, y)=\frac{y-\hat{c} t-\varphi(\alpha x) / \alpha}{\sqrt{1+|\nabla \varphi(\alpha x)|^2}}\geq\frac{y-\hat{c} t-\psi(x)-\widetilde{C}}{\sqrt{1+|\nabla \varphi(\alpha x)|^2}}= \frac{\min_{1 \leq i \leq n}\{y-\hat{c} t+x \cdot \nu_i \cot \theta_i\}-\widetilde{C}}{\sqrt{1+|\nabla \varphi(\alpha x)|^2}},
\end{equation*}
\begin{equation*}
	0<\min_{1 \leq i \leq n}\{y-\hat{c} t+x \cdot \nu_i \cot \theta_i\}\leq \widetilde{C},
\end{equation*}
and 
\begin{equation*}
	\frac{y-\hat{c} t-\varphi(\alpha x) / \alpha}{\sqrt{1+|\nabla \varphi(\alpha x)|^2}} \leq \frac{y-\hat{c} t-\psi(x)}{\sqrt{1+|\nabla \varphi(\alpha x)|^2}}= \frac{\min_{1 \leq i \leq n}\{y-\hat{c} t+x \cdot \nu_i \cot \theta_i\}}{\sqrt{1+|\nabla \varphi(\alpha x)|^2}}.
\end{equation*}
Thus, one gets if that $\xi(t, x, y)< 0$,
\begin{equation*}
	\begin{aligned}
		\bar{V}(t, x, y)&\leq \left|U_{e(x)}(\xi(t, x, y), x, y)\right|+1\\
		&\leq 1+\bar{K} e^{\kappa_2 \frac{y-\hat{c} t-\varphi(\alpha x) / \alpha}{\sqrt{1+|\nabla \varphi(\alpha x)|^2}}}+1\\
		&\leq \bar{K} e^{\kappa_2 \frac{y-\hat{c} t-\varphi(\alpha x) / \alpha}{\sqrt{1+(C_3+\max_{1\le i\le n} \{|\nu_i \cot \theta_i|\})^2}}}+2\\
		&\leq \bar{K}  e^{\kappa_2 \frac{\min_{1 \leq i \leq n}\{y-\hat{c} t+x \cdot \nu_i \cot \theta_i\} }{\sqrt{1+(C_3+\max_{1\le i\le n} \{|\nu_i \cot \theta_i|\})^2}}}+2\\
		&\leq \bar{K} e^{\kappa_2 \min_{1 \leq i \leq n}\{y-\hat{c} t+x \cdot \nu_i \cot \theta_i\} }+2\\
		&\leq \bar{K} e^{2\kappa_2 \min_{1 \leq i \leq n}\{y-\hat{c} t+x \cdot \nu_i \cot \theta_i\} } \cdot e^{-\kappa_2 \min_{1 \leq i \leq n}\{y-\hat{c} t+x \cdot \nu_i \cot \theta_i\} }\\
		&\quad+2 e^{\kappa_2 \min_{1 \leq i \leq n}\{y-\hat{c} t+x \cdot \nu_i \cot \theta_i\} } \cdot e^{-\kappa_2 \min_{1 \leq i \leq n}\{y-\hat{c} t+x \cdot \nu_i \cot \theta_i\} }\\
		&\leq \bar{K} e^{2\kappa_2 \widetilde{C} } \cdot e^{-\kappa_2 \min_{1 \leq i \leq n}\{y-\hat{c} t+x \cdot \nu_i \cot \theta_i\} }+2 e^{\kappa_2 \widetilde{C} } \cdot e^{-\kappa_2 \min_{1 \leq i \leq n}\{y-\hat{c} t+x \cdot \nu_i \cot \theta_i\} }\\
		&\leq 3 K_{3}^{*} e^{-2 v \min_{1\leq i\leq n} \left\{x \cdot \nu_i \cot \theta_i+y-\hat{c} t \right\}}
	\end{aligned}
\end{equation*}
where $K_{3}^{*}:=\max\{\bar{K}, \bar{K}e^{2 \kappa_2 \widetilde{C}}, e^{2 \kappa_2 \widetilde{C}}\}$ and $v\leq\frac{ \kappa_2 }{2}$. In conclusion, let $C_{\star}:=\max\{3K_{1}^{*},  3K_{2}^{*}, 4K_{3}^{*}\}$ and $v_{\star}\leq\min\{\frac{ \kappa_2 }{2}, \frac{3 \kappa }{8}\min_{1\leq i \leq n}\{\sin\theta_i,\beta, 1, \frac{1}{\sqrt{1+(C_3+\max_{1\le i\le n} \{|\nu_i \cot \theta_i|\})^2}}\}\}$, then \eqref{3.41} is valid.

Moreover, we can also get that there exists positive constants $v^{\star}$ and $C^{\star}$ such that
\begin{equation}\label{3.41.2}
	\frac{\left|\bar{V}(t, x, y)-\underline{V}(t, x, y)\right|}{\min \left\{1, e^{-2 v^{\star} \min_{1\leq i\leq n} \left\{x \cdot \nu_i \cot \theta_i+y-\hat{c} t \right\}}\right\}} \leq C^{\star} \varepsilon,\; (t, x, y)\in\mathbb{R} \times \mathbb{R}^N.
\end{equation}
We only need to consider $\min_{1\leq i\leq n} \left\{x \cdot \nu_i \cot \theta_i+y-\hat{c} t \right\}>0$. For $\min_{1\leq i\leq n} \left\{x \cdot \nu_i \cot \theta_i+y-\hat{c} t \right\}\leq 0$, we can see \eqref{3.6}.

\textbf{Situation 1:} $d(x, \widehat{\mathcal{R}}) \rightarrow +\infty$.
By \eqref{2.10}, we have $|\varphi(x)-\psi(x)| \rightarrow 0$ as $d(x, \widehat{\mathcal{R}}) \rightarrow+\infty$. If $y-\hat{c} t-\psi(x)\rightarrow+\infty$, then one gets that $\xi(t, x, y)=\frac{y-\hat{c} t-\varphi(\alpha x) / \alpha}{\sqrt{1+|\nabla \varphi(\alpha x)|^2}}\rightarrow+\infty$ and $\eta(t, x, y)=y-\hat{c} t-\varphi(\alpha x) / \alpha\rightarrow+\infty$. For convenience, denote $\mu(t, x, y)=\min_{1 \leq i \leq n}\{x \cdot \nu_i \cos \theta_i+(y-\hat{c} t) \sin \theta_i\}$. It holds that 
\begin{equation*}
	0<\min_{1\leq i\leq n} \left\{x \cdot \nu_i \cot \theta_i+y-\hat{c} t \right\}\cdot \min_{1\leq i\leq n}\left\{\sin \theta_{i} \right\} \leq \mu(t, x, y) \leq \min_{1\leq i\leq n} \left\{x \cdot \nu_i \cot \theta_i+y-\hat{c} t \right\}= y-\hat{c} t-\psi(x)
\end{equation*}
and
\begin{equation*}
0<\frac{y-\hat{c} t-\psi(x)-\widetilde{C}}{\sqrt{1+(C_3+\max_{1\le i\le n} \{|\nu_i \cot \theta_i|\})^2}}\leq \frac{y-\hat{c} t-\psi(x)-\widetilde{C}}{\sqrt{1+|\nabla \varphi(\alpha x)|^2}} \leq \xi(t, x, y) \leq \eta(t, x, y) \leq y-\hat{c} t-\psi(x).
\end{equation*}
Then one has
	\begin{align*}
	&\frac{\left|\bar{V}(t, x, y)-\underline{V}(t, x, y)\right|}{ e^{-2 v^{\star} \min_{1\leq i\leq n} \left\{x \cdot \nu_i \cot \theta_i+y-\hat{c} t \right\}}} \\
	&=\frac{\left|U_{e(x)}(\xi, x, y)+\varepsilon h(\alpha x) \times U_{e_i}^\beta(\eta, x, y)-U_{e_i}(\mu, x, y)\right|}{ e^{-2 v^{\star} \min_{1\leq i\leq n} \left\{x \cdot \nu_i \cot \theta_i+y-\hat{c} t \right\}}}\\
	&\leq \frac{\left|U_{e(x)}(\xi, x, y)-U_{e(x)}(\mu, x, y)+U_{e(x)}(\mu, x, y)-U_{e_i}(\mu, x, y)+\varepsilon h(\alpha x) \times U_{e_i}^\beta(\eta, x, y)\right|}{ e^{-2 v^{\star} \min_{1\leq i\leq n} \left\{x \cdot \nu_i \cot \theta_i+y-\hat{c} t \right\}}}\\
	&\leq \frac{\left|\partial_{\xi}U_{e(x)}((1-\tilde{\theta})\xi+\tilde{\theta}\mu, x, y)(\xi-\mu)\right|+\left|U_{e_i}^\prime(\mu, x, y) (e(x)-e_i)\right|+\left|\varepsilon h(\alpha x) \times U_{e_i}^\beta(\eta, x, y)\right|}{ e^{-2 v^{\star} \min_{1\leq i\leq n} \left\{x \cdot \nu_i \cot \theta_i+y-\hat{c} t \right\}}}\\
	&\leq \frac{\bar{K} e^{-\frac{3 \kappa}{4} ((1-\tilde{\theta})\xi+\tilde{\theta}\mu)}\left|\xi-\mu\right|+M_{1} e^{-\frac{\kappa}{2} \mu}\left|e(x)-e_i\right|+\left|\varepsilon \bar{K} e^{-\frac{3 \kappa}{4} \eta}\right|}{ e^{-2 v^{\star} \min_{1\leq i\leq n} \left\{x \cdot \nu_i \cot \theta_i+y-\hat{c} t \right\}}}\\
	&\leq \frac{\bar{K} e^{-\frac{3 \kappa}{4} \left((1-\tilde{\theta})\frac{y-\hat{c} t-\psi(x)-\widetilde{C}}{\sqrt{1+(C_3+\max_{1\le i\le n} \{|\nu_i \cot \theta_i|\})^2}}+\tilde{\theta}\min_{1\leq i\leq n} \left\{x \cdot \nu_i \cot \theta_i+y-\hat{c} t \right\}\cdot \min_{1\leq i\leq n}\left\{\sin \theta_{i} \right\}\right)}\left|\xi-\mu\right|}{ e^{-2 v^{\star} \min_{1\leq i\leq n} \left\{x \cdot \nu_i \cot \theta_i+y-\hat{c} t \right\}}}\\
	&\quad+\frac{M_{1} e^{-\frac{\kappa}{2} (\min_{1\leq i\leq n} \left\{x \cdot \nu_i \cot \theta_i+y-\hat{c} t \right\}\cdot \min_{1\leq i\leq n}\left\{\sin \theta_{i} \right\} )}\left|e(x)-e_i\right|}{ e^{-2 v^{\star} \min_{1\leq i\leq n} \left\{x \cdot \nu_i \cot \theta_i+y-\hat{c} t \right\}}}+ \frac{\left|\varepsilon  \bar{K} e^{-\frac{3 \kappa}{4} \beta(y-\hat{c} t-\psi(x)-\widetilde{C})}\right|}{ e^{-2 v^{\star} \min_{1\leq i\leq n} \left\{x \cdot \nu_i \cot \theta_i+y-\hat{c} t \right\}}}\\
	&=:O_{1}+O_{2}+O_{3},
	\end{align*}
where $\tilde{\theta}\in(0,1)$ is some constant and $\widetilde{C}=\frac{\sup_{\mathbb{R}^{N-1}}|\varphi(\alpha x)-\psi(\alpha x)|}{\alpha} < +\infty$. Futhermore, we can get
	\begin{align*}
		O_{1}&= \frac{\bar{K} e^{-\frac{3 \kappa}{4} \left((1-\tilde{\theta})\frac{\min_{1\leq i\leq n} \left\{x \cdot \nu_i \cot \theta_i+y-\hat{c} t \right\}-\widetilde{C}}{\sqrt{1+(C_3+\max_{1\le i\le n} \{|\nu_i \cot \theta_i|\})^2}}+\tilde{\theta}\min_{1\leq i\leq n} \left\{x \cdot \nu_i \cot \theta_i+y-\hat{c} t \right\}\cdot \min_{1\leq i\leq n}\left\{\sin \theta_{i} \right\}\right)}\left|\xi-\mu\right|}{ e^{-2 v^{\star} \min_{1\leq i\leq n} \left\{x \cdot \nu_i \cot \theta_i+y-\hat{c} t \right\}}}\\
		&\leq \bar{K} e^{\frac{3 \kappa}{4} \left((1-\tilde{\theta})\frac{\widetilde{C}}{\sqrt{1+(C_3+\max_{1\le i\le n} \{|\nu_i \cot \theta_i|\})^2}}\right)} \\
		&\qquad\times \frac{ e^{-\frac{3 \kappa}{4} \left((1-\tilde{\theta})\frac{\min_{1\leq i\leq n} \left\{x \cdot \nu_i \cot \theta_i+y-\hat{c} t \right\}}{\sqrt{1+(C_3+\max_{1\le i\le n} \{|\nu_i \cot \theta_i|\})^2}}+\tilde{\theta}\min_{1\leq i\leq n} \left\{x \cdot \nu_i \cot \theta_i+y-\hat{c} t \right\}\cdot \min_{1\leq i\leq n}\left\{\sin \theta_{i} \right\}\right)}\left|\xi-\mu\right|}{ e^{-2 v^{\star} \min_{1\leq i\leq n} \left\{x \cdot \nu_i \cot \theta_i+y-\hat{c} t \right\}}}\\
		&\leq 2\bar{K} e^{\frac{3 \kappa}{4} \left((1-\tilde{\theta})\frac{\widetilde{C}}{\sqrt{1+(C_3+\max_{1\le i\le n} \{|\nu_i \cot \theta_i|\})^2}}\right)} \cdot \min_{1\leq i\leq n} \left\{x \cdot \nu_i \cot \theta_i+y-\hat{c} t \right\} \\
		&\qquad \times\frac{ e^{-\frac{3 \kappa}{4} \left((1-\tilde{\theta})\frac{\min_{1\leq i\leq n} \left\{x \cdot \nu_i \cot \theta_i+y-\hat{c} t \right\}}{\sqrt{1+(C_3+\max_{1\le i\le n} \{|\nu_i \cot \theta_i|\})^2}}+\tilde{\theta}\min_{1\leq i\leq n} \left\{x \cdot \nu_i \cot \theta_i+y-\hat{c} t \right\}\cdot \min_{1\leq i\leq n}\left\{\sin \theta_{i} \right\}\right)}}{ e^{-2 v^{\star} \min_{1\leq i\leq n} \left\{x \cdot \nu_i \cot \theta_i+y-\hat{c} t \right\}}}\\
		&\leq 2\bar{K} e^{\frac{3 \kappa}{4} \left((1-\tilde{\theta})\frac{\widetilde{C}}{\sqrt{1+(C_3+\max_{1\le i\le n} \{|\nu_i \cot \theta_i|\})^2}}\right)} \cdot \min_{1\leq i\leq n} \left\{x \cdot \nu_i \cot \theta_i+y-\hat{c} t \right\} \\
		&\qquad \times\frac{ e^{-\frac{3 \kappa}{4} \left((1-\tilde{\theta})\frac{1}{\sqrt{1+(C_3+\max_{1\le i\le n} \{|\nu_i \cot \theta_i|\})^2}}+\tilde{\theta}\min_{1\leq i\leq n}\left\{\sin \theta_{i} \right\}\right)\min_{1\leq i\leq n} \left\{x \cdot \nu_i \cot \theta_i+y-\hat{c} t \right\}}}{ e^{-2 v^{\star} \min_{1\leq i\leq n} \left\{x \cdot \nu_i \cot \theta_i+y-\hat{c} t \right\}}}.
	\end{align*}
If set $v^{\star}<v_{1}^{\star}:=\frac{3 \kappa}{8}\min\left\{\frac{1}{\sqrt{1+(C_3+\max_{1\le i\le n} \{|\nu_i \cot \theta_i|\})^2}}, \min_{1\leq i\leq n}\left\{\sin \theta_{i} \right\}\right\}$, then one gets $O_{1}\leq 2\bar{K} e^{\frac{3 \kappa}{4} \widetilde{C}}\varepsilon$ as $d(x, \widehat{\mathcal{R}}) \rightarrow +\infty$. If set $v^{\star}<v_{2}^{\star}:=\frac{\kappa}{4} \min_{1\leq i\leq n}\left\{\sin \theta_{i} \right\}$, then one knows
	\begin{align*}
		O_{2}&=\frac{M_{1} e^{-\frac{\kappa}{2} (\min_{1\leq i\leq n} \left\{x \cdot \nu_i \cot \theta_i+y-\hat{c} t \right\}\cdot \min_{1\leq i\leq n}\left\{\sin \theta_{i} \right\} )}\left|e(x)-e_i\right|}{ e^{-2 v^{\star} \min_{1\leq i\leq n} \left\{x \cdot \nu_i \cot \theta_i+y-\hat{c} t \right\}}}\\
		&\leq 2 M_{1} \frac{e^{-\frac{\kappa}{2} \min_{1\leq i\leq n}\left\{\sin \theta_{i} \right\} (\min_{1\leq i\leq n} \left\{x \cdot \nu_i \cot \theta_i+y-\hat{c} t \right\} )}}{ e^{-2 v^{\star} \min_{1\leq i\leq n} \left\{x \cdot \nu_i \cot \theta_i+y-\hat{c} t \right\}}},
	\end{align*}
thus it holds that $O_{2}\leq 2 M_{1} \varepsilon$ as $d(x, \widehat{\mathcal{R}}) \rightarrow +\infty$. If $v^{\star}<v_{3}^{\star}:=\frac{3\kappa}{8} \beta$, then we know
\begin{align*}
	O_{3}&=\frac{\left|\varepsilon  \bar{K} e^{-\frac{3 \kappa}{4} \beta(y-\hat{c} t-\psi(x)-\widetilde{C})}\right|}{ e^{-2 v^{\star} \min_{1\leq i\leq n} \left\{x \cdot \nu_i \cot \theta_i+y-\hat{c} t \right\}}}\\
	&=\frac{\varepsilon  \bar{K} e^{-\frac{3 \kappa}{4}\beta (\min_{1\leq i\leq n} \left\{x \cdot \nu_i \cot \theta_i+y-\hat{c} t \right\}-\widetilde{C})}}{ e^{-2 v^{\star} \min_{1\leq i\leq n} \left\{x \cdot \nu_i \cot \theta_i+y-\hat{c} t \right\}}}\\ &\leq \varepsilon \bar{K} e^{\frac{3 \kappa}{4} \beta\widetilde{C}} \cdot \frac{ e^{-\frac{3 \kappa}{4}\beta \min_{1\leq i\leq n} \left\{x \cdot \nu_i \cot \theta_i+y-\hat{c} t \right\}}}{ e^{-2 v^{\star} \min_{1\leq i\leq n} \left\{x \cdot \nu_i \cot \theta_i+y-\hat{c} t \right\}}}\leq \varepsilon \bar{K} e^{\frac{3 \kappa}{4} \beta\widetilde{C}}.
\end{align*}
Therefore, if we set $v^{\star}<\min\{v_{1}^{\star}, v_{2}^{\star}, v_{3}^{\star}\}$ and $C_{1}^{\star}=\bar{K} e^{\frac{3 \kappa}{4} \beta\widetilde{C}}+2\bar{K} e^{\frac{3 \kappa}{4} \widetilde{C}}+2 M_{1}$, one has 
\begin{equation*}
	\frac{\left|\bar{V}(t, x, y)-\underline{V}(t, x, y)\right|}{ e^{-2 v^{\star} \min_{1\leq i\leq n} \left\{x \cdot \nu_i \cot \theta_i+y-\hat{c} t \right\}}}\leq C_{1}^{\star}\varepsilon \;\text{ as }\; d(x, \widehat{\mathcal{R}}) \rightarrow +\infty.
\end{equation*}

If $y-\hat{c} t-\psi(x)$ is bounded as $d(x, \widehat{\mathcal{R}}) \rightarrow +\infty$, then $\min_{1\leq i\leq n} \left\{x \cdot \nu_i \cot \theta_i+y-\hat{c} t \right\}>0$ is bounded. There exists $l \in\{1, \ldots, n\}$ such that as $d((x, y), \mathcal{R}) \rightarrow+\infty$, 
\begin{equation*}
	x \cdot \nu_l \cot \theta_l+y-\hat{c} t \text { is bounded and } x \cdot \nu_j \cot \theta_j+y-\hat{c} t  \rightarrow+\infty \text { for } j \neq l.
\end{equation*}
By $\theta_{i} \in (0, \pi/2]$ $(i \in\{1, \ldots, n\})$, we can also get 
\begin{equation*}
	x \cdot \nu_l \cos \theta_l+(y-\hat{c} t) \sin \theta_l \text { is bounded and } x \cdot \nu_j \cos \theta_j+(y-\hat{c} t) \sin \theta_j  \rightarrow+\infty \text { for } j \neq l.
\end{equation*}
This implies that $(x, y)$ is bounded away from $\widetilde{Q}_l +\hat{c} t e_0$ and $d((x, y), \mathcal{R}+\hat{c} t e_0) \rightarrow +\infty$. It also implies that $x \in \widehat{Q}_l$ and $d(x, \widehat{R}) \rightarrow+\infty$. Then by $|\varphi(x)-\psi(x)| \rightarrow 0$ as $d(x, \widehat{\mathcal{R}}) \rightarrow+\infty$, we deduce that
\begin{equation*}
\left|\varphi(x)-\left(- x \cdot \nu_l \cot \theta_l\right)\right| \rightarrow 0 \;\text{ and }\; \left|\nabla \varphi(x)+\nu_l \cot \theta_l\right| \rightarrow 0.
\end{equation*}
Therefore,
\begin{equation}\label{3.41.3}
\xi(t, x, y) \rightarrow x \cdot \nu_l \cos \theta_l+(y-\hat{c} t) \sin \theta_l=\min_{1 \leq i \leq n}\{x \cdot \nu_i \cos \theta_i+(y-\hat{c} t) \sin \theta_i\}=\mu(t, x, y).
\end{equation}
Then it follows that
	\begin{align*}
		&\frac{\left|\bar{V}(t, x, y)-\underline{V}(t, x, y)\right|}{ e^{-2 v^{\star} \min_{1\leq i\leq n} \left\{x \cdot \nu_i \cot \theta_i+y-\hat{c} t \right\}}} \\
		&=\frac{\left|U_{e(x)}(\xi, x, y)+\varepsilon h(\alpha x) \times U_{e_i}^\beta(\eta, x, y)-U_{e_l}(\mu, x, y)\right|}{ e^{-2 v^{\star} \min_{1\leq i\leq n} \left\{x \cdot \nu_i \cot \theta_i+y-\hat{c} t \right\}}}\\
		&\leq \frac{\left|U_{e(x)}(\xi, x, y)-U_{e(x)}(\mu, x, y)+U_{e(x)}(\mu, x, y)-U_{e_l}(\mu, x, y)+\varepsilon h(\alpha x) \times U_{e_i}^\beta(\eta, x, y)\right|}{ e^{-2 v^{\star} \min_{1\leq i\leq n} \left\{x \cdot \nu_i \cot \theta_i+y-\hat{c} t \right\}}}\\
		&\leq \frac{\left|U_{e(x)}(\xi, x, y)-U_{e(x)}(\mu, x, y)\right|+\left|U_{e_i}^\prime(\mu, x, y) (e(x)-e_l)\right|+\left|\varepsilon h(\alpha x) \times U_{e_i}^\beta(\eta, x, y)\right|}{ e^{-2 v^{\star} \min_{1\leq i\leq n} \left\{x \cdot \nu_i \cot \theta_i+y-\hat{c} t \right\}}}\\
		&=: O_{4}+O_{5}+O_{6}.
	\end{align*}
By \eqref{3.41.3}, It is easy to obtain $O_{4}\leq \varepsilon$ as $d(x, \widehat{\mathcal{R}}) \rightarrow+\infty$. Notice that $e(x) \rightarrow\left(\nu_l \cos \theta_l, \sin \theta_l\right)=e_l$ for $x \in \widehat{Q}_l$ as $d(x, \widehat{\mathcal{R}}) \rightarrow +\infty$. If set $v^{\star}<v_{2}^{\star}=\frac{\kappa}{4} \min_{1\leq i\leq n}\left\{\sin \theta_{i} \right\}$, we have
	\begin{align*}
		O_{5}&= \frac{\left|U_{e_i}^\prime(\mu, x, y) (e(x)-e_l)\right|}{ e^{-2 v^{\star} 
		\min_{1\leq i\leq n} \left\{x \cdot \nu_i \cot \theta_i+y-\hat{c} t 
		\right\}}}\leq\frac{\left|U_{e_i}^\prime(\mu, x, y) (e(x)-e_l)\right|}{ e^{-2 v^{\star} 
		\min_{1\leq i\leq n} \left\{x \cdot \nu_i \cot \theta_i+y-\hat{c} t \right\}}}\\
&\leq\frac{M_{1} e^{-\frac{\kappa}{2} (\min_{1\leq i\leq n} \left\{x \cdot \nu_i \cot 
\theta_i+y-\hat{c} t \right\}\cdot \min_{1\leq i\leq n}\left\{\sin \theta_{i} \right\} 
)}\left|e(x)-e_l\right|}{ e^{-2 v^{\star} \min_{1\leq i\leq n} \left\{x \cdot \nu_i \cot 
\theta_i+y-\hat{c} t \right\}}}\leq M_{1} \varepsilon.
	\end{align*}
If set $v^{\star}<v_{3}^{\star}:=\frac{3\kappa}{8} \beta$, then we know
	\begin{align*}
		O_{6}&= \frac{\left|\varepsilon h(\alpha x) \times U_{e_i}^\beta(\eta, x, y)\right|}{ e^{-2 
		v^{\star} \min_{1\leq i\leq n} \left\{x \cdot \nu_i \cot \theta_i+y-\hat{c} t 
		\right\}}}\leq\frac{\varepsilon  \bar{K} e^{-\frac{3 \kappa}{4}\beta (\min_{1\leq i\leq n} 
		\left\{x \cdot \nu_i \cot \theta_i+y-\hat{c} t \right\}-\widetilde{C})}}{ e^{-2 v^{\star} 
		\min_{1\leq i\leq n} \left\{x \cdot \nu_i \cot \theta_i+y-\hat{c} t \right\}}}\\
		&\leq \varepsilon \bar{K} e^{\frac{3 \kappa}{4} \beta\widetilde{C}} \cdot \frac{ e^{-\frac{3 
		\kappa}{4}\beta \min_{1\leq i\leq n} \left\{x \cdot \nu_i \cot \theta_i+y-\hat{c} t \right\}}}{ 
		e^{-2 v^{\star} \min_{1\leq i\leq n} \left\{x \cdot \nu_i \cot \theta_i+y-\hat{c} t \right\}}}\leq 
		\varepsilon \bar{K} e^{\frac{3 \kappa}{4} \beta\widetilde{C}}.
	\end{align*}
Thus, if we set $v^{\star}<\min\{ v_{2}^{\star}, v_{3}^{\star}\}$ and $C_{2}^{\star}=M_{1}+1+\bar{K} e^{\frac{3 \kappa}{4} \beta\widetilde{C}}$, one has 
\begin{equation*}
	\frac{\left|\bar{V}(t, x, y)-\underline{V}(t, x, y)\right|}{ e^{-2 v^{\star} \min_{1\leq i\leq n} \left\{x \cdot \nu_i \cot \theta_i+y-\hat{c} t \right\}}}\leq C_{2}^{\star}\varepsilon \;\text{ as }\; d(x, \widehat{\mathcal{R}}) \rightarrow +\infty.
\end{equation*}

\textbf{Situation 2:} $d(x, \widehat{\mathcal{R}})$ is bounded for all $x\in\mathbb{R}^{N-1}$.
Since $d(x, \widehat{\mathcal{R}})$ is bounded, we can get $y-\hat{c} t-\psi(x)\rightarrow+\infty$. Similar to arguments in Situation $1$, we can also obtain that there exist constants $v_{4}^{\star}>0$ and $C_{3}^{\star}>0$ such that 
\begin{equation*}
	\frac{\left|\bar{V}(t, x, y)-\underline{V}(t, x, y)\right|}{ e^{-2 v^{\star} \min_{1\leq i\leq n} \left\{x \cdot \nu_i \cot \theta_i+y-\hat{c} t \right\}}}\leq C_{3}^{\star}\varepsilon \;\text{ as }\; d(x, \widehat{\mathcal{R}}) \rightarrow +\infty.
\end{equation*}

In conclusion, it holds that there exist $0<v^{\star}<\min\{ v_{1}^{\star}, v_{2}^{\star}, v_{3}^{\star}, v_{4}^{\star}\}$ and $C^{\star}>\max\{C_{1}^{\star}, C_{2}^{\star}, C_{3}^{\star}\}$ such that \eqref{3.41.2} holds true.

\subsection{Proofs of existence and uniqueness}
\noindent
\textbf{Proof of Theorem \ref{Theorem 2.18}.} Since we get the supersolution $\bar{V}(t, x, y)$, we can easily show the existence for $e_0=(0,0, \ldots, 1)$. For general $e_0 \in \mathbb{S}^{N-1}$, we only need to change variables of $\bar{V}(t, x, y)$.

{\it Step 1: the existence for $e_0=(0,0, \ldots, 1)$.} Let $u_n(t, x, y)$ be the solution of following Cauchy problem
\begin{equation}\label{3.42}
\begin{cases}\partial_t u-\Delta_{x, y} u=f(x, y, u) & \text { when } t>-n,(x, y) \in \mathbb{R}^N ,\\ u(t, x, y)=\underline{V}(-n, x, y) & \text { when } t=-n,(x, y) \in \mathbb{R}^N.\end{cases}
\end{equation}
By Lemma \ref{Lemma 3.2} and the comparison principle, it holds that
\begin{equation}\label{3.43}
\underline{V}(t, x, y) \leq u_n(t, x, y) \leq \bar{V}(t, x, y), \text { for } t \geq-n \text { and }(x, y) \in \mathbb{R}^N.
\end{equation}
Since $\underline{V}(t, x, y)$ is a subsolution of \eqref{1.1}, using the comparison principle again, the sequence $u_n(t, x, y)$ is increasing in $n$. By parabolic estimates, applying Theorem $5.1.2$ of \cite{A. Lunardi1995}, there exists a constant $\Lambda$ independent of $n \in \mathbb{N}$ such that
\begin{equation*}
\left\|w_n(\cdot, \cdot, \cdot)\right\|_{C^{1+\frac{\hat{\theta}}{2}, 2+\hat{\theta}}\left([-n+1, +\infty) \times \mathbb{R}^N\right)} \leq \Lambda,
\end{equation*}
for some $\hat{\theta} \in(0,1)$ and all $n \in \mathbb{N}$. Letting $n \rightarrow \infty$, the sequence $\left\{u_n(t, x, y)\right\}_{n \in \mathbb{N}}$ converges to an entire solution $V(t, x, y)$ of \eqref{1.1}. Besides, it follows from \eqref{3.43} that
\begin{equation}\label{3.44}
\underline{V}(t, x, y) \leq V(t, x, y) \leq \bar{V}(t, x, y), \text { for }(t, x, y) \in \mathbb{R} \times \mathbb{R}^N.
\end{equation}
By \eqref{3.41}, one has that 
\begin{equation*}
\sqrt{V(t, x, y)-\underline{V}(t, x, y)} \leq C_{\star} \min \left\{1, e^{- v_{\star} \min_{1\leq i\leq n} \left\{x \cdot \nu_i \cot \theta_i+y-\hat{c} t \right\}}\right\} \; \text{ in }\mathbb{R} \times \mathbb{R}^N.
\end{equation*}
By \eqref{3.6} and \eqref{3.41.2}, letting $\varepsilon \rightarrow 0$ in $\bar{V}(t, x, y)$ yields $0 \leq V \leq 1$ and \eqref{2.14} holds. By \eqref{2.14} and the definition of $\underline{V}(t, x, y)$, we know that $V(t, x, y)$ is a transition front with sets
\begin{equation*}
\left\{\begin{array}{l}
	\Gamma_t=\left\{(x, y) \in \mathbb{R}^N ; \min _{1 \leq i \leq n}\left\{x \cdot \nu_i \cos \theta_i+(y-\hat{c} t) \sin \theta_i\right\}=0\right\}=\partial \mathcal{Q}+\hat{c} t e_0 ,\\
	\Omega_t^{-}=\left\{(x, y) \in \mathbb{R}^N ; \min _{1 \leq i \leq n}\left\{x \cdot \nu_i \cos \theta_i+(y-\hat{c} t) \sin \theta_i\right\}>0\right\}=\mathcal{Q}+\hat{c} t e_0 ,\\
	\Omega_t^{+}=\left\{(x, y) \in \mathbb{R}^N ; \min _{1 \leq i \leq n}\left\{x \cdot \nu_i \cos \theta_i+(y-\hat{c} t) \sin \theta_i\right\}<0\right\}=\mathbb{R}^N \backslash \overline{\mathcal{Q}}+\hat{c} t e_0.
\end{array}\right.
\end{equation*}
Clearly, $V(t, x, y)$ is an invasion of 0 by 1. It follows from \eqref{3.42} that for any $\tau>0, u_n(t+\tau, x, y)$ can solve the following equation 
\begin{equation*}
\begin{cases}\partial_t u-\Delta_{x, y} u=f(x, y, u) & \text { when } t>-n,(x, y) \in \mathbb{R}^N, \\ u(t, x, y)=u_n(-n+\tau, x, y) & \text { when } t=-n,(x, y) \in \mathbb{R}^N,\end{cases}
\end{equation*}
for each $n \in \mathbb{N}$. Moreover, \eqref{3.43} implies that
\begin{equation*}
u_n(-n+\tau, x, y) \geq \underline{V}(-n+\tau, x, y) \geq \underline{V}(-n, x, y)
\end{equation*}
for all $(x, y) \in \mathbb{R}^N$. Then, by the comparison principle, it holds that
\begin{equation*}
u_n(t+\tau, x, y) \geq u_n(t, x, y), \quad \forall(t, x, y) \in(-n,+\infty) \times \mathbb{R}^N, \forall \tau>0,
\end{equation*}
which implies $\partial_t u_n(t, x, y) \geq 0$ in $(-n,+\infty) \times \mathbb{R}^N$. Thus, letting $n \rightarrow \infty$, $\partial_t V(t, x, y) \geq 0$ in $\mathbb{R} \times \mathbb{R}^N$. By virtue of the strong maximum principle, it holds $\partial_t V(t, x, y) > 0$ in $\mathbb{R} \times \mathbb{R}^N$. 

{\it  Step 2: the existence for general $e_0 \in \mathbb{S}^{N-1}$.} Let $Y:=z \cdot e_0$ and 
$Z:=z-\left(z \cdot e_0\right) e_0$, where $z \in \mathbb{R}^{N}$. Then we establish a new 
coordinate system. Specifically, we take the unit vector $e_0$ as the $N$-th axis of the new 
coordinate system and select suitable $N-1$ mutually orthogonal unit vectors as the remaining axes 
so that all these vectors are orthogonal to $e_0$. In this new coordinate system, $Z$ and 
$z\in\mathbb{R}^N$ can be expressed as $(X, 0)=\left(X_1, X_2, \ldots, X_{N-1}, 0\right)$ and 
$\left(X, Y\right)=\left(X_1, X_2, \ldots, X_{N-1}, Y\right)$, respectively. Still define the 
subsolution of \eqref{1.1} as $\underline{V}(t, z)=\max _{1 \leq i \leq n}\left\{U_{e_i}\left(z \cdot 
e_i-c_{e_i} t, z\right)\right\}$. The supersolution $\bar{V}(t, z)$ of \eqref{1.1} is now given by
\begin{equation*}
\bar{V}(t, z):=U_{e(X)}(\xi, z)+\varepsilon h(\alpha X) \times\left[U_{e_i}^\beta(\eta, z) \omega(\xi)+(1-\omega(\xi))\right],
\end{equation*}
where
\begin{equation*}
\xi(t, z)=\frac{Y-\hat{c} t-\varphi(\alpha X) / \alpha}{\sqrt{1+|\nabla \varphi(\alpha X)|^2}} \quad \text{ and }  \quad \eta(t, z)=Y-\hat{c} t-\varphi(\alpha X) / \alpha.
\end{equation*}
We can also show $\bar{V}(t, z)$ is a supersolution of \eqref{1.1} in the new form. As Step $1$, we obtain the existence and monotonicity of the curved front.

\textbf{Proof of Theorem \ref{Theorem 2.19}.} Assume that $V(t, z)$ and $V_1(t, z)$ are both curved fronts of \eqref{1.1} satisfying \eqref{2.15}. Thus, they are both invasions of 0 by 1 with sets \eqref{2.13}. By \eqref{1.10}, there is $R>0$ such that $0<V(t, z), V_1(t, z) \leq \gamma_{\star}$ for $(t, z) \in \omega^{-}$ and $1-\gamma_{\star} \leq V(t, z), V_1(t, z)<1$ for $(t, z) \in \omega^{+}$, where
\begin{align*}
	 &\omega^{+}:=\left\{(t, z) \in \mathbb{R} \times \mathbb{R}^N ; \min _{1 \leq i \leq n}\left\{z \cdot e_i-c_{e_i} t\right\}<-R\right\}, \\
	 &\omega^{-}:=\left\{(t, z) \in \mathbb{R} \times \mathbb{R}^N ; \min _{1 \leq i \leq n}\left\{z \cdot e_i-c_{e_i} t\right\}>R\right\}, 
\end{align*}
and
\begin{equation*}
	\omega:=\left\{(t, z) \in \mathbb{R} \times \mathbb{R}^N ;-R \leq \min _{1 \leq i \leq n}\left\{z \cdot e_i-c_{e_i} t\right\} \leq R\right\}.
\end{equation*}
Below, similar to the proof in Lemma \ref{Lemma 2.15}, we compare $V(t, z)$ and $V_1(t, z)$ using the sliding method. We only provide an overview of the proof.

{\it Step 1: prove $V(t+\tau, x, y) \geq V_1(t, x, y)$ in $\mathbb{R} \times \mathbb{R}^N$ for $\tau$ large enough.} 
Since $V$ is a transition front connecting 0 and 1 with sets $\Gamma_t$ and $\Omega_t^{ \pm}$, it holds that:

(i) $V(t+\tau, x, y) \rightarrow 1$ uniformly as $\tau \rightarrow+\infty$ for $(t, x, y) \in \omega$;

(ii) $V(t+\tau, x, y) \rightarrow 0$ uniformly as $\tau \rightarrow-\infty$ for $(t, x, y) \in \omega$.
\\This indicates that
\begin{equation}\label{3.45}
	V(t+\tau, x, y) \geq V_1(t, x, y) \text { in } \omega \text { for large } \tau .
\end{equation}
Moreover, $V(t+\tau, x, y) \geq V_1(t, x, y)$ on $\partial \omega^{ \pm}$ for large $\tau$. Then, we prove that $V(t+\tau, x, y) \geq V_1(t, x, y)$ in $\omega^{ \pm}$ for large $\tau$. Let
\begin{equation*}
\varepsilon_*=\inf \left\{\varepsilon>0 ; V(t+\tau, x, y) \geq V_1(t, x, y)-\varepsilon \text { in } \omega^{-}\right\}.
\end{equation*}
Since $0\leq V(t, x), V_1(t, x) \leq 1$, $\varepsilon_*$ is well-defined. By the definition of $\omega^{ \pm}$, one has $\varepsilon_* \leq \gamma_{\star}$. Suppose $\varepsilon_*>0$, then
\begin{equation*}
V(t+\tau, x, y)>V_1(t, x, y)-\varepsilon_* \text { on } \partial \omega^{-}.
\end{equation*}
Because $V_1\leq \gamma_{\star}$ in $\omega^{-}$ and $f(x, y, \cdot)$ is nonincreasing in $(-\infty, 2 \gamma_{\star}]$, it holds that
\begin{equation*}
\partial_t\left(V_1-\varepsilon_*\right)-\Delta\left(V_1-\varepsilon_*\right)= f\left(V_1\right) \leq f\left(V_1-\varepsilon_*\right).
\end{equation*}
Define $V^{\star}(t, x, y):=V(t+\tau, x, y)-V_1(t, x, y)+\varepsilon_*$, then $V^{\star}$ is a nonegative function and satisfies $\partial_t V^{\star}-\Delta V^{\star} \geq b^{\star} V^{\star}$ in $\omega^{-}$, where $b^{\star}$ is some bounded function. By the definition of $\varepsilon_*$, there exists a sequence $\left\{\left(t_n, x_n, y_n\right)\right\}_{n \in \mathbb{N}}$ in $\omega^{-}$ such that
\begin{equation}\label{3.46}
V^{\star}(t_n, x_n, y_n)= V\left(t_n+\tau, x_n, y_n\right)-V_1\left(t_n, x_n, y_n\right)+\varepsilon_* \rightarrow 0 \text { as } n \rightarrow+\infty.
\end{equation}
Note that $d\left(\left(t_n, x_n, y_n\right), \partial \omega^{-}\right)$ is bounded. Otherwise, $V\left(t_n+\tau, x_n, y_n\right) \rightarrow 0$ and $V_1\left(t_n, x_n, y_n\right) \rightarrow 0$, then \eqref{3.46} can not be satisfied. Thus, there exists a sequence $\left\{(x_n^{\prime}, y_n^{\prime})\right\}$ such that $\left(t_n-1, x_n^{\prime}, y_n^{\prime}\right) \in \partial \omega^{-}$ and $\left|\left(x_n, y_n\right)-\left(x_n^{\prime}, y_n^{\prime}\right)\right|<+\infty$. By linear parabolic estimates, we can obtain $V^{\star}\left(t_n-1, x_n^{\prime}, y_n^{\prime}\right) \rightarrow 0$ from $V^{\star}\left(t_n, x_n, y_n\right) \rightarrow 0$. But it is impossible, because the above conclusion contradicts $V(t+\tau, x, y)>V_1(t, x, y)-\varepsilon_*$ on $\partial \omega^{-}$ and $\varepsilon_*>0$. Thus, $\varepsilon_*=0$ and $V(t+\tau, x, y) \geq V_1(t, x, y)$ in $\omega^{-}$. Similarly, we can also get $V(t+\tau, x, y) \geq V_1(t, x, y)$ in $\omega^{+}$. Combining \eqref{3.45}, one has $V(t+\tau, x, y) \geq V_1(t, x, y)$ in $\mathbb{R} \times \mathbb{R}^N$ for large $\tau$.

{\it Step 2:} Let
\begin{equation*}
\tau_*=\inf \left\{\tau \in \mathbb{R} ; V(t+\tau, x, y) \geq V_1(t, x, y) \text { in } \mathbb{R} \times \mathbb{R}^N\right\} .
\end{equation*}
Since $V, V_1$ satisfy \eqref{2.15} and $\underline{V}(t+\tau, x, y)>\underline{V}(t, x, y)$ for $\tau>0$, we can get $\tau_*$ is well-defined and $0 \leq \tau_*<+\infty$. Next we prove $\tau_*=0$.
Suppose $\tau_*>0$, then there might be two situations:
\begin{equation}\label{3.47}
\inf _\omega\left\{V\left(t+\tau_*, x, y\right)-V_1(t, x, y)\right\}>0,
\end{equation}
or
\begin{equation}\label{3.48}
\inf _\omega\left\{V\left(t+\tau_*, x, y\right)-V_1(t, x, y)\right\}=0.
\end{equation}

In the situation of \eqref{3.47}, there is a constant $\eta_0>0$ such that for any $\eta \in\left(0, \eta_0\right]$, $V\left(t+\tau_*-\eta, x, y\right)-V_1(t, x, y) \geq 0$ in $\omega$. By the similar argument in Step $1$, one has $V\left(t+\tau_*-\eta, x, y\right)-V_1(t, x, y) \geq 0$ in $\omega^{ \pm}$. But this contradicts the definition of $\tau_*$.

In the situation of \eqref{3.48}, there exists a sequence $\left\{\left(t_k, x_k, y_k\right)\right\}_{k \in \mathbb{N}} \in \omega$ such that
\begin{equation}\label{3.49}
\lim _{n \rightarrow+\infty}\left[V\left(t_k+\tau_*, x_k, y_k\right)-V_1\left(t_k, x_k, y_k\right)\right]=0 .
\end{equation}
If $d\left(\left(x_k, y_k\right), \mathcal{R}+\hat{c}t_k e_{0}\right) \rightarrow+\infty$, then $V\left(t_k+\tau_*, x_k, y_k\right)-\underline{V}\left(t_k+\tau_*, x_k, y_k\right) \rightarrow 0$ and $V_1\left(t_k, x_k, y_k\right)-\underline{V}\left(t_k, x_k, y_k\right) \rightarrow 0$. However, $\underline{V}\left(t_k+\tau, x_k, y_k\right)-\underline{V}\left(t_k, x_k, y_k\right)\geq\gamma(\tau)>0$, where $\gamma$ is a modulus function and $\tau>0$, this contradicts \eqref{3.49}. So it holds that
\begin{equation}\label{3.50}
\limsup _{k \rightarrow \infty} d\left(\left(x_k, y_k\right), \mathcal{R}+\hat{c}t_k e_{0}\right)<+\infty .
\end{equation}
Without loss of generality, assume that $\left(x_k, y_k\right) \in \mathcal{R}+\hat{c}t_k e_{0}$ and take a positive constant $r_{1}$ (to be determined below). Then we have
\begin{equation*}
\min _{1 \leq i \leq n}\left\{(x_k, y_k)\cdot e_{i} - \hat{c} t_k e_{0} \cdot e_{i}+r_1 e_{0} \cdot e_{i}\right\} \geq r_{1} \min _{1 \leq i \leq n} \{e_{0} \cdot e_{i}\}.
\end{equation*}
On the left-hand side of this inequality, suppose that the minimum is reached at $l$ and the minimum is $r_{2}$. Then $r_{2} \geq r_{1} \min _{1 \leq i \leq n} \{e_{0} \cdot e_{i}\}$. Denote $\left(x_{k}^{\prime}, y_k^{\prime}\right)=\left(x_{k}, y_{k}\right)-\hat{c} e_{0} +r_{1} e_{0} - r_{2} e_{l}$. Then
\begin{equation*}
\left(x_{k}^{\prime}, y_{k}^{\prime}\right) \cdot e_{l} -\hat{c} t_{k} e_{0} \cdot e_{l} + \hat{c} e_{0} \cdot e_{l} =0
\end{equation*}
and for any $j \neq l$,
\begin{equation*}
\left(x_{k}^{\prime}, y_{k}^{\prime}\right) \cdot e_{j} -\hat{c} t_{k} e_{0} \cdot e_{j} + \hat{c} e_{0} \cdot e_{j} \geq r_{2} (1 - e_{l} \cdot e_{j})\geq r_{1} \min _{1 \leq i \leq n} \{e_{0} \cdot e_{i}\} (1 - e_{l} \cdot e_{j}) >0.
\end{equation*}
Note that $0<e_{l} \cdot e_{j}<1$, since $e_l \neq e_j$ for $l \neq j$. Therefore, if $r_{1}$ is large enough, we can obtain that $\left(t_k-1, x_k^{\prime}, y_k^{\prime}\right) \in \omega$ and $d\left(\left(x_k^{\prime}, y_k^{\prime}\right), \mathcal{R}+\hat{c}(t_k-1) e_{0}\right)$ are very large. Similar to the proof of \eqref{3.50}, one has that $V\left(t_k-1+\tau_*, x_k^{\prime}, y_k^{\prime}\right)-V_1\left(t_k-1, x_k^{\prime}, y_k^{\prime}\right) \geq \gamma\left(\tau_*\right)>0$ for some modulus function $\gamma\left(\tau_*\right)$. On the other hand, it follows from the definition of $\left(x_k^{\prime}, y_k^{\prime}\right)$ that
\begin{equation*}
\limsup _{k \rightarrow+\infty}\left|\left(x_k, y_k\right)-\left(x_k^{\prime}, y_k^{\prime}\right)\right|<+\infty.
\end{equation*}
Then, by linear parabolic estimates, it holds that $V\left(t_k-1+\tau_*, x_k^{\prime}, y_k^{\prime}\right)-V_1\left(t_k-1, x_k^{\prime}, y_k^{\prime}\right) \rightarrow 0$, which is a contradiction. Thus, we obatin $\tau_*=0$, which means that $V(t, z) \geq V_1(t, z)$ for all $(t, z) \in \mathbb{R} \times \mathbb{R}^N$. 

We change positions of $V(t, z)$ and $V_1(t, z)$ and use the similar arguments again to obtain that $V(t, z) \leq V_1(t, z)$ for all $(t, x) \in \mathbb{R} \times \mathbb{R}^N$. Finally, one has $V_1(t, z) \equiv V(t, z)$, which completes the proof.
\begin{remark}\label{Remark 3.3}
According to Theorem \ref{Theorem 2.19}, we can obtain that
\begin{equation*}
	V(t, x, y)=V\left(t+L_N k / \hat{c}, x, y+L_N k\right) \text { in } \mathbb{R} \times \mathbb{R}^N, \;\forall k \in \mathbb{Z},
\end{equation*}
where $L_N$ given in the definition of $\mathbb{L}^N$ is the period of $y$.
\end{remark}
\begin{proof}
For any $k \in \mathbb{Z}$, it holds that
\begin{equation*}
	V_k(t, x, y):=V\left(t+L_N k / \hat{c}, x, y+L_N k\right)
\end{equation*}
is an entire solution of \eqref{1.1} and $0 \leq V_k \leq 1$, where $L_N$ is the period of $y$. By \eqref{3.4}, the values of $\xi$, $\eta$ and $x \cdot \nu_i \cos \theta_i+(y-\hat{c} t) \sin \theta_i$ at points $\left(t+L_N k / \hat{c}, x, y+L_N k\right)$ are invariant for each $k \in \mathbb{Z}$. Since $L_N$ is the period of $U_e(s, x, y)$ in $y$ for all $e \in \mathbb{S}^{N-1}$, we have
\begin{equation*}
	\bar{V}(t, x, y)=\bar{V}\left(t+L_{N} k / \hat{c}, x, y+L_{N} k\right), \quad \underline{V}(t, x, y)=\underline{V}\left(t+L_N k / \hat{c}, x, y+L_N k\right)
\end{equation*}
in $\mathbb{R} \times \mathbb{R}^N$ for all $k \in \mathbb{Z}$. Therefore, it follows from \eqref{3.6} that
\begin{equation*}
	\left|\bar{V}\left(t+L_{N} k / \hat{c}, x, y+L_{N} k\right)-\underline{V}\left(t+L_N k / \hat{c}, x, y+L_N k\right)\right| \leq 2\varepsilon, \; d\left((x, y), \mathcal{R}+\hat{c} t e_0\right) \rightarrow+\infty. 
\end{equation*}
By \eqref{3.44} and arbitrariness of $\varepsilon$, one has
\begin{equation*}
	\left|V_k(t, x, y)-\underline{V}(t, x, y)\right|=0,\; d\left((x, y), \mathcal{R}+\hat{c} t e_0\right) \rightarrow+\infty .
\end{equation*}
Thus, Theorem \ref{Theorem 2.19} implies that $V_k(t, x, y) \equiv V(t, x, y)$ in $\mathbb{R}\times\mathbb{R}^N$. This completes the proof.
\end{proof}

\section{Stability of curved fronts}
\noindent
In this section, we study the stability of curved fronts in Theorem \ref{Theorem 2.18} and still consider $e_0 = (0, 0, \ldots, 1)$ for convenience. Firstly, we construct super- and subsolutions for Cauchy problem \eqref{Cauchy problem} (ignore initial condition).
\begin{lemma}\label{Lemma 4.1}
 For any $\beta \in\left(0, \beta^*\right]$ and any $0<\varepsilon<\varepsilon_0^{+}(\beta)$, there exist positive constants $\lambda(\beta)$ and $\varrho(\beta, \lambda)$ such that for each $0<\alpha<\alpha_0^{+}(\beta, \varepsilon)$,
\begin{equation*}
	W_{\delta}^{+}(t, x, y):=\bar{V}(\tau, x, y)+\delta e^{-\lambda t} \times\left[U_{e_{i}}^\beta(\eta(\tau, x, y), x, y) \omega(\xi(\tau, x, y))+(1-\omega(\xi(\tau, x, y)))\right]
\end{equation*}
	is a supersolution of \eqref{1.1} for $t \geq 0$ and $(x, y) \in \mathbb{R}^N$, for all $\delta \in\left(0,  \gamma_{\star} / 4\right]$, where $\tau=\tau(t):=t-\varrho \delta e^{-\lambda t}+\varrho \delta$, and $\beta^*$, $\varepsilon_0^{+}(\beta)$, $\alpha_0^{+}(\beta, \varepsilon)$, $\bar{V}$ are given in Lemma \ref{Lemma 3.2}, and $\eta, \xi, \omega,  \gamma_{\star}$ are given in \eqref{1.5}, \eqref{3.3} and \eqref{3.4}. Note that $e_{i}$ here must be the same as $e_{i}$ in Lemma \ref{Lemma 3.2}.
\end{lemma}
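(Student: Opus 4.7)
The structure mirrors the supersolution construction in Lemma \ref{Lemma 3.2}, with two modifications: $\bar V$ is evaluated at the shifted time $\tau(t) = t-\varrho\delta e^{-\lambda t}+\varrho\delta$, and the spatial coefficient $\varepsilon h(\alpha x)$ of the corrector is replaced by the exponentially decaying temporal coefficient $\delta e^{-\lambda t}$. The role of the time shift is to generate, via $\tau'(t)=1+\varrho\delta\lambda e^{-\lambda t}>1$, a strictly positive term that can absorb the error produced when we differentiate the corrector in $t$. The plan is to run through the same three-case split $\xi>X'$, $\xi<-X''$, $-X''\le\xi\le X'$ as in Lemma \ref{Lemma 3.2}, and in each case reduce the estimate of $\mathcal L W_\delta^+$ to a linear combination of terms already controlled there, plus the new time-shift contribution.

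Setting $\mathcal B(\tau,x,y):=U_{e_i}^\beta(\eta(\tau,x,y),x,y)\omega(\xi(\tau,x,y))+(1-\omega(\xi(\tau,x,y)))$, I would first write $\partial_t W_\delta^+ = \tau'(t)\partial_\tau\bar V(\tau,\cdot)+\delta e^{-\lambda t}\tau'(t)\partial_\tau\mathcal B-\lambda\delta e^{-\lambda t}\mathcal B$ and $\Delta_{x,y}W_\delta^+ = \Delta_{x,y}\bar V(\tau,\cdot)+\delta e^{-\lambda t}\Delta_{x,y}\mathcal B$. Because $\bar V$ is a supersolution of \eqref{1.1} at time $\tau$, grouping the $\tau'(t)=1$ part with $\Delta_{x,y}\bar V$ and $f(x,y,\bar V(\tau))$ gives a non-negative piece, and what remains is
\begin{equation*}
\mathcal L W_\delta^+ \;\ge\; \varrho\delta\lambda e^{-\lambda t}\partial_\tau\bar V(\tau,\cdot)\;-\;\lambda\delta e^{-\lambda t}\mathcal B\;+\;\delta e^{-\lambda t}\bigl(\tau'(t)\partial_\tau\mathcal B-\Delta_{x,y}\mathcal B\bigr)\;+\;f(x,y,\bar V(\tau))-f(x,y,W_\delta^+).
\end{equation*}
Here the derivatives of $\mathcal B$ are exactly the quantities bounded in Step 1 of the proof of Lemma \ref{Lemma 3.2}, except that the $h(\alpha x)$ factor is now replaced by $1$; the same estimates, applied mutatis mutandis, give $|\tau'(t)\partial_\tau\mathcal B-\Delta_{x,y}\mathcal B|\le\Lambda_3'$ on the strip $-X''\le\xi\le X'$ and an even better exponential decay outside it.

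In the first regime $\xi(\tau,x,y)>X'$ one has $\mathcal B=U_{e_i}^\beta(\eta)$ and $W_\delta^+\in[0,2\gamma_\star]$ when $\delta$ is small, so $f(\bar V)-f(W_\delta^+)=0$; repeating the computation leading to \eqref{3.21} produces a positive term proportional to $\beta\hat c c_{e_i}\delta e^{-\lambda t}U_{e_i}^\beta$, which absorbs both $\lambda\delta e^{-\lambda t}U_{e_i}^\beta$ and the decaying derivative contribution as soon as $\lambda\le\beta\hat c c_{e_i}/16$. In the second regime $\xi<-X''$ one has $W_\delta^+\in[1-\gamma_\star,1+\gamma_\star]$ (using $\delta\le\gamma_\star/4$), so by \eqref{1.6} the difference $f(\bar V)-f(W_\delta^+)\ge(\kappa_1/2)\delta e^{-\lambda t}$ dominates $\lambda\delta e^{-\lambda t}$ provided $\lambda\le\kappa_1/4$. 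In the middle regime one invokes Theorem \ref{Theorem 2.12}: since $\bar V(\tau,x,y)=U_{e(x)}(\xi,x,y)+O(\varepsilon)$ on the strip $-X''\le\xi\le X'$ and $-\partial_\xi U_{e(x)}\ge r>0$ there, one gets $\partial_\tau\bar V(\tau,\cdot)\ge C_0rh(\alpha x)/2>0$ uniformly, so the time-shift term becomes $\varrho\delta\lambda e^{-\lambda t}\cdot(\text{positive constant})$ which, upon choosing $\varrho(\beta,\lambda)$ large enough, dominates the remaining $\Lambda_3'\delta e^{-\lambda t}+\lambda\delta e^{-\lambda t}+\Lambda_5'\delta e^{-\lambda t}$ error.

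The main obstacle is precisely the middle regime. Unlike Lemma \ref{Lemma 3.2}, where the whole error carried a factor $h(\alpha x)$ that matched the positivity gain $C_0rh(\alpha x)$, here the corrector has a uniform amplitude $\delta e^{-\lambda t}$ and the only positive buffer comes from the time shift. Getting a clean lower bound $\partial_\tau\bar V\ge c_0>0$ uniformly in $(x,y)$ on $\{-X''\le\xi\le X'\}$ — and verifying that it survives the $O(\varepsilon)$ perturbation present in $\bar V$ — is the technical core; once this is done, the choice $\lambda=\min\{\beta\hat c c_{e_i}/16,\kappa_1/4\}$ and $\varrho=\varrho(\beta,\lambda)$ large enough closes all three cases, and the bound $\delta\le\gamma_\star/4$ keeps $W_\delta^+$ inside the regime where \eqref{1.6} applies.
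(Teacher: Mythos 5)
Your overall structure---the same three-regime split as in Lemma \ref{Lemma 3.2}, the decomposition of $\mathcal{L}W_{\delta}^{+}$ using the supersolution property of $\bar V$ at the shifted time $\tau$, and the choice of $\lambda$ small in the two outer regimes---is exactly the paper's argument, and your treatment of the regimes $\xi>X'$ and $\xi<-X''$ is correct. The genuine gap is in the middle regime, precisely the point you flag as ``the technical core'' but do not close. The bound you write there, $\partial_\tau\bar V\ge C_0 r\,h(\alpha x)/2$, is not a uniform positive lower bound: $h(\alpha x)$ decays exponentially as $d(x,\widehat{\mathcal R})\to+\infty$, while the error terms $\Lambda_3'\delta e^{-\lambda t}+\lambda\delta e^{-\lambda t}+\|f_u\|_{L^\infty}\delta e^{-\lambda t}$ carry no factor of $h$. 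Hence no choice of $\varrho(\beta,\lambda)$ makes $\varrho\delta\lambda e^{-\lambda t}\cdot C_0 r\,h(\alpha x)/2$ dominate them on the whole strip, and Case 3 does not close as written. The $h(\alpha x)$ factor is an artifact of importing \eqref{3.12}: there it is the difference $\hat c/\sqrt{1+|\nabla\varphi|^2}-c_{e(x)}$ that is only of size $h$, but that difference plays no role here, since the supersolution inequality for $\bar V$ has already been used.

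The correct (and short) route, which is the paper's \eqref{4.6}--\eqref{4.7}, is to differentiate $\bar V$ in $\tau$ directly: $\partial_\tau\bar V=\partial_\xi U_{e(x)}\,\xi_\tau+\varepsilon h(\alpha x)\,\partial_\tau\bigl[(U_{e_i}^\beta-1)\,\omega(\xi)\bigr]$ with $\xi_\tau=-\hat c/\sqrt{1+|\nabla\varphi(\alpha x)|^2}$ and $\eta_\tau=-\hat c$. The $\varepsilon$-term is nonnegative because $\partial_\eta U_{e_i}\le 0$, $U_{e_i}^\beta\le 1$, $\omega'\ge 0$ and $\xi_\tau,\eta_\tau<0$, so the $O(\varepsilon)$ perturbation you worry about contributes with the right sign and needs no smallness. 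For the leading term, Theorem \ref{Theorem 2.12} gives $-\partial_\xi U_{e(x)}\ge r>0$ for $\xi\in[-X'',X']$ uniformly in the direction, and Lemma \ref{Lemma 2.17} bounds $|\nabla\varphi|$, so $\partial_\tau\bar V\ge r\hat c/\sqrt{1+(C_3+\max_{1\le i\le n}|\nu_i\cot\theta_i|)^2}=:r^{*}\hat c>0$, a constant with no $h$ factor. With this bound the time-shift term is at least $\varrho\delta\lambda e^{-\lambda t}r^{*}\hat c$, and taking $\varrho>(\|f_u\|_{L^{\infty}}+\lambda+C^{*}(\beta))/(\lambda r^{*}\hat c)$ finishes Case 3 exactly as in the paper; the rest of your proposal then goes through.
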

\begin{proof}
Our approach is to find two numbers $X^{\prime}>1$ and $X^{\prime \prime}>1$ and consider the inequality
\begin{equation*}
\mathcal{L} W_{\delta}^{+}:=\partial_t	W_{\delta}^{+}-\Delta_{x, y} W_{\delta}^{+}-f\left(x, y, W_{\delta}^{+}\right) \geq 0, \quad \forall(t, x, y) \in[0,+\infty) \times \mathbb{R}^N
\end{equation*}
in three cases $\xi(\tau, x, y)>X^{\prime}, \xi(\tau, x, y)<-X^{\prime \prime}$, and $\xi(\tau, x, y) \in\left[-X^{\prime \prime}, X^{\prime}\right]$, respectively. Since $\bar{V}$ is a supersolution of \eqref{1.1} from Lemma \ref{Lemma 3.2}, we have
\begin{equation}\label{4.1}
\begin{aligned}
	\mathcal{L} W_{\delta}^{+} \geq & \varrho \delta \lambda e^{-\lambda t} \bar{V}_\tau +f\left(x, y, \bar{V}\right)-f\left(x, y, W_{\delta}^{+}\right) \\
	& +\left(\partial_t-\Delta_{x, y}\right)\left(\delta e^{-\lambda t} \times\left[U_{e_{i}}^\beta(\eta, x, y) \omega(\xi)+(1-\omega(\xi))\right]\right),
\end{aligned}
\end{equation}
in $\mathbb{R} \times \mathbb{R}^N$, where $\xi, \eta, \bar{V}$ and all of its derivatives are evaluated at $(\tau(t), x, y)$. For convenience, we define $P:=\left(\partial_t-\Delta_{x, y}\right)\left(\delta e^{-\lambda t} \times\left[U_{e_{i}}^\beta(\eta, x, y) \omega(\xi)+(1-\omega(\xi))\right]\right)$. 

\textbf{Case 1:} $\xi(\tau(t), x, y)>X^{\prime}$ and $t \geq 0$, where $X^{\prime}>1$ is to be chosen.
In this case, $\omega(\xi) \equiv 1$. Thus,
\begin{equation*}
	\mathcal{L} W_{\delta}^{+} \geq \varrho \delta \lambda e^{-\lambda t} \bar{V}_\tau+\left(\partial_t-\Delta_{x, y}\right)\left(\delta e^{-\lambda t} U_{e_{i}}^\beta(\eta)\right)+f\left(x, y, \bar{V}\right)-f\left(x, y, W_{\delta}^{+}\right).
\end{equation*}
Then one gets
	\begin{align}\label{4.2}		
		P=&-\lambda\delta e^{- \lambda t}U_{e_{i}}^\beta+\delta e^{- \lambda t}\beta U_{e_{i}}^{\beta-1} \partial_{\eta}U_{e_{i}}(-\hat{c})(1+\varrho \delta \lambda e^{- \lambda t}) \nonumber\\
		&-\delta e^{- \lambda t}\beta(\beta-1) U_{e_{i}}^{\beta-2}\left(\sum_{k=1}^{N-1}(\partial_{x_k}U_{e_{i}})^2 + (\partial_{y}U_{e_{i}})^2\right)\nonumber\\
		&-2\delta e^{- \lambda t}\beta(\beta-1) U_{e_{i}}^{\beta-2}\partial_{\eta}U_{e_{i}}\left(\sum_{k=1}^{N-1}\partial_{x_k}U_{e_{i}}\partial_{x_k}\eta + \partial_{y} U_{e_{i}}\partial_{y}\eta\right)\nonumber\\
		&-\delta e^{- \lambda t}\beta U_{e_{i}}^{\beta-1}\left(\sum_{k=1}^{N-1}\partial_{x_k x_k}U_{e_{i}} + \partial_{y y}U_{e_{i}}\right)-2\delta e^{- \lambda t}\beta U_{e_{i}}^{\beta-1}\left(\sum_{k=1}^{N-1}\partial_{x_k}\partial_{\eta}U_{e_{i}}\partial_{x_k}\eta + \partial_{y}\partial_{\eta} U_{e_{i}}\partial_{y}\eta\right)\nonumber\\
		&-\delta e^{- \lambda t}\beta(\beta-1) U_{e_{i}}^{\beta-2}(\partial_{\eta} U_{e_{i}})^2\left(\sum_{k=1}^{N-1}(\partial_{x_k}\eta)^2 + (\partial_{y}\eta)^2\right)\nonumber\\
		&-\delta e^{- \lambda t}\beta U_{e_{i}}^{\beta-1}\partial_{\eta \eta} U_{e_{i}}\left(\sum_{k=1}^{N-1}(\partial_{x_k}\eta)^2 + (\partial_{y}\eta)^2\right)-\delta e^{- \lambda t}\beta U_{e_{i}}^{\beta-1}\partial_{\eta} U_{e_{i}}\left(\sum_{k=1}^{N-1}\partial_{x_k x_k}\eta + \partial_{y y}\eta\right)\nonumber\\
		&\geq -\lambda\delta e^{- \lambda t}U_{e_{i}}^\beta +0 \\
		&\quad -\delta e^{- \lambda t} \beta U_{e_{i}}^{\beta}\left[\beta\frac{\sum_{k=1}^{N-1}\left(\partial_{x_k}U_{e_{i}}+\partial_{\eta}U_{e_{i}}\partial_{x_k}\eta\right)^2+\left(\partial_{y}U_{e_{i}}+\partial_{\eta}U_{e_{i}}\partial_{y}\eta\right)^2}{U_{e_{i}}^2}+\hat{c}\frac{\partial_{\eta}U_{e_{i}}}{U_{e_{i}}}\right.\nonumber\\
		&\quad -\frac{\sum_{k=1}^{N-1}\left(\partial_{x_k}U_{e_{i}}+\partial_{\eta}U_{e_{i}}\partial_{x_k}\eta\right)^2+\left(\partial_{y}U_{e_{i}}+\partial_{\eta}U_{e_{i}}\partial_{y}\eta\right)^2}{U_{e_{i}}^2}+ \frac{\Delta_{x, y}U_{e_{i}}+2 \nabla_{x, y}\partial_{\eta}U_{e_{i}}(-\nabla_{\zeta} \varphi(\alpha x), 1)}{U_{e_{i}}}\nonumber\\
		&\quad \left. +\frac{\partial_{\eta \eta}U_{e_{i}}}{U_{e_{i}}}\left(\sum_{k=1}^{N-1}(-\partial_{\zeta_k}\varphi(\alpha x))^2+1\right)-\frac{\partial_{\eta}U_{e_{i}}}{U_{e_{i}}}\left(\alpha \sum_{k=1}^{N-1}\partial_{\zeta_k \zeta_k}\varphi(\alpha x)\right)\right]\nonumber\\
		&\geq -\lambda\delta e^{- \lambda t}U_{e_{i}}^\beta +\delta e^{- \lambda t} \beta U_{e_{i}}^{\beta} \frac{\partial_{\eta}U_{e_{i}}}{U_{e_{i}}}\left(\alpha \sum_{k=1}^{N-1}\partial_{\zeta_k \zeta_k}\varphi(\alpha x)\right)\nonumber\\
		&\quad -\delta e^{- \lambda t} \beta U_{e_{i}}^{\beta}\left[\beta\frac{\sum_{k=1}^{N-1}\left(\partial_{x_k}U_{e_{i}}+\partial_{\eta}U_{e_{i}}\partial_{x_k}\eta\right)^2+\left(\partial_{y}U_{e_{i}}+\partial_{\eta}U_{e_{i}}\partial_{y}\eta\right)^2}{U_{e_{i}}^2}+\hat{c}\frac{\partial_{\eta}U_{e_{i}}}{U_{e_{i}}}\right.\nonumber\\
		&\quad -\frac{\sum_{k=1}^{N-1}\left(\partial_{x_k}U_{e_{i}}+\partial_{\eta}U_{e_{i}}\partial_{x_k}\eta\right)^2+\left(\partial_{y}U_{e_{i}}+\partial_{\eta}U_{e_{i}}\partial_{y}\eta\right)^2}{U_{e_{i}}^2}+ \frac{\Delta_{x, y}U_{e_{i}}+2 \nabla_{x, y}\partial_{\eta}U_{e_{i}}(-\nabla_{\zeta} \varphi(\alpha x), 1)}{U_{e_{i}}}\nonumber \\
		&\quad \left. +\frac{\partial_{\eta \eta}U_{e_{i}}}{U_{e_{i}}}\left(\sum_{k=1}^{N-1}(-\partial_{\zeta_k}\varphi(\alpha x))^2+1\right)\right] \nonumber\\
		&=: -\lambda\delta e^{- \lambda t}U_{e_{i}}^\beta + Q_1 +Q_2,\nonumber
	\end{align}
where $U_{e_i}$ and all of its derivatives are evaluated at $( \eta(t, x, y), x, y)$. From Lemma \ref{Lemma 3.2}, recall that
\begin{equation*}
	\beta^{*}\leq\frac{\hat{c}}{2 c_{e_i}\left((C_3+\max_{1\le i\le n} \{|\nu_i \cot \theta_i|\})^2+1\right)}.
\end{equation*}
By \eqref{3.16} and \eqref{3.17}, for any $\beta \in\left(0, \beta^{*}\right]$, there exists a sufficiently large number $X_1^{\prime}>1$ such that
\begin{equation}\label{4.3}
	\begin{aligned}
		Q_2&\geq-\delta e^{- \lambda t} \beta U_{e_{i}}^{\beta} \left[\frac{\hat{c}}{2 c_{e_i}\left((C_3+\max_{1\le i\le n} \{|\nu_i \cot \theta_i|\})^2+1\right)}\cdot c_{e_i}^2 \left(\sum_{k=1}^{N-1}(-\partial_{\zeta_k}\varphi(\alpha x))^2+1\right) -\hat{c} c_{e_i} \right]\\	
		&\geq-\delta e^{- \lambda t} \beta U_{e_{i}}^{\beta}\left(\frac{\hat{c} c_{e_i}}{2}-\hat{c} c_{e_i}\right)>\frac{\hat{c} c_{e_i}}{4}\delta e^{- \lambda t} \beta U_{e_{i}}^{\beta}
	\end{aligned}
\end{equation}
for all $(\eta, x, y) \in\left(X_1^{\prime},+\infty\right) \times \mathbb{R}^N$ and $t \geq 0$. It follows from \eqref{3.19}, \eqref{3.20}, \eqref{4.2} and \eqref{4.3} that for arbitrary $0<\alpha \leq \alpha_0^{+}(\beta, \varepsilon) \leq \alpha_1^{+}(\beta)$ ($\alpha_0^{+}(\beta, \varepsilon),\alpha_1^{+}(\beta)$ are given in Lemma \ref{Lemma 3.2}),
\begin{equation}\label{4.4}
Q_1+Q_2>\delta e^{- \lambda t} U_{e_{i}}^{\beta}\times \beta\frac{\hat{c}c_{e_i}}{8 }.
\end{equation}
Therefore, it follows from \eqref{4.1}-\eqref{4.4} that
\begin{equation*}
	\mathcal{L} W_{\delta}^{+} \geq \varrho \delta \lambda e^{-\lambda t} \bar{V}_\tau-\lambda\delta e^{- \lambda t}U_{e_{i}}^\beta+\delta e^{- \lambda t} U_{e_{i}}^{\beta}\times \beta\frac{\hat{c}c_{e_i}}{8 }+f\left(x, y, \bar{V}\right)-f\left(x, y, W_{\delta}^{+}\right).
\end{equation*}
By Theorem \ref{Theorem 2.8} and definitions of $\bar{V}$ and $W_{\delta}^{+}$, there exists a 
large enough constant 
$X_2^{\prime}>1$ such that 
\begin{equation*}
	f\left(x, y, \bar{V}\right)-f\left(x, y, W_{\delta}^{+}\right)= 0,\quad\forall(\xi, x, y) \in\left(X_2^{\prime},+\infty\right) \times \mathbb{R}^N \text{ and } t \geq 0.
\end{equation*}
Take $X^{\prime}=\max\{X_1^{\prime},X_2^{\prime}\}$, and $\beta^{*}$  and $\alpha_0^{+}(\beta, \varepsilon)$ are given in Lemma \ref{Lemma 3.2}. By \eqref{3.8}, we know
\begin{equation*}
	\mathcal{L} W_{\delta}^{+} \geq -\lambda\delta e^{- \lambda t}U_{e_{i}}^\beta+\delta e^{- \lambda t} U_{e_{i}}^{\beta}\times \beta\frac{\hat{c}c_{e_i}}{8 },
\end{equation*}
for any $(\xi, x, y) \in\left(X^{\prime},+\infty\right) \times \mathbb{R}^N$ and $t \geq 0$. Let $\lambda\in\left(0,\beta\frac{\hat{c}c_{e_i}}{8 }\right)$, then we prove $\mathcal{L} W_{\delta}^{+} >0$ in Case $1$.

\textbf{Case 2:} $\xi(\tau(t), x, y)<-X^{\prime\prime}$ and $t \geq 0$, where $X^{\prime\prime}>1$ is to be chosen.
In this case, $\omega(\xi) \equiv 0$. Then we have
\begin{equation*}
	\mathcal{L} W_{\delta}^{+} \geq \varrho \delta \lambda e^{-\lambda t} \bar{V}_\tau-\lambda\delta e^{- \lambda t} +f\left(x, y, \bar{V}\right)-f\left(x, y, W_{\delta}^{+}\right).
\end{equation*}
Recall that $\delta \in\left(0,  \gamma_{\star} / 4\right]$ and $\varepsilon<\gamma_{\star} / 3$, by definitions of $\bar{V}$ and $W_{\delta}^{+}$, there exists a sufficiently large constant 
$X^{\prime\prime}>1$ such that $W_{\delta}^{+}, \bar{V} \in [1-\gamma_{\star},1+\gamma_{\star}]$ when $\xi<-X^{\prime\prime}$ and $t \geq 0$. 
Thus one gets
\begin{equation*}
	f\left(x, y, \bar{V}\right)-f\left(x, y, W_{\delta}^{+}\right)>\frac{\kappa_1}{2} \delta  e^{-\lambda t},
\end{equation*}
then
\begin{equation*}
	\mathcal{L} W_{\delta}^{+} \geq -\lambda\delta e^{- \lambda t}+\frac{\kappa_1}{2} \delta  e^{-\lambda t},
\end{equation*}
for any $(\xi, x, y) \in\left(-\infty, -X^{\prime\prime}\right) \times \mathbb{R}^N$ and $t \geq 0$. Let $\lambda\in\left(0,\frac{\kappa_1}{2}\right)$, then we prove $\mathcal{L} W_{\delta}^{+} >0$ in Case $2$.

\textbf{Case 3:} $-X^{\prime\prime}<\xi(\tau(t), x, y)<X^{\prime}$ and $t \geq 0$.
	\begin{align*}
	P=&-\delta \lambda e^{- \lambda t}	-\delta  e^{- \lambda t}\beta U_{e_{i}}^{\beta-1} \partial_{\eta}U_{e_{i}}\hat{c}\omega-\delta e^{- \lambda t}U_{e_{i}}^{\beta}\omega^{\prime}\frac{\hat{c}}{\sqrt{1+|\nabla \varphi(\alpha x)|^2}}+	\delta  e^{- \lambda t} \omega^{\prime}\frac{\hat{c}}{\sqrt{1+|\nabla \varphi(\alpha x)|^2}}\\
	&-\delta e^{- \lambda t}\beta U_{e_{i}}^{\beta-1} \partial_{\eta}U_{e_{i}}\hat{c}\omega\varrho\delta\lambda e^{- \lambda t} -\delta e^{- \lambda t}U_{e_{i}}^{\beta}\omega^{\prime}\frac{\hat{c}}{\sqrt{1+|\nabla \varphi(\alpha x)|^2}}\varrho\delta\lambda e^{- \lambda t}\\
	&+\delta  e^{- \lambda t} \omega^{\prime}\frac{\hat{c}}{\sqrt{1+|\nabla \varphi(\alpha x)|^2}}\varrho\delta\lambda e^{- \lambda t}-\delta  e^{- \lambda t}\beta(\beta-1) U_{e_{i}}^{\beta-2}\omega\left(\sum_{k=1}^{N-1}(\partial_{x_k}U_{e_{i}})^2 + (\partial_{y}U_{e_{i}})^2\right)\\
	&-2\delta e^{- \lambda t}\beta(\beta-1) U_{e_{i}}^{\beta-2}\partial_{\eta}U_{e_{i}}\omega\left(\sum_{k=1}^{N-1}\partial_{x_k}U_{e_{i}}\partial_{x_k}\eta + \partial_{y} U_{e_{i}}\partial_{y}\eta\right)\\
	&-\delta e^{- \lambda t}\beta U_{e_{i}}^{\beta-1}\omega\left(\sum_{k=1}^{N-1}\partial_{x_k x_k}U_{e_{i}} + \partial_{y y}U_{e_{i}}\right)-2\delta e^{- \lambda t}\beta U_{e_{i}}^{\beta-1}\omega\left(\sum_{k=1}^{N-1}\partial_{x_k}\partial_{\eta}U_{e_{i}}\partial_{x_k}\eta + \partial_{y}\partial_{\eta} U_{e_{i}}\partial_{y}\eta\right)\\
	&-2\delta e^{- \lambda t}\beta U_{e_{i}}^{\beta-1} \omega^{\prime}\left(\sum_{k=1}^{N-1}\partial_{x_k}U_{e_{i}}\partial_{x_k}\xi+ \partial_{y} U_{e_{i}}\partial_{y}\xi\right)\\
	& -\delta  e^{- \lambda t}\beta(\beta-1) U_{e_{i}}^{\beta-2}(\partial_{\eta}U_{e_{i}})^2 \omega\left(\sum_{k=1}^{N-1}(\partial_{x_k}\eta)^2 + (\partial_{y}\eta)^2\right) \\
	&-\delta  e^{- \lambda t}\beta U_{e_{i}}^{\beta-1}\partial_{\eta \eta}U_{e_{i}} \omega\left(\sum_{k=1}^{N-1}(\partial_{x_k}\eta)^2 + (\partial_{y}\eta)^2\right)-\delta  e^{- \lambda t}\beta U_{e_{i}}^{\beta-1}\partial_{\eta}U_{e_{i}} \omega\left(\sum_{k=1}^{N-1}\partial_{x_k x_k}\eta + \partial_{y y}\eta\right)\\
	&-2\delta e^{- \lambda t}\beta U_{e_{i}}^{\beta-1} \partial_{\eta}U_{e_{i}} \omega^{\prime} \left(\sum_{k=1}^{N-1}\partial_{x_k}\eta \partial_{x_k}\xi + \partial_{y}\eta \partial_{y}\xi\right)- \delta e^{- \lambda t} U_{e_{i}}^{\beta} \omega^{\prime \prime} \left[\sum_{k=1}^{N-1}(\partial_{x_k}\xi)^2+(\partial_{y}\xi)^2\right]\\
	&-\delta e^{- \lambda t} U_{e_{i}}^{\beta} \omega^{\prime} \left[\sum_{k=1}^{N-1}\partial_{x_k x_k}\xi+\partial_{y y}\xi\right]+\delta e^{- \lambda t} \omega^{\prime \prime} \left[\sum_{k=1}^{N-1}(\partial_{x_k}\xi)^2+(\partial_{y}\xi)^2\right]\\
	&+\delta e^{- \lambda t}\omega^{\prime} \left[\sum_{k=1}^{N-1}\partial_{x_k x_k}\xi+\partial_{y y}\xi\right].
	\end{align*}
By calculations, one has
	\begin{align*}
	&-\delta  e^{- \lambda t}\beta U_{e_{i}}^{\beta-1} \partial_{\eta}U_{e_{i}}\hat{c}\omega\geq 0,\\ \\
	&-\delta e^{- \lambda t}U_{e_{i}}^{\beta}\omega^{\prime}\frac{\hat{c}}{\sqrt{1+|\nabla \varphi(\alpha x)|^2}}\geq -\delta e^{- \lambda t}U_{e_{i}}^{\beta}|\omega^{\prime}|\hat{c},\\ \\
	&\delta  e^{- \lambda t} \omega^{\prime}\frac{\hat{c}}{\sqrt{1+|\nabla \varphi(\alpha x)|^2}}\geq 0,\\ \\
	&-\delta e^{- \lambda t}\beta U_{e_{i}}^{\beta-1} \partial_{\eta}U_{e_{i}}\hat{c}\omega\varrho\delta\lambda e^{- \lambda t}\geq -(\delta e^{- \lambda t})^2 \omega\varrho\delta\lambda\beta \hat{c} U_{e_{i}}^{\beta-1}\partial_{\eta}U_{e_{i}}\geq 0,\\ \\
	&-\delta e^{- \lambda t}U_{e_{i}}^{\beta}\omega^{\prime}\frac{\hat{c}}{\sqrt{1+|\nabla \varphi(\alpha x)|^2}}\varrho\delta\lambda e^{- \lambda t}\geq -(\delta e^{- \lambda t})^2 \varrho\lambda \hat{c}\omega^{\prime} U_{e_{i}}^{\beta},\\ \\
	&\delta  e^{- \lambda t} \omega^{\prime}\frac{\hat{c}}{\sqrt{1+|\nabla \varphi(\alpha x)|^2}}\varrho\delta\lambda e^{- \lambda t}\geq (\delta  e^{- \lambda t})^2 \varrho\lambda \omega^{\prime} \hat{c}\geq 0,\\ \\
	&-\delta  e^{- \lambda t}\beta(\beta-1) U_{e_{i}}^{\beta-2}\omega\left(\sum_{k=1}^{N-1}(\partial_{x_k}U_{e_{i}})^2 + (\partial_{y}U_{e_{i}})^2\right)\\
	&\quad\geq -\delta  e^{- \lambda t}|\beta(\beta-1)| U_{e_{i}}^{\beta-2} |\nabla_{x, y}U_{e_{i}} |^2\geq -\delta  e^{- \lambda t}\beta N\left(\bar{K} \max\{e^{-\frac{3 \kappa}{4} |\eta|}, e^{-\kappa_2 |\eta|}\}\right)^2,\\ \\
	&-2\delta e^{- \lambda t}\beta(\beta-1) U_{e_{i}}^{\beta-2}\partial_{\eta}U_{e_{i}}\omega\left(\sum_{k=1}^{N-1}\partial_{x_k}U_{e_{i}}\partial_{x_k}\eta + \partial_{y} U_{e_{i}}\partial_{y}\eta\right)\\
	&\quad\geq-2\delta e^{- \lambda t}\beta U_{e_{i}}^{\beta-2}|\partial_{\eta}U_{e_{i}}|\sqrt{N\left(\bar{K} \max\{e^{-\frac{3 \kappa}{4} |\eta|}, e^{-\kappa_2 |\eta|}\}\right)^2}\left[1+(C_3 h(\alpha x)+\max_{1\le i\le n} \{|\nu_i \cot \theta_i|\})^2\right]^\frac{1}{2},\\ \\
	&-\delta e^{- \lambda t}\beta U_{e_{i}}^{\beta-1}\omega\left(\sum_{k=1}^{N-1}\partial_{x_k x_k}U_{e_{i}} + \partial_{y y}U_{e_{i}}\right)\\
	&\quad\geq -\delta e^{- \lambda t}\beta U_{e_{i}}^{\beta-1}\omega|\Delta_{x, y} U_{e_{i}}|\geq -\delta e^{- \lambda t}\beta\omega N\bar{K} \max\{e^{-\frac{3 \kappa}{4} |\eta|}, e^{-\kappa_2 |\eta|}\},\\ \\
	&-2\delta e^{- \lambda t}\beta U_{e_{i}}^{\beta-1}\omega\left(\sum_{k=1}^{N-1}\partial_{x_k}\partial_{\eta}U_{e_{i}}\partial_{x_k}\eta + \partial_{y}\partial_{\eta} U_{e_{i}}\partial_{y}\eta\right)\\
	&\quad\geq -2\delta e^{- \lambda t}\beta U_{e_{i}}^{\beta-1}\omega |\nabla_{x, y}\partial_{\eta}U_{e_{i}}|\cdot|\nabla_{x, y} \eta|\\
	&\quad\geq-2\delta e^{- \lambda t}\beta \left(N\left(\bar{K} \max\{e^{-\frac{3 \kappa}{4} |\eta|}, e^{-\kappa_2 |\eta|}\}\right)^2\right)^\frac{1}{2}\left[1+(C_3 h(\alpha x)+\max_{1\le i\le n} \{|\nu_i \cot \theta_i|\})^2\right]^\frac{1}{2},\\ \\
	&-2\delta e^{- \lambda t}\beta U_{e_{i}}^{\beta-1} \omega^{\prime}\left(\sum_{k=1}^{N-1}\partial_{x_k}U_{e_{i}}\partial_{x_k}\xi+ \partial_{y} U_{e_{i}}\partial_{y}\xi\right)\\
	&\quad\geq-2\delta e^{- \lambda t}\beta U_{e_{i}}^{\beta-1} \omega^{\prime}|\nabla_{x, y}U_{e_{i}}|\cdot|\nabla_{x, y} \xi|\\
	&\quad\geq-\delta e^{- \lambda t}\beta U_{e_{i}}^{\beta-1} \omega^{\prime} \left(N\left(\bar{K} \max\{e^{-\frac{3 \kappa}{4} |\eta|}, e^{-\kappa_2 |\eta|}\}\right)^2\right)^\frac{1}{2}(\alpha C_4(1+|\xi|) h(\alpha x)+\alpha C_4|\xi| h(\alpha x)),\\ \\
	&-\delta  e^{- \lambda t}\beta(\beta-1) U_{e_{i}}^{\beta-2}(\partial_{\eta}U_{e_{i}})^2 \omega\left(\sum_{k=1}^{N-1}(\partial_{x_k}\eta)^2 + (\partial_{y}\eta)^2\right)\\
	&\quad\geq-\delta  e^{- \lambda t}\beta(\beta-1) U_{e_{i}}^{\beta-2}(\partial_{\eta}U_{e_{i}})^2 \omega |\nabla_{x, y}\eta|^2\\
	&\quad\geq-\delta  e^{- \lambda t}\beta (\partial_{\eta}U_{e_{i}})^2 \left(\bar{K} \max\{e^{-\frac{3 \kappa}{4} |\eta|}, e^{-\kappa_2 |\eta|}\}\right)^2 \left[1+(C_3 h(\alpha x)+\max_{1\le i\le n} \{|\nu_i \cot \theta_i|\})^2\right],\\ \\
	&-\delta  e^{- \lambda t}\beta U_{e_{i}}^{\beta-1}\partial_{\eta \eta}U_{e_{i}} \omega\left(\sum_{k=1}^{N-1}(\partial_{x_k}\eta)^2 + (\partial_{y}\eta)^2\right)\\
	&\quad\geq-\delta  e^{- \lambda t}\beta \bar{K} \max\{e^{-\frac{3 \kappa}{4} |\eta|}, e^{-\kappa_2 |\eta|}\}\left[1+(C_3 h(\alpha x)+\max_{1\le i\le n} \{|\nu_i \cot \theta_i|\})^2\right],\\ \\
	&-\delta  e^{- \lambda t}\beta U_{e_{i}}^{\beta-1}\partial_{\eta}U_{e_{i}} \omega\left(\sum_{k=1}^{N-1}\partial_{x_k x_k}\eta + \partial_{y y}\eta\right)\geq-\delta  e^{- \lambda t}\beta \bar{K} \max\{e^{-\frac{3 \kappa}{4} |\eta|}, e^{-\kappa_2 |\eta|}\}(N-1)\alpha C_{3} h(\alpha x),\\ \\
	&-2\delta e^{- \lambda t}\beta U_{e_{i}}^{\beta-1} \partial_{\eta}U_{e_{i}} \omega^{\prime} \left(\sum_{k=1}^{N-1}\partial_{x_k}\eta \partial_{x_k}\xi + \partial_{y}\eta \partial_{y}\xi\right)\\
	&\quad\geq -\delta e^{- \lambda t}\beta \omega^{\prime}  \bar{K} \max\{e^{-\frac{3 \kappa}{4} |\eta|}, e^{-\kappa_2 |\eta|}\}\times\\
	& \qquad\qquad\qquad\qquad\left(1+(C_3 h(\alpha x)+\max_{1\le i\le n} \{|\nu_i \cot \theta_i|\})^2\right)^\frac{1}{2} (\alpha C_4(1+|\xi|) h(\alpha x)+\alpha C_4|\xi| h(\alpha x)),\\ \\
	&- \delta e^{- \lambda t} U_{e_{i}}^{\beta} \omega^{\prime \prime} \left[\sum_{k=1}^{N-1}(\partial_{x_k}\xi)^2+(\partial_{y}\xi)^2\right]\geq - \delta e^{- \lambda t} |\omega^{\prime \prime}| \left[\frac{1}{2}(\alpha C_4(1+|\xi|) h(\alpha x)+\alpha C_4|\xi| h(\alpha x))\right]^2,\\ \\
	&- \delta e^{- \lambda t} U_{e_{i}}^{\beta} \omega^{\prime} \left[\sum_{k=1}^{N-1}\partial_{x_k x_k}\xi+\partial_{y y}\xi\right]\geq - \delta e^{- \lambda t} \omega^{\prime}\alpha C_4(1+\alpha|\xi|) h(\alpha x),\\ \\
	&\delta e^{- \lambda t} \omega^{\prime \prime} \left[\sum_{k=1}^{N-1}(\partial_{x_k}\xi)^2+(\partial_{y}\xi)^2\right]\geq \delta e^{- \lambda t} |\omega^{\prime \prime}| \left[\frac{1}{2}(\alpha C_4(1+|\xi|) h(\alpha x)+\alpha C_4|\xi| h(\alpha x))\right]^2,\\ 
	&\text{and}\\
	&\delta e^{- \lambda t}\omega^{\prime} \left[\sum_{k=1}^{N-1}\partial_{x_k x_k}\xi+\partial_{y y}\xi\right]\geq\delta e^{- \lambda t}\omega^{\prime} \alpha C_4(1+\alpha|\xi|) h(\alpha x). 	
	\end{align*}
Then there exists a constant $C^{*}(\beta)>0$ such that 
\begin{equation*}
	\left(\partial_t-\Delta_{x, y}\right)\left(\delta e^{-\lambda t} \times\left[U_{e_{i}}^\beta(\eta, x, y) \omega(\xi)+(1-\omega(\xi))\right]\right)\geq -\delta \lambda e^{- \lambda t}-\delta e^{- \lambda t} C^{*}(\beta),
\end{equation*}
thus we have
\begin{equation}\label{4.5}
	\mathcal{L} W_{\delta}^{+} \geq \varrho \delta \lambda e^{-\lambda t} \bar{V}_\tau-\delta\lambda e^{- \lambda t}-\delta e^{- \lambda t} C^{*}(\beta) +f\left(x, y, \bar{V}\right)-f\left(x, y, W_{\delta}^{+}\right).
\end{equation}
It follows from \eqref{3.5}, \eqref{3.10} and \eqref{3.14} that
\begin{equation}\label{4.6}
\begin{aligned}
	\frac{\partial}{\partial \tau}\left(\bar{V}(\tau, x, y)\right) & =\partial_{\xi} U_{e(x)}(\xi, x, y) \xi_\tau+\varepsilon h(\alpha x) \frac{\partial}{\partial \tau}\left[\left(U_{e_{i}}^\beta(\eta, x, y)-1\right) \omega(\xi)\right] \\
	& \geq \partial_{\xi} U_{e(x)}(\xi, x, y) \frac{-\hat{c}}{\sqrt{1+|\nabla \varphi|^2}}
\end{aligned}
\end{equation}
in $\mathbb{R}\times\mathbb{R}^N$, where $\xi$ and $\eta$ are evaluated at $(\tau, x, y)$. Thus, by \eqref{4.6}, Theorem \ref{Theorem 2.12} and Lemma \ref{Lemma 2.17}, there is a constant $r^{*}>0$ such that
\begin{equation}\label{4.7}
\bar{V}_{\tau}(\tau, x, y)>r^{*} \hat{c} \text { in }\left\{(t, x, y): \xi(\tau(t), x, y) \;\in\left[-X^{\prime \prime}, X^{\prime}\right]\right\} .
\end{equation}
By \eqref{4.5} and \eqref{4.7}, we get
\begin{equation*}
  \begin{aligned}
	\mathcal{L} W_{\delta}^{+} &\geq \varrho \delta \lambda e^{-\lambda t} r^{*} \hat{c} -\delta\lambda e^{- \lambda t}-\delta e^{- \lambda t} C^{*}(\beta) +f\left(x, y, \bar{V}\right)-f\left(x, y, W_{\delta}^{+}\right)\\
	&\geq \varrho \delta \lambda e^{-\lambda t} r^{*} \hat{c} -\delta\lambda e^{- \lambda t}-\delta e^{- \lambda t} C^{*}(\beta) -\left\|f_u\right\|_{L^{\infty}}\delta e^{- \lambda t}\\
	&\geq \delta e^{- \lambda t}\left(\varrho\lambda e^{-\lambda t} r^{*} \hat{c}- \lambda - C^{*}(\beta)-\left\|f_u\right\|_{L^{\infty}} \right)
  \end{aligned}
\end{equation*}	
 in $\left\{(t, x, y): \xi(\tau(t), x, y) \in\left[-X^{\prime \prime}, X^{\prime}\right], t \geq 0\right\}$. Set 
 \begin{equation*}
 	 \varrho>\frac{\left\|f_u\right\|_{L^{\infty}}+\lambda+C^{*}(\beta)}{\lambda r^{*} \hat{c}},
 \end{equation*}
 then we show that $\mathcal{L} W_{\delta}^{+}>0$ in Case $3$. The proof of Lemma \ref{Lemma 4.1} is complete. 
\end{proof}

\begin{lemma}\label{Lemma 4.2}
Suppose that $V$ is the solution defined in Theorem \ref{Theorem 2.18}. Then for each $\beta \in\left(0, \beta^*\right]$, there exists a positive constant $\tilde{\alpha}_0^{+}(\beta)$ such that, for any $0<\alpha<\tilde{\alpha}_0^{+}(\beta)$ there exist positive constants $\lambda(\beta)$ and $\tilde{\varrho}(\beta, \lambda, \alpha)$ such that
\begin{equation*}
V_{\delta}^{+}(t, x, y ; T):=V(T+\tilde{\tau}, x, y)+\delta e^{-\lambda t} \times\left[U_{e_{i}}^\beta(\eta, x, y) \omega(\xi)+(1-\omega(\xi))\right]
\end{equation*}	
is a supersolution of \eqref{1.1} for $t \geq 0$ and $(x, y) \in \mathbb{R}^N$, for all $T \in \mathbb{R}$ and $\delta \in$ $\left(0,  \gamma_{\star} / 4 \right]$, where $\tilde{\tau}=\tilde{\tau}(t):=t-\tilde{\varrho}\delta e^{-\lambda t}+\tilde{\varrho} \delta$, and $\xi$, $\eta$ are evaluated at $(T+\tilde{\tau}, x, y)$, and $\beta^*$, $\lambda(\beta)$, $\eta$, $\omega$, $\gamma_{\star}$ are the same as those in Lemma \ref{Lemma 4.1}.

Besides, for any $\beta \in\left(0, \beta^*\right]$, there exists a positive constant $\hat{\alpha}_0^{+}(\delta)$ such that, for any $0<\alpha<\hat{\alpha}_0^{+}(\delta)$ there exist positive constants $\hat{\lambda}(\beta), \hat{\varrho}(\beta, \hat{\lambda}, \alpha)$ and $\delta^0(\beta, \hat{\lambda}, \hat{\varrho}, \alpha)$ such that
\begin{equation*}
V_{\delta}^{-}(t, x, y ; T):=V(T+\hat{\tau}, x, y)-\delta e^{-\hat{\lambda} t} \times\left[U_{e_{i}}^\beta(\eta, x, y) \omega(\xi)+(1-\omega(\xi))\right]
\end{equation*}	
is a subsolution of \eqref{1.1} for $t \geq 0$ and $(x, y) \in \mathbb{R}^N$, for all $T \in \mathbb{R}$ and $\delta \in\left(0, \delta^0\right]$, where $\hat{\tau}=\hat{\tau}(t):=t+\hat{\varrho} \delta e^{-\hat{\lambda} t}-\hat{\varrho} \delta$, and $\xi$, $\eta$ are evaluated at $(T+\hat{\tau}, x, y)$, and $\beta^*$, $\eta$, $\omega$, $\gamma_{\star}$ are the same as those in Lemma \ref{Lemma 4.1}.
\end{lemma}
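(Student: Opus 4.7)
The strategy is to carry out the same three-case analysis in $\xi$ as in Lemma \ref{Lemma 4.1}, but with the supersolution $\bar V$ replaced by $V(T+\cdot,\cdot)$, which is an \emph{exact} solution of \eqref{1.1} by Theorem \ref{Theorem 2.18}. The key simplification is that, writing $\tilde V(t,x,y):=V(T+\tilde\tau(t),x,y)$ and using the equation $V_\tau=\Delta_{x,y}V+f(x,y,V)$ at the shifted point, a direct chain-rule computation yields
\begin{equation*}
(\partial_t-\Delta_{x,y})\tilde V = f(\tilde V)+(\tilde\tau_t-1)V_\tau,
\end{equation*}
so that the bulk of $V$ cancels and only the small time-shift correction survives. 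For $V_\delta^+$ one has $\tilde\tau_t-1=\tilde\varrho\delta\lambda e^{-\lambda t}>0$; for $V_\delta^-$ the analogue is $\hat\tau_t-1=-\hat\varrho\delta\hat\lambda e^{-\hat\lambda t}<0$.

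For $V_\delta^+$, substitution gives
\begin{equation*}
\mathcal{L}V_\delta^+ = \tilde\varrho\delta\lambda e^{-\lambda t}\,V_\tau +(\partial_t-\Delta_{x,y})\bigl[\delta e^{-\lambda t}(U_{e_i}^\beta\omega+(1-\omega))\bigr] +f(\tilde V)-f(V_\delta^+),
\end{equation*}
which has the same structure as \eqref{4.1} but with $\bar V_\tau$ replaced by $V_\tau$. Cases $1$ ($\xi>X'$) and $2$ ($\xi<-X''$) of Lemma \ref{Lemma 4.1} then carry over unchanged, as they depend only on the sizes of $\tilde V$, $V_\delta^+$ and the combustion structure of $f$. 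In Case $3$ ($-X''\le\xi\le X'$), the estimate \eqref{4.7} is replaced by a uniform positive lower bound $V_\tau\ge r^{*}\hat c$ on the bounded transition zone, and then $\lambda(\beta)$ is taken small and $\tilde\varrho(\beta,\lambda,\alpha)$ large exactly as in Lemma \ref{Lemma 4.1}.

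For $V_\delta^-$ we need $\mathcal{L}V_\delta^-\le 0$, equivalently
\begin{equation*}
\hat\varrho\delta\hat\lambda e^{-\hat\lambda t}\,V_\tau+(\partial_t-\Delta_{x,y})\bigl[\delta e^{-\hat\lambda t}(U_{e_i}^\beta\omega+(1-\omega))\bigr]\ge f(\tilde V)-f(V_\delta^-).
\end{equation*}
In Case $1$, both $\tilde V$ and $V_\delta^-$ sit below $\theta$ (using the extension $f\equiv 0$ on $(-\infty,0)$), so the right-hand side vanishes and the Case $1$ computation of Lemma \ref{Lemma 4.1} transfers directly. In Case $2$, choosing $\delta\le\delta^0$ small enough to keep $V_\delta^-\in[1-\gamma_\star,1+\gamma_\star]$, \eqref{1.5} combined with $V_\delta^-\le\tilde V$ gives $f(\tilde V)-f(V_\delta^-)\le-(\kappa_1/2)\delta e^{-\hat\lambda t}$, which dominates the $-\hat\lambda\delta e^{-\hat\lambda t}$ contribution from $(\partial_t-\Delta_{x,y})\Phi$ as soon as $\hat\lambda\le\kappa_1/2$. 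In Case $3$, the Lipschitz control $|f(\tilde V)-f(V_\delta^-)|\le\|f_u\|_{L^\infty}\delta e^{-\hat\lambda t}$ and the bounded-perturbation estimate $|(\partial_t-\Delta_{x,y})\Phi|\le C^{*}(\beta)\delta e^{-\hat\lambda t}$ (as in Lemma \ref{Lemma 4.1}) are absorbed by the positive term $\hat\varrho\delta\hat\lambda e^{-\hat\lambda t}V_\tau\ge r^{*}\hat c\hat\varrho\delta\hat\lambda e^{-\hat\lambda t}$ once $\hat\varrho$ is taken large enough.

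The main obstacle is establishing the quantitative lower bound $V_\tau\ge r^{*}\hat c$ uniformly in $(t,x,y)$ on the transition zone $\{\xi(T+\tilde\tau,x,y)\in[-X'',X']\}$, independent of $T\in\mathbb R$. Theorem \ref{Theorem 2.18} supplies only the pointwise strict inequality $V_t>0$ via the strong maximum principle. To upgrade this to a uniform quantitative bound, the plan is to combine the squeeze $\underline V\le V\le\bar V$ from \eqref{3.44}, together with Theorem \ref{Theorem 2.12} and \eqref{3.6}, to keep $V$ bounded away from $0$ and $1$ on the transition zone, and then apply interior parabolic Schauder estimates and a parabolic Harnack inequality to the linear equation satisfied by $V_t$ (whose coefficients are bounded and $L_i$-periodic in $z$) to extract a positive uniform infimum of $V_t$ there.
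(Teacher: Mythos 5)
Your reduction of the problem to the Lemma \ref{Lemma 4.1} computation (using that $V$ is an exact solution, so only the time-shift correction $(\tilde\tau_t-1)V_\tau$ survives) is exactly the paper's route, and you correctly identify the crux: a lower bound $V_\tau\geq \mathrm{const}>0$, uniform in $T$, on the transition zone. However, your proposed derivation of that bound has a genuine gap. First, the squeeze $\underline V\le V\le\bar V$ from \eqref{3.44} does \emph{not} keep $V$ bounded away from $0$ on $\{|\eta|\le q\}$ near the ridges: there the smoothed interface $\varphi(\alpha x)/\alpha$ sits a height of order $1/\alpha$ above $\psi(x)$, so at points with $|\eta|\le q$ one has $\underline V$ arbitrarily small while $\bar V$ is of order one, and the sandwich gives no nondegeneracy for $V$. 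Second, and more fundamentally, even granting such nondegeneracy, "Schauder + Harnack applied to the equation for $V_t$" cannot by itself produce a uniform positive infimum of $V_t$ over the unbounded moving transition zone: Harnack only compares values of $V_t$ at nearby space-time points, and for a combustion nonlinearity \eqref{1.3} every constant in $[0,\theta]$ (and many intermediate states) is a steady state, so "$V$ takes intermediate values" does not force $V_t$ to be bounded below. What is needed is a quantitative input forcing $V$ to increase by a definite amount over a \emph{fixed} time interval at a point within bounded distance of any point of the zone. The paper supplies exactly this in its Step 1: assuming $V_t(t_n,x_n,y_n)\to0$ with $|\eta|\le q$, it shifts laterally by a vector $\ell$ of size $\iota(\alpha)$ chosen (via \eqref{3.6} with $\varepsilon=r_1/16$) so that $|\bar V-V|\le r_1/4$ at the shifted point, notes that the shift leaves $\eta$ invariant so \eqref{4.7} still gives $\partial_t\bar V>r_1$ there, hence $V$ must rise by at least $r_1/2$ over unit time at the shifted point, while parabolic estimates transport $V_t\to0$ from $(t_n,x_n,y_n)$ to the (boundedly) shifted point — a contradiction. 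This is why the resulting constant $r=r(\alpha,q)$, and hence $\tilde\varrho$ and $\hat\varrho$, depend on $\alpha$; your sketch omits this mechanism entirely, so the key estimate is unproved as written.

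Two smaller omissions: in Case 2 ($\xi<-X''$) you assert the Lemma \ref{Lemma 4.1} argument "carries over unchanged'' resp.\ that smallness of $\delta$ keeps $V_\delta^-\in[1-\gamma_\star,1+\gamma_\star]$, but the lower bound $V\ge 1-\gamma_\star$ in that region is not automatic for the exact solution $V$ and requires the geometric argument relating $\xi\ll0$ to $d((x,y),\mathcal{R}+\hat c te_0)\to+\infty$ together with \eqref{3.6} and $V\ge\underline V$, as the paper carries out; and in the subsolution Cases 1 and 3 the shift-correction terms now enter with the unfavorable sign and are quadratic in $\delta$, which is precisely why the statement contains the extra smallness threshold $\delta^0$ (e.g.\ $\delta\le 1/\hat\varrho$ and $\delta\le\hat r/(2C_2^{*}(\beta))$) that your proposal does not account for.
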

\begin{proof}
{\it Step 1: we prove that $V_t \geq r$ in $\{(t, x, y):|\eta| \leq q\}$, where $r=r(\alpha, q)>0$ is a constant.}
Suppose by contradiction that there exists a sequence of points $\left\{\left(t_n, x_n, y_n\right)\right\}_{n \in \mathbb{N}}$ satisfying
\begin{equation}\label{4.8}
V_t\left(t_n, x_n, y_n\right) \rightarrow 0 \text { as } n \rightarrow \infty \text { and }\left|\eta\left(t_n, x_n, y_n\right)\right| \leq q \text { for all } n.
\end{equation}	
It follows from \eqref{4.7} that there exists a positive number $r_1=r_1(q)$ independent of $\alpha$ and $\varepsilon$ such that
\begin{equation}\label{4.9}
\partial_t \bar{V}>r_1 \text { in }\left\{(t, x, y):|\eta| \leq q+\hat{c}\right\}.
\end{equation}	
Set $\varepsilon=r_1 / 16$ and fix arbitrary $0<\alpha<\alpha_0^{+}(\beta, \varepsilon)=: \bar{\alpha}_0^{+}(\beta, q)$, where $\alpha_0^{+}(\beta, \varepsilon)$ is given in Lemma \ref{Lemma 3.2}. By virtue of \eqref{3.6}, we know that there exists a constant $\iota=\iota(\alpha)>0$ such that
\begin{equation}\label{4.10}
\left|\bar{V}(t, x, y)-V(t, x, y)\right| \leq \frac{r_1}{4}
\end{equation}	
for $\max_{1\leq i \leq N-1}|x_i| \geq \iota$ and $d\left((x,y), 
\mathcal{R}+\hat{c} t e_0\right) \geq \iota$. Without loss of generality, we suppose that $\{x_n\}_{n \in \mathbb{N}}$ satisfy $x_{n_{1}}\leq 0$ for each $n \in \mathbb{N}$, where $x_n=(x_{n_{1}}, x_{n_{2}}, \ldots, x_{n_{N-1}})$. Define
\begin{equation*}
\bar{y}_n:=y_n+\frac{\varphi\left(\alpha\left(x_n-\ell\right)\right)}{\alpha}-\frac{\varphi\left(\alpha
 x_n\right)}{\alpha} ,
\end{equation*}	
where $\ell:=(\ell_1,\ell_2,\ldots,\ell_{N-1})$, $\ell_k\geq 0$ $(k\in 
\{1,2,\ldots,N-1\})$ and  $\ell_1=\max_{1\leq k \leq N-1}\ell_{k}=\iota$. Then
\begin{equation*}
	\left|\bar{y}_n-y_n\right| \leq 
	\left|\frac{\varphi\left(\alpha\left(x_n-\ell\right)\right)}{\alpha}-\frac{\varphi\left(\alpha 
	x_n\right)}{\alpha}\right| \leq \iota (N-1) |\nabla \varphi | \leq \iota (N-1) \left(C_3+\max_{1\le 
	i\le n} \{|\nu_i \cot \theta_i|\}\right).
\end{equation*}	
It is easy to check that $\eta\left(t_n, x_n-\ell, \bar{y}_n\right) =\eta\left(t_n, x_n, y_n\right)$, 
which implies
\begin{equation}\label{4.11}
\left|\eta\left(t_n-\tau, x_n-\ell, \bar{y}_n\right)\right| \leq q+\hat{c} \; \text{ for all } \tau \in[0,1] 
\text{ and } n \in \mathbb{N}. 
\end{equation}	
Since $V$ is a solution of \eqref{1.1}, it can solve the equation 
\begin{equation}\label{4.12}
\left(\partial_t-\Delta_{x, y}\right) V_t-f_u(x, y, V) V_t=0
\end{equation}	
in $\mathbb{R} \times \mathbb{R}^N$, where $f_u(x, y, V)$ is bounded by \eqref{1.2}. By $V_{t}(t,x,y)>0$, \eqref{4.8} and \eqref{4.12}, it follows from parabolic estimates that
\begin{equation*}
V_t\left(t_n-\tau, x_n-\ell, \bar{y}_n\right) \rightarrow 0 \text { as } n \rightarrow \infty
\end{equation*}	
uniformly for $\tau \in[0,1]$. Then one has
\begin{equation*}
V\left(t_n, x_n-\ell, \bar{y}_n\right)-V\left(t_n-1, x_n-\ell, \bar{y}_n\right) \rightarrow 0 \text { as 
} n \rightarrow \infty.
\end{equation*}	
By virtue of \eqref{4.9}-\eqref{4.11}, we know
\begin{equation*}
	\begin{aligned}
    V\left(t_n, x_n-\ell, \bar{y}_n\right)\geq \bar{V}\left(t_n, x_n-\ell, 
    \bar{y}_n\right)-\frac{r_{1}}{4} \; \text{and} \;
    -V\left(t_n-1, x_n-\ell, \bar{y}_n\right)\geq -\bar{V}\left(t_n-1, x_n-\ell, 
    \bar{y}_n\right)-\frac{r_{1}}{4},
	\end{aligned}
\end{equation*}
and then it holds that
\begin{equation*}
	\begin{aligned}
		&V\left(t_n, x_n-\ell, \bar{y}_n\right)
		-V\left(t_n-1, x_n-\ell, \bar{y}_n\right)\\
		&\geq\bar{V}\left(t_n, x_n-\ell, \bar{y}_n\right)-\frac{r_{1}}{4}-\bar{V}\left(t_n-1, 
		x_n-\ell, \bar{y}_n\right)-\frac{r_{1}}{4}>r_{1}-\frac{r_{1}}{2}=\frac{r_{1}}{2}, \quad 
		\forall n \in \mathbb{N}.
	\end{aligned}
\end{equation*}
So we get a contradiction.

{\it Step 2: we prove that $V_{\delta}^{+}(t, x, y ; T)$ is a supersolution.}
The approach is to find two numbers $X^{\prime}>1$ and $X^{\prime \prime}>1$ and prove the inequality
	\begin{equation*}
		\mathcal{L} V_{\delta}^{+}:=\partial_t	V_{\delta}^{+}-\Delta_{x, y} V_{\delta}^{+}-f\left(x, y, V_{\delta}^{+}\right) \geq 0, \quad \forall(t, x, y) \in[0,+\infty) \times \mathbb{R}^N
	\end{equation*}
in three cases $\xi(\tilde{\tau}(t)+T, x, y)>X^{\prime}, \xi(\tilde{\tau}(t)+T, x, y)<-X^{\prime \prime}$, and $\xi(\tilde{\tau}(t)+T, x, y) \in\left[-X^{\prime \prime}, X^{\prime}\right]$, respectively. Since $V$ is a solution of \eqref{1.1}, it holds
	\begin{equation*}
		\begin{aligned}
			\mathcal{L} V_{\delta}^{+} \geq & \tilde{\varrho} \delta \lambda e^{-\lambda t} V_{\tilde{\tau}} +f\left(x, y, V\right)-f\left(x, y, V_{\delta}^{+}\right) \\
			& +\left(\partial_t-\Delta_{x, y}\right)\left(\delta e^{-\lambda t} \times\left[U_{e_{i}}^\beta(\eta, x, y) \omega(\xi)+(1-\omega(\xi))\right]\right)
		\end{aligned}
	\end{equation*}
in $\mathbb{R} \times \mathbb{R}^N$, where $\xi, \eta, V$ and all of its derivatives are evaluated at $(\tilde{\tau}(t)+T, x, y)$. 

\textbf{Case 1:} $\xi(\tilde{\tau}(t)+T, x, y)>X^{\prime}$ and $t \geq 0$, where $X^{\prime}>1$ is to be chosen.
In this case, $\omega(\xi) \equiv 1$. Thus,
\begin{equation*}
	\mathcal{L} V_{\delta}^{+} \geq \tilde{\varrho}\delta \lambda e^{-\lambda t} V_{\tilde{\tau}}+\left(\partial_t-\Delta_{x, y}\right)\left(\delta e^{-\lambda t} U_{e_{i}}^\beta(\eta)\right)+f\left(x, y, V\right)-f\left(x, y, V_{\delta}^{+}\right).
\end{equation*}
With similar arguments in Case $1$ of the proof in Lemma \ref{Lemma 4.1}, we can find a constant $X^{\prime}$ such that $V, V_{\delta}^{+}<\theta$. Let $\lambda\in\left(0,\beta\frac{\hat{c}c_{e_i}}{8 }\right)$, and $\beta^{*}$ and $\alpha_0^{+}(\beta, \varepsilon)$ are given in Lemma \ref{Lemma 3.2}, then we prove $\mathcal{L} V_{\delta}^{+} >0$ in Case $1$.
	
\textbf{Case 2:} $\xi(\tilde{\tau}(t)+T, x, y)<-X^{\prime\prime}$ and $t \geq 0$, where $X^{\prime\prime}>1$ is to be chosen.
In this case, $\omega(\xi) \equiv 0$. Then we have
\begin{equation*}
	\mathcal{L} V_{\delta}^{+} \geq \tilde{\varrho}\delta \lambda e^{-\lambda t} V_{\tilde{\tau}}-\lambda\delta e^{- \lambda t} +f\left(x, y, V\right)-f\left(x, y, V_{\delta}^{+}\right).
\end{equation*}
Recall that $\delta \in\left(0,  \gamma_{\star} / 4\right]$ and  $\alpha<\alpha_0^{+}\left(\beta, \varepsilon\right)$  in  $\bar{V}$  where  $\varepsilon<\gamma_{\star} / 3$, by definitions of $V$ and $V_{\delta}^{+}$, there exists a sufficiently large number 
$X^{\prime\prime}>1$ such that $V_{\delta}^{+}, V\in [1-\gamma_{\star},1+\gamma_{\star}]$ when $\xi<-X^{\prime\prime}$ and $t \geq 0$. 

Since $-\psi(x)=-\max _{1 \leq i \leq n} \{\psi_i(x)\}=\min_{1 \leq i \leq n}\{x \cdot \nu_i \cot \theta_i\}$, Theorem \ref{Theorem 2.16} and the definition of $\widehat{Q}_i$, we can get $\varphi(x)\geq\psi(x)$ for $x\in\mathbb{R}^{N-1}$ and 
\begin{equation*}
	\begin{aligned}
		-X^{\prime\prime}>\xi(\tilde{\tau}(t)+T, x, y)&\geq y-\hat{c} (\tilde{\tau}(t)+T)-\varphi(\alpha x) / \alpha \\
		&\geq y-\hat{c}(\tilde{\tau}(t)+T) -\psi(x)-\widetilde{C}= y-\hat{c} (\tilde{\tau}(t)+T) +\min_{1 \leq i \leq n}\{x \cdot \nu_i \cot \theta_i\} -\widetilde{C},
	\end{aligned}
\end{equation*}
where $\widetilde{C}=\frac{\sup_{\mathbb{R}^{N-1}}|\varphi(\alpha x)-\psi(\alpha x)|}{\alpha}$. Then it holds
\begin{equation*}
	-X^{\prime\prime}+\widetilde{C}> y-\hat{c} (\tilde{\tau}(t)+T) +\min_{1 \leq i \leq n}\{x \cdot \nu_i \cot \theta_i\}.
\end{equation*}
Assume that $y-\hat{c} (\tilde{\tau}(t)+T) +x \cdot \nu_j \cot \theta_j=\min_{1 \leq i \leq n}\{y-\hat{c} (\tilde{\tau}(t)+T) +x \cdot \nu_i \cot \theta_i\}$, for some constant $1 \leq j\leq n$. Let $X^{\prime\prime}\geq X^{*} + \widetilde{C}$ and $X^{*}>0$, then we obtain $-X^{\prime\prime}+ \widetilde{C} \leq -X^{*}<0 $ and
\begin{align*}
	&\min _{1 \leq i \leq n}\left\{x \cdot \nu_i \cos \theta_i+(y-\hat{c} (\tilde{\tau}(t)+T)) \sin \theta_i\right\}\\
	&\leq \left(y-\hat{c} (\tilde{\tau}(t)+T) +x \cdot \nu_j \cot \theta_j\right) \sin \theta_j < -X^{*} \cdot \sin \theta_j \leq -X^{*} \cdot \min _{1 \leq i \leq n}\{\sin \theta_i\}.
\end{align*}
Since $\theta_i\in(0,\pi/2]$, when $X^{*}>0$ is large enough, one has $ d\left((x, y), \mathcal{R}+\hat{c}(\tilde{\tau}(t)+T) e_0\right) \rightarrow+\infty$ and $\left|\bar{V}(\tilde{\tau}(t)+T, x, y)-\underline{V}(\tilde{\tau}(t)+T, x, y)\right|\leq 2\varepsilon$ by \eqref{3.6}. Then one gets 
\begin{equation*}
 V_{\delta}^{+}(t, x, y ; T)\geq V(T+\tilde{\tau}, x, y)\geq \underline{V}(T+\tilde{\tau}, x, y)\geq \bar{V}(\tilde{\tau}(t)+T, x, y)-2\varepsilon>U_{e(x)}(\xi, x, y)-\frac{ 2\gamma_{\star}}{3},
\end{equation*}
where $\xi$ is evaluated at $(T+\tilde{\tau}, x, y)$. Therefore, there exists a large enough constant $X^{**}>0$ such that $V_{\delta}^{+}(t, x, y ; T)\geq V(T+\tilde{\tau}, x, y)\geq 1-\gamma_{\star}$. Moreover, 
\begin{equation*}
	\begin{aligned}
		V(T+\tilde{\tau}, x, y)\leq V_{\delta}^{+}(t, x, y ; T)&\leq\bar{V}(\tilde{\tau}(t)+T, x, y)+\delta e^{-\lambda t}\\
		&\leq U_{e(x)}(\xi, x, y)+\varepsilon+\delta e^{-\lambda t}\leq U_{e(x)}(\xi, x, y)+\frac{ \gamma_{\star}}{3}+\frac{ \gamma_{\star}}{4}<1+\gamma_{\star},
	\end{aligned}
\end{equation*}
where $\xi$ is evaluated at $(T+\tilde{\tau}, x, y)$. Take $X^{\prime\prime}:=\max\{X^{*} + \widetilde{C}, X^{**}\}$, then we have
\begin{equation*}
f\left(x, y, V\right)-f\left(x, y, V_{\delta}^{+}\right)\geq -\frac{\kappa_1}{2} \cdot (-\delta e^{-\lambda t})=\frac{\kappa_1}{2}\delta e^{-\lambda t}
\end{equation*}
and 
\begin{equation*}
	\mathcal{L} V_{\delta}^{+} \geq -\lambda\delta e^{- \lambda t} +\frac{\kappa_1}{2}\delta e^{-\lambda t}.
\end{equation*}
Let $\lambda\in\left(0,\frac{\kappa_1}{2}\right)$, then we prove $\mathcal{L} V_{\delta}^{+} >0$ in Case $2$.
	
\textbf{Case 3:} $-X^{\prime\prime}<\xi(\tilde{\tau}(t)+T, x, y)<X^{\prime}$ and $t \geq 0$.
By Step $1$, there exists a positive constant $\tilde{\alpha}_1^{+}(\beta):=\bar{\alpha}_0^{+}\left(\beta, \max \left\{X_{*}^{\prime}, X_{*}^{\prime \prime}\right\}\right)$ such that, for any $0<\alpha<\tilde{\alpha}_1^{+}(\beta)$ there exists $\bar{r}=\bar{r}(\alpha)$ such that $V_{\tilde{\tau}}>\bar{r}$ in $\left\{(t, x, y):-X_{*}^{\prime \prime} \leq \eta(T+\tilde{\tau}(t), x, y) \leq X_{*}^{\prime}\right\}$, where $X_{*}^{\prime}:= X^{\prime}\sqrt{(C_3+\max_{1\le i\le n} \{|\nu_i \cot \theta_i|\})^2+1}$ and $X_{*}^{\prime\prime}:= X^{\prime\prime}\sqrt{(C_3+\max_{1\le i\le n} \{|\nu_i \cot \theta_i|\})^2+1}$. With similar arguments as those in Case $3$ of the proof of Lemma \ref{Lemma 4.1}, we can obtain 
\begin{equation*}
	\begin{aligned}
		\mathcal{L} V_{\delta}^{+} &\geq \tilde{\varrho} \delta \lambda e^{-\lambda t} V_{\tilde{\tau}}-\delta\lambda e^{- \lambda t}-\delta e^{- \lambda t} C_{0}^{*}(\beta) +f\left(x, y, V\right)-f\left(x, y, V_{\delta}^{+}\right)\\
		&\geq\delta e^{-\lambda t} \left(\tilde{\varrho} \lambda \bar{r} -\lambda - C_{0}^{*}(\beta) -\left\|f_u\right\|_{L^{\infty}} \right)
	\end{aligned}
\end{equation*}
in $\left\{(t, x, y):-X^{\prime \prime} \leq \xi(T+\tilde{\tau}(t), x, y) \leq X^{\prime}, t \geq 0\right\}$, where $C_{0}^{*}(\beta)$ is a positive constant. Set 
\begin{equation*}
	\tilde{\varrho}=\tilde{\varrho}(\beta, \lambda, \alpha)>\frac{\left\|f_u\right\|_{L^{\infty}}+\lambda+C_{0}^{*}(\beta)}{\lambda \bar{r}},
\end{equation*}
and then $\mathcal{L} V_{\delta}^{+}>0$ in $\left\{(t, x, y): \xi(T+\tilde{\tau}(t), x, y) \in\left[-X^{\prime \prime}, X^{\prime}\right], t \geq 0\right\}$. In conclusion, let $\tilde{\alpha}_0^{+}(\beta)<\min\{\tilde{\alpha}_1^{+}(\beta),\alpha_0^{+}(\beta, \varepsilon)\}$, where $\varepsilon<\gamma_{\star} / 3$. Then $V_{\delta}^{+}$ is a supersolution of \eqref{1.1} for $t \geq 0$ and $(x, y) \in \mathbb{R}^N$, for all $T \in \mathbb{R}$ and $\delta \in\left(0, \gamma_{\star} / 4\right]$.
	
{\it Step 3: we prove that $V_{\delta}^{-}(t, x, y ; T)$ is a subsolution.}
The approach is to find two numbers $\hat{X}^{\prime}>1$ and $\hat{X}^{\prime \prime}>1$ and consider the inequality
\begin{equation*}
	\mathcal{L} V_{\delta}^{-}:=\partial_t	V_{\delta}^{-}-\Delta_{x, y} V_{\delta}^{-}-f\left(x, y, V_{\delta}^{-}\right) \leq 0, \quad \forall(t, x, y) \in[0,+\infty) \times \mathbb{R}^N
\end{equation*}
in three cases $\xi(\hat{\tau}(t)+T, x, y)>\hat{X}^{\prime}, \xi(\hat{\tau}(t)+T, x, y)<-\hat{X}^{\prime \prime}$ and $\xi(\hat{\tau}(t)+T, x, y) \in\left[-\hat{X}^{\prime \prime}, \hat{X}^{\prime}\right]$, respectively. Since $V$ is a solution of \eqref{1.1}, one has
\begin{equation*}
	\begin{aligned}
		\mathcal{L} V_{\delta}^{-} \leq & -\hat{\varrho} \delta \hat{\lambda} e^{-\hat{\lambda} t} V_{\hat{\tau}} +f\left(x, y, V\right)-f\left(x, y, V_{\delta}^{-}\right) \\
		& +\left(\partial_t-\Delta_{x, y}\right)\left(-\delta e^{-\hat{\lambda} t} \times\left[U_{e_{i}}^\beta(\eta, x, y) \omega(\xi)+(1-\omega(\xi))\right]\right),
	\end{aligned}
\end{equation*}
in $\mathbb{R} \times \mathbb{R}^N$, where $\xi$, $\eta$, $V$ and all of its derivatives take value at $(\hat{\tau}(t)+T, x, y)$.

\textbf{Case 1:} $\xi(\hat{\tau}(t)+T, x, y)>\hat{X}^{\prime}$ and $t \geq 0$, where $\hat{X}^{\prime}>1$ is to be chosen.
In this case, $\omega(\xi) \equiv 1$. Thus,
\begin{equation*}
	\mathcal{L} V_{\delta}^{-} \leq -\hat{\varrho}\delta \hat{\lambda} e^{-\hat{\lambda} t} V_{\hat{\tau}}+\left(\partial_t-\Delta_{x, y}\right)\left(-\delta e^{-\hat{\lambda} t} U_{e_{i}}^\beta(\eta)\right)+f\left(x, y, V\right)-f\left(x, y, V_{\delta}^{-}\right).
\end{equation*}
Then we have 
\begin{align}\label{4.13}
	&\left(\partial_t-\Delta_{x, y}\right)\left(-\delta e^{-\hat{\lambda} t} U_{e_{i}}^\beta(\eta)\right)\nonumber\\
	&=\delta\hat{\lambda} e^{-\hat{\lambda} t}U_{e_{i}}^\beta-\delta e^{- \hat{\lambda} t}\beta U_{e_{i}}^{\beta-1} \partial_{\eta}U_{e_{i}}(-\hat{c})(1-\hat{\varrho} \delta \hat{\lambda} e^{- \hat{\lambda} t})\nonumber\\
	&\quad+\delta e^{- \hat{\lambda} t}\beta(\beta-1) U_{e_{i}}^{\beta-2}\left(\sum_{k=1}^{N-1}(\partial_{x_k}U_{e_{i}})^2 + (\partial_{y}U_{e_{i}})^2\right)\nonumber\\
	&\quad+2\delta e^{-\hat{\lambda} t}\beta(\beta-1) U_{e_{i}}^{\beta-2}\partial_{\eta}U_{e_{i}}\left(\sum_{k=1}^{N-1}\partial_{x_k}U_{e_{i}}\partial_{x_k}\eta + \partial_{y} U_{e_{i}}\partial_{y}\eta\right)\nonumber\\
	&\quad+\delta e^{- \hat{\lambda} t}\beta U_{e_{i}}^{\beta-1}\left(\sum_{k=1}^{N-1}\partial_{x_k x_k}U_{e_{i}} + \partial_{y y}U_{e_{i}}\right)+2\delta e^{- \hat{\lambda} t}\beta U_{e_{i}}^{\beta-1}\left(\sum_{k=1}^{N-1}\partial_{x_k}\partial_{\eta}U_{e_{i}}\partial_{x_k}\eta + \partial_{y}\partial_{\eta} U_{e_{i}}\partial_{y}\eta\right)\nonumber\\
	&\quad+\delta e^{-\hat{\lambda} t}\beta(\beta-1) U_{e_{i}}^{\beta-2}(\partial_{\eta} U_{e_{i}})^2\left(\sum_{k=1}^{N-1}(\partial_{x_k}\eta)^2 + (\partial_{y}\eta)^2\right)\nonumber\\
	&\quad+\delta e^{-\hat{\lambda} t}\beta U_{e_{i}}^{\beta-1}\partial_{\eta \eta} U_{e_{i}}\left(\sum_{k=1}^{N-1}(\partial_{x_k}\eta)^2 + (\partial_{y}\eta)^2\right)+\delta e^{- \hat{\lambda} t}\beta U_{e_{i}}^{\beta-1}\partial_{\eta} U_{e_{i}}\left(\sum_{k=1}^{N-1}\partial_{x_k x_k}\eta + \partial_{y y}\eta\right)\nonumber\\
	&=\delta\hat{\lambda} e^{-\hat{\lambda} t}U_{e_{i}}^\beta + \delta e^{-\hat{\lambda} t}U_{e_{i}}^\beta \left(\alpha\beta \frac{\partial_{\eta} U_{e_{i}}}{U_{e_{i}}}\sum_{k=1}^{N-1}\partial_{\zeta_k \zeta_k}\varphi(\alpha x)\right)\nonumber\\
	&\quad+\delta e^{-\hat{\lambda} t}U_{e_{i}}^\beta \beta\left[\frac{\Delta_{x, y}U_{e_{i}}+2\nabla_{x, y}\partial_{\eta}U_{e_{i}}(-\nabla_{\zeta}\varphi(\alpha x),1) }{U_{e_{i}}}\right.	\nonumber\\
	&\qquad +\beta \frac{\sum_{k=1}^{N-1}\left(\partial_{x_k} U_{e_i}-\partial_{\zeta_k}\varphi(\alpha x)\partial_{\eta} U_{e_i}\right)^2+\left(\partial_y U_{e_i}+\partial_{\eta} U_{e_i}\right)^2}{U_{e_i}^2} \nonumber\\
	& \qquad-\frac{\sum_{k=1}^{N-1}\left(\partial_{x_k} U_{e_i}-\partial_{\zeta_k}\varphi(\alpha x)\partial_{\eta} U_{e_i}\right)^2+\left(\partial_y U_{e_i}+\partial_{\eta} U_{e_i}\right)^2}{U_{e_i}^2} \nonumber\\
	& \qquad\left.+\frac{\partial_{\eta \eta} U_{e_i}}{U_{e_i}}\left(\sum_{k=1}^{N-1}(\partial_{\zeta_k}\varphi (\alpha x))^2+1\right)+\hat{c} \frac{\partial_{\eta} U_{e_i}}{U_{e_i}}\right]\nonumber\\
	&\quad-\delta e^{- \hat{\lambda} t}\beta U_{e_{i}}^{\beta-1} \partial_{\eta}U_{e_{i}}\hat{c}\hat{\varrho} \delta \hat{\lambda} e^{- \hat{\lambda} t}\nonumber\\
	&=:\delta\hat{\lambda} e^{-\hat{\lambda} t}U_{e_{i}}^\beta+N_{1}+N_{2}+N_{3}.
\end{align}
Recall that \eqref{3.16}, \eqref{3.17} and	$\beta_1^{*}=\frac{\hat{c}}{2 c_{e_i}\left((C_3+\max_{1\le i\le n} \{|\nu_i \cot \theta_i|\})^2+1\right)}$,
then for any $\beta\in(0,\beta_1^{*}]$ there exists a large enough number $\hat{X}_1^{\prime}>1$ such that
\begin{equation}\label{4.14}
	\begin{aligned}
	 N_{2}&\leq \delta e^{-\hat{\lambda} t}U_{e_{i}}^\beta \beta\left[\frac{\hat{c}}{2 c_{e_i}\left((C_3+\max_{1\le i\le n} \{|\nu_i \cot \theta_i|\})^2+1\right)} c_{e_{i}}^2 \left(\sum_{k=1}^{N-1}(\partial_{\zeta_k}\varphi(\alpha x))^2 +1\right) - \hat{c} c_{e_{i}} \right]\\
	 &\leq \delta e^{-\hat{\lambda} t}U_{e_{i}}^\beta \beta\left(\frac{\hat{c} c_{e_{i}}}{2}- \hat{c} c_{e_{i}}\right)< \delta e^{-\hat{\lambda} t}U_{e_{i}}^\beta \beta\left(-\frac{\hat{c} c_{e_{i}}}{4}\right).
	\end{aligned}
\end{equation}
Thus, for arbitrary $0<\alpha \leq \alpha_0^{+}(\beta, \varepsilon) \leq \alpha_1^{+}(\beta)$ ($\alpha_0^{+}(\beta, \varepsilon),\alpha_1^{+}(\beta)$ are given in Lemma \ref{Lemma 3.2} and Lemma \ref{Lemma 4.1}), it follows from \eqref{4.13} and \eqref{4.14} that
\begin{equation}\label{4.15}
	N_1 + N_2<- \delta e^{- \hat{\lambda} t} U_{e_{i}}^{\beta}\times \beta\frac{\hat{c}c_{e_i}}{8 }.
\end{equation}
It follows from \eqref{4.13} and \eqref{4.15} that 
\begin{equation*}
    \mathcal{L} V_{\delta}^{-} \leq -\hat{\varrho}\delta \hat{\lambda} e^{-\hat{\lambda} t} V_{\hat{\tau}} +\delta\hat{\lambda} e^{-\hat{\lambda} t}U_{e_{i}}^\beta- \delta e^{- \hat{\lambda} t} U_{e_{i}}^{\beta}\times \beta\frac{\hat{c}c_{e_i}}{8 }+N_{3}+f\left(x, y, V\right)-f\left(x, y, V_{\delta}^{-}\right).
\end{equation*}
By definitions of $V$ and $V_{\delta}^{-}$, we know
\begin{equation*}
	 V_{\delta}^{-}(\hat{\tau}, x, y; T) \leq V(T+\hat{\tau}, x, y)\leq \bar{V}(T+\hat{\tau}, x, y) \leq U_{e(x)}(\xi(T+\hat{\tau}, x, y), x, y) +\varepsilon.
\end{equation*}
Recall that $\alpha<\alpha_0^{+}\left(\beta, \varepsilon\right)$  in $\bar{V}$ where $\varepsilon<\gamma_{\star} / 3$, there exists a sufficiently large number $\hat{X}_2^{\prime}>1$ such that 
\begin{equation*}
	f\left(x, y, V\right)-f\left(x, y, V_{\delta}^{-}\right)= 0,\quad\forall(\xi, x, y) \in\left(X_2^{\prime},+\infty\right) \times \mathbb{R}^N.
\end{equation*}
Take $\hat{X}^{\prime}=\max\{\hat{X}_1^{\prime},\hat{X}_2^{\prime}\}$ and then
\begin{equation*}
	\mathcal{L} V_{\delta}^{-} \leq \delta\hat{\lambda} e^{-\hat{\lambda} t}U_{e_{i}}^\beta- \delta e^{- \lambda t} U_{e_{i}}^{\beta}\times \beta\frac{\hat{c}c_{e_i}}{8 }+N_{3}\leq \delta e^{-\hat{\lambda} t}U_{e_{i}}^\beta\left[\hat{\lambda}-\beta\frac{\hat{c}c_{e_i}}{8 }-\frac{\partial_{\eta}U_{e_{i}}}{U_{e_{i}}}\beta\hat{c}\hat{\varrho}\delta\hat{\lambda}\right],
\end{equation*}
in $\left\{(t, x, y): \xi(T+\tilde{\tau}(t), x, y) \in \left(\hat{X}^{\prime}, +\infty\right), t \geq 0\right\}$. Since $\frac{\partial_{\eta}U_{e_{i}}}{U_{e_{i}}}$ is bounded for $\xi(\hat{\tau}(t)+T, x, y)>\hat{X}^{\prime}$ and \eqref{3.16}, if $\delta<\delta_{1}^0:=\frac{1}{\hat{\varrho}}$, then for any
\begin{equation*}
	0<\hat{\lambda}<\hat{\lambda}(\beta):=\frac{\beta \hat{c} c_{e_i}}{8\left(1+\bar{K} e^{-\frac{3 \kappa}{4}X^{\prime} }\beta\hat{c}\right)},
\end{equation*}
one has $\mathcal{L} V_{\delta}^{-} <0$ in Case $1$.

\textbf{Case 2:} $\xi(\hat{\tau}(t)+T, x, y)<-\hat{X}^{\prime\prime}$ and $t \geq 0$, where $\hat{X}^{\prime\prime}>1$ is to be chosen.
In this case, $\omega(\xi) \equiv 0$. Then we obtain
\begin{equation*}
	\begin{aligned}
	\mathcal{L} V_{\delta}^{-} &\leq -\hat{\varrho}\delta \hat{\lambda} e^{-\hat{\lambda} t} V_{\hat{\tau}}+\hat{\lambda}\delta e^{- \hat{\lambda} t} +f\left(x, y, V\right)-f\left(x, y, V_{\delta}^{-}\right)\\
	&\leq \hat{\lambda}\delta e^{- \hat{\lambda} t} +f\left(x, y, V\right)-f\left(x, y, V_{\delta}^{-}\right).
	\end{aligned}
\end{equation*}
Similar to the Case $2$ of Step $2$, there exists a large enough constant $\hat{X}^{*}>0$ such that 
\begin{equation*}
	d\left((x, y), \mathcal{R}+\hat{c}(\hat{\tau}(t)+T) e_0\right) \rightarrow+\infty
\end{equation*}
 and $\left|\bar{V}(\hat{\tau}(t)+T, x, y)-\underline{V}(\hat{\tau}(t)+T, x, y)\right|\leq 2\varepsilon$ by \eqref{3.6}. Let $\delta<\delta_{2}^{0}:=\gamma_{\star} / 4$ and recall that $\varepsilon<\gamma_{\star} / 3$. There exists a constant $\hat{X}^{**}>0$ such that
\begin{equation*}
	\begin{aligned}
    V \geq V_{\delta}^{-}&= V(T+\hat{\tau}, x, y)-\delta e^{-\hat{\lambda} t} \geq \underline{V}(\hat{\tau}(t)+T, x, y)\geq \bar{V}(\hat{\tau}(t)+T, x, y)-2\varepsilon-\delta e^{-\hat{\lambda} t}\\
    &\geq U_{e(x)}(\xi, x, y)-2\varepsilon-\delta e^{-\hat{\lambda} t}> U_{e(x)}(\xi, x, y) - \frac{2\gamma_{\star}}{3}-\frac{\gamma_{\star}}{4}\geq U_{e(x)}(\xi, x, y) - \frac{11}{12}\gamma_{\star}\geq 1-\gamma_{\star}
	\end{aligned}
\end{equation*}
and 
\begin{equation*}
	V_{\delta}^{-}(\hat{\tau}, x, y;T)\leq V(T+\hat{\tau}, x, y) <1,
\end{equation*}
where $\xi$ is evaluated at $(T+\hat{\tau}, x, y)$. Take $\hat{X}^{\prime\prime}:=\max\{\hat{X}^{*} + \widetilde{C}, \hat{X}^{**}\}$, then one has
\begin{equation*}
	f\left(x, y, V\right)-f\left(x, y, V_{\delta}^{-}\right)\leq -\frac{\kappa_1}{2} \delta e^{-\hat{\lambda} t} \;\text{ and }\; \mathcal{L} V_{\delta}^{-} \leq \hat{\lambda}\delta e^{-\hat{\lambda} t} -\frac{\kappa_1}{2}\delta e^{-\hat{\lambda} t}.
\end{equation*}
Let $\hat{\lambda}\in\left(0,\frac{\kappa_1}{2}\right)$, then we prove $\mathcal{L} V_{\delta}^{-} <0$ in Case $2$.

\textbf{Case 3:} $-\hat{X}^{\prime\prime}<\xi(\hat{\tau}(t)+T, x, y)<\hat{X}^{\prime}$ and $t \geq 0$.
From Step $1$, there exists a constant $\hat{\alpha}_1^{+}(\beta):=\bar{\alpha}_0^{+}\left(\beta, \max \left\{\hat{X}_{*}^{\prime}, \hat{X}_{*}^{\prime \prime}\right\}\right) > 0$ such that, for any $0<\alpha<\hat{\alpha}_1^{+}(\beta)$ there exists $\hat{r}=\hat{r}(\alpha)$ such that $V_{\hat{\tau}}>\hat{r}$ in $\left\{(t, x, y):-\hat{X}_{*}^{\prime \prime} \leq \eta(T+\hat{\tau}(t), x, y) \leq \hat{X}_{*}^{\prime}\right\}$, where $\hat{X}_{*}^{\prime}:= \hat{X}^{\prime}\sqrt{(C_3+\max_{1\le i\le n} \{|\nu_i \cot \theta_i|\})^2+1}$ and $\hat{X}_{*}^{\prime\prime}:= \hat{X}^{\prime\prime}\sqrt{(C_3+\max_{1\le i\le n} \{|\nu_i \cot \theta_i|\})^2+1}$. With similar arguments as those in Case $3$ of the proof of Lemma \ref{Lemma 4.1}, we know
\begin{align}\label{4.17}
		\mathcal{L} V_{\delta}^{-} &\leq - \hat{\varrho} \delta \hat{\lambda} e^{-\hat{\lambda} t} V_{\tilde{\tau}}+\delta(\hat{\lambda}+C_{1}^{*}(\beta)) e^{- \hat{\lambda} t}+C_{2}^{*}(\beta)\hat{\varrho} \delta^{2} \hat{\lambda} e^{-\hat{\lambda} t} +f\left(x, y, V\right)-f\left(x, y, V_{\delta}^{+}\right) \nonumber\\
		&\leq\delta e^{-\hat{\lambda} t} \left(-\hat{\varrho} \hat{\lambda} \hat{r} +\hat{\lambda} + C_{1}^{*}(\beta)+ C_{2}^{*}(\beta)\hat{\varrho} \delta \hat{\lambda} +\left\|f_u\right\|_{L^{\infty}} \right)
\end{align}
in $\left\{(t, x, y):-\hat{X}^{\prime \prime} \leq \xi(T+\hat{\tau}(t), x, y) \leq \hat{X}^{\prime}, t\geq 0\right\}$. Set
\begin{equation*}
\delta \leq \delta_{3}^{0}:=\frac{\hat{r}}{2 C_{2}^{*}(\beta)} \;\text { and }\; \hat{\varrho}>\frac{2\left(\left\|f_u\right\|_{L^{\infty}}+\hat{\lambda}+C_{1}^{*}(\beta)\right)}{\hat{\lambda} \hat{r}},
\end{equation*}
then it follows from \eqref{4.17} that
\begin{equation*}
\mathcal{L} V_{\delta}^{-}<0 \text { in }\left\{(t, x, y): \xi(T+\hat{\tau}(t), x, y) \in\left[-\widehat{X}^{\prime \prime}, \widehat{X}^{\prime}\right], t \geq 0\right\} .
\end{equation*}
Finally, let $\delta^0:=\min \left\{\delta_1^0, \delta_2^0, \delta_3^0\right\}$ and $\hat{\alpha}_0^{+}(\beta)<\min\{\hat{\alpha}_1^{+}(\beta),\alpha_0^{+}(\beta, \varepsilon)\}$ where $\varepsilon<\gamma_{\star} / 3$, then $V_{\delta}^{-}$is a subsolution of \eqref{1.1} for $t \geq 0$ and $(x, y) \in \mathbb{R}^N$, for all $T \in \mathbb{R}$ and $\delta \in\left(0, \delta^0\right]$.
\end{proof}

Next, we are going to prove the stability of the curved front $V(t, x, y)$ in Theorem \ref{Theorem 2.18}.

\begin{proof}[Proof of Theorem \ref{Theorem 2.20}]
It follows from \eqref{3.4} and the fact that $\varphi(x)\geq\psi(x)$ in $\mathbb{R}^{N-1}$ that 
\begin{equation}\label{4.18}
	\eta(t,x,y)=y-\hat{c} t-\varphi(\alpha x) / \alpha\leq y-\hat{c} t-\psi(\alpha x)/ \alpha = \min_{1\leq i\leq n} \left\{y-\hat{c} t+x \cdot \nu_i \cot \theta_i\right\}.
\end{equation}

{\it Step 1: construct supersolutions of Cauchy problem \eqref{Cauchy problem} with initial value $u_0(x, y)$.} 
Define $\beta_1:=\min \left\{v / K, \beta^*\right\}$, where positive constants $v$, $K$ and $\beta^*$ are given in \eqref{2.17}, Theorem \ref{Theorem 2.3} and Lemma \ref{Lemma 4.1}, respectively. For any $\delta \in\left(0,  \gamma_{\star} / 4\right]$, it follows from \eqref{2.17} there exists a constant $R_{\delta}>0$ such that
\begin{equation}\label{4.19}
	u_0(x, y) \leq \underline{V}(0, x, y)+\delta\left(\frac{C_1}{2}\right)^{\beta_1} \min \left\{1, e^{- v \min_{1\leq i\leq n} \left\{x \cdot \nu_i \cot \theta_i+y \right\}}\right\}
\end{equation}
for all $d\left((x, y), \mathcal{R}\right)>R_{\delta}$, where the constant $C_1>0$ is given in Theorem \ref{Theorem 2.3}. We claim that
\begin{equation}\label{4.20}
	W_{\delta}^{+}(0, x, y) \geq u_0(x, y) \text { in } \mathbb{R}^N
\end{equation}
for all $\delta\in\left(0, \gamma_{\star} / 4\right]$, where parameters in Lemma \ref{Lemma 4.1} are taken as $\beta=\beta_1$, $\lambda=\lambda\left(\beta_1\right), \varrho=\varrho\left(\beta_1, \lambda\right), \forall \varepsilon \in\left(0, \varepsilon_0^{+}\left(\beta_1\right)\right)$, and $0<\alpha<\alpha_0^{+}\left(\beta_1, \varepsilon\right)$ is to be determined.
	
\textbf{Case 1: }$\min_{1\leq i\leq n} \left\{y-\hat{c} t+x \cdot \nu_i \cot \theta_i\right\}>0$. By Theorem \ref{Theorem 2.3}, there is a positive constant $X_*>0$ such that
\begin{equation}\label{4.21}
	U_{e_{i}}(\eta, x, y) \geq \frac{C_1}{2} e^{-c_{e_{i}} \eta}, \quad \forall(\eta, x, y) \in\left(X_*,+\infty\right) \times \mathbb{R}^N.
\end{equation}
Since $\beta_1 \leq v / K$, it hlods from Lemma \ref{Lemma 4.1}, \eqref{4.18}, \eqref{4.19}, \eqref{4.21} and $\varphi(x)\geq\psi(x)$ that
\begin{equation*}
   \begin{aligned}
	W_{\delta}^{+}(0, x, y) &\geq \bar{V}(0, x, y)+\delta U_{e_{i}}^{\beta_1}(\eta(\tau(0), x, y), x, y)\\
   	& \geq \underline{V}(0, x, y)+\delta\left(\frac{C_1}{2}\right)^{\beta_1} e^{-\beta_1 c_{e_{i}} \eta} \\
   	& \geq \underline{V}(0, x, y)+\delta\left(\frac{C_1}{2}\right)^{\beta_1} e^{-v \eta} \\
   	& \geq \underline{V}(0, x, y)+\delta\left(\frac{C_1}{2}\right)^{\beta_1} e^{-v \min_{1 \leq i \leq n}\{y+x \cdot \nu_i \cot \theta_i\}} \\
   	& \geq u_0(x, y)
   \end{aligned}
\end{equation*}
in $\left\{(x, y): \eta(0, x, y)>X_*, d\left((x, y), \mathcal{R}\right)>R_{\delta}\right\}$. Then it follows from Theorem \ref{Theorem 2.12} that
\begin{equation*}
W_{\delta}^{+}(0, x, y) \geq \bar{V}(0, x, y)+{\delta} U_{e_{i}}^{\beta_1}\left(X_*, x, y\right) \geq \bar{V}(0, x, y)+r_2
\end{equation*}
in $\left\{(x, y): \eta(0, x, y) \leq X_*\right\}$ for some constant $r_2>0$. Therefore, even if it means increasing $R_{\delta}$, it follows from \eqref{2.17} that
\begin{equation*}
	 u_0(x, y)-\underline{V}(0, x, y) \leq r_2 \text { in }\left\{(x, y): \eta(0, x, y) \leq X_*, d\left((x, y), \mathcal{R}\right)>R_{\delta}\right\}
\end{equation*}
and
\begin{equation*}
W_{\delta}^{+}(0, x, y)\geq \bar{V}(0, x, y)+r_2 \geq \underline{V}(0, x, y)+r_2 \geq u_0(x, y) \text { in }\left\{(x, y): \eta(0, x, y) \leq X_*, d\left((x, y), \mathcal{R}\right)>R_{\delta}\right\}.
\end{equation*}
By \cite[Lemma 4.3]{Guo H2025}, we can obtain
\begin{equation*}
\xi(0, x, y)=\frac{y-\varphi(\alpha x) / \alpha}{\sqrt{1+|\nabla \varphi(\alpha x)|^2}} \rightarrow-\infty \text { as } \alpha \rightarrow 0^{+} \text{ uniformly in } \left\{(x, y):d\left((x, y), \mathcal{R}\right)\leq R_{\delta}\right\}.
\end{equation*}
By \eqref{4.18}, Theorem \ref{Theorem 2.8} and Lemma \ref{Lemma 4.1}, there is a constant 
$\alpha_5^{+}(\delta)>0$ such that for each $0<\alpha<\alpha_5^{+}$,
\begin{equation}\label{4.22}
W_{\delta}^{+}(0, x, y) \geq U_{e(x)}(\xi, x, y)+\delta U_{e_{i}}^{\beta_1}\left(\eta, x, y\right) \geq 1 \geq u_0(x, y)
\end{equation}
in $\left\{(x, y):d\left((x, y), \mathcal{R}\right)\leq R_{\delta}\right\}$. Thus, we show $W_{\delta}^{+}(0, x, y) \geq u_0(x, y)$ in Case $1$.
	
\textbf{Case 2: }$\min_{1\leq i\leq n} \left\{y-\hat{c} t+x \cdot \nu_i \cot \theta_i\right\}\leq 0$. It follows from \eqref{4.18} and Theorem \ref{Theorem 2.12} that
\begin{equation*}
W_{\delta}^{+}(0, x, y) \geq \bar{V}(0, x, y)+\delta U_{e_{i}}^{\beta_1}(0, x, y) \geq \underline{V}(0, x, y)+r_3
\end{equation*}
for some constant $r_3>0$. By \eqref{2.17}, even if it means increasing $R_{\delta}$, one knows
\begin{equation*}
	u_0(x, y)-\underline{V}(0, x, y) \leq r_3 \text { in }\left\{(x, y): d\left((x, y), \mathcal{R}\right)>R_{\delta}\right\}
\end{equation*}
and
\begin{equation*}
W_{\delta}^{+}(0, x, y)\geq \underline{V}(0, x, y)+r_3  \geq u_0(x, y) \text { in }\left\{(x, y): d\left((x, y), \mathcal{R}\right)>R_{\delta}\right\}.
\end{equation*}
Similar to arguments of \eqref{4.22}, there exists a constant $\alpha_6^{+}(\delta)>0$ such that for any $0<\alpha<\alpha_6^{+}$,
\begin{equation*}
W_{\delta}^{+}(0, x, y) \geq u_0(x, y) \text { in }\left\{(x, y): d\left((x, y), \mathcal{R}\right)\leq R_{\delta}\right\}
\end{equation*}
Thus we get $W_{\delta}^{+}(0, x, y) \geq u_0(x, y)$ in Case $2$.

Finally, we obtain that \eqref{4.20} is true for all $0<\alpha<\min \left\{\alpha_0^{+}\left(\beta_1, \varepsilon\right), \alpha_5^{+}(\delta), \alpha_6^{+}(\delta)\right\}$.

{\it Step 2: construct a time sequence.}
By \eqref{2.16}, Lemma \ref{Lemma 4.1} and Step $1$, using the comparison principle, one has
\begin{equation}\label{4.23}
\underline{V}(t, x, y) \leq u(t, x, y) \leq W_{\delta}^{+}(t, x, y) \text { in }[0,+\infty) \times \mathbb{R}^N
\end{equation}
for any $\delta \in\left(0,  \gamma_{\star} / 4\right]$. Let $t_m:=L_{N} m / \hat{c}$ and
\begin{equation*}
u_m(t, x, y):=u\left(t+t_m, x, y+L_{N} m\right) \text { in } \mathbb{R} \times \mathbb{R}^N
\end{equation*}
for all $m \in \mathbb{N}$, where $L_{N}$ is the period of $y$, then one has $t_m \rightarrow+\infty$ as $m \rightarrow +\infty$. By parabolic estimates, we can find a sequence of functions $\left\{u_{m_k}\right\}_{k \in \mathbb{N}}$ that converges locally uniformly to a function
$u_{\infty}(t, x, y)$ in $\mathbb{R} \times \mathbb{R}^N$, which is an entire solution of \eqref{1.1}. Since $\underline{V}\left(t+t_{m_k}, x, y+L_{N} m_k\right)=\underline{V}(t, x, y)$ and $\bar{V}\left(t+t_{m_k}, x, y+L_{N} m_k\right)=\bar{V}(t, x, y)$, it follows from \eqref{4.23} that
\begin{equation}\label{4.24}
\underline{V}(t, x, y) \leq u_{m_k}(t, x, y) \leq \bar{V}\left(t-\varrho \delta e^{-\lambda\left(t+t_{m_k}\right)}+\varrho \delta, x, y\right)+\delta e^{-\lambda\left(t+t_{m_k}\right)}
\end{equation}
in $\left[-t_{m_k},+\infty\right) \times \mathbb{R}^N$ for any $k \in \mathbb{N}$. Letting $k \rightarrow +\infty$, one has
\begin{equation}\label{4.25}
\underline{V}(t, x, y) \leq u_{\infty}(t, x, y) \leq \bar{V}(t+\varrho \delta, x, y) \text { in } \mathbb{R} \times \mathbb{R}^N.
\end{equation}
Let $w_n(t, x, y ; g(x, y))$ be the unique solution of the Cauchy problem
\begin{equation*}
\begin{cases}\partial_t w-\Delta_{x, y} w=f(x, y, w) & \text { when } t>-n,(x, y) \in \mathbb{R}^N, \\ w(t, x, y)=g(x, y) & \text { when } t=-n,(x, y) \in \mathbb{R}^N,\end{cases}
\end{equation*}
for all $n \in \mathbb{N}$. By \eqref{4.25} and the comparison principle, we obtain
\begin{equation}\label{4.26}
w_{m_k}\left(t, x, y ; \underline{V}\left(-m_k, x, y\right)\right) \leq u_{\infty}(t, x, y) \leq w_{m_k, \varepsilon, \delta}\left(t, x, y ; \bar{V}\left(-m_k+\varrho \delta, x, y\right)\right)
\end{equation}
in $\left[-m_k,+\infty\right) \times \mathbb{R}^N$ for all $k \in \mathbb{N}$, where $\varepsilon$ and $\delta$ are given in $W_\delta^{+}$. Recall the definition of $V(t, x, y)$ in the proof of Theorem \ref{Theorem 2.18}, then we know
\begin{equation}\label{4.27}
V(t, x, y) \leq u_{\infty}(t, x, y) \text { in } \mathbb{R}\times\mathbb{R}^N .
\end{equation}
Furthermore, it follows from the comparison principle that
\begin{equation*}
\underline{V}(t+\varrho \delta, x, y) \leq w_{m_k, \varepsilon, \delta}\left(t, x, y ; \bar{V}\left(-m_k+\varrho \delta, x, y\right)\right) \leq \bar{V}(t+\varrho \delta, x, y)
\end{equation*}
in $\left[-m_k,+\infty\right) \times \mathbb{R}^N$ for all $k \in \mathbb{N}$. By parabolic estimates, the sequence $\left\{w_{m_k, \varepsilon, \delta}\right\}$ converges, up to a subsequence, locally uniformly to an entire solution $w_{\infty}(t, x, y)$ of \eqref{1.1} as $k \rightarrow \infty$, $\varepsilon \rightarrow 0$ and $\delta \rightarrow 0$, which satisfies by \eqref{3.6} that
\begin{equation}\label{4.28}
\left|w_{\infty}(t, x, y)-\underline{V}(t, x, y)\right|\rightarrow 0,\quad d\left((x, y), \mathcal{R}+\hat{c} t e_0\right) \rightarrow+\infty,
\end{equation}
and $0 \leq w_{\infty} \leq 1$. By the virtue of Theorem \ref{Theorem 2.19} and \eqref{4.28}, one has $w_{\infty} \equiv V$ in $\mathbb{R}\times\mathbb{R}^N$. This implies that $u_{\infty} \equiv V$ in $\mathbb{R}\times\mathbb{R}^N$ by \eqref{4.26} and \eqref{4.27}.

Note that $\varrho(\beta, \lambda)$ is independent of $\delta$ in Lemma \ref{Lemma 4.1}. For any $\vartheta>0$, choose a parameter $\delta_1(\vartheta) \in\left(0,  \gamma_{\star} / 4\right]$ such that
\begin{equation}\label{4.29}
	\left|\bar{V}\left(t-\varrho \delta_1 e^{-\lambda\left(t+t_{m_k}\right)}+\varrho \delta_1, x, y\right)+\delta_1 e^{-\lambda\left(t+t_{m_k}\right)}-\bar{V}(t, x, y)\right| \leq \delta_1+\left\|\partial_t \bar{V}\right\|_{L^{\infty}} \varrho \delta_1 <\frac{\vartheta^4}{2}
\end{equation}
for any $(t, x, y) \in[0,+\infty) \times \mathbb{R}^N$ and $k \in \mathbb{N}$. By \eqref{4.24} and \eqref{4.29}, we know
\begin{equation}\label{4.30}
\underline{V}(t, x, y) \leq u_{m_k}(t, x, y) \leq \bar{V}(t, x, y)+\frac{\vartheta^4}{2} \text { in }[0,+\infty) \times \mathbb{R}^N
\end{equation}
for any $k \in \mathbb{N}$. According to \eqref{3.6}, \eqref{3.44} and \eqref{4.30}, there exists a constant $R_{\vartheta}>0$ such that
\begin{equation}\label{4.31}
	\begin{aligned}
	&\left|u_{m_k}(0, x, y)-V(0, x, y)\right|\\
    &\leq\left|u_{m_k}(0, x, y)-\bar{V}(0, x, y)\right| + \left|\bar{V}(0, x, y)-V(0, x, y)\right| \\
	&\leq\frac{\vartheta^4}{2}+\left|\bar{V}(0, x, y)-\underline{V}(0, x, y) \right|\leq  \vartheta^4\quad\text { in }\left\{(x, y): d\left((x, y), \mathcal{R}\right)> R_{\delta}\right\}
	\end{aligned}
\end{equation}
for all $k \in \mathbb{N}$. Since $\left\{u_{m_k}\right\}_{k \in \mathbb{N}}$ converges locally uniformly to $u_{\infty}(t, x, y) \equiv V(t, x, y)$ in $\mathbb{R} \times \mathbb{R}^N$, there exists a constant $k_0(\vartheta) \in \mathbb{N}$ such that
\begin{equation}\label{4.32}
\left|u_{m_k}(0, x, y)-V(0, x, y)\right| \leq \vartheta^4 \text { in }\left\{(x, y): d\left((x, y), \mathcal{R}\right)\leq R_{\delta}\right\}
\end{equation}
for all $k \geq k_0$. Letting $T_{\vartheta}:=t_{m_{k_0}}$, by virtue of \eqref{4.31}, \eqref{4.32} and Remark \ref{Remark 3.3}, it holds that for any $\vartheta>0$,
\begin{equation}\label{4.33}
\left|u\left(T_{\vartheta}, x, y\right)-V\left(T_{\vartheta}, x, y\right)\right|=\left|u_{m_{k_0}}\left(0, x, y-L_N m_{k_0}\right)-V\left(0, x, y-L_N m_{k_0}\right)\right| \leq \vartheta^4
\end{equation}
for all $(x, y) \in \mathbb{R}^N$.
	
{\it Step 3: construct suitable super- and subsolutions to show the stability of $V(t, x, y)$.} 
Fix arbitrary parameters of $W_{\delta}^{+}$ in Step $1$, we obtain from \eqref{3.44} and \eqref{4.23} that 
\begin{equation*}
|u(t, x, y)-V(t, x, y)| \leq\left|W_{\delta}^{+}(t, x, y)-\underline{V}(t, x, y)\right| \text { in }[0,+\infty) \times \mathbb{R}^N.
\end{equation*}
Similar to arguments in \eqref{3.41}, one knows 
\begin{equation}\label{4.34}
\sqrt{\left|W_{\delta}^{+}(t, x, y)\right|+\left|\underline{V}(t, x, y)\right|} \leq \Lambda^{*} \min \left\{1, e^{- \tilde{v} \min_{1\leq i\leq n} \left\{x \cdot \nu_i \cot \theta_i+y-\hat{c} t \right\}}\right\}.
\end{equation}
in $\mathbb{R} \times \mathbb{R}^N$, for some constants $\tilde{v}>0$ and $\Lambda^{*}>0$.

\textbf{Situation 1:} $\min_{1\leq i\leq n} \left\{x \cdot \nu_i \cot \theta_i+y-\hat{c} t \right\}>0$ and $\xi(\tau, x, y)>1$, where $\tau=t-\varrho \delta e^{-\lambda t}+\varrho \delta$ is given in Lemma \ref{Lemma 4.1}.
By Theorem \ref{Theorem 2.8}, one has
\begin{align*}
	\left|\underline{V}(t, x, y)\right|\leq\bar{K} e^{-\frac{3 \kappa}{4}\min_{1 \leq i \leq n}\{x \cdot \nu_i \cos \theta_i+(y-\hat{c} t) \sin \theta_i\} }\leq\bar{K} e^{-2\tilde{v} \min_{1 \leq i \leq n}\{x \cdot \nu_i \cot \theta_i+(y-\hat{c} t)\} },
\end{align*}
where $\tilde{v}\leq 3 \kappa \min_{1 \leq i \leq n} \{\sin\theta_i\} / 8$,
\begin{equation*}
	\xi(\tau, x, y)=\frac{y-\hat{c} \tau-\varphi(\alpha x) / \alpha}{\sqrt{1+|\nabla \varphi(\alpha x)|^2}}\geq\frac{y-\hat{c} \tau-\psi(x)-\widetilde{C}}{\sqrt{1+|\nabla \varphi(\alpha x)|^2}}\geq \frac{\min_{1 \leq i \leq n}\{y-\hat{c} \tau+x \cdot \nu_i \cot \theta_i\}-\widetilde{C}}{\sqrt{1+|\nabla \varphi(\alpha x)|^2}}
\end{equation*}
and
	\begin{align*}
	W_{\delta}^{+}(t, x, y)&\leq \left|U_{e(x)}(\xi(\tau, x, y), x, y)\right|+\left| U_{e_{i}}^\beta(\eta(\tau, x, y), x, y)\right|+\left| U_{e_{i}}^\beta(\eta(\tau, x, y), x, y)\right|\\
	&\leq\bar{K} e^{-\frac{3 \kappa }{4}\cdot\frac{y-\hat{c} \tau-\varphi(\alpha x) / \alpha}{\sqrt{1+|\nabla \varphi(\alpha x)|^2}}}+2\left(\bar{K} e^{-\frac{3 \kappa}{4}(y-\hat{c} \tau-\varphi(\alpha x) / \alpha)}\right)^\beta\\
	&\leq\bar{K} e^{-\frac{3 \kappa }{4}\cdot\frac{\min_{1 \leq i \leq n}\{y-\hat{c} \tau+x \cdot \nu_i \cot \theta_i\}-\widetilde{C}}{\sqrt{1+|\nabla \varphi(\alpha x)|^2}}}+2\left(\bar{K} e^{-\frac{3 \kappa}{4}(\min_{1 \leq i \leq n}\{y-\hat{c} \tau+x \cdot \nu_i \cot \theta_i\}-\widetilde{C})}\right)^\beta\\
	&\leq\bar{K} e^{-\frac{3 \kappa}{4}\cdot\frac{\min_{1 \leq i \leq n}\{y-\hat{c} t+x \cdot \nu_i \cot \theta_i\}-\widetilde{C}-\hat{c}\varrho \delta}{\sqrt{1+|\nabla \varphi(\alpha x)|^2}}}+2\bar{K} e^{-\frac{3 \kappa}{4}\beta(\min_{1 \leq i \leq n}\{y-\hat{c} t+x \cdot \nu_i \cot \theta_i\}-\widetilde{C}-\hat{c}\varrho \delta)}\\
	&\leq\bar{K}e^{\frac{3 \kappa}{4}(\widetilde{C}+\hat{c}\varrho \delta)}  e^{-\frac{3 \kappa}{4}\cdot\frac{\min_{1 \leq i \leq n}\{y-\hat{c} t+x \cdot \nu_i \cot \theta_i\}}{\sqrt{1+|\nabla \varphi(\alpha x)|^2}}}+2\bar{K}e^{\frac{3 \kappa}{4}\beta(\widetilde{C}+\hat{c}\varrho \delta)} \cdot e^{-\frac{3 \kappa}{4}\beta(\min_{1 \leq i \leq n}\{y-\hat{c} t+x \cdot \nu_i \cot \theta_i\})}\\
	&\leq 3\Lambda_{1}^{*} e^{-2 \tilde{v} \min_{1\leq i\leq n} \left\{x \cdot \nu_i \cot \theta_i+y-\hat{c} t \right\}},
	\end{align*}
where $\Lambda_{1}^{*}:=\max\{\bar{K},\bar{K}e^{\frac{3 \kappa}{4}(\widetilde{C}+\hat{c}\varrho \delta)}\}$, $\tilde{v}\leq\frac{3 \kappa }{8}\min_{1\leq i \leq n}\{\sin\theta_i, \beta, \frac{1}{\sqrt{(C_3+\max_{1\le i\le n} \{|\nu_i \cot \theta_i|\})^2+1}}\}$ and $\widetilde{C}=\frac{\sup_{\mathbb{R}^{N}}|\varphi(\alpha x)-\psi(\alpha x)|}{\alpha}$.

\textbf{Situation 2:} $\min_{1\leq i\leq n} \left\{x \cdot \nu_i \cot \theta_i+y-\hat{c} t \right\}>0$ and $0 \leq \xi(\tau, x, y)\leq1$, where $\tau=t-\varrho \delta e^{-\lambda t}+\varrho \delta$ is given in Lemma \ref{Lemma 4.1}.
By Theorem \ref{Theorem 2.8}, we have
\begin{align*}
	\left|\underline{V}(t, x, y)\right|\leq\bar{K} e^{-\frac{3 \kappa}{4}\min_{1 \leq i \leq n}\{x \cdot \nu_i \cos \theta_i+(y-\hat{c} t) \sin \theta_i\} }\leq\bar{K} e^{-2\tilde{v} \min_{1 \leq i \leq n}\{x \cdot \nu_i \cot \theta_i+(y-\hat{c} t)\} },
\end{align*}
where $\tilde{v}\leq 3 \kappa \min_{1 \leq i \leq n} \{\sin\theta_i\} / 8$. Moreover, one has
\begin{equation*}
	1\geq\xi(\tau, x, y)=\frac{y-\hat{c} \tau-\varphi(\alpha x) / \alpha}{\sqrt{1+|\nabla \varphi(\alpha x)|^2}}\geq\frac{y-\hat{c} \tau-\psi(x)-\widetilde{C}}{\sqrt{1+|\nabla \varphi(\alpha x)|^2}}\geq \frac{\min_{1 \leq i \leq n}\{y-\hat{c} \tau+x \cdot \nu_i \cot \theta_i\}-\widetilde{C}}{\sqrt{1+|\nabla \varphi(\alpha x)|^2}}\geq 0
\end{equation*}
and thereby
\begin{equation*}
0 < \min_{1 \leq i \leq n}\{y-\hat{c} \tau+x \cdot \nu_i \cot \theta_i\}\leq\sqrt{1+|\nabla \varphi(\alpha x)|^2}+\widetilde{C}\leq \sqrt{1+(C_3+\max_{1\le i\le n} \{|\nu_i \cot \theta_i|\})^2}+\widetilde{C}=:\hat{C}.
\end{equation*}
Then, if $\xi(\tau, x, y)\geq 0$, we know 
\begin{equation*}
	\begin{aligned}
		W_{\delta}^{+}(t, x, y)&\leq \left|U_{e(x)}(\xi(\tau, x, y), x, y)\right|+2\\
		&\leq \bar{K}e^{\frac{3 \kappa}{4}(\widetilde{C}+\hat{c}\varrho \delta)} \cdot e^{-\frac{3 \kappa}{4}\frac{\min_{1 \leq i \leq n}\{y-\hat{c} t+x \cdot \nu_i \cot \theta_i\}}{\sqrt{1+|\nabla \varphi(\alpha x)|^2}}}\\
		&\quad+2e^{\frac{3 \kappa}{4}\min_{1 \leq i \leq n}\{y-\hat{c} t+x \cdot \nu_i \cot \theta_i\}}\cdot e^{-\frac{3 \kappa}{4}\min_{1 \leq i \leq n}\{y-\hat{c} t+x \cdot \nu_i \cot \theta_i\}}\\
		&\leq 3\Lambda_{2}^{*} e^{-2 \tilde{v} \min_{1\leq i\leq n} \left\{x \cdot \nu_i \cot \theta_i+y-\hat{c} t \right\}},
	\end{aligned}
\end{equation*}
where $\Lambda_{2}^{*}:=\max\{\bar{K}, \bar{K}e^{\frac{3 \kappa}{4}(\widetilde{C}+\hat{c}\varrho \delta)}, e^{\frac{3 \kappa}{4}\hat{C}}\}$, $\tilde{v}\leq\frac{3 \kappa }{8}\min_{1\leq i \leq n}\{\sin\theta_
i, 1, \frac{1}{\sqrt{(C_3+\max_{1\le i\le n} \{|\nu_i \cot \theta_i|\})^2+1}}\}$ and $\widetilde{C}=\frac{\sup_{\mathbb{R}^{N}}|\varphi(\alpha x)-\psi(\alpha x)|}{\alpha}$.

\textbf{Situation 3:} $\min_{1\leq i\leq n} \left\{x \cdot \nu_i \cot \theta_i+y-\hat{c} t \right\}>0$ and $\xi(\tau, x, y)< 0$, where $\tau=t-\varrho \delta e^{-\lambda t}+\varrho \delta$ is given in Lemma \ref{Lemma 4.1}.
By Theorem \ref{Theorem 2.8}, one has
\begin{equation*}
	\left|\underline{V}(t, x, y)\right|\leq\bar{K} e^{-\frac{3 \kappa}{4}\min_{1 \leq i \leq n}\{x \cdot \nu_i \cos \theta_i+(y-\hat{c} t) \sin \theta_i\} }\leq\bar{K} e^{-2 \tilde{v}\min_{1 \leq i \leq n}\{x \cdot \nu_i \cot \theta_i+(y-\hat{c} t)\} }
\end{equation*}
for all $\tilde{v} \leq 3 \kappa \min_{1 \leq i \leq n} \{\sin\theta_i\} / 8$. Furthermore, we know
\begin{equation*}
	0\geq\xi(\tau, x, y)=\frac{y-\hat{c} \tau-\varphi(\alpha x) / \alpha}{\sqrt{1+|\nabla \varphi(\alpha x)|^2}}\geq\frac{y-\hat{c} \tau-\psi(x)-\widetilde{C}}{\sqrt{1+|\nabla \varphi(\alpha x)|^2}}= \frac{\min_{1 \leq i \leq n}\{y-\hat{c} \tau+x \cdot \nu_i \cot \theta_i\}-\widetilde{C}}{\sqrt{1+|\nabla \varphi(\alpha x)|^2}},
\end{equation*}
\begin{equation*}
	y-\hat{c} \tau-\varphi(\alpha x) / \alpha\leq y-\hat{c} \tau-\psi(x)=\min_{1 \leq i \leq n}\{y-\hat{c} \tau+x \cdot \nu_i \cot \theta_i\}\leq \widetilde{C},
\end{equation*}
and 
\begin{equation*}
	0<\min_{1 \leq i \leq n}\{y-\hat{c} t+x \cdot \nu_i \cot \theta_i\}\leq \widetilde{C}-\hat{c}\varrho \delta e^{-\lambda t} + \hat{c}\varrho \delta \leq \widetilde{C}+ \hat{c}\varrho \delta=: \widetilde{C}_{*}.
\end{equation*}
Thus, if $\xi(\tau, x, y)\leq 0$, we also get
	\begin{align*}
		W_{\delta}^{+}(t, x, y)&\leq \left|U_{e(x)}(\xi(\tau, x, y), x, y)\right|+2\\
		&\leq 1+\bar{K} e^{\kappa_2 \frac{y-\hat{c} \tau-\varphi(\alpha x) / \alpha}{\sqrt{1+|\nabla \varphi(\alpha x)|^2}}}+2\\
		&\leq \bar{K} e^{\kappa_2 \frac{y-\hat{c} \tau-\varphi(\alpha x) / \alpha}{\sqrt{(C_3+\max_{1\le i\le n} \{|\nu_i \cot \theta_i|\})^2+1}}}+3\\
		&\leq \bar{K} e^{\kappa_2 \frac{y-\hat{c} \tau-\psi(x)}{\sqrt{(C_3+\max_{1\le i\le n} \{|\nu_i \cot \theta_i|\})^2+1}}}+3\\
		&\leq \bar{K} e^{\kappa_2 \frac{\min_{1 \leq i \leq n}\{y-\hat{c} t+x \cdot \nu_i \cot \theta_i\}+\hat{c}(\varrho \delta e^{-\lambda t}-\varrho \delta)}{\sqrt{(C_3+\max_{1\le i\le n} \{|\nu_i \cot \theta_i|\})^2+1}}}+3 \\
		&\leq \bar{K} e^{\kappa_2 \frac{\min_{1 \leq i \leq n}\{y-\hat{c} t+x \cdot \nu_i \cot \theta_i\} }{\sqrt{(C_3+\max_{1\le i\le n} \{|\nu_i \cot \theta_i|\})^2+1}}}+3\\
		&\leq \bar{K} e^{\kappa_2 \frac{\min_{1 \leq i \leq n}\{y-\hat{c} t+x \cdot \nu_i \cot \theta_i\} }{\sqrt{(C_3+\max_{1\le i\le n} \{|\nu_i \cot \theta_i|\})^2+1}}}+3e^{\kappa_2 \min_{1 \leq i \leq n}\{y-\hat{c} t+x \cdot \nu_i \cot \theta_i\}} e^{-\kappa_2 \min_{1 \leq i \leq n}\{y-\hat{c} t+x \cdot \nu_i \cot \theta_i\}}\\
		&\leq \bar{K} e^{\kappa_2 \min_{1 \leq i \leq n}\{y-\hat{c} t+x \cdot \nu_i \cot \theta_i\}} +3e^{\kappa_2 \min_{1 \leq i \leq n}\{y-\hat{c} t+x \cdot \nu_i \cot \theta_i\}}\cdot e^{-\kappa_2 \min_{1 \leq i \leq n}\{y-\hat{c} t+x \cdot \nu_i \cot \theta_i\}}\\
		&\leq 4 \Lambda_{3}^{*} e^{-2 \tilde{v} \min_{1\leq i\leq n} \left\{x \cdot \nu_i \cot \theta_i+y-\hat{c} t \right\}}
	\end{align*}
where $\Lambda_{3}^{*}:=\max\{\bar{K}, \bar{K}e^{2 \kappa_2 \widetilde{C}_{*}}, e^{2 \kappa_2 \widetilde{C}_{*}}\}$, $\widetilde{C}=\frac{\sup_{\mathbb{R}^{N}}|\varphi(\alpha x)-\psi(\alpha x)|}{\alpha}$ and $\tilde{v}\leq\frac{ \kappa_2 }{2}$. In conclusion, let $\Lambda^{*}:=\max\{4\Lambda_{1}^{*},4\Lambda_{2}^{*},5\Lambda_{3}^{*}\}$ and $\tilde{v}\leq\min\{\frac{ \kappa_2 }{2}, \frac{3 \kappa }{8}\min_{1\leq i \leq n}\{\sin\theta_i,\beta, 1, \frac{1}{\sqrt{(C_3+\max_{1\le i\le n} \{|\nu_i \cot \theta_i|\})^2+1}}\}\}$, then we obtain \eqref{4.34}. 

Therefore, by \eqref{4.33}, one gets
\begin{equation}\label{4.35}
\left|u\left(T_{\vartheta}, x, y\right)-V\left(T_{\vartheta}, x, y\right)\right| \leq \vartheta^2 \Lambda^{*} \min \left\{1, e^{- \tilde{v} \min_{1\leq i\leq n} \left\{x \cdot \nu_i \cot \theta_i+y-\hat{c} t \right\}}\right\}, \quad \forall(x, y) \in \mathbb{R}^N, \quad \forall\vartheta>0.
\end{equation}
Denote $\beta_2:=\min \left\{\tilde{v} / K, \beta^*\right\}$, where $K$ is given in Theorem \ref{Theorem 2.3}. Following Theorem \ref{Theorem 2.12}, there exists a constant $r_4>0$ such that
\begin{equation}\label{4.36}
U_{e_{i}}^{\beta_2}(0, x, y) \geq U_{e_{i}}^{\beta_2}\left(X_{*}, x, y\right) \geq r_4, \quad \forall(x, y) \in \mathbb{R}^N.
\end{equation}
Define
\begin{equation}\label{4.37}
\Gamma:=\min \left\{1,\left(\frac{C_1}{2}\right)^{\beta_2} \frac{\min \left\{ \delta^{0}, \gamma_{\star} / 4\right\}}{\Lambda^{*}},\left(\frac{2}{C_1}\right)^{\beta_2} r_4\right\}
\end{equation}
where $C_1$ is given in Theorem \ref{Theorem 2.3}. We point out that
\begin{equation}\label{4.38}
V_{\delta}^{-}\left(0, x, y ; T_{\vartheta}\right) \leq u\left(T_{\vartheta}, x, y\right) \leq V_{\delta}^{+}\left(0, x, y ; T_{\vartheta}\right)
\end{equation}
in $\mathbb{R}^N$ for all $0<\vartheta<\Gamma$, where $V_{\delta}^{-}$ and $V_{\delta}^{+}$ are defined in Lemma \ref{Lemma 4.2}, and let $\beta=\beta_2$, $\delta=\bar{\delta}(\vartheta):= \Lambda^{*} \vartheta\left(2 / C_1\right)^{\beta_2}$ in $V_{\delta}^{-}$ and $V_{\delta}^{+}$.

\textbf{Case 1:} $\min_{1\leq i\leq n} \left\{x \cdot \nu_i \cot \theta_i+y-\hat{c} T_{\vartheta} \right\}>0$. Since $\beta_2 \leq \tilde{v} / K$, by virtue of Lemma \ref{Lemma 4.2}, \eqref{4.18}, \eqref{4.21}, \eqref{4.35} and \eqref{4.37}, one has for any $0<\vartheta<\Gamma$,
\begin{equation*}
\begin{aligned}
	V_{\bar{\delta}}^{+}\left(0, x, y ; T_{\vartheta}\right) & \geq V\left(T_{\vartheta}, x, y\right)+\bar{\delta} U_{e_{i}}^{\beta_2}(\eta, x, y) \\
	& \geq V\left(T_{\vartheta}, x, y\right)+\bar{\delta}\left(\frac{C_1}{2}\right)^{\beta_2} e^{-\beta_2 c_{e_{i}} \eta} \\
	& \geq V\left(T_{\vartheta}, x, y\right)+\Lambda^{*} \vartheta e^{-\tilde{v} (y-\hat{c} T_{\vartheta}-\varphi(\alpha x) / \alpha)} \\
	& \geq V\left(T_{\vartheta}, x, y\right)+\Lambda^{*} \vartheta^2 e^{-\tilde{v} \min_{1\leq i\leq n} \left\{x \cdot \nu_i \cot \theta_i+y-\hat{c} T_{\vartheta} \right\}} \\
	& \geq u\left(T_{\vartheta}, x, y\right)
\end{aligned}
\end{equation*}
in $\left\{(x, y): \eta\left(T_{\vartheta}, x, y\right)>X_*\right\}$. According to 
\eqref{4.35}-\eqref{4.37}, we know 
\begin{equation*}
V_{\bar{\delta}}^{+}\left(0, x, y ; T_{\vartheta}\right) \geq V\left(T_{\vartheta}, x, y\right)+\bar{\delta} U_{e_{i}}^{\beta_2}\left(X_*, x, y\right) \geq V\left(T_{\vartheta}, x, y\right)+\bar{\delta} r_4 \geq u\left(T_{\vartheta}, x, y\right)
\end{equation*}
in $\left\{(x, y): \eta\left(T_{\vartheta}, x, y\right) \leq X_*\right\}$ for all $0<\vartheta<\Gamma$.

\textbf{Case 2:} $\min_{1\leq i\leq n} \left\{x \cdot \nu_i \cot \theta_i+y-\hat{c} T_{\vartheta} 
\right\}\leq 0$. By \eqref{4.18} and \eqref{4.35}-\eqref{4.37}, we obtain
\begin{equation*}
	V_{\bar{\delta}}^{+}\left(0, x, y ; T_{\vartheta}\right) \geq V\left(T_{\vartheta}, x, y\right)+\bar{\delta} U_{e_{i}}^{\delta_2}(0, x, y) \geq V\left(T_{\vartheta}, x, y\right)+\bar{\delta} r_4 \geq u\left(T_{\vartheta}, x, y\right)
\end{equation*}
in Case $2$ for any $0<\vartheta<\Gamma$.

In conclusion, $V_{\bar{\delta}}^{+}\left(0, x, y ; T_{\vartheta}\right) \geq u\left(T_{\vartheta}, x, y\right)$ in $\mathbb{R}^N$, where parameters in Lemma \ref{Lemma 4.2} are taken as $\beta=\beta_2$, $\alpha=\tilde{\alpha}_0^{+}\left(\beta_2\right) / 2$, $\lambda=\lambda\left(\beta_2\right)$, $\varrho=\tilde{\varrho}\left(\beta_2, \lambda, \alpha\right)$, $\delta=\bar{\delta}=\Lambda^{*} \vartheta\left(2 / C_1\right)^{\beta_2}$. Similarly, we can also get $V_{\bar{\delta}}^{-}\left(0, x, y ; T_{\vartheta}\right) \leq u\left(T_{\vartheta}, x, y\right)$ in $\mathbb{R}^N$ for all $0<\vartheta<\Gamma$. Therefore, \eqref{4.38} is ture. We obtain from \eqref{4.38}, Lemma \ref{Lemma 4.2} and the comparison principle that for any $0<\vartheta<\Gamma$,
\begin{equation*}
	V_{\bar{\delta}}^{-}\left(s, x, y ; T_{\vartheta}\right) \leq u\left(T_{\vartheta}+s, x, y\right) \leq V_{\bar{\delta}}^{+}\left(s, x, y ; T_{\vartheta}\right), \quad \forall(s, x, y) \in[0,+\infty) \times \mathbb{R}^N .
\end{equation*}
Letting $\vartheta \rightarrow 0$ which indicates $\bar{\delta} \rightarrow 0$, it follows from Lemma \ref{Lemma 4.2} that
\begin{equation*}
u(t, x, y) \rightarrow V(t, x, y) \text { as } t \rightarrow+\infty
\end{equation*}
uniformly in $(x, y) \in \mathbb{R}^N$. For general $e_0\in\mathbb{S}^{N-1}$, we can change the subsolutions and supersolutions in the same way as in the proof of Theorem \ref{Theorem 2.18}. The proof is complete.
\end{proof}

\section{Appendix}
\noindent
In this section, we complete the proof of Lemma \ref{Lemma 2.15}.
\begin{lemma}\label{Lemma 5.1}
If $u$ is a sub-invasion of 0 by 1 with sets $\left(\Omega_t^{ \pm}\right)_{t \in \mathbb{R}}$ and $\left(\Gamma_t\right)_{t \in \mathbb{R}}$, then
\begin{equation}\label{5.1}
	\sup \left\{d\left(z, \Gamma_{t-\tau}\right) ; t \in \mathbb{R}, z \in \Gamma_t\right\}<+\infty \text{ for any constant } \tau>0.
\end{equation}
\end{lemma}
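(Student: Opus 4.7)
The plan is to argue by contradiction: suppose the supremum is infinite, so that there exist sequences $\{t_n\}\subset\mathbb{R}$ and $z_n\in\Gamma_{t_n}$ with $d_n:=d(z_n,\Gamma_{t_n-\tau})\to+\infty$. I first use the monotonic expansion $\Omega_{t_n}^{+}\supset\Omega_{t_n-\tau}^{+}$ of Definition \ref{Definition 2.14} to locate $z_n$ inside $\Omega_{t_n-\tau}^{-}$ for all large $n$: if $z_n$ lay in the open set $\Omega_{t_n-\tau}^{+}$, a neighborhood of $z_n$ would be contained in $\Omega_{t_n}^{+}$, contradicting $z_n\in\partial\Omega_{t_n}^{+}=\Gamma_{t_n}$; and $z_n\in\Gamma_{t_n-\tau}$ is excluded once $d_n>0$. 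Because every continuous path from a point of $\Omega_{t_n-\tau}^{-}$ to $\overline{\Omega_{t_n-\tau}^{+}}$ must cross $\Gamma_{t_n-\tau}$, we deduce $d(z_n,\overline{\Omega_{t_n-\tau}^{+}})=d_n$.

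Fix $\varepsilon\in(0,1/2)$ small (to be pinned down below) and let $M_{\varepsilon}$ be the constant from \eqref{1.10}. Apply \eqref{1.8} with $M:=M_{\varepsilon}$ to obtain a radius $r:=r_{M_{\varepsilon}}$ and a point $y_n^{+}\in\Omega_{t_n}^{+}$ with $|z_n-y_n^{+}|\le r$ and $d(y_n^{+},\Gamma_{t_n})\ge M_{\varepsilon}$; by \eqref{1.10}, $u(t_n,y_n^{+})\ge 1-\varepsilon$. On the other hand, $d(y_n^{+},\overline{\Omega_{t_n-\tau}^{+}})\ge d_n-r\to+\infty$, so as soon as $R_n:=d_n-r-M_{\varepsilon}>0$, the ball $B(y_n^{+},R_n)$ is disjoint from $\Gamma_{t_n-\tau}$ and, being connected and containing $y_n^{+}\in\Omega_{t_n-\tau}^{-}$, lies entirely inside $\Omega_{t_n-\tau}^{-}$ with $d(\cdot,\Gamma_{t_n-\tau})\ge M_{\varepsilon}$ throughout. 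Thus \eqref{1.10} yields $u(t_n-\tau,z)\le\varepsilon$ for every $z\in B(y_n^{+},R_n)$, with $R_n\to+\infty$.

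To propagate this smallness forward for time $\tau$, set $K_0:=\|f_u\|_{L^{\infty}(\mathbb{R}^{N+1})}$; since $f(z,0)=0$ and $u\ge 0$ imply $f(z,u)\le K_0 u$, the bounded solution $w_n$ of
\begin{equation*}
\partial_t w_n-\Delta w_n=K_0 w_n,\qquad w_n(t_n-\tau,\cdot)=\varepsilon\,\mathbf{1}_{B(y_n^{+},R_n)}+\mathbf{1}_{\mathbb{R}^N\setminus B(y_n^{+},R_n)}
\end{equation*}
is a supersolution of \eqref{1.1} that dominates $u(t_n-\tau,\cdot)$. The parabolic comparison principle on $\mathbb{R}^N$ for bounded sub/supersolutions gives $u(t_n,y_n^{+})\le w_n(t_n,y_n^{+})$. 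Writing $w_n=e^{K_0 s}\tilde{w}_n$ with $s:=t-(t_n-\tau)$ reduces $\tilde{w}_n$ to the heat equation on $\mathbb{R}^N$, and the explicit Gaussian representation together with $R_n\to+\infty$ gives $w_n(t_n,y_n^{+})\to\varepsilon e^{K_0\tau}$. Choosing $\varepsilon$ so small that $\varepsilon e^{K_0\tau}<1-\varepsilon$ produces the contradiction $1-\varepsilon\le u(t_n,y_n^{+})<1-\varepsilon$ for $n$ large.

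The main technical point is the geometric bookkeeping in the second paragraph: the monotonic expansion $\Omega_t^{+}\supset\Omega_s^{+}$ for $t\ge s$ is precisely what permits placing the whole large ball $B(y_n^{+},R_n)$ simultaneously inside $\Omega_{t_n-\tau}^{-}$ and at distance at least $M_{\varepsilon}$ from $\Gamma_{t_n-\tau}$, so that \eqref{1.10} can be invoked uniformly over it. Without this monotonicity, a general transition subsolution could in principle carry pockets of $\Omega^{+}$ near $z_n$ at the earlier time $t_n-\tau$, and the heat-kernel comparison propagating $u\lesssim\varepsilon$ to time $t_n$ would break down.
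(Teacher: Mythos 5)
Your proof is correct, but it follows a genuinely different route from the paper's. The paper reduces to $\tau\geq T_{\varepsilon}$ and invokes the finite-speed-of-spreading estimate for combustion nonlinearities from \cite{Suobing Zhang2025} (Lemmas 3.1--3.2 there): with a fixed small level $\tilde{\varepsilon}=\delta_{\varepsilon}$ and a ball of radius $R\geq(K+\varepsilon)\tau$ tied to the maximal pulsating speed $K$, the auxiliary solution $v_R$ stays below $\gamma_{\star}$ at the center for times up to $R/(K+\varepsilon)$, so the comparison principle forces $u(t_0,y_0)\leq\gamma_{\star}$, contradicting $u(t_0,y_0)\geq 1-\tilde{\varepsilon}>\gamma_{\star}$. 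You bypass the combustion structure entirely: after the same geometric reduction (a point $y_n^{+}\in\Omega_{t_n}^{+}$ near $z_n\in\Gamma_{t_n}$ where $u(t_n,\cdot)\geq 1-\varepsilon$ by \eqref{1.8}--\eqref{1.10}, surrounded at time $t_n-\tau$ by a ball of radius $R_n\to+\infty$ lying deep in $\Omega_{t_n-\tau}^{-}$ where $u\leq\varepsilon$), you only use $f(z,0)=0$, the global Lipschitz bound $f(z,u)\leq K_0u$ for $u\geq 0$ (which also covers $u\geq1$ since there $f\leq 0$), and an explicit heat-kernel/Gronwall computation, choosing $\varepsilon$ so small that $\varepsilon e^{K_0\tau}<1-\varepsilon$. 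What each buys: your argument is more elementary and self-contained (it needs no external spreading lemma and no combustion-specific constants $\theta,\gamma_{\star},K$, so it would work for any bounded Lipschitz $f$ with $f(\cdot,0)=0$), while the paper's argument keeps $\tilde{\varepsilon}$ independent of $\tau$ and exploits the bounded spreading speed, which is the natural quantitative mechanism in this setting; for the lemma as stated (fixed $\tau$) your $\tau$-dependent choice of $\varepsilon$ is perfectly adequate, and nothing downstream requires more. Two small points to tighten: justify explicitly that $y_n^{+}\in\Omega_{t_n-\tau}^{-}$ (the segment from $z_n$ to $y_n^{+}$ has length at most $r<d_n$, hence avoids $\Gamma_{t_n-\tau}$ and stays in $\Omega_{t_n-\tau}^{-}$ by connectedness, exactly as in your ball argument), and note that $u\leq 1$ everywhere (Definition \ref{Definition 2.14}) so the initial comparison with $w_n$ holds outside $B(y_n^{+},R_n)$; the comparison with the discontinuous initial datum of $w_n$ is standard.
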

\begin{proof}
Take suitable constants $\varepsilon>0$, $T_{\varepsilon}>0$, $R_{\varepsilon}>0$ and $\delta_{\varepsilon}>0$ such that \cite[Lemmas 3.1 and 3.2]{Suobing Zhang2025} hold. Let $\tilde{\varepsilon}:=\delta_{\varepsilon}$ and $M_{\tilde{\varepsilon}}$ be defined by \eqref{1.10}. Since $\Gamma_t \subset \Omega_{t-\tau}^{-}$ for any $\tau>0$, we only need to prove \eqref{5.1} holds for $\tau \geq T_{\varepsilon}$. Take $R \geq \max \left\{R_{\varepsilon},(K+\varepsilon) \tau\right\}$ where $K$ is given in Theorem \ref{Theorem 2.3}.

Assume by contradiction that
\begin{equation*}
\sup \left\{d\left(z, \Gamma_{t-\tau}\right) ; t \in \mathbb{R}, z \in \Gamma_t\right\}= +\infty,
\end{equation*}
then there exist $t_0 \in \mathbb{R}$ and $z_0 \in \Gamma_{t_0} \subset \Omega_{t_0-\tau_0}^{-}$ such that
\begin{equation}\label{5.2}
d\left(z_0, \Gamma_{t_0-\tau_0}\right) \geq r_{M_{\tilde{\varepsilon}}}+M_{\tilde{\varepsilon}}+R,
\end{equation}
where $r_{M_{\tilde{\varepsilon}}}>0$ satisfy
\begin{equation*}
\sup \left\{d\left(y, \Gamma_{t_0}\right) ; y \in \Omega_{t_0}^{+} \cap B\left(z_0, r_{M_{\tilde{\varepsilon}}}\right)\right\} \geq M_{\tilde{\varepsilon}} 
\end{equation*}
and this is possible by \eqref{1.8}. Thus, there exists $y_0 \in \mathbb{R}^N$ such that
\begin{equation}\label{5.3}
y_0 \in \Omega_{t_0}^{+},\left|y_0-z_0\right| \leq r_{M_{\tilde{\varepsilon}}} \text { and } d\left(y_{0}, \Gamma_{t_0}\right) \geq M_{\tilde{\varepsilon}}.
\end{equation}
By \eqref{1.10} and the definition of $\delta_{\varepsilon}$ in \cite[Lemma 3.2]{Suobing Zhang2025}, one has that
\begin{equation}\label{5.4}
u\left(t_0, y_0\right) \geq 1-\tilde{\varepsilon}\geq 1-\frac{\gamma_{\star}}{8}>1-\gamma_{\star} \geq 1-\frac{\theta}{2}>1-\theta \geq \gamma_{\star}.
\end{equation}
Besides, by \eqref{5.2} and \eqref{5.3}, we can get $B\left(y_0, R\right) \subset \Omega_{t_0-\tau}^{-}$ and $d\left(B\left(y_0, R\right), \Gamma_{t_0-\tau}\right) \geq M_{\tilde{\varepsilon}}$, which further shows
\begin{equation}\label{5.5}
u\left(t_0-\tau, y\right) \leq \tilde{\varepsilon} < \gamma_{\star} \text { for all } y \in B\left(y_0, R\right).
\end{equation}
For the above $R$ and $\tilde{\varepsilon}$, let $v_R(t, z)$ be the solution of the following equation
\begin{equation*}
\left\{\begin{array}{l}
	\left(v_R\right)_t- \Delta v_R=f\left(z, v_R\right), \quad t>0,\; z \in \mathbb{R}^N ,\\
	v_R(0, z)=\tilde{\varepsilon} \text { for }|z|<R, \quad v_R(0, z)=1 \text { for }|z| \geq R.
\end{array}\right.
\end{equation*}
By \cite[Lemma 3.2]{Suobing Zhang2025}, it holds that
\begin{equation*}
v_R(t, z) \leq \gamma_{\star}, \text { for all } T_{\varepsilon} \leq t \leq \frac{R}{K+\varepsilon} \text { and }|z| \leq R-(K+\varepsilon) t.
\end{equation*}
Due to $u(t, z)$ is a subsolution and $R \geq(K+\varepsilon) \tau$, we can obtain from \eqref{5.5} and the comparison principle that
\begin{equation*}
u\left(t_0, y\right) \leq v_R\left(\tau, y-y_0\right) \text { for all } y \in \mathbb{R}^N.
\end{equation*}
Then one has $u\left(t_0, y_0\right) \leq v_R(\tau, 0) \leq \gamma_{\star}$ which contradicts \eqref{5.4}.
\end{proof}

\begin{proof}[Proof of Lemma \ref{Lemma 2.15}]
Similar to \cite{H. Berestycki2012}, we omit some details and only present the main proof steps.

{\it Step 1: dividing $\mathbb{R}^N$ into multiple parts.}
 We denote $\widetilde{u}_s(t, z)=\widetilde{u}(t+s, z)$ for $(t, z) \in \mathbb{R} \times \mathbb{R}^N$ and obtain from \eqref{1.10} that for $0<\gamma_{\star}\leq \min \{\theta / 2, 1-\theta\}<\frac{1}{2}$, there exist constants $M>0$ and $\widetilde{M}>0$ such that
\begin{equation}\label{5.6}
	\left\{\begin{array}{l}
		\forall t \in \mathbb{R}, \forall z \in \Omega_t^{+},\left(d\left(z, \Gamma_t\right) \geq M\right) \Rightarrow(u(t, z) \geq 1-\gamma_{\star} / 2), \\
		\forall t \in \mathbb{R}, \forall z \in \Omega_t^{-},\left(d\left(z, \Gamma_t\right) \geq M\right) \Rightarrow(u(t, z) \leq \gamma_{\star}),
	\end{array}\right.
\end{equation}
and
\begin{equation}\label{5.7}
\left\{\begin{array}{l}
	\forall t \in \mathbb{R}, \forall z \in \widetilde{\Omega}_t^{+},\left(d\left(z, \widetilde{\Gamma}_t\right) \geq \widetilde{M}\right) \Rightarrow(\widetilde{u}(t, z) \geq 1-\gamma_{\star} / 2) ,\\
	\forall t \in \mathbb{R}, \forall z \in \widetilde{\Omega}_t^{-},\left(d\left(z, \widetilde{\Gamma}_t\right) \geq \widetilde{M}\right) \Rightarrow(\widetilde{u}(t, z) \leq \gamma_{\star}).
\end{array}\right.
\end{equation}
By assumptions of Lemma \ref{Lemma 2.15}, we can know $u$ and $\widetilde{u}$ are sub-invasion and super-invasion of $0$ by $1$ respectively, and $\widetilde{\Omega}_t^{-} \subset \Omega_t^{-}$. Then there is a constant $s_0$ large enough such that
\begin{equation*}
	\widetilde{\Omega}_{t+s}^{+} \supset \Omega_t^{+} \text {and } d\left(\widetilde{\Gamma}_{t+s}, \Gamma_t\right) \geq M+\widetilde{M} \text { for all } t \in \mathbb{R} \text { and } s \geq s_0.
\end{equation*}
We infer from above that
\begin{equation}\label{5.8}
\left(z \in \Omega_t^{+}\right) \text {or }\left(x \in \Omega_t^{-} \text { and } d\left(z, \Gamma_t\right) \leq M\right) \Rightarrow\left(\widetilde{u}_s(t, z) \geq 1-\gamma_{\star}\right).
\end{equation}
Define
\begin{equation*}
\omega_M^{-}:=\left\{(t, z) \in \mathbb{R} \times \mathbb{R}^N ; z \in \Omega_t^{-} \text { and } d\left(z, \Gamma_t\right) \geq M\right\}  \text { and } \; \omega_M^{+}:=\mathbb{R} \times \mathbb{R}^N \backslash \omega_M^{-}.
\end{equation*}
According to \eqref{5.6}-\eqref{5.8}, one has
\begin{equation}\label{5.9}
\widetilde{u}_s(t, z)-u(t, z) \geq 1-2 \gamma_{\star}>0 \text { on } \partial \omega_{M}^{-}.
\end{equation}

{\it Step 2: $\widetilde{u}_s \geq u$ in $\omega_M^{-}$ for all $s \geq s_0$.}
Fix $s \geq s_0$ and define $\varepsilon^*=\inf \left\{\varepsilon>0 ; \widetilde{u}_s \geq u-\varepsilon\text{ in } \omega_M^{-}\right\}$, then $\varepsilon^*$ is a well-defined nonnegative constant. We only have to show $\varepsilon^*=0$. Assume by contradiction that $\varepsilon^*>0$. There are a sequence of positive numbers $\left\{\varepsilon_n\right\}_{n \in \mathbb{N}}$ and a sequence of points $\left\{\left(t_n, z_n\right)\right\}_{n \in \mathbb{N}}$ in $\omega_M^{-}$ such that
\begin{equation}\label{5.10}
\varepsilon_n \rightarrow \varepsilon^* \text { as } n \rightarrow+\infty \text { and } \widetilde{u}_s\left(t_n, z_n\right)<u\left(t_n, z_n\right)-\varepsilon_n \text { for all } n \in \mathbb{N}.
\end{equation}
By \eqref{5.9}, \eqref{5.10} and the positivity of $\varepsilon^*$, there is $\rho>0$ such that
\begin{equation*}
\liminf _{n \rightarrow+\infty} d\left(z_n, \Gamma_{t_n}\right) \geq M+2 \rho.
\end{equation*}
Since $u$ is a sub-invasion of 0 by 1, there exists $n_0$ such that
\begin{equation}\label{5.11}
z \in \Omega_t^{-} \text { and } d\left(z, \Gamma_t\right) \geq M \text { for all } n \geq n_0, x \in \overline{B\left(z_n, \rho\right)} \text { and } t \leq t_n .
\end{equation}
Therefore, without loss of generality, we suppose that $\rho<\tau$, where $\tau$ is given in Lemma \ref{Lemma 5.1}. Then there exists a sequence $\left\{y_n\right\}_{n \in \mathbb{N}}$ in $\mathbb{R}^N$ such that
\begin{equation}\label{5.12}
y_n \in \Omega_{t_n-\tau+\rho}^{-} \text { and } d\left(z_n, \Gamma_{t_n-\tau+\rho}\right)-d\left(z_n, y_n\right)=d\left(y_n, \Gamma_{t_n-\tau+\rho}\right)=M+\rho \text { for all } n \geq n_0 .
\end{equation}
Defining a $C^1$ path $P_n:[0,1] \rightarrow \Omega_{t_n-\tau+\rho}^{-}$ with $P_n(0)=z_n, P_n(1)=y_n$, by \eqref{5.11} and \eqref{5.12}, we know that for each $n \geq n_0$,
\begin{equation*}
	E_n:=\left[t_n-\tau, t_n\right] \times \overline{B\left(z_n, \rho\right)} \cup\left[t_n-\tau, t_n-\tau+\rho\right] \times\left\{z \in \mathbb{R}^N ; d\left(z, P_n([0,1])\right) \leq \rho\right\}
\end{equation*}
is included in $\omega_M^{-}$. Thus, 
\begin{equation*}
v:=\widetilde{u}_s-\left(u-\varepsilon^*\right) \geq 0 \text { in } E_n \text { for all } n \geq n_0 .
\end{equation*}
Since $f(z, \cdot)$ is nonincreasing in $(-\infty, \gamma_{\star}]$, we can get
\begin{equation*}
v_t(t, z)- \Delta v(t, z)+L v(t, z) \geq 0 \text { in } E_n \text { for all } n \geq n_0,
\end{equation*}
where $L=\|f(z, u)\|_{L^{\infty}\left(\mathbb{L}^N \times[-1-\varepsilon^*,1+\varepsilon^*]\right)}<+\infty$.

Now we pay attention to the distance between the sequences just defined. Firstly, we show that $\left\{d\left(z_n, \Gamma_{t_n}\right)\right\}_{n \in \mathbb{N}}$ is bounded. Otherwise, up to extraction of subsequences, we have that $d\left(z_n, \Gamma_{t_n}\right) \rightarrow+\infty$, then $u\left(t_n, z_n\right), \widetilde{u}_s\left(t_n, z_n\right) \rightarrow 0$. But, it follows from \eqref{5.10} that $u\left(t_n, z_n\right)>\widetilde{u}_s\left(t_n, z_n\right)+\varepsilon_n \rightarrow \varepsilon^*>0$ as $n \rightarrow+\infty$, which is a contradiction. Besides, we can obtain from \eqref{5.1} that there exists a sequence $\left\{\widetilde{z}_n\right\}_{n \in \mathbb{N}}$ in $\mathbb{R}^N$ such that $\widetilde{z}_n \in \Gamma_{t \rightarrow \tau}$ for each $n \in \mathbb{N}$ and $\sup \left\{d\left(z_n, \widetilde{z}_n\right) ; n \in \mathbb{N}\right\}<+\infty$. Therefore, for each $n \geq n_0$,
\begin{equation*}
d\left(z_n, y_n\right)= d\left(z_n, \Gamma_{t_n-\tau+\rho}\right)-(M+\rho) \leq d\left(z_n, \Gamma_{t_n-\tau}\right)-(M+\rho) \leq d\left(z_n, \widetilde{z}_n\right)-(M+\rho)<+\infty.
\end{equation*}
Since $v\left(t_n, z_n\right) \rightarrow 0$ as $n \rightarrow+\infty$, by the linear parabolic estimates, we obtain
\begin{equation*}
v\left(t_n-\tau, y_n\right) \rightarrow 0 \text { as } n \rightarrow+\infty.
\end{equation*}
However, it follows from \eqref{5.12} that there is a sequence $\left\{\bar{z}_n\right\}_{n \in \mathbb{N}}$ such that
\begin{equation*}
\bar{z}_n \in \Omega_{t_n-\tau+\rho}^{-}, d\left(\bar{z}_n, y_n\right)=\rho \text { and } d\left(\bar{z}_n, \Gamma_{t_n-\tau+\rho}\right)=M \text { for all } n \geq n_0.
\end{equation*}
Since $\partial_{t} \widetilde{u}$ and $\nabla_{z} \widetilde{u}$ are all globally bounded in $\mathbb{R} \times \mathbb{R}^N$ and $d\left(\Gamma_{t_n-\tau}, \Gamma_{t_n-\tau+\rho}\right)$ is bounded by Lemma \ref{Lemma 5.1}, even if it means decreasing $\rho$, one has that
\begin{equation}\label{5.13}
\rho \cdot(\left\|\nabla_z \widetilde{u}\right\|_{L^{\infty}\left(\mathbb{R} \times \mathbb{R}^N\right)}+ \left\|\partial_{t}  \widetilde{u}\right\|_{L^{\infty}\left(\mathbb{R} \times \mathbb{R}^N\right)})\leq \varepsilon^* / 2.
\end{equation}
Finally, for all $n \geq n_0$, we know from \eqref{5.6}, \eqref{5.8} and \eqref{5.13} that
\begin{equation*}
	\begin{aligned}
    v\left(t_n-\tau, y_n\right)&=\widetilde{u}_s\left(t_n-\tau, y_n\right)-u\left(t_n-\tau, y_n\right)+\varepsilon^* \\
    &\geq\widetilde{u}_s\left(t_n-\tau+\rho, \bar{z}_n\right)-\varepsilon^* / 2 -u\left(t_n-\tau, y_n\right)+\varepsilon^*\\
    &\geq 1- \gamma_{\star}+\varepsilon^*/2- \gamma_{\star}>0.		
	\end{aligned}
\end{equation*}
This is a contradiction to $\varepsilon^*>0$, and hence $\varepsilon^*=0$.

{\it Step 3: $\widetilde{u}_s \geq u$ in $\omega_M^{+}$ for all $s \geq s_0$. }
Fix $s \geq s_0$ and define $\varepsilon_*=\inf \left\{\varepsilon>0 ; \widetilde{u}_s \geq u-\varepsilon\right.$ in $\left.\omega_M^{+}\right\}$, then we show $\varepsilon_*=0$. Suppose by contradiction that $\varepsilon_*>0$. There exists a sequence of positive constants $\left\{\varepsilon_n\right\}_{n \in \mathbb{N}}$ and a sequence of points $\left\{\left(t_n, z_n\right)\right\}_{n \in \mathbb{N}}$ in $\omega_M^{+}$ such that
\begin{equation}\label{5.14}
\varepsilon_n \rightarrow \varepsilon_* \text { as } n \rightarrow+\infty \text { and } \widetilde{u}_s\left(t_n, z_n\right)<u\left(t_n, z_n\right)-\varepsilon_n \text { for all } n \in \mathbb{N}.
\end{equation}
Similar to arguments in Step $2$, we know $\left\{d\left(z_n, \Gamma_{t_n}\right)\right\}_{n \in \mathbb{N}}$ is bounded. By \eqref{5.1}, there exists a sequence $\left\{\widetilde{z}_n\right\}_{n \in \mathbb{N}}$ in $\mathbb{R}^N$ such that $\widetilde{z}_n \in \Gamma_{t_n-\tau}$ for each $n \in \mathbb{N}$ and $\sup \left\{d\left(z_n, \widetilde{z}_n\right) ; n \in \mathbb{N}\right\}<+\infty$. Following from \eqref{1.8}, there are $r>0$ and $\left\{y_n\right\}_{n \in \mathbb{N}}$ in $\mathbb{R}^N$ such that
\begin{equation*}
y_n \in \Omega_{t_n-\tau}^{-}, d\left(\widetilde{z}_n, y_n\right)=r \text { and } d\left(y_n, \Gamma_{t_n-\tau}\right) \geq M \text { for all } n \in \mathbb{N}.
\end{equation*}
Then there is a sequence $\left\{\bar{z}_n\right\}_{n \in \mathbb{N}}$ in $\mathbb{R}^N$ such that 
\begin{equation*}
\bar{z}_n \in \Omega_{t_n-\tau}^{-} \text{ and } d\left(\bar{z}_n, \Gamma_{t_n-\tau}\right)=d\left(y_n, \Gamma_{t_n-\tau}\right)-d\left(y_n, \bar{z}_{n}\right)=M \text{ for all } n \in \mathbb{N}.
\end{equation*}
Based on the above, we know that $\left\{d\left(z_n, \bar{z}_n\right)\right\}_{n \in \mathbb{N}}$ is bounded. Choose $\rho^{*}>0$ and $K^{*} \in \mathbb{N} \backslash\{0\}$ such that
\begin{equation*}
	\rho^{*} \cdot(2\left\|\nabla_z \widetilde{u}\right\|_{L^{\infty}\left(\mathbb{R} \times \mathbb{R}^N\right)}+ \left\|\partial_{t}  \widetilde{u}\right\|_{L^{\infty}\left(\mathbb{R} \times \mathbb{R}^N\right)})\leq \varepsilon^* 
\end{equation*}
and
\begin{equation*}
K^{*} \rho^{*} \geq \max \left(\tau, \sup \left\{d\left(z_n, \bar{z}_n\right) ; n \in \mathbb{N}\right\}\right).
\end{equation*}
Define
\begin{equation*}
E_{n, i}=\left[t_n-\frac{i+1}{K^{*}} \tau, t_n-\frac{i}{K^{*}} \tau\right] \times \overline{B\left(X_{n, i}, 2 \rho^{*}\right)} \text{ for all } n \in \mathbb{N}  \text{ and }  0 \leq i \leq K^{*}-1 ,
\end{equation*}
where $\left\{X_{n, i}\right\}_{0 \leq i \leq K^{*}}$ is a sequence in $\mathbb{R}^N$ such that
\begin{equation*}
X_{n, 0}=z_n, X_{n, K^{*}}=\bar{z}_n \text { and } d\left(X_{n, i}, X_{n, i+1}\right) \leq \rho^{*} \text { for any } 0 \leq i \leq K^{*}-1.
\end{equation*}
According to \eqref{5.14} and the global boundedness of $\partial_{t} (\widetilde{u}_s-u)$ and $\nabla_{z}\left(\widetilde{u}_s-u\right)$, one has $w:=$ $\widetilde{u}_s-\left(u-\varepsilon_*\right)<\varepsilon_*$ for large $n$. This shows that $E_{n, 0} \subset \omega_M^{+}$ for large $n$, because we know from Step $2$ that $w \geq \varepsilon_*$ in $\omega_M^{-}$. Since $f(z, \cdot)$ is nonincreasing in $[1-\gamma_{\star}, +\infty)$ by \eqref{1.6}, similar to arguments in Step $2$, it holds that 
\begin{equation*}
w\left(t_n-\tau / K^{*}, X_{n, 1}\right) \rightarrow 0 \text { as } n \rightarrow+\infty.
\end{equation*}
An immediate induction yields $w\left(t_n- i \tau / K^{*}, X_{n, i}\right)\rightarrow 0$ as $n \rightarrow+\infty$ for any $ 1 \leq i \leq K^{*}$ and then $w\left(t_n-\tau, \bar{z}_n\right) \rightarrow 0$ as $n \rightarrow+\infty$. By the definition of $\bar{z}_n$, one has $\left(t_n-\tau, \bar{z}_n\right) \in \omega_M^{-}$ and thereby $w\left(t_n-\tau, \bar{z}_n\right) \geq \varepsilon_*$. This is a contradiction, which means $\varepsilon_*=0$.

{\it Step 4: existence of the smallest $T$.} 
It follows from Steps $1$-$3$ that $\widetilde{u}_s \geq u$ in $\mathbb{R} \times$ 
$\mathbb{R}^N$ for all $s \geq s_0$. Define
\begin{equation*}
s_*:=\inf \left\{s \in \mathbb{R} ; \widetilde{u}_s \geq u \text { in } \mathbb{R} \times \mathbb{R}^N\right\}.
\end{equation*}
Since $0<u(t, z)<1$ for all $(t, z) \in \mathbb{R} \times \mathbb{R}^N$ and $\widetilde{u}_s\left(t_0, z_0\right) \rightarrow 0$ as $s \rightarrow-\infty$ for all $\left(t_0, z_0\right) \in \mathbb{R} \times \mathbb{R}^N$, we can get $-\infty<s_* \leq s_0$. Therefore, there exists a sequence $\{s_{n}\}_{n \in \mathbb{N}}$ which satisfy $s_*\leq s_{n}<s_*+\frac{1}{n}$ and $\widetilde{u}_{s_n} \geq u$ in $\mathbb{R} \times\mathbb{R}^N$. Moreover, one gets that ${s_{n}}$ converges to $s_*$, because $s_{n}\rightarrow s_*$ as $n \rightarrow +\infty$. We also have that for any fixed $(t, z) \in \mathbb{R} \times \mathbb{R}^N$, the mapping $s \mapsto \widetilde{u}_s (t, z)=\widetilde{u}(t+s, z)$ is a continuous function with respect to $s$. So it holds that $\widetilde{u}_{s_{n}} (t, z) \rightarrow \widetilde{u}_{s_*}(t, z)$ as $n\rightarrow +\infty$. Then we obtain that $u(t, z) \leq \lim_{n\rightarrow +\infty}\widetilde{u}_{s_n}=\widetilde{u}_{s_*}(t, z)$. Due to the arbitrariness of $(t, z)$, one has
\begin{equation*}
\widetilde{u}_{s_*}(t, z)=\widetilde{u}\left(t+s_*, z\right) \geq u(t, z) \text { for all }(t, z) \in \mathbb{R} \times \mathbb{R}^N.
\end{equation*}
This implies that $s_*$ is the smallest $T$. Take $T=s_*$ in the following proof.

In particular,
\begin{equation}\label{5.15}
	\left(z \in \Omega_t^{+} \text { and } d\left(z, \Gamma_t\right) \geq M\right) \Longrightarrow\left(\widetilde{u}_T(t, z) \geq u(t, z) \geq 1-\frac{\gamma_{\star}}{2}\right).
\end{equation}
Suppose 
\begin{equation}\label{5.16}
\inf \left\{\widetilde{u}_T(t, z)-u(t, z) ; d\left(z, \Gamma_t\right) \leq M \right\}>0,
\end{equation}
 then there exists a constant $\eta_0>0$ such that for any $\eta \in\left(0, \eta_0\right]$, $\widetilde{u}_{T-\eta}-u \geq 0$ for all $(t, z) \in \mathbb{R} \times \mathbb{R}^N$ such that $d\left(z, \Gamma_t\right) \leq M$, since $\partial_{t}\widetilde{u}$ is globally bounded. By \eqref{5.15}, we can assume that $\eta_0>0$ (even if it means decreasing $\eta_0$) such that
\begin{equation*}
 \left(z \in \Omega_t^{+} \text { and } d\left(z, \Gamma_t\right) \geq M\right) \Longrightarrow\left(\widetilde{u}_{T-\eta}(t, z) \geq 1-\gamma_{\star}\right)
\end{equation*}
for all $\eta \in\left[0, \eta_0\right]$. With similar arguments as those in Step $2$, one has 
 \begin{equation*}
 \widetilde{u}_{T-\eta}(t, z) \geq u(t, z)\text{ for all } \eta \in\left[0, \eta_0\right]\text{ and } (t, z) \in \mathbb{R} \times\mathbb{R}^{N} \text{ with } z \in \Omega_t^{-}\text{ and } d\left(z, \Gamma_t\right) \geq M.
\end{equation*}
Then we know that for all $\eta \in\left[0, \eta_0\right]$, $\widetilde{u}_{T-\eta}(t, z) \geq u(t, z)$ for all $(t, z) \in \mathbb{R} \times \mathbb{R}^N$ such that $z \in \Omega_t^{-}$ or $z \in \Omega_t^{+}$ and $d\left(z, \Gamma_t\right) \leq M$. As arguments in Step $3$, one has
 \begin{equation*}
	\widetilde{u}_{T-\eta}(t, z) \geq u(t, z)\text{ for all } \eta \in\left[0, \eta_0\right]\text{ and } (t, z) \in \mathbb{R} \times\mathbb{R}^{N} \text{ with } z \in \Omega_t^{+}\text{ and } d\left(z, \Gamma_t\right) \geq M.
\end{equation*}
 Finally, it holds that
 \begin{equation*}
 \widetilde{u}_{T-\eta} \geq u \text { in } \mathbb{R} \times \mathbb{R}^{N}
\end{equation*}
 for each $\eta \in\left[0, \eta_0\right]$. This contradicts the minimality of $T$ and then \eqref{5.16} is false. Thus, we know that $\inf \left\{\widetilde{u}_T(t, z)-u(t, z) ; d\left(z, \Gamma_t\right) \leq M\right\}=0$ and then there is a sequence $\left\{\left(t_n, z_n\right)\right\}_{n \in \mathbb{N}}$ of $\mathbb{R} \times \mathbb{R}^N$ such that
\begin{equation*}
d\left(z_n, \Gamma_{t_n}\right) \leq M \text { for all } n \in \mathbb{N} \text { and } \widetilde{u}_T\left(t_n, z_n\right)-u\left(t_n, z_n\right) \rightarrow 0 \text { as } n \rightarrow+\infty.
\end{equation*}
The proof is complete.
\end{proof}

\section*{Acknowledgments}

This work was partially supported by NSF of China (12171120) and by the Fundamental Research 
Funds for the Central Universities (No. 2023FRFK030022, 2022FRFK060028).

\section*{ Data availability statements}  

We do not analyze or generate any datasets, because our work proceeds within a theoretical and 619
mathematical approach.

\section*{ Conflict of interest} 

There is no conflict of interest to declare.

\end{document}